\documentclass[11pt,letterpaper]{amsart}

\usepackage{amsmath,amssymb,amsthm,verbatim, csquotes,mathrsfs,stmaryrd}
\usepackage{hyperref}
\usepackage{xcolor}
\usepackage{esint}
\usepackage{mathtools}
\usepackage{graphicx}
\usepackage{float}
\usepackage{caption}
\mathtoolsset{showonlyrefs}
\usepackage{geometry}

\usepackage{bm}
\usepackage{pdfpages}
\usepackage{import}
\usepackage[shortlabels]{enumitem}

\newtheorem{theorem}{Theorem}[section]%
\newtheorem{lemma}[theorem]{Lemma}%
\newtheorem{proposition}[theorem]{Proposition}%
\newtheorem{definition}[theorem]{Definition}%
\newtheorem{corollary}[theorem]{Corollary}%
\newtheorem{remark}[theorem]{Remark}

\newtheorem{example}[theorem]{Example}

\def\om{\omega}

\def\p{\partial}
\def\ep{\epsilon}
\def\de{\delta}
\def\De{\Delta}

\def\S{{\Sigma}}
\def\<{\langle}
\def\>{\rangle}

\def\na{\nabla}

\def\dist{{\rm dist}}
\def\spt{{\rm spt}}

\providecommand{\abs}[1]{\lvert#1\rvert}
\providecommand{\Abs}[1]{\left\lvert#1\right\rvert}

\providecommand{\norm}[1]{\lVert#1\rVert}

\newcommand{\mbC}{\mathbb{C}}

\newcommand{\mbF}{\mathbb{F}}

\newcommand{\mbH}{\mathbb{H}}

\newcommand{\mbN}{\mathbb{N}}

\newcommand{\mbR}{\mathbb{R}}
\newcommand{\mbS}{\mathbb{S}}

\newcommand{\mcA}{\mathcal{A}}
\newcommand{\mcB}{\mathcal{B}}

\newcommand{\mcH}{\mathcal{H}}

\newcommand{\mcL}{\mathcal{L}}

\newcommand{\mcQ}{\mathcal{Q}}

\newcommand{\mfc}{\mathbf{c}}

\newcommand{\mfC}{\mathbf{C}}

\newcommand{\mfR}{\mathbb{R}}

\newcommand{\msB}{\mathscr{B}}

\newcommand{\rd}{{\rm d}}

\newcommand{\bseta}{\boldsymbol\eta}

\newcommand{\ra}{\rightarrow}

\newcommand{\eq}[1]{\begin{equation}\begin{alignedat}{2} #1 \end{alignedat}\end{equation}}

\numberwithin{equation} {section}

\begin{document}

\title[Regularity of Stable Capillary Minimal Hypersurfaces]{Regularity of stable capillary minimal hypersurfaces}
\date{\today}

\author[Wang]{Gaoming Wang}
\address[G.W]{Beijing Institute of Mathematical Sciences and Applications\\Huairou District\\101408\\Beijing\\China}
\email{wanggaoming@bimsa.cn}

\author[Zhang]{Xuwen Zhang}
	\address[X.Z]{Mathematisches Institut\\Universit\"at Freiburg\\Ernst-Zermelo-Str.1\\79104\\Freiburg\\ Germany}
\email{xuwen.zhang@math.uni-freiburg.de}

\begin{abstract}
We develop a regularity and compactness theory for stable capillary minimal hypersurfaces in the half-space $\mathbb{H}^{n+1}$ with contact angle $\theta \in (0,\pi)$ and dimension $n \geq 2$.
One key analytic ingredient is a capillary differential Schoen inequality, which
allows us to establish a boundary sheeting theorem in the spirit of Bellettini \cite{Bellettini25}.
The other ingredient is a refined classification of stable capillary minimal cones, and we show that for any contact angle $\theta\in(0,\pi)$, the stable capillary minimal hypercone in $\mathbb H^5$ with an isolated singularity must be flat.
As a consequence, for any $n\leq4$ and $\theta\in(0,\pi)$, we obtain the Bernstein theorem for embedded complete stable capillary minimal hypersurfaces in $\mathbb H^{n+1}$ with Euclidean area growth.

\

\noindent {\bf MSC 2020: 53C42, 49Q15}\\
{\bf Keywords:} Stable minimal hypersurfaces, Capillary surfaces, Regularity, Compactness\\
\end{abstract}

\maketitle
\tableofcontents
\section{Introduction}

Let $n\geq2, \theta\in(0,\pi)$, and let $\mbH^{n+1}\coloneqq\left\{x=(x_1,\cdots,x_{n+1})\in\mbR^{n+1}: x_1>0\right\}$ denote the Euclidean half-space. Given an open Caccioppoli set $E\subset\mbH^{n+1}$, the \emph{capillary free energy} is given by
\eq{\label{defn:A-function}
\mcA_\theta\left(E\right)
=\mcH^n\left(\p^\ast E\cap\mbH^{n+1}\right)-\cos\theta\,\mcH^n\left(\p^\ast E\cap\p\mbH^{n+1}\right),
}
where $\mcH^n$ is the $n$-dimensional Hausdorff measure. This functional models the total interface energy of a liquid droplet $E$ resting on a flat solid wall $\p\mbH^{n+1}$, where the coefficient $\cos\theta$ encodes the relative wettability of the solid surface, a quantity classically governed by Young's law.

Assuming $M\coloneqq\p^\ast E\cap\mbH^{n+1}$ is a smooth hypersurface, the Euler-Lagrange equation for critical points of $\mcA_\theta$ takes the form
\eq{
\begin{cases}
    H=0&\text{ on }M,\\
    \langle\nu,e_1\rangle=\cos\theta&\text{ on }\p M,
\end{cases}
\label{eq:Youngs-law}
}
by the classical Young's law,
where $H$ is the mean curvature and $\nu$ is the outer unit normal with respect to $E$. We call $M$ a \emph{capillary minimal hypersurface} with contact angle $\theta$. If $E$ is moreover a stable critical point, then by classical second variation computations (cf.\ \cite{RS97}), $M$ satisfies the \emph{stability inequality}
\eq{\label{ineq:SS81-(1.17)-smooth-case}
\int_M\abs{A}^2\varphi^2\,\rd\mcH^n
\leq\int_M\abs{\na\varphi}^2\,\rd\mcH^n
+\cot\theta\int_{\p M} A(\eta,\eta)\varphi^2\,\rd\mcH^{n-1},
}
for any $\varphi\in C^1_c(\mbR^{n+1})$, where $A$ is the second fundamental form, $\na$ is the tangential gradient along $M$, and $\eta$ is the outer unit co-normal of $\p M\subset M$.

The classical interior regularity and compactness theory for embedded stable minimal hypersurfaces is by now well understood.
Schoen-Simon-Yau established curvature estimates for embedded stable minimal hypersurfaces in dimensions $n\leq 5$ \cite{Schoen1975curvature}, which imply the compactness of the stable minimal hypersurfaces in the smooth topology. 
Schoen-Simon extended the regularity theory to all dimensions in the embedded setting through their celebrated regularity work \cite{SS81}, and Wickramasekera later provided a definitive treatment \cite{wickramasekera2008regularity2density,Wickramasekera14}.
In contrast, the problem of optimal regularity in the immersed setting remains open.
Recently, Bellettini \cite{Bellettini25} extended Schoen-Simon-Yau's curvature estimates to the case $n\leq 6$.
Subsequently, regularity theories for immersed stable minimal hypersurfaces in all dimensions were established by the first author with Hong and Li under a non-optimal assumption on the size of the singular set \cite{HLW2024deltaStable}, and more recently by Minter--Xiao \cite{MX26} under the assumption that the singular set has vanishing \((n-2)\)-dimensional Hausdorff measure.
Important applications of regularity and compactness theory include curvature estimates and Bernstein-type results for stable minimal hypersurfaces in $\mbR^{n+1}$ under volume growth assumptions.
Recent breakthroughs have focused on removing this constraint, see \cite{Chodosh2021stable,chodosh2023stable,Catino2022stable,ChodoshLi2024Bernstein,Mazet2024Stable,CCMMR26,Stryker26,LX26}.

The existence and regularity of capillary minimal hypersurfaces have been extensively studied through direct minimization in the framework of geometric measure theory; see, for example, \cite{Taylor1977regularity,DePM15,CEL25}.
{These hypersurfaces are substantially used to study geometric problems, see e.g. \cite{Li20,CW23,CW24,EK24,CW25,Wu25,KoYao2024comparison,KY26,EK26}.}
However, the minimizing hypothesis is quite restrictive. In many geometric and analytic applications, particularly in capillary min–max theory developed by Li–Zhou–Zhu \cite{LZZ24} and De Masi–De Philippis \cite{DeMasi2021CapillaryMinMax}, one encounters surfaces that are merely \emph{stable} rather than minimizing. This motivates the development of a regularity theory for more general capillary minimal hypersurfaces.
One important direction is the Allard-type boundary regularity for stationary varifolds in the capillary setting, which was recently established by De Masi–Edelen–Gasparetto–Li \cite{DEGL25} and the first author \cite{Wang2024Allard}.

The boundary regularity of stable capillary minimal hypersurfaces is substantially more subtle than the interior case.
The main difficulty arises from the non-trivial boundary term $\cot\theta\int_{\p M}A(\eta,\eta)\varphi^2$ in the stability inequality \eqref{ineq:SS81-(1.17)-smooth-case}, which prevents direct application of standard interior techniques. Prior to this work, known results were limited to the two-dimensional case $n=2$, due to Hong–Saturnino \cite{Hong2021capillary}, Li–Zhou–Zhu \cite{LZZ24}, and De Masi–De Philippis \cite{DeMasi2021CapillaryMinMax}, as a consequence of their curvature estimate.
For the special case $\theta=\frac{\pi}{2}$ (the \emph{free boundary} case), curvature estimates and a generalized Bernstein theorem for stable free boundary minimal hypersurfaces in $\mbH^{n+1}$ were established by Guang-Li-Zhou \cite{GLZ20} under a volume growth assumption.

In this paper, we develop a complete regularity and compactness theory for stable capillary minimal hypersurfaces for $\theta\in(0,\pi)$ and $n\geq 2$.
We work in the \emph{almost embedded} (or \emph{$\theta$-regular}) sense, introduced in the following definitions.

For a pair of varifolds $(V,W)$, where $V$ is an integral $n$-varifold supported on $\overline{\mbH^{n+1}}$ and $W$ is an integral $n$-varifold supported on $\p\mbH^{n+1}$, we define the capillary energy
\[
\mbF_\theta(V,W)
\coloneqq\norm{V}(\overline{\mbH^{n+1}})-\cos\theta\norm{W}(\p\mbH^{n+1}).
\]
\begin{definition}\label{defn:stationary-pair}
\normalfont
Given a relatively open subset $U\subset\overline{\mbH^{n+1}}$, we say that any pair $(V,W)$ as above is {\em stationary for $\mbF_\theta$ in $U$} if
\eq{
\de_{\mbF_\theta}(V,W)(\psi)=0,
}
for any $\psi\in C^1(\mbR^{n+1};\mbR^{n+1})$ tangential to $\p\mbH^{n+1}$, compactly supported in $U$.
\end{definition}

\begin{example}\label{example:capillary-cone-planes}
For any $\gamma\in(-\pi,\pi)$, define the constant vector field
\eq{\label{defn:nu_theta}
\nu_\gamma
\coloneqq\cos\gamma e_1+\sin \gamma e_{n+1}.
}

Let $\theta\in(0,\pi)$. We define the unit vectors $\nu_\theta, \nu_{-\theta}$ by \eqref{defn:nu_theta},
and let $P_\theta, P_{\pi-\theta}\in G(n,n+1)$ denote, respectively, the $n$-planes perpendicular to $\nu_\theta$ and $\nu_{-\theta}$.
The half-$n$-planes truncated by $\mbH^{n+1}$ are denoted by $H_\theta=P_\theta\cap\mbH^{n+1}$, $H_{\pi-\theta}=P_{\pi-\theta}\cap\mbH^{n+1}$.
The model for our regularity theorem is the multiplicity-one $n$-varifold given by
\eq{
\mathbf{C}
\coloneqq\abs{H_{\theta}}+\abs{H_{\pi-\theta}}.
}
\end{example}

\begin{remark}
\normalfont
Let $V$ be an integral $n$-varifold supported on $\overline{\mbH^{n+1}}$.
We recall that the regular set of $\spt\norm{V}$, in the classical sense, is defined to be the collection of all $X\in\spt\norm{V}$ such that there exists $r_X>0$ with $\spt\norm{V}\cap B_{r_X}(X)$ a connected, embedded, $C^2$-hypersurface, possibly with boundary contained in $\p\mbH^{n+1}$.
In particular, when the boundary exists, we call it the {\em regular boundary part}.
We point out the following two facts:
\begin{itemize}
    \item For the cone $\mathbf{C}$ considered in Example \ref{example:capillary-cone-planes}, its cone spine is not a regular boundary part. 
    \item Given a $\mbF_\theta$-stationary pair $(V,W)$, we cannot conclude from stationarity that {\em Young's law} \eqref{eq:Youngs-law} holds along the regular boundary part of $V$, since $V$ and $W$ could have multiplicity $\geq 2$.
    For an example indicating this fact, cf. \cite[Appendix 1]{Zhang24}.
\end{itemize}

\end{remark}

This motivates the following definition.

\begin{definition}[$\theta$-regular point]\label{Defn:theta-regular-points-manifold}
\normalfont
Let $n\geq2, \theta\in(0,\pi)$.
Let $V$ be a rectifiable $n$-varifold supported on $\overline{\mbH^{n+1}}$.
A point $X\in\spt\norm{V}$ is called a $\theta$-\emph{regular point}, denoted as $X\in {\rm Reg}_\theta V$,
if there exists $\rho>0$ such that one of the following holds:
\begin{enumerate}[label=\textup{(\roman*)}]
    \item
    $\spt\norm{V}$ is a $C^2$, embedded hypersurface without boundary in $B_\rho(X)$;
    \item \label{it:defThetaBoundary}
    $X\in\partial \mathbb{H}^{n+1}$, and for some $N=N(X)\in\mbN$,
    \eq{
        \spt\norm{V}\cap B_\rho(X)
        =
        \bigcup_{j=1}^N \Sigma_j \cap B_\rho(X),
    }
    where each $\Sigma_j$ is an embedded 
    $C^2$-hypersurface such that the following conditions hold:
    \begin{enumerate}[label=\textup{(\alph*)}]
        \item $\partial \Sigma_j \cap B_\rho(X)\subset \partial \mathbb{H}^{n+1}$ for each $j$; \label{it:reg:onboundary}
        \item there exists a unit normal vector field $\nu_j$ of $\Sigma_j$ such that $\langle \nu_j, e_1 \rangle=\cos \theta$ on $\p\S_j$; \label{it:reg:normal}
        \item the interiors of $\Sigma_i$ and $\Sigma_j$ are disjoint in $B_\rho(X)$ for $i\neq j$;
        \item any intersection between distinct components may occur only along $\partial \mathbb{H}^{n+1}$;
        \item \label{it:defNonemptyIntersection} if $\Sigma_i\cap\Sigma_j\neq\emptyset$, then their boundaries are
        \begin{enumerate}
            \item [(e1)] either identical, and with the same induced unit normal in $\p\mbH^{n+1}$, which implies that $\Sigma_i$ and $\Sigma_j$ are identical;
            \item [(e2)] or mutually tangent within $\partial\mathbb{H}^{n+1}$, with opposite induced unit normals in $\partial \mathbb{H}^{n+1}$.
        \end{enumerate}
    \end{enumerate}
\end{enumerate}
We denote the {\em $\theta$-singular set} as ${\rm Sing}_\theta V\coloneqq\spt\norm{V}\setminus{{\rm Reg}_\theta V}$, which is relatively closed in $\spt\norm{V}$.
\end{definition}

\begin{definition}\label{defn:stable-capillary-minimal-hypersurface}
\normalfont
Let $n\geq2, \theta\in(0,\pi)$,
let $U$ be a relatively open subset in $\overline{\mbH^{n+1}}$.
Let $\iota:M \rightarrow \overline{\mbH^{n+1}}$ be a properly immersed two-sided $C^2$-hypersurface in $\overline{\mbH^{n+1}}$, with boundary $\iota(\p M)\subset\p\mbH^{n+1}$.
Let $\nu$ be the unit normal field of $M$.
We say that $M$ is a {\em capillary minimal hypersurface in $U$}, if
\eq{
    \begin{cases}
H=0 & \text{on } M\cap U, \\
\langle\nu,e_1\rangle=\cos\theta & \text{on } \p M\cap U.
\end{cases}
}

\end{definition}

Throughout the paper, we identify $M$ with its image under the immersion $\iota$, and omit the immersion map $\iota$ for simplicity.

\begin{definition}\label{defn:stability}
\normalfont
We say that $M$ is a {\em stable capillary minimal hypersurface in $U$}, if in addition, (cf. \eqref{ineq:SS81-(1.17)-smooth-case})
\eq{\label{ineq:SS81-(1.17)}
\int_{M}\abs{A}^2\varphi^2\rd\mcH^n
\leq\int_{M}\abs{\na\varphi}^2\rd\mcH^n+\cot\theta\int_{\p M} A(\eta,\eta)\varphi^2\rd\mcH^{n-1},
}
for any $\varphi\in C^1(M)$ with compact support in $U$.
\end{definition}
With these definitions, in this paper we consider the following class of capillary varifolds.
\begin{definition}\label{defn:varifold-class}
\normalfont
Let $n\geq2$, $\Lambda\in[1,\infty)$, and $\theta\in(0,\pi)$.
We define $\mathscr{V}(\theta,\Lambda)$ to be the set of all varifolds
$\overline{V}=V+\abs{\cos\theta}\,W$ in
$\overline{\mbH^{n+1}}\cap B_2(0)$ such that:
\begin{enumerate}[label=(\roman*)]
    \item $M$ is a stable capillary minimal hypersurface with contact angle $\theta$, for a suitable choice of unit normal, in $\overline{\mbH^{n+1}}\cap B_2(0)$;%
    \item $V = |M|$ is the multiplicity-$1$ varifold induced by $M$;
    \item $W$ is an integral $n$-varifold supported on $\p\mbH^{n+1}$, and
    \[
    \de V(\psi)+\abs{\cos\theta}\,\de W(\psi)=0
    \]
    for every $\psi\in C^1(\mbR^{n+1};\mbR^{n+1})$ tangential to $\p\mbH^{n+1}$ and compactly supported in $\overline{\mbH^{n+1}}\cap B_2(0)$;
    \item The energy bound holds: $(\|V\| + \abs{\cos\theta}\,\|W\|)(B_2(0)) \leq \Lambda$;
    \item The $\theta$-singular set of $V$ satisfies $\mcH^{n-2}(\mathrm{Sing}_\theta V) = 0$.
\end{enumerate}
Thus $\mathscr{V}(\theta,\Lambda)=\mathscr{V}(\pi-\theta,\Lambda)$.
We denote by
$\overline{\mathscr{V}}(\theta,\Lambda)$ the closure of
$\mathscr{V}(\theta,\Lambda)$ in the varifold topology.

\end{definition}

Our main regularity and compactness theorem is the following.
\begin{theorem}
    \label{thm:mainCompact}
    Let $n\geq2, \theta\in(0,\pi)$.  Any $\overline{V}\in\overline{\mathscr{V}}(\theta,\Lambda)$ can be represented as $\overline{V} = V + \abs{\cos\theta}\, W$, where $V$ is an integral $n$-varifold such that $\spt\norm{V}$ is a stable capillary minimal hypersurface with contact angle $\theta$, for a suitable choice of unit normal, in $\overline{\mbH^{n+1}}\cap B_2(0)$, and $W$ is an integral $n$-varifold supported on $\p\mbH^{n+1}$, such that $(V,W)$ is $\mbF_{\tilde\theta}$-stationary in $\overline{\mbH^{n+1}}\cap B_2(0)$ for $\tilde\theta=\theta$ if $\theta\in[\frac{\pi}{2},\pi)$, $=\pi-\theta$ otherwise.
    The singular set ${\rm Sing}_\theta V$ is empty if $n<n_\theta$, discrete if $n=n_\theta$, and has Hausdorff dimension at most $n-n_\theta$ if $n>n_\theta$, where
    \[
        n_\theta
        \coloneqq
        \begin{cases} 
            7, & \text{if } \left|\theta-90^\circ\right|<4.580^\circ,\\
            6, & \text{if } 4.580^\circ\leq\left|\theta-90^\circ\right|<16.664^\circ,\\
            5, & \text{if } 16.664^\circ\leq\left|\theta-90^\circ\right|<90^\circ.
        \end{cases}
    \]
\end{theorem}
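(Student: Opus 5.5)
The plan is to follow the Schoen--Simon/Wickramasekera/Bellettini scheme, adapted to the boundary. We argue by induction through a sheeting theorem, a tangent cone analysis, and Federer's dimension reducing argument.

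\textbf{Step 1: compactness and stationarity of the limit.} Take $\overline{V}_k=V_k+\abs{\cos\theta}W_k\in\mathscr{V}(\theta,\Lambda)$ converging to $\overline{V}$. By the mass bound and Allard compactness, after passing to a subsequence $V_k\to V$ and $W_k\to W$ as integral varifolds. The integrality of $V$ at boundary points needs the monotonicity formula for the capillary pair, which holds with reflection as in \cite{Wang2024Allard,DEGL25}. Part of $V_k$ may collapse onto $\p\mbH^{n+1}$. Such mass is absorbed into $W$, together with a possible switch $\theta\leftrightarrow\pi-\theta$ coming from the sign of $\cos\theta$. For this reason stationarity is stated for $\tilde\theta$, the angle with $\cos\tilde\theta\le0$. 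Passing to the limit in the first variation then gives stationarity of the pair $(V,W)$ for $\mbF_{\tilde\theta}$.

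\textbf{Step 2: boundary sheeting theorem.} The main analytic step is a boundary sheeting theorem. Suppose $V_k$ is close, in a ball centred on $\p\mbH^{n+1}$, to a multiple of a half-plane $H_\theta$, or to a multiple of the model cone $\mathbf{C}$ of Example~\ref{example:capillary-cone-planes}. We aim to show that $V_k$ then splits into ordered $C^{1,\alpha}$ capillary graphs over $H_\theta$, with curvature estimates. This is the decomposition of Definition~\ref{Defn:theta-regular-points-manifold}\ref{it:defThetaBoundary}. The tool is the capillary differential Schoen inequality. It bounds $\abs{A}^2$ by $\abs{\na\abs{A}}^2$-type terms, with a boundary contribution controlled through the Robin-type term $\cot\theta A(\eta,\eta)$. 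One uses the boundary condition to write $A(\eta,\eta)$ in terms of $\p_\eta$ of a suitable height function.

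Combining this with the stability inequality \eqref{ineq:SS81-(1.17)} yields an $L^2$ bound for $\abs{A}$ in terms of the excess. From there we run Bellettini's argument \cite{Bellettini25}: a De Giorgi/Moser-type iteration on the graphical regions, an $\epsilon$-regularity for the tilt excess, and the hypothesis $\mcH^{n-2}(\mathrm{Sing}_\theta)=0$. The last is used to cut off the singular set in the test functions and to establish the ordering of the sheets. We expect this to be the main obstacle. The boundary term carries no sign, and near $\p\mbH^{n+1}$ one must also control sheets touching tangentially, as in case (e2).

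\textbf{Step 3: cones and dimension reduction.} With sheeting in hand, every tangent cone to $V$ at a point of $\mathrm{Sing}_\theta V$ is a stable capillary minimal cone of the same class. Once the cone is known to be regular away from its vertex, the Schoen inequality also applies on it. Suppose $n<n_\theta$ and that a singular point exists. Federer dimension reduction, using cone splitting off lines along $\p\mbH^{n+1}$ and interior Simons cones, produces a non-flat stable capillary cone in $\overline{\mbH^{k+1}}$ with an isolated singularity, for some $k<n_\theta$. The classification of stable capillary cones with isolated singularity rules this out: they are flat in $\mbH^5$ for all $\theta$, and in the stated ranges also for $k=5,6$, which is where the thresholds $4.580^\circ$ and $16.664^\circ$ come from. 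Interior cones are already handled by Simons' theorem when $k\le6$.

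\textbf{Step 4: conclusion.} The standard Almgren--Federer stratification then gives that $\mathrm{Sing}_\theta V$ is empty for $n<n_\theta$, discrete for $n=n_\theta$, and of Hausdorff dimension at most $n-n_\theta$ for $n>n_\theta$. Stability of $\spt\norm V$ follows by smooth convergence on the regular part, together with a capacity cutoff across the singular set.
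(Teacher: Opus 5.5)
\textbf{Gaps at the two places where the boundary matters.} Your overall architecture matches the paper's: compactness, a boundary sheeting theorem following Bellettini, tangent cones, and dimension reduction with the cone classification producing $n_\theta$. The problem is at the two places where the boundary actually matters.

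\textbf{First gap: no mechanism for the boundary term in Step 2.} You name the boundary term as the main obstacle but give no way around it. What you describe is also not the Schoen inequality. The needed statement is a pointwise bound $g(\Delta+\abs{A}^2)g\ge\delta\abs{A}^2$ for a \emph{tilt function} $g$, not a bound of $\abs{A}^2$ by $\abs{\nabla\abs{A}}^2$. Likewise, $\partial_\eta$ of a height function carries no curvature; the relevant identity is $\partial_\eta\nu_1=\sin\theta\,A(\eta,\eta)$.

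Testing stability with $g\varphi$ leaves exactly $\int_{\partial M}g\varphi^2\bigl(\partial_\eta g+\cot\theta\,A(\eta,\eta)g\bigr)$. So you must construct a tilt function with three properties:
\begin{enumerate}
\item it vanishes precisely at $\nu_{\pm\theta}$, which is needed to handle $\mathbf{C}$;
\item it satisfies the Schoen inequality with $\delta_{n,\theta}>0$;
\item it obeys $\partial_\eta g+\cot\theta\,A(\eta,\eta)g=0$ exactly.
\end{enumerate}
The naive $\abs{\nu-\nu_\theta}$ fails: it satisfies this Robin condition only with coefficient $\frac12\cot\theta$, and it does not vanish at $\nu_{-\theta}$. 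The paper's $g_\theta=\sqrt{1-\nu_1^2-\nu_{n+1}^2+k(\cos\theta-\nu_1)^2}$, with $k=\frac1{n-2}$ for $n\ge3$ and $k=1$ for $n=2$, has all three properties; the upper bound on $k$ is what keeps $\delta_{n,\theta}>0$.

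The De Giorgi iteration also needs truncations, and $(g_\theta-\ell)^+$ destroys the Robin condition. The paper instead truncates against $h_\theta=1-\cos\theta\,\nu_1$. This function satisfies $(\Delta+\abs{A}^2)h_\theta=\abs{A}^2$ and the same Robin condition, so $(g_\theta-\ell h_\theta)^+\varphi$ can be inserted into stability and the boundary integral cancels identically. Without these two choices the Caccioppoli inequality, and hence the iteration, never starts.

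\textbf{Second gap: one-dimensional tangent cones in Step 3.} Your dimension reduction skips the case where the tangent cone, after splitting off its maximal spine in $\partial\mathbb{H}^{n+1}$, is one-dimensional, i.e.\ a classical cone $q_0\abs{H_0}+\sum_ip_i\abs{H_{\theta_i}}+q_\pi\abs{H_\pi}$. Every piece is flat, so a classification of cones with isolated singularity says nothing here. Yet such a cone is singular unless it lies in $\mathscr{C}_\theta$ with the sheets near $H_\theta$ and $H_{\pi-\theta}$ oriented by $\nu_\theta$ and $\nu_{-\theta}$ respectively. Only then is the $g_\theta$-excess small; note $g_\theta(-\nu_\theta)=2\sqrt{k}\abs{\cos\theta}\neq0$ for $\theta\neq\frac\pi2$. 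Your Step 2, phrased for a multiple of $\mathbf{C}$, also does not cover unequal multiplicities or wall weights.

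The paper handles this with a separate minimum distance theorem. Slice by the $2$-planes $P_y$. A curve joining sheets with different normals, or joining a wrongly oriented sheet to the wall, carries $\int\abs{A}\ge\underline\theta/2$. This contradicts $\int_{M\cap S_\tau}\abs{A}\le C\sqrt\tau$, which follows from the $L^2$ curvature bound.

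It also needs a wall sheeting theorem for sheets approaching $\partial\mathbb{H}^{n+1}$. This uses the classical tilt $\sqrt{1-\nu_1^2}$, which equals $\sin\theta$ on $\partial M$, with the same $h_\theta$ truncation, and shows that no capillary boundary survives there.

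\textbf{A smaller error in Step 1.} Your suggestion to absorb collapsing mass into $W$ is not available. $V$ carries integer multiplicity on the wall, while $W$ enters with weight $\abs{\cos\theta}$. The $\tilde\theta$ in the statement only reflects $\mathscr{V}(\theta,\Lambda)=\mathscr{V}(\pi-\theta,\Lambda)$ under reversal of the normal.
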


Moreover, the convergence is actually smooth away from the singular set, see Theorem \ref{thm:bdry-regularity} for the precise statement.

Theorem~\ref{thm:mainCompact} also yields the following generalized Bernstein theorem.
\begin{corollary}\label{Cor:Generalized-Bernstein}
    Let $2\leq n\leq4, \theta\in(0,\pi)$.  Any properly embedded complete two-sided stable capillary minimal hypersurface $M^n$ in $\mbH^{n+1}$ with contact angle $\theta$ satisfying the Euclidean area growth condition
    \[
        \mathcal{H}^n(M\cap B_r(0))\leq C r^n,\quad\forall r>0,
    \]
    must be flat.
\end{corollary}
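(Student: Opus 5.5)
The plan is to reduce Corollary~\ref{Cor:Generalized-Bernstein} to Theorem~\ref{thm:mainCompact} by the classical blow-down argument. Given $M$ as in the statement, consider the rescalings $M_k\coloneqq r_k^{-1}M$ with $r_k\to\infty$. Each $M_k$ is again a properly embedded two-sided stable capillary minimal hypersurface with angle $\theta$, since the stability inequality \eqref{ineq:SS81-(1.17)} is scale invariant. The area growth gives the uniform bound $\mcH^n(M_k\cap B_2(0))\le 2^nC$.

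To place $|M_k|$ in $\mathscr V(\theta,\Lambda)$ we must produce the wetting varifold $W_k$. Since $M$ is embedded and proper with boundary on $\p\mbH^{n+1}$, the region it bounds together with $\p\mbH^{n+1}$ gives a trace region $W_k=|\p^*E_k\cap\p\mbH^{n+1}|$. The first variation formula together with Young's law then yields $\de V_k+\cos\theta\,\de W_k=0$ for tangential fields, after possibly replacing $\theta$ by $\pi-\theta$, i.e.\ switching the side. The mass of $W_k$ in $B_2$ is trivially bounded by $\mcH^n(\p\mbH^{n+1}\cap B_2)$. Since ${\rm Sing}_\theta M_k=\emptyset$, we get $\overline V_k\in\mathscr V(\theta,\Lambda)$ for a uniform $\Lambda$.

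Next comes the tangent cone at infinity. By the monotonicity formula for capillary stationary pairs (of Allard/Kagaya–Tonegawa type, available from the boundary Allard theory cited earlier), the density ratio $\Theta(r)$ of $\overline V$ at $0$ is monotone and bounded. A subsequence $\overline V_k\to\overline{\mathbf C}$ is therefore a cone. By Theorem~\ref{thm:mainCompact} the limit is a stable capillary minimal cone with $\theta$-singular set empty, since $n\le4<5\le n_\theta$. A cone that is regular away from the origin, and hence everywhere, is a union of half-planes or planes meeting $\p\mbH^{n+1}$ at angle $\theta$ (possibly with multiplicity, and possibly of the type $\mathbf C$ in Example~\ref{example:capillary-cone-planes}). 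The density at infinity, $\Theta_\infty$, is thus the density of such a flat cone.

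To close the argument, we use that the convergence $M_k\to\mathbf C$ is smooth away from the singular set (Theorem~\ref{thm:bdry-regularity}) and that the singular set is empty. Hence $M_k$ converges smoothly with multiplicity on compact subsets. For $\abs{A}$ this gives $r\,|A_M|(x)\to0$ as $|x|=r\to\infty$; better, applying the theorem to arbitrary rescalings centered at arbitrary points gives a curvature estimate $|A_M|(x)\le C/\mathrm{dist}$. The cleanest conclusion goes through density. The density at $0$ of a smooth capillary hypersurface through a boundary point equals that of a single half-plane, $\Theta_0$. Comparing it with $\Theta_\infty$, monotonicity forces either equality — in which case the monotonicity identity implies that $M$ is itself a cone, and hence flat since it is smooth at $0$ — or multiple sheets at infinity.

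The main obstacle is excluding the multi-sheet and $\mathbf C$-type limits, and hence getting the density equality. My first approach would be this: embeddedness plus smooth sheeted convergence on annuli means that $M\cap(B_{2R}\setminus B_R)$ is a union of graphs over the limit half-planes. Connectedness arguments (each sheet is a stable capillary graph, hence by curvature estimates an entire capillary graph) then reduce the problem to a single sheet. Failing that, I would apply the argument with the base point chosen on a given sheet, so that the density at the base point matches the density at infinity of that component. If $M$ does not meet $\p\mbH^{n+1}$, the interior Schoen–Simon theory applies directly.
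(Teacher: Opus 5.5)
\paragraph{Where the argument breaks.} Your setup matches the paper's proof of Theorem~\ref{thm:Bernstein-capillary}: the blow-down $M_k=r_k^{-1}M$, the wetting varifold built from a region bounded by $M$, membership in $\mathscr{V}(\theta,\Lambda)$, a conical limit via monotonicity, and ${\rm Sing}_\theta$ of the limit empty because $n\leq4<n_\theta$. The gap is in the closing step. You choose to conclude by comparing densities. You then name the exclusion of multi-sheet and $\mathbf{C}$-type blow-downs as the ``main obstacle'', and you do not resolve it.
\begin{itemize}
\item Your first suggested fix is that each sheet is an entire capillary graph. This presupposes a global graph structure for $M$, which sheeting on annuli does not provide.
\item Your second fix compares the density at a base point with ``the density at infinity of that component''. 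This is not justified: the monotonicity formula (Lemma~\ref{Lem:Monotonicity-formula-doubling}) holds only for the whole stationary pair $\overline V=V-\cos\theta\,W$, not for individual sheets of a connected $M$.
\end{itemize}
As written, the argument does not reach flatness.

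\paragraph{How to close it.} The obstacle is not real, and you already had the needed fact. You asserted that $M_k$ converges smoothly, possibly with multiplicity, on compact subsets because the singular set of the limit is empty. That emptiness includes the vertex. So the sheeting theorems (Theorem~\ref{Thm:sheeting-epsilon-delta}, together with Theorem~\ref{thm:sheeting-2nd} for sheets near the wall) give graphical convergence with $C^2$ control on all of $B_{1/2}(0)$, including a neighbourhood of the origin. This holds whatever the number of sheets, since each sheet's curvature is controlled separately. Hence $\sup_{M_k\cap B_{1/2}}\abs{A_{M_k}}\to0$.

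You evaluated this only at points with $\abs{x}=r_k$, which gives mere $o(1/r)$ decay. Instead, fix $x\in M$ and evaluate at $x/r_k\to0$. By scaling,
\[
\abs{A_M}(x)=r_k^{-1}\abs{A_{M_k}}(x/r_k)\longrightarrow0,
\]
so $A_M\equiv0$. Equivalently, the curvature estimate $\abs{A_M}(x)\leq C/{\rm dist}$ that you mention already gives flatness. Apply it to the rescalings $R^{-1}M$ about a wall point with $R\to\infty$, using the uniform area bound. This is exactly the paper's argument; it needs no density comparison and no exclusion of multiplicity.
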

The result can be equivalently formulated as the following curvature estimate.
\begin{corollary}
    Let $2\leq n\leq4$, $\theta\in(0,\pi)$, and let $M^n$ be a properly embedded two-sided stable capillary minimal hypersurface in $\mbH^{n+1}\cap B_1(0)$.
    If $\mcH^n(M\cap B_1(0))\leq \Lambda$, then there exists a constant $C=C(n,\Lambda,\theta)$ such that
    \[
        \sup_{M\cap B_{\frac{1}{2}}(0)}\abs{A}^2\leq C.
    \]
\end{corollary}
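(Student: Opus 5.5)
The curvature estimate follows from Corollary~\ref{Cor:Generalized-Bernstein} by the standard blow-up and contradiction argument, with Theorem~\ref{thm:mainCompact} supplying the compactness. Suppose the estimate fails. Then there are properly embedded two-sided stable capillary minimal hypersurfaces $M_k\subset\mbH^{n+1}\cap B_1(0)$ with $\mcH^n(M_k\cap B_1)\leq\Lambda$ but $\sup_{M_k\cap B_{1/2}}\abs{A_k}\to\infty$.

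First I normalize by point-picking. Consider the function $(1-2\abs{x})\abs{A_k}(x)$ on $M_k\cap \overline{B_{1/2}}$, or more precisely $d(x)\abs{A_k}(x)$ with $d(x)=\frac12-\abs{x}$ on a slightly larger ball. Choose $x_k$ maximizing it, and set $\lambda_k=\abs{A_k}(x_k)$. Then $\lambda_k\, d(x_k)\to\infty$, and on $B_{d(x_k)/2}(x_k)$ we have $\abs{A_k}\leq 2\lambda_k$.

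Next I rescale by $\lambda_k$ about a base point. Let $p_k$ be $x_k$ if $\lambda_k\,\dist(x_k,\p\mbH^{n+1})\to\infty$, and otherwise its projection onto $\p\mbH^{n+1}$. The rescaling maps half-spaces to half-spaces, and both stability \eqref{ineq:SS81-(1.17)} and the contact angle are scale invariant. The rescaled surfaces $\tilde M_k=\lambda_k(M_k-p_k)$ therefore satisfy the following:
\begin{itemize}
\item they are stable capillary minimal in regions exhausting either $\mbR^{n+1}$ or $\overline{\mbH^{n+1}}$;
\item $\abs{\tilde A_k}\leq 2$ on balls of radius tending to infinity;
\item $\abs{\tilde A_k}=1$ at a point $y_k$ with $\abs{y_k}$ bounded;
\item they have uniform Euclidean area growth, by the monotonicity formula for the capillary pair, which is available from \cite{DEGL25,Wang2024Allard} up to controlling $W$ by $V$ via the first variation, with the constant depending on $\Lambda,\theta,n$.
\end{itemize}

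In either case the bound $\abs{\tilde A_k}\leq 2$ together with embeddedness gives local graphical control. Near the boundary I use the uniform boundary estimates for capillary graphs from the oblique-derivative problem; alternatively, Theorem~\ref{thm:mainCompact} with $n\leq4<n_\theta$, so that the $\theta$-singular set is empty, gives smooth convergence. The rescaled surfaces subconverge smoothly on compact sets to a complete stable capillary minimal hypersurface $M_\infty$ in $\mbH^{n+1}$, or a stable minimal hypersurface in $\mbR^{n+1}$. This limit has Euclidean area growth and $\abs{A_\infty}(y_\infty)=1$.

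The main obstacle is that $M_\infty$ may not be embedded: sheets can touch along $\p\mbH^{n+1}$ in the configuration (e2), or converge with multiplicity. I handle this as follows. Since the curvature is bounded, every sheet is smooth. Sheets that touch are each individually capillary, and they lie on one side of each other, so by the maximum principle for capillary surfaces, or Hopf's lemma at the boundary, they either coincide or are disjoint. Hence the connected component through $y_\infty$ is a properly embedded, two-sided (it bounds in the half-space) stable capillary minimal hypersurface with area growth. Corollary~\ref{Cor:Generalized-Bernstein} then forces it to be flat; in the interior case, the Schoen--Simon--Yau Bernstein theorem for $n\leq 5$ with area growth does the same. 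Flatness contradicts $\abs{A_\infty}(y_\infty)=1$, which proves the estimate.
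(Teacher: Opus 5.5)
Your overall scheme is the one the paper points to: argue by contradiction, blow up, and contradict Corollary~\ref{Cor:Generalized-Bernstein} or Schoen--Simon--Yau. The step where you make the limit embedded is false, however.

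\textbf{The gap.} In the interior, two minimal sheets that touch while lying on one side of each other are automatically tangent at the contact point, so the maximum principle applies. Along $\p\mbH^{n+1}$ the corner destroys this. In configuration (e2) of Definition~\ref{Defn:theta-regular-points-manifold} one has $\bar\nu_i=-\bar\nu_j$ at the contact points. By \eqref{eq:nu_and_eta} the two normals are then $\cos\theta\,e_1\pm\sin\theta\,\bar\nu_i$, which are neither equal nor opposite when $\theta\neq\frac\pi2$. So the sheets meet along the wall at a nonzero angle, and neither an interior nor a Hopf-type boundary comparison is available; as graphs they even satisfy different oblique boundary conditions.

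A concrete instance: translate $H_\theta$ and $H_{\pi-\theta}$ by opposite vertical vectors $\pm t\,e_{n+1}$, with the sign chosen so they become disjoint. This gives two disjoint flat, hence stable, capillary half-planes whose union is properly embedded and two-sided. As $t\to0$ they converge to the cone $\abs{H_\theta}+\abs{H_{\pi-\theta}}$ of Example~\ref{example:capillary-cone-planes}, whose sheets touch along the edge without coinciding.

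Consequently your blow-up limit is in general only almost embedded. This applies even to the single sheet through $y_\infty$, since nothing in your argument prevents it from touching itself along the wall in this way. Corollary~\ref{Cor:Generalized-Bernstein} assumes proper embeddedness, and its proof uses Jordan--Brouwer separation to produce the wetting region, so it cannot be applied to this limit as stated.

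\textbf{A repair.} Stay inside the class where the paper's compactness theory works, rather than extracting an embedded limit.
\begin{enumerate}
\item Produce $W$ topologically. This, not ``controlling $W$ by $V$ via the first variation'', is what the monotonicity formula needs. Replace $M_k$ by its connected component through the bad point. That component separates the simply connected half-ball. Take $E$ to be the side out of which $\nu$ points; then $W=\abs{E\cap\p\mbH^{n+1}}$ gives an $\mbF_\theta$-stationary pair with $\norm{W}(B_1)\leq\omega_n$. Reverse $\nu$ if necessary so that $\theta\geq\frac\pi2$.
\item After scaling $B_1$ to $B_2$, these objects lie in $\mathscr V(\theta,\Lambda')$. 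Theorem~\ref{thm:bdry-regularity} with $n\leq4<n_\theta$ then gives a subsequential limit with empty $\theta$-singular set. It also gives sheeting-type smooth convergence near every point, including (e2)-type points and points on the wall.
\item By compactness of the closed ball, this bounds $\abs{A_k}$ uniformly on the rescaled $B_{1/2}$, which is a contradiction.
\end{enumerate}
No blow-up, and no embeddedness of any limit, is needed. If you prefer to keep your blow-up, you must run the blow-down argument of Theorem~\ref{thm:Bernstein-capillary} on rescalings of the embedded $M_k$ via a diagonal sequence, not on the almost-embedded limit itself.
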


\begin{remark}
\normalfont
    For every $\theta\in(0,\pi)$, the above two corollaries extend to general dimensions $2\leq n<n_\theta$.
    In particular, we push the dimension of the Bernstein-type theorem in \cite{LZZ24} up to $n\leq6$.
\end{remark}

\begin{remark}\label{rmk:minmaxExtension}
\normalfont
    Our results also provide the regularity foundation needed to extend the min-max existence theorems of Li--Zhou--Zhu \cite{LZZ24} and De Masi--De Philippis \cite{DeMasi2021CapillaryMinMax} from $3$-dimensional to $5$-dimensional manifolds with boundary.
\end{remark}

The first key ingredient of the paper is the introduction of a capillary tilt function \footnote{The construction of the capillary tilt function below motivates the subsequent work \cite{WZ26}, in which we prove a sheeting theorem for branched stable minimal immersed hypersurfaces near a stationary classical cone or a cone consisting of distinct hyperplanes whose spine has dimension at least $n-2$.}, together with a capillary analogue of the {\it Schoen (differential) inequality} \cite{Schoen77,SS81}, which enables us to use Bellettini's method \cite{Bellettini25} to prove the sheeting theorem for stable capillary minimal hypersurfaces.
Precisely, we introduce the \emph{capillary tilt function}
\eq{\label{defn:g_theta}
g_\theta
\coloneqq
\begin{cases}
    g_{\theta,1},&\text{ for }n=2,\\
    g_{\theta,\frac{1}{n-2}},&\text{ for }n\geq3,
\end{cases}
}
where for $k\in(0,1]$,
\eq{\label{defn:g_theta,k}
    g_{\theta,k}(X)
    \coloneqq\sqrt{1-\nu_1^{2}(X)-\nu_{n+1}^{2}(X)+k(\cos \theta - \nu_1(X))^{2}},\quad X\in M,
}
with $\nu_i=\langle \nu,e_i \rangle$ and $\nu$ is the unit normal vector field of $M$. Note that $g_{\theta,k}\geq 0$, with equality if and only if $\nu_1=\cos\theta$ and $\nu_{n+1}=\pm\sin\theta$, i.e., $\nu$ coincides with one of the canonical capillary unit normals $\nu_{\pm\theta}$.

\begin{theorem}[Capillary Schoen inequality]\label{thm:differential-capillary-Schoen}
Let $n\geq2$ and $\theta\in(0,\pi)$.
Suppose that $M$ is a properly immersed two-sided capillary minimal
hypersurface in a relatively open set
$U\subset\overline{\mbH^{n+1}}$.
Then there exists $\delta_{n,\theta}>0$ such that
\eq{\label{ineq:differential-capillary-Schoen}
g_\theta(\Delta+\abs{A}^2)g_\theta
\geq\delta_{n,\theta}\abs{A}^2
\quad\text{on }M\cap U\cap\{g_\theta>0\}.
}
\end{theorem}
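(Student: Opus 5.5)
The inequality is pointwise and interior: it involves only the Jacobi-type identities for the normal components. So the plan is to compute $g_\theta(\Delta+\abs{A}^2)g_\theta$ explicitly as a quadratic form in $A$, and then prove positivity by an algebraic argument. The capillary boundary condition plays no role.

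\textbf{Step 1: reduce to an algebraic quadratic form.} Set $c=\cos\theta$, $k\in(0,1]$ and $f=g_{\theta,k}^2=1-\nu_1^2-\nu_{n+1}^2+k(c-\nu_1)^2$. Since $M$ is minimal, each $\nu_i=\<\nu,e_i\>$ satisfies $\Delta\nu_i+\abs{A}^2\nu_i=0$ and $\na\nu_i=-A(e_i^T)$, where $e_i^T$ is the tangential projection (sign conventions irrelevant). A direct computation gives
\[
\tfrac12\Delta f+\abs{A}^2f=\abs{A}^2\bigl(1+kc(c-\nu_1)\bigr)-(1-k)\abs{A e_1^T}^2-\abs{A e_{n+1}^T}^2 .
\]
Moreover $\na f=-2A(w)$ with $w\coloneqq(\nu_1+k(c-\nu_1))e_1^T+\nu_{n+1}e_{n+1}^T$, so $\abs{\na g}^2=\abs{Aw}^2/f$. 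Hence on $\{f>0\}$,
\[
g(\Delta+\abs{A}^2)g=\abs{A}^2\bigl(1+kc(c-\nu_1)\bigr)-(1-k)\abs{Ae_1^T}^2-\abs{Ae_{n+1}^T}^2-\frac{\abs{Aw}^2}{f}=:Q_\nu(A).
\]
We must show $Q_\nu(A)\ge\delta\abs{A}^2$ for all unit $\nu$ with $f(\nu)>0$ and all symmetric trace-free $A$ on $\nu^\perp$. Note $1+kc(c-\nu_1)\ge 1-k/4\ge 3/4$.

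\textbf{Step 2: localize $A$ to a 2-plane.} All negative terms involve $A$ applied to vectors in $\Pi=\mathrm{span}\{e_1^T,e_{n+1}^T\}$, a subspace of dimension at most two. For trace-free symmetric $A$ and vectors in a 2-plane we use Schoen–Simon-type refined Kato bounds:
\begin{itemize}
\item $\abs{Au}^2\le\frac{n-1}{n}\abs{A}^2\abs{u}^2$ for a single vector;
\item for an orthonormal pair $u,v$, the sum $\abs{Au}^2+\abs{Av}^2$ is bounded by $\abs{A}^2$ minus a contribution controlled by the trace-free condition (the off-$\Pi$ block must compensate the trace of $A|_\Pi$).
\end{itemize}
For $n=2$ this pair bound is an identity, which is why one needs $k=1$ there, killing the $\abs{Ae_1^T}^2$ term. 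For $n\ge3$ the choice $k=\frac1{n-2}$ should be exactly what balances the $(1-k)\abs{Ae_1^T}^2$ loss against the gain from trace-freeness. Concretely, I would write $A|_\Pi$ in an orthonormal basis of $\Pi$ adapted to $w$, parametrize $\nu$ by $(\nu_1,\nu_{n+1})$ with $\nu_1^2+\nu_{n+1}^2\le1$, and reduce $Q_\nu\ge\delta\abs{A}^2$ to positivity of an explicit finite-dimensional quadratic form in a few entries of $A$, with coefficients depending on $(\nu_1,\nu_{n+1},c,k)$.

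\textbf{Step 3 (main obstacle): the degenerate regime $f\to0$.} Away from $\{f=0\}$ the quotient $\abs{Aw}^2/f$ is bounded, and strict positivity follows by continuity and compactness in $(\nu,A/\abs{A})$ once it holds pointwise. The danger is near $\nu=\nu_{\pm\theta}$, where $f=0$ and $w$ vanishes: there $e_1^T=e_1-c\nu$ and $e_{n+1}^T=e_{n+1}\mp s\nu$ with $s=\sin\theta$, so $w=ce_1\pm se_{n+1}-\nu=0$.

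To handle this I would expand $\nu=\nu_{\pm\theta}+\xi$ and show two things:
\begin{itemize}
\item $\abs{w}^2$ and $f$ are comparable quadratic forms in $\xi$ (this is where the $k(c-\nu_1)^2$ term is essential, making $f$ nondegenerate in the $e_1$–$e_{n+1}$ directions);
\item the sharp ratio $\abs{Aw}^2/(f\abs{A}^2)$ stays strictly below the available margin $1+kc(c-\nu_1)-(\text{the }\Pi\text{-losses})$.
\end{itemize}
I would try to identify $\delta_{n,\theta}$ as the minimum of the resulting explicit $2\times2$ (or $3\times3$) eigenvalue problem on the compactified parameter set, closed up over the limiting directions of $\xi/\abs{\xi}$. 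I expect verifying strict positivity uniformly here, especially for $\theta$ near $0$ or $\pi$ where $s\to0$, to be the delicate part. It may require using that $\delta_{n,\theta}$ is allowed to degenerate as $\theta\to0,\pi$.
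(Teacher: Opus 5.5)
\textbf{There is a genuine gap: the proposal reduces the theorem to a pointwise algebraic inequality and then never proves that inequality.}

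Your Step~1 is correct and is exactly the paper's starting point (Lemma~\ref{lem:basic_formulas}, with the coefficient rewritten as $1+k\cos\theta(\cos\theta-\nu_1)$). Restricting attention to $\Pi=\operatorname{span}\{e_1^\top,e_{n+1}^\top\}$ is also what the paper does. The whole content of the theorem, however, is the inequality $Q_\nu(A)\geq\delta\abs{A}^2$. Your Step~2 lists two Kato-type facts without combining them, and Step~3 invokes compactness ``once it holds pointwise'', which assumes precisely that inequality.

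Neither of your two facts suffices on its own. Write $\vec a_1=\sqrt{1-k}\,e_1^\top$, $\vec a_2=e_{n+1}^\top$, $\vec a_3=w/g_{\theta,k}$, so the negative terms are $\sum_i\abs{A\vec a_i}^2$.
\begin{itemize}
\item The single-vector bound gives only $\frac{n-1}{n}\bigl(\sum_i\abs{\vec a_i}^2\bigr)\abs{A}^2$. On $\{\nu_1=\cos\theta\}$ one has $\sum_i\abs{\vec a_i}^2=2-k\sin^2\theta$ while the margin equals $1$, so this bound is already too weak for every $n\geq4$.
\item The orthonormal-pair bound gives only $\abs{A}^2$, since the largest eigenvalue of the Gram form below is $1$. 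But the margin drops below $1$ wherever $\cos\theta\,\nu_1>\cos^2\theta$.
\end{itemize}

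What is missing is the exact structure of these three vectors, together with a sharp trace-free estimate adapted to it.
\begin{itemize}
\item \emph{Gram form.} $\mathcal Q=\sum_i\vec a_i\otimes\vec a_i$ on $\Pi$ satisfies $\operatorname{tr}\mathcal Q=1+t$ and, by Cauchy--Binet, $\det\mathcal Q=t$. Here $t=(1-k\sin^2\theta)(1-\nu_1^2-\nu_{n+1}^2)/g_{\theta,k}^2\in[0,1-k\sin^2\theta]$, because $g_{\theta,k}^2\geq1-\nu_1^2-\nu_{n+1}^2$ (Lemma~\ref{lem:B1B2_bounds}). So $\mathcal Q$ has eigenvalues $1$ and $t$, and $\abs{w}^2/g_{\theta,k}^2=\abs{\vec a_3}^2$ is uniformly bounded.
\item \emph{Your Step~3 targets the wrong region.} Nothing degenerates as $\nu\to\nu_{\pm\theta}$. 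That region is among the easiest, since the margin tends to $1$ there while the bound stays at most $f(1-k\sin^2\theta)<1$ (with $f$ as below).
\item \emph{Trace-free estimate.} Take an orthonormal basis $\tau_1,\tau_2$ of $\Pi$ diagonalizing $A|_\Pi$. The complementary block carries the trace $-(A_{11}+A_{22})$, so $\sum_{j\geq3}A_{jj}^2\geq(A_{11}+A_{22})^2/(n-2)$. A $2\times2$ positive-semidefiniteness argument (Lemma~\ref{lem:quadratic_form}) then gives $\sum_i\abs{A\vec a_i}^2\leq f(t)\abs{A}^2\leq f(1-k\sin^2\theta)\abs{A}^2$. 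Here $f(t)$ is the larger root of $n\sigma^2-(n-1)(1+t)\sigma+(n-2)t=0$, and it is increasing in $t$.
\item \emph{Scalar inequality.} One must still verify $1+k\cos^2\theta-k\abs{\cos\theta}>f(1-k\sin^2\theta)$ (Lemma~\ref{lem:chooseK}). This is where the restriction $k\leq\frac1{n-2}$ enters for $n\geq3$.
\end{itemize}

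For $n=2$ the scalar inequality reduces to $\frac k2(1-\abs{\cos\theta})^2>0$, which holds for every $k\in(0,1]$. So your remark that $n=2$ forces $k=1$ is not correct. The steps above constitute the proof, and your proposal stops before any of them.
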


\eqref{ineq:differential-capillary-Schoen} provides an intrinsic PDE on the stable capillary minimal hypersurface, but it leads to a non-trivial boundary condition
\eq{\label{eq:g_theta-bdry-condition}
\partial_\eta g_\theta
+\cot\theta\,A(\eta,\eta)g_\theta=0
\quad\text{on }\partial M\cap\{g_\theta>0\},
}
and causes difficulty in proving a Caccioppoli-type inequality, compared to the case without boundary  \cite[Lemma 4.1]{Bellettini25}.
To overcome this, we consider the ansatz
\eq{\label{defn:h_theta}
h_\theta(X)
\coloneqq 1-\cos\theta\nu_1(X),\quad\forall X\in M,
}
which satisfies
\eq{\label{eq:h_theta}
(\Delta+\abs{A}^2)h_\theta=\abs{A}^2\text{ on }M,\qquad
\p_\eta h_\theta+\cot\theta A(\eta,\eta)h_\theta=0\text{ on }\p M.
}
Testing the capillary stability inequality \eqref{ineq:SS81-(1.17)} with $\varphi=(g_\theta-\ell h_\theta)^+\varphi$, and then use \eqref{eq:g_theta-bdry-condition}, \eqref{eq:h_theta} to eliminate the boundary terms, we arrive at the capillary Caccioppoli-type inequality (see Lemma \ref{lem:capillary-truncated-caccioppoli} below). The capillary sheeting theorem can then be proved in the spirit of \cite{Bellettini25}.
To state it, we introduce the following notation. Let $r>0$,
\begin{itemize}
        \item $B^n_r(0)$ denotes the $n$-dimensional open ball in $\mathbb{R}^n$ with radius $r$ centered at $0$.
        We set $B^{n+}_r(0) \coloneqq B^n_r(0)\cap\{x_1>0\}$.
        We define $B^{n+}_r(x)\coloneqq B^n_r(x)\cap\{x_1>0\}$ for any $x\in\mathbb{R}^{n}$.
        \item For $X=(x,x_{n+1})\in\mbR^{n+1}$, define the region
        \[
        \mbC^\theta_r(X)
        \coloneqq\overline{\mbH^{n+1}}\cap\left(B^{n}_r(x)\times\left(x_{n+1}-(1+|\cot\theta|)r, x_{n+1}+(1+|\cot\theta|)r\right)\right),
        \]
        which is relatively open in $\overline{\mbH^{n+1}}$.
        We use the shorthand $\mbC^\theta_r\coloneqq\mbC^\theta_r(0)$ when $X=0$.
\end{itemize}

\begin{definition}%
\label{Defn:capillary-tilt-excess}
\normalfont
Let $M$ be a properly immersed two-sided $C^2$-hypersurface in $\overline{\mbH^{n+1}}$.
For any $X\in\mbR^{n+1}$, $\sigma\in\mbR$, we define the \textit{capillary tilt-excess} as
    \eq{
    E_\sigma%
    \coloneqq{\frac{1}{\sigma^n}\int_{M\cap \mbC^\theta_\sigma%
    }g_\theta^2\rd\mcH^n}.
    }
\end{definition}

\begin{theorem}\label{thm:sheetingHalf}
Let $n\geq2, \theta\in[\frac{\pi}{2},\pi)$, $\Lambda\in[1,\infty)$.
Let $ \overline{V}\in \mathscr{V}(\theta,\Lambda) $.
Denote by $M,V,W$ the corresponding hypersurface and varifolds as in Definition \ref{defn:varifold-class}.

There exists a positive constant $\ep_0\in(0,1)$, depending only on $n,\theta,\Lambda$, with the following property:
if for some $\sigma\in(0,\frac{1}{2(1+|\cot\theta|)}]$,
    \eq{\label{condi:SheetingThm-height-control}
    E_\sigma<\ep_0,
    }
then
\eq{
	\overline{M}\cap\mbC^\theta_{\frac{\sigma}{8}}
        =\mbC^\theta_{\frac{\sigma}{8}}\cap
        \left(\left(\bigcup_{j\in Q^+}{\mathrm{graph}(u^+_j)}\right)
        \cup\left(\bigcup_{j\in Q^-}{\mathrm{graph}(u^-_j)}\right)\right),
}
	where
    $u^\pm_j : B_{\frac{\sigma}{4}}^{n+}(0) \rightarrow \mathbb{R}$,
$j \in Q^\pm \coloneqq \left\{1,\cdots,q^\pm\right\}$ (note that $Q^\pm$ could be empty, corresponding to the case $q^\pm=0$) 
are smooth functions whose graphs 
\eq{
\left\{\left(x,u_j^\pm(x)\right): x\in B^{n+}_{\frac{\sigma}{4}}(0)\right\},
}
oriented by the unit normal pointing upwards for $u_j^+$ 
and downwards for $u_j^-$, are minimal and satisfy the capillary boundary condition.
If $q^\pm > 1$ then $u_j^\pm \leq u_{j+1}^\pm$ for $j=1,2,\ldots,q^\pm-1$. 
Moreover, for each $j\in Q^\pm$ there is a constant $a_j^\pm\in\mbR$ such that
\eq{\label{esti:C^2-u^pm}
\sigma^{-1}\sup_{B^{n+}_{\frac{\sigma}{8}}(0)}
\abs{u_j^\pm \pm \cot\theta\, x_1-a_j^\pm}
+ \sup_{B^{n+}_{\frac{\sigma}{8}}(0)} \abs{D u_j^\pm \pm \cot\theta\, e_1}
+ \sigma \sup_{B^{n+}_{\frac{\sigma}{8}}(0)} \abs{D^2 u_j^\pm}
\le C \left(E_\sigma\right)^{\frac12},
}
where $C=C(n,\theta,\Lambda)\in(0,\infty)$.
\end{theorem}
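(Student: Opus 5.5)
The plan is to follow Bellettini's strategy \cite{Bellettini25}, with the capillary Schoen inequality (Theorem~\ref{thm:differential-capillary-Schoen}) and the capillary truncated Caccioppoli inequality playing the roles of their interior counterparts. By scaling we may take $\sigma$ to be a fixed size, since all hypotheses are scale invariant. The proof has three steps.

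\textbf{Step 1: a capillary Caccioppoli inequality.} Fix $\ell\ge 0$. Insert $\varphi=(g_\theta-\ell h_\theta)^+\zeta$ into the stability inequality \eqref{ineq:SS81-(1.17)}, where $\zeta$ is a cutoff. Integrating by parts against \eqref{ineq:differential-capillary-Schoen} and \eqref{eq:h_theta}, the boundary terms on $\p M$ cancel exactly, because of the matching boundary conditions \eqref{eq:g_theta-bdry-condition} and \eqref{eq:h_theta}. The assumption $\theta\in[\frac\pi2,\pi)$ gives $\cos\theta\le0$, so $h_\theta\ge1$ is bounded below. The result is
\[
\int \abs{A}^2\bigl((g_\theta-\ell h_\theta)^+\bigr)^2\zeta^2\le C\int \bigl((g_\theta-\ell h_\theta)^+\bigr)^2\abs{\na\zeta}^2 .
\]
The term $\delta_{n,\theta}\abs{A}^2$ also gives control of $\int_{\{g_\theta>\ell h_\theta\}}\abs{A}^2\zeta^2$ in terms of $\ell^{-2}$ times the tilt. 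The hypothesis $\mcH^{n-2}(\mathrm{Sing}_\theta V)=0$ lets us cut off around the singular set by a standard capacity argument, so these test functions are admissible.

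\textbf{Step 2: De Giorgi/Moser iteration for $\sup g_\theta$.} Here $g_\theta$ is a subsolution of an elliptic inequality with a Robin-type boundary condition. We use the Michael--Simon Sobolev inequality in its capillary/free-boundary form for stationary pairs, together with the energy bound $\Lambda$. Iterating over the levels $\ell$ as in \cite{Bellettini25} then yields
\[
\sup_{M\cap\mbC^\theta_{\sigma/2}}g_\theta\le C E_\sigma^{1/2} .
\]
Consequently $\nu$ is uniformly close to $\nu_\theta$ or to $\nu_{-\theta}$ on each component. Since $g_\theta$ vanishes exactly at $\nu_{\pm\theta}$, smallness of $g_\theta$ forces $\nu_1\approx\cos\theta$ and $\nu_{n+1}\approx\pm\sin\theta$, and in particular $\abs{\nu_{n+1}}$ is bounded below.

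\textbf{Step 3: graph decomposition and estimates.} Since $\abs{\nu_{n+1}}\ge c>0$ on the embedded regular part, each connected component of $M\cap\mbC^\theta_{\sigma/4}$ is locally a graph over $\{x_{n+1}=0\}$ with slope near $\mp\cot\theta\,e_1$. The dimension hypothesis on $\mathrm{Sing}_\theta V$, combined with a removable-singularity/connectedness argument, shows that the singular set does not disconnect the sheets. The graphs can therefore be continued over the whole half-ball $B^{n+}_{\sigma/4}$, which gives single-valued functions $u^\pm_j$ split by orientation. By embeddedness the sheets with the same orientation are ordered. We then rule out singular points: a $\theta$-singular point with all normals close to $\nu_{\pm\theta}$ would be a point where graphs of the same orientation touch, or where two orientations meet at the boundary. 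The first is handled by the maximum principle. The second is exactly the admissible case (e2) of Definition~\ref{Defn:theta-regular-points-manifold}, or is excluded by the boundary Hopf lemma. After that, the $u^\pm_j$ solve the minimal surface equation with capillary (oblique) boundary condition and have small gradient deviation. Schauder theory for oblique derivative problems, or the Allard-type boundary regularity of \cite{DEGL25,Wang2024Allard}, then gives the $C^2$ estimate \eqref{esti:C^2-u^pm} in terms of $\sup g_\theta\lesssim E_\sigma^{1/2}$.

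The main obstacle is Step 2: making the Sobolev-type iteration work on the boundary. The truncation $(g_\theta-\ell h_\theta)^+$ does not vanish on $\p M$, and the Michael--Simon inequality must be used in a version valid for the capillary pair $(V,W)$, where $W$ may carry multiplicity. I would first try to reflect or extend by zero across $\p\mbH^{n+1}$, using the stationarity of $V+\abs{\cos\theta}W$ to control the boundary first variation term.
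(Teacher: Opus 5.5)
Your outline follows the paper's proof. You truncate with $(g_\theta-\ell h_\theta)^+$ so that the Robin boundary terms of $g_\theta$ and $h_\theta$ cancel, run Bellettini's De Giorgi iteration, continue the local graphs using $\mcH^{n-2}(\mathrm{Sing}_\theta V)=0$, and finish with oblique-derivative estimates. The genuine gap is the step you flagged yourself: the Sobolev inequality for functions that do not vanish on $\partial M$. Reflecting, or extending $f$ by zero onto $\mathrm{spt}\norm{W}$, does not do it. The zero extension jumps across $\partial M$, so it is not an admissible test function for a varifold Michael--Simon inequality. Moreover, the first variation of $V$ (and, in the smooth picture, the boundary measure of the free-boundary varifold $V+\abs{\cos\theta}W$) is concentrated on $\partial M$. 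Any such route therefore reproduces the term $\int_{\partial M}f$ you want to remove. The missing idea is the capillary trace identity. Test the first variation of $V$ itself with $fe_1$: since $H=0$ we have $\operatorname{div}_M(fe_1^\top)=\langle\nabla f,e_1\rangle$, and the contact angle gives $\langle\eta,e_1\rangle=-\sin\theta$ on $\partial M$. Hence $\sin\theta\int_{\partial M}f=-\int_M\langle\nabla f,e_1\rangle\le\int_M\abs{\nabla f}$ for $f\ge0$. Plug this into the classical Michael--Simon inequality on $M$, whose only extra term is $\int_{\partial M}f$, to get $\norm{f}_{L^{n/(n-1)}(M)}\le C(n,\theta)\int_M\abs{\nabla f}$ (Lemma \ref{Lem:M-S-ineq}). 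Neither $W$ nor its multiplicity plays any role; $\Lambda$ enters only through $\mcH^n(M\cap\mbC^\theta_2)\le\Lambda$ in the approximation across $\mathrm{Sing}_\theta V$. For $n=2$, apply the inequality to $f^2$ to get $\norm{f}_{L^4}^2\le C\norm{f}_{L^2}\norm{\nabla f}_{L^2}$, which replaces the critical Sobolev inequality in the iteration.

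Two further corrections.

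\emph{The lower bound on $h_\theta$.} The claim $h_\theta\ge1$ is false exactly where it matters: near $\nu_{\pm\theta}$ one has $h_\theta\approx\sin^2\theta$. What holds, for every $\theta\in(0,\pi)$, is $h_\theta\ge1-\abs{\cos\theta}$. This gives the level gap $(g_\theta-\ell_jh_\theta)^+\ge(1-\abs{\cos\theta})(\ell_{j+1}-\ell_j)$ on $\{g_\theta>\ell_{j+1}h_\theta\}$. The restriction $\theta\ge\frac\pi2$ is used instead to make $V-\cos\theta W$ a nonnegative free-boundary stationary varifold, which gives monotonicity and the mass bound on $M$.

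\emph{The form of the Caccioppoli inequality.} To reach the sharp power $E_\sigma^{1/2}$ you need the scale-invariant two-level version. Testing stability with $(g_\theta-\ell h_\theta)^+\varphi$ and using the Schoen inequality, for $\ell\le\delta_{n,\theta}/(2\sup g_\theta)$, puts the weight $(g_\theta-\ell h_\theta)^+/g_\theta$ in front of $\abs{A}^2\varphi^2$, not $((g_\theta-\ell h_\theta)^+)^2$. On $\{g_\theta>Lh_\theta\}$ this weight is at least $1-\ell/L$. Combined with $\abs{\nabla g_\theta},\abs{\nabla h_\theta}\le\abs{A}$, this yields $\int\abs{\nabla(\varphi(g_\theta-Lh_\theta)^+)}^2\le\frac{C}{1-\ell/L}\int((g_\theta-\ell h_\theta)^+)^2\abs{\nabla\varphi}^2$. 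A bound by ``$\ell^{-2}$ times the tilt'' instead loses a factor $L^{-2}$ in every Caccioppoli step. It would only give $\sup g_\theta\lesssim E_\sigma^{\beta}$ with $\beta<\frac12$.

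Finally, ruling out singular points via the maximum principle or Hopf lemma in Step 3 is unnecessary. The statement allows ordered graphs that touch ($u_j^\pm\le u_{j+1}^\pm$), and the decomposition follows directly from Bellettini's continuation argument.
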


The second key ingredient of the paper is a classification of stable capillary minimal hypercones in $\mbH^{n+1}$ with an isolated singularity.
Motivated by recent work on stable and minimizing capillary cones \cite{CEL25,PTV25,FTW26}, together with some techniques used in \cite{HLW2024deltaStable}, we prove the following result, which yields the dimension bound for the singular set in Theorem \ref{thm:mainCompact}.

\begin{theorem}\label{thm:stable-capillary-cone-isolated-singularity}
Let $n\geq3$, and let $M$ be a stable capillary minimal hypercone in $\mbH^{n+1}$ with an isolated singularity at the origin and contact angle $\theta$ in the following range:
\begin{enumerate}[label=\textup{(\roman*)}]
    \item $n=3,4$: $\theta\in(0,\pi)$;
    \item $n=5$: $\theta\in(73.336^\circ,106.664^\circ)$;
    \item $n=6$: $\theta\in(85.420^\circ,94.580^\circ)$.
\end{enumerate}
Then $M$ is flat.
\end{theorem}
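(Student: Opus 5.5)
We will use the Simons-type approach on the link of the cone, the way one shows that stable minimal cones with isolated singularity in $\mbR^{n+1}$, $n\le 6$, are flat, but we have to keep track of the capillary boundary terms.

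Write $M=\{r\omega:\omega\in\Sigma\}$, where the link $\Sigma\subset\mbS^n\cap\overline{\mbH^{n+1}}$ is a smooth hypersurface whose boundary lies on the equator $\p\mbH^{n+1}\cap\mbS^n$ and meets it at angle $\theta$. Since $\abs{A}^2(r\omega)=r^{-2}\abs{A_\Sigma}^2(\omega)$, we test the stability inequality \eqref{ineq:SS81-(1.17)} with $\varphi(r\omega)=r^{-\frac{n-2}{2}+\epsilon}\eta(r)\,\psi(\omega)$, where $\eta$ is a logarithmic cutoff. Separating variables in the standard way gives the spectral condition
\[
\int_\Sigma\abs{A_\Sigma}^2\psi^2\le\int_\Sigma\abs{\na\psi}^2+\frac{(n-2)^2}{4}\int_\Sigma\psi^2+\cot\theta\int_{\p\Sigma}A(\eta,\eta)\psi^2,
\]
that is, the first eigenvalue of $-\Delta_\Sigma-\abs{A_\Sigma}^2$ with Robin condition $\p_\eta\psi=\cot\theta A(\eta,\eta)\psi$ is at least $-\frac{(n-2)^2}{4}$.

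Next we test this with powers of $\abs{A}$, following Schoen--Simon--Yau. On the cone we use the Simons identity together with the refined Kato inequality $\abs{\na A}^2\ge(1+\frac2n)\abs{\na\abs A}^2$, valid for minimal cones. We also use the radial structure: for cones the identity improves, which is how Simons' bound $n\le6$ arises. The new ingredient is the boundary term. We must compute $\p_\eta\abs{A}^2$ on $\p M$ using the capillary condition; for a flat support plane this gives a relation between $\na_\eta A$ and $A(\eta,\eta)$ together with $\cot\theta$. We then compare it with the stability boundary term $\cot\theta A(\eta,\eta)\abs{A}^{2p}$.

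The main obstacle is that the two boundary terms do not cancel: a remainder remains, of the form $c(\theta)\int_{\p\Sigma}A(\eta,\eta)\abs{A}^{2p}$ or $\int_{\p\Sigma}\abs{A}^{2p}\,\cdot(\text{cubic in }A)$.

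To absorb this remainder we would first try a weighted test function $\psi=\abs{A}^{p}f$, where the weight $f$ (for instance built from $h_\theta=1-\cos\theta\nu_1$) satisfies the same Robin condition as in \eqref{eq:h_theta]}. Equivalently, we would use the identity \eqref{eq:h_theta} to convert boundary integrals into interior ones via the divergence theorem, as in the Caccioppoli argument, at the cost of interior error terms with constants depending on $\cot\theta$. This yields an inequality of the form
\[
\big(a(n,p)-b(n,p)\,\Phi(\theta)\big)\int_\Sigma\abs{A}^{2p+2}\le0.
\]
The admissible range of $p$ shrinks as $\abs{\cot\theta}$ grows. Optimizing over $p$ should give the thresholds $73.336^\circ$ and $85.420^\circ$ for $n=5,6$, and the full range $(0,\pi)$ for $n=3,4$. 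The decimal form of these angles suggests the optimization is done numerically, and we would carry it out that way.

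Finally, $\abs{A}\equiv0$ gives flatness. The isolated singularity is what justifies all the integrations by parts near the origin: the cutoff near $0$ costs a vanishing error once $\epsilon>0$ is chosen within the admissible range. Our main effort will go into the sharp boundary estimate in the step before.
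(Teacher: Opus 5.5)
\textbf{Gap: the $n=4$ case at angles near $0$ and $\pi$.} For $n=5,6$, and for $n=4$ with $\abs{\cos\theta}\le\frac13$, your outline is essentially the paper's argument. Its ingredients are the Simons inequality for cones with improved constant $1+\frac{2}{n-1}$, the capillary formula for $\p_\eta\abs{A}$, a pointwise bound $p_n\abs{A}^3$ on the cubic boundary remainder, and conversion of $\int_{\p M}\abs{A}^{2\alpha+1}\varphi^2$ into interior terms via \eqref{eq-divf-capillary}. The specific thresholds also depend on using the zero radial principal curvature to reduce that remainder to $n-2$ variables, and on explicit parameter choices. But this scheme cannot give all $\theta\in(0,\pi)$ for $n=4$. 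The boundary remainder carries a factor $\cot\theta$, and the trace identity adds a factor $1/\sin\theta$. So you must absorb a coefficient of order $\cos\theta/\sin^2\theta$, which blows up as $\theta\to0,\pi$, while the good interior terms have fixed coefficients.

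\textbf{Why $n=3$ works but $n=4$ does not.} Such a scheme works at every angle only if the cubic remainder vanishes identically. This happens for $n=3$: on $\p M$ the radial direction is principal with zero curvature, so $\lambda_3=0$ and $\lambda_1=-\lambda_2$. The remainder is then $\cot\theta\abs{A}^{2\alpha-2}(2-2q)\lambda_2^3$, which cancels exactly when stability is added with weight $1+q=2$. That is an exact cancellation, not the outcome of optimizing exponents. For $n=4$, with $\lambda_4=0$ and $P_k=\lambda_2^k+\lambda_3^k$, the remainder is
$\cot\theta\abs{A}^{2\alpha-2}P_1\bigl((\tfrac52-q)P_2-(\tfrac12+q)P_1^2\bigr)$.
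This is not identically zero for any weight $q$, and it has no sign. So your method yields only a middle range for $n=4$; the paper's version gives $(51.654^\circ,128.346^\circ)$. The claim that numerics will cover all of $(0,\pi)$ is unfounded.

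\textbf{The missing idea.} For $\cos\theta>\frac13$ (after reducing to $\theta\le\frac\pi2$), the paper obtains a \emph{sign} for the boundary geometry instead of absorbing the remainder.
\begin{enumerate}
\item Restrict stability to the link $\Sigma=M\cap\mbS^4$ with Dirichlet test functions $\psi\in H^1_0(\Sigma)$, so no boundary term appears. The sharp Hardy constant $\frac{(n-2)^2}{4}=1$ gives $\int_\Sigma\abs{\na\psi}^2+(1-\abs{A_\Sigma}^2)\psi^2\ge0$.
\item The height $x_1$ is a positive first Dirichlet eigenfunction on $\Sigma$ with eigenvalue $3$.
\item The function $v=\langle\nu,e_1\rangle$ solves $\Delta_\Sigma v+\abs{A_\Sigma}^2v=0$ with $v=\cos\theta$ on $\p\Sigma$. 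Combine the resulting Poincar\'e inequality, this Jacobi equation tested against $w=(\cos\theta-v)_+$, and link stability tested with $w$. This gives $3\int_\Sigma w^2\le\int_\Sigma\abs{\na w}^2\le\frac{1}{\cos\theta}\int_\Sigma w^2$, hence $v\ge\cos\theta$.
\item Since $v-\cos\theta\ge0$ vanishes on $\p\Sigma$, $\p_\eta v\le 0$. Then $\p_\eta v=\sin\theta A(\eta,\eta)=-\sin^2\theta H_\Gamma$ gives $H_\Gamma\ge0$ for $\Gamma=\p M\subset\p\mbH^5$.
\item The Pacati--Tortone--Velichkov classification of stable capillary cones with isolated singularity and nonnegative boundary mean curvature then gives flatness.
\end{enumerate}
Your link spectral condition is the natural starting point. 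However, you use it only with the Robin boundary term and never extract sign information on $\p M$. Without this or some other new ingredient, the $n=4$ statement for $\theta$ near $0$ (and, by orientation reversal, near $\pi$) remains unproved.
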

In dimension $n=4$, the classification holds for any $\theta\in(0,\pi)$, which removes the angle restriction in the classification result of Chodosh-Edelen-Li \cite[Theorem 1.2 (2)]{CEL25}.
For the proof, we break into two cases: when $\abs{\cos\theta}>\frac{1}{3}$, we show that the mean curvature of $\p M$ in $\p\mbH^{n+1}$ has a sign, so that the classification result of Pacati-Tortone-Velichkov \cite{PTV25} applies. When $\abs{\cos\theta}<\frac{1}{3}$, we use an argument in the spirit of \cite{Schoen1975curvature} (see also \cite[Appendix B]{Simon83}) to prove the result. As a by-product, the angle restriction in the improved bounds for the size of singular set in \cite[Theorem 1.1 (2)]{CEL25} can be removed:
\begin{corollary}\label{cor:minimizing-singular-set}
Let $(X^{n+1},h)$ be a smooth Riemannian manifold with boundary, let
$\beta\in C^1(\p X)$ satisfy $-1<\beta<1$, and let $g\in C^1(X)$.
Suppose that a Caccioppoli set $E\subset X$ locally minimizes
\[
\mcA_{\beta,g}(E)
\coloneqq
\mcH_h^n(\p^*E\cap\mathring X)
-\int_{\p^*E\cap\p X}\beta\,\rd\mcH_h^n
+\int_Eg\,\rd\operatorname{Vol}_h
\]
with respect to compactly supported variations.  If
$M=\overline{\p^*E\cap\mathring X}$, then
\[
\dim_{\mcH}\bigl(\operatorname{Sing}M\cap\p X\bigr)\leq n-5.
\]
\end{corollary}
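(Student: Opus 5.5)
The plan is the standard blow-up/dimension-reduction scheme for the singular set, applied in the capillary setting. The new input is the conclusion of Theorem \ref{thm:stable-capillary-cone-isolated-singularity} for $n=4$ and every $\theta\in(0,\pi)$.

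\emph{Step 1: local picture at a singular point.} Let $E$ be a Caccioppoli set locally minimizing $\mcA_{\beta,g}$, and let $X_0\in\operatorname{Sing}M\cap\p X$.
\begin{itemize}
\item Work in boundary-adapted coordinates around $X_0$ and rescale. The metric $h$ converges to the Euclidean one.
\item The function $\beta$ freezes to the constant $\cos\theta$, with $\theta\in(0,\pi)$ defined by $\cos\theta=\beta(X_0)$.
\item The volume term $\int_E g$ scales like $r^{n+1}$ against $r^n$ for the area, so it disappears in the limit.
\end{itemize}
By the monotonicity formula and compactness for capillary almost-minimizers (De Philippis--Maggi \cite{DePM15}), every blow-up is a minimizing capillary cone $\mathbf{C}$ in $\mbH^{n+1}$ with constant angle $\theta$. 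Minimizing cones are in particular stable, and they are embedded multiplicity-one boundaries. If $X_0$ is singular, the upper semicontinuity of the density / $\ep$-regularity shows that $\mathbf{C}$ cannot be a flat capillary half-plane.

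\emph{Step 2: dimension reduction.} I follow Federer's argument exactly as in \cite{CEL25}. Let $d=\dim_{\mcH}(\operatorname{Sing}M\cap\p X)$. Suppose $d>n-5$, and take $\mcH^{s}$-density points with $s>n-5$. Repeated tangent-cone blow-ups at boundary singular points produce a minimizing capillary cone of the form $\mathbf{C}_0\times\mbR^{k}$, with the $\mbR^k$ factor lying inside $\p\mbH^{n+1}$. Here $\mathbf{C}_0\subset\mbH^{m+1}$ with $m=n-k$ has an isolated boundary singularity, and $k$ can be pushed up to $k>n-5$, i.e.\ $m\leq4$.

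One point has to be addressed: the lower-dimensional cone $\mathbf{C}_0$ must still be minimizing, with the same angle $\theta$. Cylindrical splitting preserves minimality. Interior singularities of $\mathbf{C}_0$ away from the origin are excluded for $m\le 6$ by Simons' classical theorem, and boundary ones are excluded by the inductive minimality of $m$.

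\emph{Step 3: excluding the low-dimensional cones.}
\begin{itemize}
\item For $m=2$, the known classification gives flatness.
\item For $m=3,4$, a minimizing, hence stable, capillary cone with an isolated singularity is flat by Theorem \ref{thm:stable-capillary-cone-isolated-singularity}(i), for every $\theta\in(0,\pi)$.
\end{itemize}
This contradicts the non-flatness obtained in Step 1, so $d\leq n-5$. Stability of the minimizer follows from the second variation being nonnegative; this gives the inequality \eqref{ineq:SS81-(1.17)} on the regular part of the cone.

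The main obstacle is the case $m=4$. Previously this was exactly where \cite{CEL25} needed the angle restriction, and removing it is precisely the content of Theorem \ref{thm:stable-capillary-cone-isolated-singularity} for $n=4$. Everything else is a verbatim repetition of the dimension-reduction argument of \cite[Theorem 1.1]{CEL25}, with their cone classification input replaced by ours.
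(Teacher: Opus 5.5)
Your proposal is correct and follows essentially the same route as the paper. The paper also invokes the framework of \cite{CEL25} (De Philippis--Maggi $\varepsilon$-regularity and compactness, the Kagaya--Tonegawa monotonicity formula, and Federer's dimension reduction) to reduce the claim to excluding non-flat minimizing capillary cones with an isolated singularity in dimensions at most four, and then concludes because such cones are stable and hence flat by Theorem \ref{thm:stable-capillary-cone-isolated-singularity} for every $\theta\in(0,\pi)$; your write-up just makes explicit the blow-up and dimension-reduction details that the paper leaves to \cite{CEL25}.
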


\noindent{\em The rest of the paper is organized as follows.}
In Section \ref{sec:notations_and_preliminaries}, we introduce the notation and recall basic facts on capillary surfaces and varifolds. In Section \ref{sec:differential-capillary-Schoen}, we prove the capillary Schoen differential inequality. We then develop the capillary first variation formula and its consequences in Section \ref{sec:capillaryFirstVariation}, including the monotonicity formula and Ahlfors regularity. In Section \ref{sec:SheetingTheorem}, we derive the capillary Caccioppoli inequality and prove the Sheeting Theorem. Building on these results, we establish the main regularity and compactness theorem in Section \ref{sec:mainCompactness}. As an application, Section \ref{sec:BernsteinTheorem} is devoted to a generalized Bernstein theorem for stable capillary minimal hypersurfaces in $\mbH^{n+1}$. Finally, in Section \ref{sec:stableCapillaryCone}, we present a classification result for stable capillary cones with an isolated singularity.

\

\noindent{\em Acknowledgments.} 
We are grateful to Otis Chodosh and Chao Li for their interest in this work, and to Xin Zhou for pointing out Remark~\ref{rmk:minmaxExtension} to us. Part of this work was completed while the second author was visiting BIMSA. He would like to thank BIMSA for its hospitality and the first author for the kind invitation.

\noindent{\em On the use of AI.}
Generative AI tools assisted the authors in identifying the argument used to
establish the sign of the boundary mean curvature in the small-angle regime
(see \eqref{eq:n=4-positive-boundary-mean-curvature}), and were also used for
selected algebraic checks and proofreading. The authors independently verified
all arguments and take full responsibility for the completeness and correctness
of the proofs.

\section{Preliminaries}\label{sec:notations_and_preliminaries}

We adopt the following basic notations throughout the paper.
\begin{itemize}
    \item We work with the Euclidean space $\mbR^{n+1}$, with Euclidean scalar product denoted by $\langle\cdot,\cdot\rangle$, and the corresponding Levi-Civita connection denoted by $D$.
When considering the topology of $\mbR^{n+1}$, we denote by $\overline{E}$ the topological closure of a set $E\subset\mbR^{n+1}$.
We denote by $e_i$ ($i=1,\cdots,n+1$) the $i$-th coordinate basis vector of $\mbR^{n+1}$;
    \item $B_r(X)$ is the open ball in $\mbR^{n+1}$, centered at $X$ with radius $r>0$;
    \item $\mcH^k$ is the $k$-dimensional Hausdorff measure on $\mfR^{n+1}$ and $\om_k$ is the $\mcH^k$-measure of $k$-dimensional unit ball;
    \item For two sets $A,B\subset\mbR^{n+1}$, ${\rm dist}_\mcH(A,B)$ denotes the Hausdorff distance of $A,B$ in $\mbR^{n+1}$;
    \item For the definition of Caccioppoli sets (sets of finite perimeter), we refer to \cite[§14]{Simon83}.
\end{itemize}

\subsection{Capillary hypersurfaces}

Let $\nu$ be the unit normal on $M$
, and let $\eta$ be the outer unit co-normal of $\p M$ in $M$.
Then $\{\nu,\eta\}$ spans the normal bundle of $\p M$.
For each $X\in\p M$, let $\bar\nu(X)$ be the unit vector in $T_X\p M$ such that $\{\bar\nu(X),-e_1\}$ spans the same $2$-dimensional plane as $\{\nu(X),\eta(X)\}$ and has the same orientation.
Note that if $M$ is a capillary minimal hypersurface in the sense of Definition \ref{defn:stable-capillary-minimal-hypersurface}, then the boundary condition yields
\eq{\label{eq:nu_and_eta}
	\nu=\cos \theta e_1+\sin\theta \bar\nu,\quad
    \eta=-\sin \theta e_1+\cos\theta \bar\nu.
}

On $M$, we let $\na,{\rm div},\De$ denote the Levi-Civita connection, divergence, and Laplacian induced by the immersion into $\mbR^{n+1}$.
For any vector $e\in\mbR^{n+1}$, we write $e^\top=e-\langle e,\nu\rangle\nu$ for its tangential component along $M$.
Let $A$ denote the second fundamental form of $M$ in $\mbR^{n+1}$, defined by $A(\tau,\xi)=\langle D_{\tau}\xi,\nu\rangle$.

We record some known facts, which will be needed in due course.

\begin{lemma}
	Let $M$ be a capillary minimal hypersurface in the sense of Definition \ref{defn:stable-capillary-minimal-hypersurface}, and let $w$ be a constant vector field on $\mbR^{n+1}$.
	Then
	\begin{align}
		\Delta\langle\nu,w\rangle={}&-\abs{A}^2\langle\nu,w\rangle\quad\text{ on }M,%
        \label{eq:lemLaplacian}\\
		\frac{\p\langle\nu,w\rangle}{\partial \eta}={}&-A(\eta,\eta)\langle\eta,w\rangle\quad\text{ on }\p M.
		\label{eq:lemBoundaryDeri}
	\end{align}
\end{lemma}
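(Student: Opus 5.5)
The plan is to derive both identities directly from the minimal surface equation and the boundary geometry. Throughout we work at a point in a local orthonormal frame $\{\tau_i\}_{i=1}^n$ of $M$, geodesic at that point.

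For \eqref{eq:lemLaplacian}, the standard computation runs as follows. First, $\na_{\tau_i}\langle\nu,w\rangle=\langle D_{\tau_i}\nu,w\rangle=-\sum_j A(\tau_i,\tau_j)\langle\tau_j,w\rangle$; the sign follows from $A(\tau,\xi)=\langle D_\tau\xi,\nu\rangle=-\langle\xi,D_\tau\nu\rangle$. Differentiating once more and summing over $i$ gives two contributions:
\begin{itemize}
\item The term $-\sum_{i,j}(\na_{\tau_i}A)(\tau_i,\tau_j)\langle\tau_j,w\rangle$ equals $-\sum_j(\na_{\tau_j}H)\langle\tau_j,w\rangle$ by Codazzi in flat space, hence vanishes since $H=0$.
\item The term $-\sum_{i,j}A(\tau_i,\tau_j)\langle D_{\tau_i}\tau_j,w\rangle$ equals $-\sum_{i,j}A(\tau_i,\tau_j)^2\langle\nu,w\rangle=-\abs{A}^2\langle\nu,w\rangle$, because at the chosen point $D_{\tau_i}\tau_j=A(\tau_i,\tau_j)\nu$.
\end{itemize}
This is purely interior and requires no boundary information.

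For \eqref{eq:lemBoundaryDeri}, the first-derivative formula gives, at $X\in\p M$, $\p_\eta\langle\nu,w\rangle=-A(\eta,\eta)\langle\eta,w\rangle-\sum_{\alpha}A(\eta,e_\alpha)\langle e_\alpha,w\rangle$, where $\{e_\alpha\}$ is an orthonormal frame of $T_X\p M$. It therefore suffices to show $A(\eta,e_\alpha)=0$ for every $e_\alpha\in T\p M$.

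This vanishing is the one genuinely geometric step, and it uses the capillary condition. The relation \eqref{eq:nu_and_eta} holds along all of $\p M$, with $\bar\nu$ the unit normal of $\p M$ inside $\p\mbH^{n+1}$. Solving for $e_1$ gives $-e_1=\sin\theta\,\eta-\cos\theta\,\nu$ on $\p M$, hence $\sin\theta\,\eta=\cos\theta\,\nu-e_1$. Differentiating this along $e_\alpha\in T\p M$ (where $e_1$ is constant and $\theta$ is constant) and pairing with $\nu$ gives
\[
\sin\theta\,\langle D_{e_\alpha}\eta,\nu\rangle=\cos\theta\,\langle D_{e_\alpha}\nu,\nu\rangle=0 .
\]
Since $\langle D_{e_\alpha}\eta,\nu\rangle=A(e_\alpha,\eta)$ and $\sin\theta\neq0$, we get $A(e_\alpha,\eta)=0$. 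Thus $\eta$ is a principal direction of $M$ along $\p M$.

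Substituting back, the tangential terms drop and \eqref{eq:lemBoundaryDeri} follows. The main point of care is keeping the sign conventions for $A$ and the orientation of $\bar\nu$ consistent; the vanishing of the mixed term is insensitive to these choices, since it only uses $\langle D\nu,\nu\rangle=0$.
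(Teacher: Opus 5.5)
Your proof is correct and follows the same route as the paper: the interior identity is the standard Codazzi computation that the paper simply cites as \cite[(2.3)]{SS81}, and the boundary identity reduces, exactly as in the paper, to the fact that $\eta$ is a principal direction of $M$ along $\p M$. The only difference is that you verify this last fact directly, by differentiating $\sin\theta\,\eta=\cos\theta\,\nu-e_1$ along $T\p M$ and pairing with $\nu$, whereas the paper quotes it as well known.
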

\begin{proof}
	\eqref{eq:lemLaplacian} is exactly \cite[(2.3)]{SS81}.
	\eqref{eq:lemBoundaryDeri} follows from the well-known fact that, for a capillary hypersurface in $\mbH^{n+1}$, the outer unit co-normal is a principal direction.
\end{proof}

\begin{lemma}
Let $n\geq2, \theta \in(0,\pi)$, and let $M$ be a capillary minimal hypersurface in the sense of Definition \ref{defn:stable-capillary-minimal-hypersurface}.
Then for any compactly supported $\varphi\in C^1(M)$, there holds
\eq{\label{eq-divf-capillary}
-\sin\theta\int_{\p M}\varphi\rd\mcH^{n-1}
=\int_{M}\langle\na \varphi,e_1\rangle\rd\mcH^n.
}
\end{lemma}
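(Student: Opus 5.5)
The plan is to apply the divergence theorem on $M$ to the tangential vector field $\varphi\, e_1^\top$, where $e_1^\top=e_1-\langle e_1,\nu\rangle\nu$.

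First, compute the tangential divergence. Since $e_1$ is constant, we have ${\rm div}_M e_1^\top=-\langle e_1,\nu\rangle\,{\rm div}_M\nu = H\langle e_1,\nu\rangle$ (up to sign convention). This vanishes because $H=0$ on $M$. Hence
\[
{\rm div}_M(\varphi e_1^\top)=\langle\na\varphi,e_1^\top\rangle=\langle\na\varphi,e_1\rangle ,
\]
where the last equality uses that $\na\varphi$ is tangent to $M$.

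Second, apply the divergence theorem on the immersed manifold $M$. Because $\varphi$ has compact support and $M$ is properly immersed, there are no boundary contributions except from $\p M$. This gives
\[
\int_M\langle\na\varphi,e_1\rangle\,\rd\mcH^n=\int_{\p M}\varphi\langle e_1^\top,\eta\rangle\,\rd\mcH^{n-1}=\int_{\p M}\varphi\langle e_1,\eta\rangle\,\rd\mcH^{n-1},
\]
where the last step holds because $\eta$ is tangent to $M$.

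Third, evaluate $\langle e_1,\eta\rangle$ using the capillary boundary structure \eqref{eq:nu_and_eta}. There $\eta=-\sin\theta\, e_1+\cos\theta\,\bar\nu$ with $\bar\nu\in T\p M\subset\p\mbH^{n+1}$, so $\langle\bar\nu,e_1\rangle=0$. Therefore $\langle e_1,\eta\rangle=-\sin\theta$, and the identity \eqref{eq-divf-capillary} follows.

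The only point requiring care is that \eqref{eq:nu_and_eta} itself holds, i.e.\ that $\eta$ decomposes as stated. To check it, note that $\{\nu,\eta\}$ and $\{\bar\nu,-e_1\}$ are both orthonormal bases of the same 2-plane with the same orientation, so they differ by a rotation. The condition $\langle\nu,e_1\rangle=\cos\theta$ fixes the rotation angle, since it determines the coefficient of $e_1$ in $\nu$. Beyond this orientation bookkeeping, the argument is routine.
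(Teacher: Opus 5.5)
Your proposal is correct and is the paper's argument: apply the divergence theorem to $\varphi e_1^\top$, use minimality to get ${\rm div}_M(\varphi e_1^\top)=\langle\na\varphi,e_1\rangle$, and read off $\langle\eta,e_1\rangle=-\sin\theta$ from \eqref{eq:nu_and_eta}. One small correction to your side check of \eqref{eq:nu_and_eta}: since $e_1\perp T\p M$ lies in ${\rm span}\{\nu,\eta\}$, the condition $\langle\nu,e_1\rangle=\cos\theta$ only gives $\langle\eta,e_1\rangle=\pm\sin\theta$, and the minus sign comes from $\eta$ being the outward conormal of $M\subset\{x_1\geq0\}$; also, $\bar\nu$ is tangent to $\p\mbH^{n+1}$ and normal to $\p M$, so it is not an element of $T\p M$, though $\langle\bar\nu,e_1\rangle=0$ still holds.
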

\begin{proof}
The proof can be found in \cite{LZZ24,JZ24}, we include here for completeness.
Since $H=0$ on $M$,%
we have ${\rm div}\left(\varphi e_1^\top\right)=\langle\na\varphi,e_1\rangle$, hence by \eqref{eq:nu_and_eta}
\eq{
-\sin\theta\int_{\p M}\varphi\rd\mcH^{n-1}
=&\int_{\p M}\varphi\langle\eta,e_1\rangle\rd\mcH^{n-1}\\
=&\int_{M}{\rm div}\left(\varphi e_1^\top\right)\rd\mcH^n
=\int_{M}\langle\na \varphi,e_1\rangle\rd\mcH^n.
}
\end{proof}

\begin{lemma}[Michael-Simon-type inequality]\label{Lem:M-S-ineq}
Let $n\geq2, \theta\in[\frac{\pi}{2},\pi)$, $\Lambda\in[1,\infty)$.
Let $\overline{V}\in \mathscr{V}(\theta,\Lambda)$, and let $M,V,W$ be the corresponding hypersurface and varifolds as in Definition \ref{defn:varifold-class}.
Then for any non-negative function $\varphi\in \mathrm{Lip}_c(\overline{M}\cap \mathbb{C}^\theta_2)$,
\eq{\label{ineq-Michael-Simon}
\norm{\varphi}_{L^\frac{n}{n-1}(M)}%
\leq C(n,\theta)\int_M%
\abs{\na \varphi}\rd\mcH^n
}
for some positive constant $C$ depending only on $n,\theta$.
\end{lemma}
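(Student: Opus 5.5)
The plan is to apply the general Michael--Simon (Allard) Sobolev inequality for integral varifolds whose first variation is a Radon measure, to $V=\abs{M}$ alone. The only non-standard term is a boundary integral over $\p M$, which I will absorb with the capillary divergence identity \eqref{eq-divf-capillary}. The support of $W$ plays no role.

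\emph{Step 1: first variation of $V$.} For any $\psi\in C^1_c(\mbR^{n+1};\mbR^{n+1})$ with support in $\mbC^\theta_2$, I use $H=0$ on the regular part of $M$ and the divergence theorem to write
\[
\de V(\psi)=\int_M {\rm div}_M\psi\,\rd\mcH^n=\int_{\p M}\langle\psi,\eta\rangle\,\rd\mcH^{n-1}.
\]
This is clear away from ${\rm Sing}_\theta V$. To include singular points, I use $\mcH^{n-2}({\rm Sing}_\theta V)=0$ to choose cutoff functions $\chi_k$ that vanish near the singular set, equal $1$ outside shrinking neighbourhoods of it, and satisfy $\int_M\abs{\na\chi_k}\to0$. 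The standard covering construction gives this, using the area bound and monotonicity, or equivalently the density bounds from $\Lambda$. Since $V$ has bounded mass, passing $k\to\infty$ gives $\norm{\de V}\le\mcH^{n-1}\llcorner\p M$ and no generalized mean curvature. I also need $\mcH^{n-1}(\p M\cap K)<\infty$ on compact sets $K$. This follows from \eqref{eq-divf-capillary} applied with a cutoff $\varphi\ge0$, namely $\sin\theta\int_{\p M}\varphi\le\int_M\abs{\na\varphi}$, which is legitimate by the same cutoff argument.

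\emph{Step 2: Michael--Simon.} I extend $\varphi$ (nonnegative, Lipschitz, compactly supported in $\overline M\cap\mbC^\theta_2$) to a nonnegative compactly supported Lipschitz function on $\mbR^{n+1}$ by McShane extension and truncation. Only its values and tangential gradient on $M$ enter below. The Michael--Simon inequality for a rectifiable varifold with $\Theta\ge1$ a.e. and first variation a Radon measure (Allard's version, \cite[§18]{Simon83}) then gives
\[
\norm{\varphi}_{L^{\frac{n}{n-1}}(M)}\le C(n)\Big(\int_M\abs{\na\varphi}\,\rd\mcH^n+\int_{\p M}\varphi\,\rd\mcH^{n-1}\Big).
\]

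\emph{Step 3: absorb the boundary term.} By \eqref{eq-divf-capillary}, and by Step 1 to justify it through the singular set,
\[
\sin\theta\int_{\p M}\varphi=-\int_M\langle\na\varphi,e_1\rangle\le\int_M\abs{\na\varphi}.
\]
Hence $\int_{\p M}\varphi\le(\sin\theta)^{-1}\int_M\abs{\na\varphi}$, and the lemma follows with $C(n,\theta)=C(n)(1+1/\sin\theta)$. The hypothesis $\theta\ge\frac\pi2$ is not needed in this route. It is only the normalization used in the rest of Section~\ref{sec:SheetingTheorem}.

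The main obstacle is Step 1: justifying the first-variation formula and \eqref{eq-divf-capillary} across ${\rm Sing}_\theta V$, including singular points on $\p\mbH^{n+1}$. This is a capacity argument, which works because the singular set has zero $\mcH^{n-2}$-measure, so it has zero $1$-capacity relative to an $n$-dimensional measure with bounded densities. If that is delicate at boundary singular points, I would reflect $V$ across $\p\mbH^{n+1}$ locally to obtain density upper bounds, or invoke the monotonicity formula of Section~\ref{sec:capillaryFirstVariation}, to run the covering.
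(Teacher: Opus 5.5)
Your proposal is correct and follows the paper's proof. You both apply the Michael--Simon inequality with its boundary term $\int_{\p M}\varphi$, absorb that term via \eqref{eq-divf-capillary} to get a constant $C(n)(1+1/\sin\theta)$, and handle ${\rm Sing}_\theta V$ by a cutoff/capacity argument based on $\mcH^{n-2}({\rm Sing}_\theta V)=0$. The only difference is bookkeeping: you move the cutoff argument into the first variation of $V$ and then invoke the varifold form of Michael--Simon, whereas the paper proves the inequality for $C^1$ functions first and then cites the standard approximation of \cite{SS81,wickramasekera2008regularity2density}; your explicit use of monotonicity density bounds in the covering spells out what that approximation step needs.
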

\begin{proof}
We modify \cite[Theorem 3.1]{JZ24} to allow for the presence of singularities.
We first consider non-negative functions $\varphi\in C^1(M)$ compactly supported in $\mbC^\theta_2$. 
By the classical Michael-Simon inequality \cite{Allard72,MS73}, 
\eq{
C(n)\norm{\varphi}_{L^\frac{n}{n-1}(M)}
\leq\int_{\p M}%
\varphi\rd\mcH^{n-1}+\int_M%
\abs{\na \varphi}\rd\mcH^n
}
for some positive constant $C=C(n)$.
On the other hand, by \eqref{eq-divf-capillary}
\eq{
\int_{\p M}%
\varphi\rd\mcH^{n-1}
\leq\frac{1}{\sin\theta}\int_M%
\abs{\na\varphi}\rd\mcH^n.
}
Combining, we deduce the required estimate \eqref{ineq-Michael-Simon}.

Finally, since $\mcH^{n-2}({\rm Sing}_\theta V)=0$, and $\mcH^n(M\cap\mbC^\theta_2)\leq\left(\norm{V}-\cos\theta\norm{W}\right)(\mbC^\theta_2)\leq\Lambda$, we can use the standard approximation argument as in \cite{SS81,wickramasekera2008regularity2density}
to show that the required estimate holds for any non-negative Lipschitz function $\varphi$ compactly supported in $\mbC^\theta_2$.
This completes the proof.
\end{proof}

\subsection{Varifolds}
We use the notation and terminology in \cite{Simon83}.
Recall that an $n$-rectifiable varifold $V$ in $U$ is a positive Radon measure on the trivial Grassmannian bundle $U\times G(n,n+1)$ of the form
\eq{
V(\phi(X,P))
=\int_{R_V}\phi(X,T_XR_V)\theta_{V}(X)\rd\mcH^n(X),\quad\forall\phi\in C^0_c(U\times G(n,n+1)),
}
where $R_V$ is an $n$-rectifiable set in $U$, $\theta_V$ is a non-negative $\mcH^n\llcorner R_V$-measurable function.
The weight measure of $V$ is defined as $\norm{V}\coloneqq\pi_\ast V$, where $\pi:U\times G(n,n+1)\ra U$ is the canonical projection, and $\pi_\ast(\cdot)$ denotes the {\em push-forward} of measure through $\pi$.
$V$ is called {\em integral} if in addition, $\theta_V\in\mbN$ at $\norm{V}$-a.e.
If $\S$ is a $k$-dimensional Lipschitz submanifold of $U$, we write $\abs{\S}=\mcH^k\llcorner\S\otimes T_X\S$ for the multiplicity-one varifold naturally induced by $\S$.

For any Borel set $\Omega\subset U$, we denote by $V\llcorner\Omega$ the {restriction} of $V$ to $\Omega\times G(n,n+1)$.
By \textit{support} of $V$ we mean ${\rm spt}\, \norm{V}$, which is the smallest closed subset $B\subset\mfR^{n+1}$ such that $V\llcorner(\mfR^{n+1}\setminus B)=0$.
For any diffeomorphism $f:U\ra\mfR^{n+1}$, the continuous \textit{push-forward map} $f_\#V$ is defined as in \cite[(39.1)]{Simon83}.
Note that this is not the push-forward of Radon measures introduced above,
therefore we adopt different notations.
If $\varphi\in C^1_c(U;\mbR^{n+1})$ generates a one-parameter family of diffeomorphisms $\Phi_t$ of $\mfR^{n+1}$,
then $(\Phi_t)_\#V$ and its \textit{first variation} with respect to $\varphi$ is, see \cite[(4.2), (4.4)]{Allard72},
\eq{\label{defn-1st-variationformula}
    \delta V(\varphi)
    \coloneqq\frac{\rd}{\rd t}_{|_{t=0}}\norm{(\Phi_t)_\# V}(\mfR^{n+1})
    =\int_{U\times G(n,n+1)}{\rm div}_P\varphi(x)\rd V(x,P),
}
where ${\rm div}_P\varphi(x)=\sum_i\langle D_{e_i}\varphi, e_i\rangle$ and $\{e_1,\ldots,e_n\}\subset P$ is any orthonormal basis.
We say that $V$ has \textit{locally bounded first variation in $U$}, if for any compact set $K\subset U$,
\eq{
\sup\{\abs{\de V(\varphi)}:\varphi\in C^1(K;\mbR^{n+1}), \abs{\varphi}\leq1\}\leq C(K)<+\infty.
}

Following \cite[Definition 42.3]{Simon83}, we denote ${\rm VarTan}(V,X)$ to be the set of \textit{varifold tangents} of $V$ at  $X\in{\rm spt}\norm{V}$.
By the compactness of Radon measures,%
${\rm VarTan}(V,X)$ is compact and non-empty provided that the upper density $\Theta^{\ast n}(\norm{V},X)\coloneqq\limsup_{r\searrow0}\frac{\norm{V}(B_r(X))}{\om_nr^n}$ is finite.
Moreover, there exists a non-zero element in ${\rm VarTan}(V,X)$ if and only if $\Theta^{\ast n}(\mu_V,X)>0$.

\subsection{Free boundary varifolds}

\begin{definition}[stationary free boundary varifold]\label{Defn:stationary-free-bdry-vfld}
\normalfont
Let $U$ be a relatively open set of $\overline{\mbH^{n+1}}$.
We call an $n$-rectifiable varifold $V$ stationary with free boundary in $U\subset\overline{\mbH^{n+1}}$, if
\eq{
\de V(\varphi)
=0,
}
for any $\varphi\in C^1(\mbR^{n+1};\mbR^{n+1})$ tangential to $\p\mbH^{n+1}$, compactly supported in $U$.
\end{definition}

By \cite{GJ86,DeMasi21}, these varifolds have locally bounded first variation.

\begin{proposition}\label{Prop:bdd-1st-variation-free-bdry-vfld}
Let $U$ be a relatively open set of $\overline{\mbH^{n+1}}$, and let $V$ be an $n$-rectifiable varifold which is stationary with free boundary in $U$.
Then $V$ has locally bounded first variation in $U$.
Precisely, there exists a Radon measure $\sigma_V\perp\norm{V}$ on $U$ supported in $\p\mbH^{n+1}$, such that for any $\varphi\in C^1_c(U;\mbR^{n+1})$, we have
\eq{
\de V(\varphi)
=-\int_{\p\mbH^{n+1}}\langle\varphi,e_1\rangle\rd\sigma_V,
}
with $\sigma_V$ satisfying the local estimate
\eq{
\sigma_V\left(B_\frac{r}{2}(X)\right)
\leq\frac{C}{r}\norm{V}\left(B_r(X)\right),\quad\text{whenever }B_r(X)\cap\mbH^{n+1}\subset U,
}
where $C>0$ is an absolute constant.
\end{proposition}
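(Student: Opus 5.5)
The plan is to reflect across $\p\mbH^{n+1}$ and reduce the question to the classical fact that stationarity controls the first variation. Since every $\varphi$ tangential to $\p\mbH^{n+1}$ has vanishing first variation, the only obstruction to $\de V$ being a bounded functional is the normal component $\langle\varphi,e_1\rangle$ on $\p\mbH^{n+1}$.

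\textbf{Decomposition of test fields.} Given $\varphi\in C^1_c(U;\mbR^{n+1})$, I split it as
\[
\varphi=\varphi^T+\langle\varphi,e_1\rangle e_1 .
\]
Here $\varphi^T\coloneqq\varphi-\langle\varphi,e_1\rangle e_1$ is tangential to $\p\mbH^{n+1}$, so $\de V(\varphi^T)=0$. It therefore suffices to study the linear functional $\varphi_1\mapsto\de V(\varphi_1e_1)$ on scalar functions $\varphi_1\in C^1_c(U)$.

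\textbf{Key sign property.} Suppose $\varphi_1\ge0$ and write $\varphi_1=\varphi_1(0,x')+x_1\psi$ near $\p\mbH^{n+1}$. After cutting off appropriately, the piece $x_1\psi e_1$ vanishes on $\p\mbH^{n+1}$, so it is tangential there and its first variation is zero. The remaining piece behaves like a field pointing into $\mbH^{n+1}$. Making this rigorous, I would first show that $\de V(\varphi_1 e_1)\le0$ for every nonnegative $\varphi_1$ (or $\ge 0$; the sign must be fixed by the convention). One way is to use $\varphi_1 e_1$ with $\varphi_1=\chi(x_1/\ep)\zeta$, where $\chi$ is a cutoff. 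The standard approach, following Grüter--Jost, is as follows.

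\textbf{Grüter--Jost argument.} For $\zeta\in C^1_c(U)$ with $\zeta\ge0$, take $\varphi=\zeta\,\gamma(x_1)e_1$, with $\gamma(t)=1-t/\ep$ for $t<\ep$ and $\gamma=0$ afterwards. Then
\[
\de V(\zeta e_1)=\de V\bigl(\zeta(1-\gamma)e_1\bigr)+\de V(\zeta\gamma e_1).
\]
The field $\zeta(1-\gamma)e_1$ vanishes on $\p\mbH^{n+1}$, so it is tangential and contributes zero. Computing $\operatorname{div}_P(\zeta\gamma e_1)$ gives
\[
\gamma\operatorname{div}_P(\zeta e_1)-\tfrac{\zeta}{\ep}\abs{P^T e_1}^2\mathbf 1_{\{x_1<\ep\}} .
\]
Letting $\ep\to0$, the first term tends to zero because $\norm V(\{0<x_1<\ep\})\to0$ up to the boundary portion. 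One also needs to handle mass on $\p\mbH^{n+1}$ itself, where $\gamma=1$ and $P^Te_1=0$ since $P$ is tangent there a.e.; this contributes $\int\operatorname{div}_P(\zeta e_1)$ over the boundary, which vanishes because $e_1\perp P$ there. The second term has a sign. Hence $\de V(\zeta e_1)$ is a limit of nonpositive quantities, and the functional $\zeta\mapsto-\de V(\zeta e_1)$ is positive.

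\textbf{Riesz representation and the estimate.} By Riesz, this positive functional is given by a Radon measure $\sigma_V$. It is supported in $\p\mbH^{n+1}$, since for $\zeta$ supported away from $\p\mbH^{n+1}$ the field $\zeta e_1$ is tangential. For the local estimate, I take $\zeta$ to be a cutoff that equals $1$ on $B_{r/2}(X)$, is supported in $B_r(X)$, and has $\abs{D\zeta}\le 3/r$. Then, since the sign term is dropped,
\[
\sigma_V(B_{r/2})\le-\de V(\zeta e_1)\le\int\abs{D\zeta}\,\rd\norm V\le\frac{C}{r}\norm V(B_r).
\]
For singularity, Lebesgue-decompose $\sigma_V$ with respect to $\norm{V}$; on the absolutely continuous part one can argue via monotonicity that it vanishes. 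Alternatively, I would define $\sigma_V$ to be the full representing measure and note that the conclusion as stated only needs the formula.

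\textbf{Main obstacle.} The delicate step is the $\ep\to0$ limit: controlling mass near, and on, $\p\mbH^{n+1}$, and proving $\sigma_V\perp\norm V$. The latter I would obtain from the vanishing of the tangential part of the measure restricted to $\p\mbH^{n+1}$, using $\sigma_V(B_s)\le Cs^{-1}\norm V(B_{2s})$ together with Besicovitch differentiation. This shows the density $d\sigma_V/d\norm V$ is zero $\norm V$-a.e. wherever $\norm V$ has finite density.
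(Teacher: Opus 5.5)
\paragraph{What is correct.} Your treatment of the representation formula and the local estimate is correct. It is the Grüter--Jost argument that the paper imports by citing \cite{GJ86,DeMasi21}:
\begin{itemize}
\item Split off the tangential part and test with $\zeta\gamma_\ep(x_1)e_1$, taking $\gamma_\ep$ smooth rather than piecewise linear.
\item Use rectifiability to get $Pe_1=0$ for $V$-a.e.\ $(X,P)$ with $X\in\p\mbH^{n+1}$.
\item This gives $\de V(\zeta e_1)\le0$ for $\zeta\ge0$, hence a nonnegative Radon measure $\sigma_V$ on $\p\mbH^{n+1}$.
\item The estimate needs no truncation at all, since $-\de V(\zeta e_1)=-\int\langle D\zeta,Pe_1\rangle\,\rd V(X,P)\le\int\abs{D\zeta}\,\rd\norm V$.
\end{itemize}

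\paragraph{The gap: $\sigma_V\perp\norm V$.} Your differentiation argument cannot give this. The local estimate only yields
\[
\frac{\sigma_V(B_s(X))}{\norm V(B_s(X))}\le \frac{C}{s}\,\frac{\norm V(B_{2s}(X))}{\norm V(B_s(X))},
\]
which blows up as $s\to0$ instead of tending to zero. The vanishing of the tangential component says nothing about a possible normal part $h\,\norm V\llcorner\p\mbH^{n+1}$ of $\sigma_V$. No monotonicity argument is actually given. Finally, ``taking the full representing measure'' just drops a stated conclusion, since $\sigma_V$ is uniquely determined by $\sigma_V(\zeta)=-\de V(\zeta e_1)$.

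\paragraph{The clause is false in this generality.} Singularity is automatic when $\norm V(\p\mbH^{n+1})=0$. For general rectifiable $V$ charging $\p\mbH^{n+1}$, however, it fails, so no argument of your type can close the gap. Here is a counterexample for $n=1$; take products with $\mbR^{n-1}$ for general $n$.
\begin{itemize}
\item Let $V'$ be a vertical trunk of $e_1$-flux $1$ ending at $(1,0)$, from which hangs a weighted binary tree.
\item Its generation-$j$ edges are indexed by $(\epsilon_1,\dots,\epsilon_j)\in\{\pm1\}^j$. Each descends from $x_1=2^{1-j}$ to $x_1=2^{-j}$ with horizontal slope $s=\tfrac12\sum_{i\le j}\epsilon_i$ and weight $2^{-j}\sqrt{1+s^2}$, so its $e_1$-flux is $2^{-j}$.
\item The children of an edge of slope $s$ have slopes $s\pm\tfrac12$, so every node is balanced. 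The total mass is $\sum_j2^{-j}(1+j/4)<\infty$.
\item Compute $\de V'$ as the limit of the boundary terms on $\{x_1=t\}$. The endpoints distribute like $\sum_i\epsilon_i2^{-i}$, i.e.\ uniformly on $[-1,1]$, and one finds
\[
\de V'(\varphi)=-\frac12\int_{-1}^1\varphi_1(0,y)\,\rd y-\frac14\int_{-1}^1\rho(y)\,\p_2\varphi_2(0,y)\,\rd y,\qquad \rho(y)=-\sum_{i\geq1}\dist\bigl(y,-1+2^{2-i}\mathbb Z\bigr).
\]
\item Add the boundary layer $\theta_0\,\mcH^1\llcorner\p\mbH^2$ with $\theta_0=1+\tfrac14\rho\,\mathbf 1_{[-1,1]}\ge\tfrac12$. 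Its first variation is $\tfrac14\int_{-1}^1\rho\,\p_2\varphi_2$, which cancels the tangential term exactly.
\item Hence $V=V'+\theta_0\,\mcH^1\llcorner\p\mbH^2$ is rectifiable and stationary with free boundary in $\overline{\mbH^2}\cap B_{3/2}(0)$. Yet $\sigma_V=\tfrac12\mcH^1\llcorner(\{0\}\times[-1,1])\ll\norm V$, since $\theta_0\ge\tfrac12$.
\end{itemize}

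So the singularity clause needs an extra hypothesis, or should be dropped. Note that the rest of the paper only uses the local boundedness of $\de V$ and the estimate for $\sigma_V$.
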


\section{The differential capillary Schoen inequality}%
\label{sec:differential-capillary-Schoen}

In this section we prove Theorem \ref{thm:differential-capillary-Schoen}.
Throughout, $M$ denotes a capillary minimal hypersurface satisfying the
assumptions of that theorem.

We shall establish the theorem for $g_{\theta,k}$ with $k$ in a suitable range, and our starting point is as follows.
\begin{lemma}\label{lem:basic_formulas}
    The function $g_{\theta,k}$ satisfies,
    \begin{align}\label{eq:lemBoundaryG}
        \frac{\partial g_{\theta,k}^{2}}{\partial \eta}
        = -2\cot \theta\, A(\eta,\eta)\, g_{\theta,k}^{2},\quad\text{ on }\p M;%
    \end{align}
and on $M$:%
    \begin{align}
        g_{\theta,k} \Delta g_{\theta,k}
        &= |A|^{2}\bigl( k\cos \theta\,\nu_1+(1-k)\nu_1^{2}+\nu_{n+1}^{2} \bigr)\\
        &\quad - (1-k)|\nabla \nu_1|^{2}- |\nabla \nu_{n+1}|^{2} 
        - \frac{\Abs{\nabla \nu \cdot (\nu_1 e_1 + \nu_{n+1} e_{n+1} + k(\cos \theta - \nu_1)e_1)}^{2}}{g_{\theta,k}^{2}}.
        \label{eq:lemDeltaG}
    \end{align}
\end{lemma}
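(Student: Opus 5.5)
Both identities are direct computations. I will write $G\coloneqq g_{\theta,k}^2=1-\nu_1^2-\nu_{n+1}^2+k(\cos\theta-\nu_1)^2$, which is a smooth function of $\nu_1,\nu_{n+1}$. The two inputs are \eqref{eq:lemLaplacian} and \eqref{eq:lemBoundaryDeri} with $w=e_1$ and $w=e_{n+1}$, together with the boundary relations \eqref{eq:nu_and_eta}.

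\emph{Boundary identity.} On $\p M$, differentiate $G$ along $\eta$:
\[
\p_\eta G=-2\nu_1\p_\eta\nu_1-2\nu_{n+1}\p_\eta\nu_{n+1}-2k(\cos\theta-\nu_1)\p_\eta\nu_1 .
\]
The capillary condition gives $\nu_1=\cos\theta$ on $\p M$, so the $k$-term vanishes. Then \eqref{eq:lemBoundaryDeri} gives $\p_\eta G=2A(\eta,\eta)\bigl(\nu_1\langle\eta,e_1\rangle+\nu_{n+1}\langle\eta,e_{n+1}\rangle\bigr)$. From \eqref{eq:nu_and_eta}, since $\bar\nu\perp e_1$,
\[
\langle\eta,e_1\rangle=-\sin\theta,\qquad \langle\eta,e_{n+1}\rangle=\cos\theta\,\bar\nu_{n+1},\qquad \nu_{n+1}=\sin\theta\,\bar\nu_{n+1}.
\]
This yields
\[
\p_\eta G=2A(\eta,\eta)\sin\theta\cos\theta\,(\bar\nu_{n+1}^2-1).
\]
On $\p M$ we also have $G=1-\cos^2\theta-\sin^2\theta\,\bar\nu_{n+1}^2=\sin^2\theta\,(1-\bar\nu_{n+1}^2)$. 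Hence $\p_\eta G=-2\cot\theta\,A(\eta,\eta)\,G$, which is \eqref{eq:lemBoundaryG}.

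\emph{Interior identity.} First compute $\frac12\Delta G$. For any function $f$, $\frac12\Delta f^2=f\Delta f+|\na f|^2$. Applying this together with \eqref{eq:lemLaplacian} to each term:
\begin{align}
\tfrac12\Delta G&=|A|^2\nu_1^2-|\na\nu_1|^2+|A|^2\nu_{n+1}^2-|\na\nu_{n+1}|^2\\
&\quad+k\bigl(|\na\nu_1|^2-(\cos\theta-\nu_1)|A|^2\nu_1\bigr).
\end{align}
Note that $\Delta(\cos\theta-\nu_1)=|A|^2\nu_1$. Collecting the $|A|^2$ terms gives
\[
|A|^2\bigl(\nu_1^2+\nu_{n+1}^2-k\cos\theta\,\nu_1+k\nu_1^2\bigr).
\]
The gradient terms combine to $-(1-k)|\na\nu_1|^2-|\na\nu_{n+1}|^2$.

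Next use $g\Delta g=\frac12\Delta G-|\na g|^2$ together with $|\na g|^2=|\na G|^2/(4G)$. We have
\[
\tfrac12\na G=-\bigl(\nu_1\na\nu_1+\nu_{n+1}\na\nu_{n+1}+k(\cos\theta-\nu_1)\na\nu_1\bigr)=-\na\nu\cdot\bigl(\nu_1e_1+\nu_{n+1}e_{n+1}+k(\cos\theta-\nu_1)e_1\bigr),
\]
so the fraction term in \eqref{eq:lemDeltaG} appears with exactly the stated form.

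\emph{Sign check.} The displayed coefficient of $|A|^2$ is $k\cos\theta\,\nu_1+(1-k)\nu_1^2+\nu_{n+1}^2$, whereas my collected coefficient is $\nu_1^2+\nu_{n+1}^2-k\cos\theta\,\nu_1+k\nu_1^2$. These differ, and I expect this to be the main obstacle, since the discrepancy is most likely a sign slip in the $k$-term of $\Delta G$. The step to recheck is
\[
\tfrac12\Delta(\cos\theta-\nu_1)^2=(\cos\theta-\nu_1)\cdot|A|^2\nu_1+|\na\nu_1|^2,
\]
which gives the contribution $+k|A|^2(\cos\theta\,\nu_1-\nu_1^2)+k|\na\nu_1|^2$. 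My earlier line carried the wrong sign on this term. With the corrected sign, the $|A|^2$ coefficient becomes $\nu_1^2+\nu_{n+1}^2+k\cos\theta\,\nu_1-k\nu_1^2$, which matches \eqref{eq:lemDeltaG}. The gradient coefficient $-(1-k)|\na\nu_1|^2$ is unchanged. The identity is then valid wherever $g_{\theta,k}>0$.
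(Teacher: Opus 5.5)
Your proof is correct and follows essentially the same computation as the paper. On $\p M$ you differentiate $g_{\theta,k}^2$ along $\eta$ via \eqref{eq:lemBoundaryDeri} and \eqref{eq:nu_and_eta} and compare with $g_{\theta,k}^2=\sin^2\theta\,(1-\bar\nu_{n+1}^2)$; in the interior you compute $\frac12\Delta g_{\theta,k}^2$ from \eqref{eq:lemLaplacian} and use $g_{\theta,k}\Delta g_{\theta,k}=\frac12\Delta g_{\theta,k}^2-\abs{\na g_{\theta,k}}^2$. The only flaw is presentational: your first display for $\frac12\Delta G$ has the wrong sign on the $k(\cos\theta-\nu_1)\abs{A}^2\nu_1$ term, which your own \emph{sign check} correctly repairs, so a final write-up should state $\frac12\Delta(\cos\theta-\nu_1)^2=(\cos\theta-\nu_1)\abs{A}^2\nu_1+\abs{\na\nu_1}^2$ directly.
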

\begin{proof}

Using \eqref{eq:lemBoundaryDeri} and the capillary boundary condition \eqref{eq:nu_and_eta}, we compute
\eq{
\frac{\p g^2_{\theta,k}}{\p\eta}
=&A(\eta,\eta)\left(\left(2(1-k)\nu_1+2k\cos\theta\right)\langle\eta,e_1\rangle+2\nu_{n+1}\langle\eta,e_{n+1}\rangle\right)\\
=&A(\eta,\eta)\left(-2(1-k)\sin\theta\cos\theta-2k\sin\theta\cos\theta+2\sin\theta\cos\theta\langle\bar\nu,e_{n+1}\rangle^2\right)\\
=&-2\sin\theta\cos\theta A(\eta,\eta)(1-\langle\bar\nu,e_{n+1}\rangle^2).
}
On the other hand, by \eqref{eq:nu_and_eta} we find
\eq{
-2\cot\theta A(\eta,\eta)g^2_{\theta,k}
=&-2\cot\theta A(\eta,\eta)\left(1+k\cos^2\theta-(1-k)\cos^2\theta-2k\cos^2\theta-\sin^2\theta\langle\bar\nu,e_{n+1}\rangle^2\right)\\
=&-2\sin\theta\cos\theta A(\eta,\eta)(1-\langle\bar\nu,e_{n+1}\rangle^2)
=\frac{\p g^2_{\theta,k}}{\p\eta},
}
proving \eqref{eq:lemBoundaryG}.

To show \eqref{eq:lemDeltaG}, note that
\eq{
\na(g^2_{\theta,k})
=-2\na\nu\cdot\left(\nu_1e_1+\nu_{n+1}e_{n+1}+k(\cos\theta-\nu_1)e_1\right).
}
Hence
\eq{\label{eq:nabla-g-theta-k}
\abs{\na g_{\theta,k}}^2
=\frac{\abs{\na(g^2_{\theta,k})}^2}{4g^2_{\theta,k}}
=\frac{\Abs{\na\nu\cdot\left(\nu_1e_1+\nu_{n+1}e_{n+1}+k(\cos\theta-\nu_1)e_1\right)}^2}{g^2_{\theta,k}},
}
and by \eqref{eq:lemLaplacian}
\eq{
\frac{1}{2}\De g^2_{\theta,k}
=&-(1-k)\frac{1}{2}\De\nu_1^2-k\cos\theta\De\nu_1-\frac{1}{2}\De\nu^2_{n+1}\\
=&-(1-k)\left(-\abs{A}^2\nu_1^2+\abs{\na\nu\cdot e_1}^2\right)+k\cos\theta\nu_1\abs{A}^2-\left(-\abs{A}^2\nu_{n+1}^2+\abs{\na\nu\cdot e_{n+1}}^2\right)\\
=&\abs{A}^2\left(k\cos\theta\nu_1+(1-k)\nu_1^2+\nu_{n+1}^2\right)-(1-k)\abs{\na\nu_1}^2-\abs{\na\nu_{n+1}}^2.
}
Since $g_{\theta,k}\De g_{\theta,k}=\frac{1}{2}\De g^2_{\theta,k}-\abs{\na g_{\theta,k}}^2$, combining the above we thus obtain \eqref{eq:lemDeltaG}.
\end{proof}

The crucial step is to estimate the gradient terms appearing in~\eqref{eq:lemDeltaG}.
To this end, fix a $2$-plane $\mathscr{P}$ containing $e_1^\top$ and $e_{n+1}^\top$, and choose an orthonormal basis $\{\tau_i\}_{i=1}^{n}$ of $TM$,%
such that $\mathscr{P}$ is spanned by $\tau_1$ and $\tau_2$ with $A(\tau_1,\tau_2)=0$.
For simplicity we write $A_{ij}=A(\tau_i,\tau_j)$.

We parametrize the projections of three vectors onto $\mathscr{P}$ using angles $\xi_i \in [0,2\pi]$:
\eq{
    \vec{a}_1
    \coloneqq{}& \sqrt{1-k}e_1^\top
    =\abs{\vec{a}_1}(\cos \xi_1 \tau_1 + \sin \xi_1 \tau_2),\\
    \vec{a}_2
    \coloneqq{}& e_{n+1}^\top
    =\abs{\vec{a}_2}(\cos \xi_2 \tau_1 + \sin \xi_2 \tau_2),\\
    \vec{a}_3
    \coloneqq{}& \frac{\nu_1 e_1^\top + \nu_{n+1} e_{n+1}^\top + k(\cos \theta - \nu_1)e_1^\top}{g_{\theta,k}}
    =\abs{\vec{a}_3}(\cos \xi_3 \tau_1 + \sin \xi_3 \tau_2).
}
Our goal is to determine the smallest constant $s$ such that
\eq{
    \sum_{i=1}^3\abs{\nabla \nu \cdot \vec{a}_i}^{2}
    \le s\abs{A}^{2}.
}
The following proposition provides an explicit formula for this optimal constant.

\begin{proposition}\label{prop:boundS}
    Define $s(k,n,\theta)$ by
    \eq{
        s(k,n,\theta)
        =\frac{(n-1)(2-k\sin^{2}\theta)+\sqrt{4(1-k\sin^{2}\theta)+(n-1)^{2}k^{2}\sin^4\theta}}{2n}.
    }
    Then, for all contact angles $\theta\in (0,\pi)$ and dimensions $n\ge 2$, we have
    \eq{
        \sum_{i=1}^{3}|\nabla \nu\cdot \vec{a}_i|^{2}\le s(k,n,\theta)|A|^{2}.
    }
\end{proposition}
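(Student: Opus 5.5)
The plan is to reduce the inequality to a finite-dimensional eigenvalue problem for a $2\times 2$ symmetric matrix built from $\vec a_1,\vec a_2,\vec a_3$, and then bound that matrix's trace and determinant uniformly in $\nu$. We write $a=\abs{a}(\cos\xi\,\tau_1+\sin\xi\,\tau_2)$. Since $\nabla\nu\cdot a=\sum_j A(a,\tau_j)\tau_j$ and $A_{12}=0$, we get
\[
\abs{\nabla\nu\cdot a}^2=\abs{a}^2\Bigl(\cos^2\xi\,A_{11}^2+\sin^2\xi\,A_{22}^2+\sum_{j\ge3}(\cos\xi\,A_{1j}+\sin\xi\,A_{2j})^2\Bigr).
\]
Let $Q\coloneqq\sum_{i=1}^3\vec a_i\otimes\vec a_i$, viewed as a symmetric operator on $\mathscr P$ in the basis $\{\tau_1,\tau_2\}$. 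Summing over $i$ gives
\[
\sum_i\abs{\nabla\nu\cdot\vec a_i}^2=Q_{11}A_{11}^2+Q_{22}A_{22}^2+\sum_{j\ge3}(A_{1j},A_{2j})\,Q\,(A_{1j},A_{2j})^T .
\]

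On the other side, we decompose $\abs{A}^2=A_{11}^2+A_{22}^2+2\sum_{j\ge3}(A_{1j}^2+A_{2j}^2)+\sum_{j,l\ge3}A_{jl}^2$. By minimality $\sum_{j\ge3}A_{jj}=-(A_{11}+A_{22})$, so Cauchy--Schwarz gives
\[
\sum_{j,l\ge3}A_{jl}^2\ge\frac{(A_{11}+A_{22})^2}{n-2}
\]
(for $n=2$ we instead have $A_{22}=-A_{11}$ directly). The mixed terms are then controlled by $\lambda_{\max}(Q)\sum_j(A_{1j}^2+A_{2j}^2)$, which is at most $s$ times their weight $2\sum_j(\cdots)$ provided $\lambda_{\max}(Q)\le 2s$. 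The diagonal terms reduce to maximizing $Q_{11}x^2+Q_{22}y^2$ subject to $x^2+y^2+\frac{(x+y)^2}{n-2}=1$, a $2\times2$ generalized eigenvalue problem. Since $Q_{11}+Q_{22}=\operatorname{tr}Q$ and $Q_{11}Q_{22}\ge\det Q$ for every orientation of the basis, the worst case over rotations is governed by $\operatorname{tr}Q$ and $\det Q$. Solving the quadratic, I expect a bound of the form $\frac{(n-1)T+\sqrt{\cdots}}{2n}$ with $T=\operatorname{tr}Q$, which matches the shape of $s(k,n,\theta)$; this step also checks that $\lambda_{\max}(Q)\le 2s$, so that the mixed terms are harmless.

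It then remains to compute $Q$ in terms of $\nu_1,\nu_{n+1}$. Using $\abs{e_1^\top}^2=1-\nu_1^2$, $\abs{e_{n+1}^\top}^2=1-\nu_{n+1}^2$, and $\langle e_1^\top,e_{n+1}^\top\rangle=-\nu_1\nu_{n+1}$, we can express $\operatorname{tr}Q$ and $\det Q$ (the latter as a Gram determinant). The contribution of $\vec a_3$ is computed through the definition of $g_{\theta,k}^2$: the numerator of $\vec a_3$ is $-\tfrac12\nabla(g^2_{\theta,k})$ written via tangential projections. The goal is to show $\operatorname{tr}Q\le 2-k\sin^2\theta$, together with the matching determinant relation, for all admissible $\nu$.

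I expect this last step to be the main obstacle. It is an optimization over the two parameters $(\nu_1,\nu_{n+1})$ of a rational expression, since $g_{\theta,k}^2$ appears in the denominator of $\abs{\vec a_3}^2$. I would first try to write $\vec a_3$ as a combination of $e_1^\top$ and $e_{n+1}^\top$ and compute $\operatorname{tr}Q$ and $\det Q$ in closed form. Then I would check monotonicity in $\nu_{n+1}^2$ and locate the extremum, expected at the degenerate direction where $g_{\theta,k}\to0$, i.e.\ $\nu=\nu_{\pm\theta}$. Finally I would verify that plugging these extremal values into the eigenvalue formula reproduces exactly $s(k,n,\theta)$.
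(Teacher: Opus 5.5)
Your reduction matches the paper's. It uses the frame with $A_{12}=0$, the matrix $Q$ whose diagonal entries are $\mathscr B_1,\mathscr B_2$, and the generalized-eigenvalue bound $\tilde s=\frac{(n-1)T+\sqrt{(n-1)^2T^2-4n(n-2)P}}{2n}$ with $T=\operatorname{tr}Q$ and $P=Q_{11}Q_{22}\ge\det Q$ (Lemmas \ref{lem:gradient-nu-estimates} and \ref{lem:quadratic_form}). It also uses Cauchy--Schwarz plus minimality on the block $A_{jl}$, $j,l\ge3$. Controlling the mixed terms by $\lambda_{\max}(Q)$ instead of $\operatorname{tr}Q$ is a harmless variant.

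The gap is that the step you defer as ``the main obstacle'' is the actual content of the proof, and it is not carried out. Moreover, the target you set for it would not suffice. Since $\tilde s$ is increasing in $T$ but decreasing in $P$, the bound $\operatorname{tr}Q\le 2-k\sin^2\theta$ plus a determinant bound not tied to $T$ at the same point does not give $s(k,n,\theta)$. For instance, $P=0$ yields $\tilde s=\frac{(n-1)(2-k\sin^2\theta)}{n}$, which is strictly larger than $s(k,n,\theta)$ whenever $n\ge3$ and $k\sin^2\theta<1$. You need a pointwise relation between $\operatorname{tr}Q$ and $\det Q$.

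That relation is the content of Lemma \ref{lem:B1B2_bounds}, and it removes the two-parameter optimization you anticipate. Using $|e_1^\top|^2=1-\nu_1^2$, $|e_{n+1}^\top|^2=1-\nu_{n+1}^2$ and $\langle e_1^\top,e_{n+1}^\top\rangle=-\nu_1\nu_{n+1}$, one gets $\operatorname{tr}Q=1+(1-k\sin^2\theta)w$. By Cauchy--Binet, $\det Q=\sum_{i<j}|\vec a_i\wedge\vec a_j|^2$, and with $|e_1^\top\wedge e_{n+1}^\top|^2=1-\nu_1^2-\nu_{n+1}^2$ this gives $\det Q=(1-k\sin^2\theta)w$. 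Here $w=(1-\nu_1^2-\nu_{n+1}^2)/g_{\theta,k}^2\in[0,1]$.

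Thus $Q$ has eigenvalues $1$ and $d=(1-k\sin^2\theta)w\in[0,1]$. Hence $\tilde s\le f(d)$, where $f(t)=\frac{(n-1)(1+t)+\sqrt{(n-1)^2(1+t)^2-4n(n-2)t}}{2n}$ is increasing and $f(1-k\sin^2\theta)=s(k,n,\theta)$. Also $\lambda_{\max}(Q)=1\le 2s$ handles the mixed terms at once.

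Your guess for the extremal configuration is also off. The value $w=1$ holds on the whole slice $\{\nu_1=\cos\theta,\ \nu_{n+1}^2<\sin^2\theta\}$, and for fixed $\nu_1$, $w$ is maximized at $\nu_{n+1}=0$. At $\nu=\nu_{\pm\theta}$, by contrast, $\vec a_3$ is undefined and $w$ has no limit: it tends to $1$ along $\nu_1=\cos\theta$ and to $0$ along $1-\nu_1^2-\nu_{n+1}^2=0$. So evaluating at the degenerate direction would not by itself close the argument.
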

Here we point out that, when $k=1$ and $\theta=\frac{\pi}{2}$, the estimate reduces to \cite[(2.7)]{SS81}.
We divide the proof of Proposition~\ref{prop:boundS} into the following lemmas:
\begin{lemma}\label{lem:gradient-nu-estimates}
Define the quantities
\eq{
\msB_1
=\sum_{i=1}^{3}|\vec{a}_i|^{2}\cos^{2} \xi_i,\quad \msB_2
=\sum_{i=1}^{3}|\vec{a}_i|^{2}\sin^{2} \xi_i.
}
Then
\eq{
 \sum_{i=1}^{3}|\nabla \nu \cdot \vec{a}_i|^{2}
 \le{}& \msB_1 A_{11}^{2} + \msB_2 A_{22}^{2} + (\msB_1+\msB_2) \sum_{j=3}^{n} (A_{1j}^{2}+A_{2j}^{2}).
}
\end{lemma}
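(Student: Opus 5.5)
Since each $\vec a_i$ lies in the plane $\mathscr{P}=\mathrm{span}\{\tau_1,\tau_2\}$, the plan is to expand $\nabla\nu\cdot\vec a_i$ in the basis $\{\tau_j\}$ and bound each coefficient by Cauchy--Schwarz. Identify $\nabla\nu\cdot\vec a$ with the tangent vector $\sum_j A(\vec a,\tau_j)\tau_j$; its norm is the same either way. Then
\[
\nabla\nu\cdot\vec a_i=\abs{\vec a_i}\sum_{j=1}^n\bigl(\cos\xi_i A_{1j}+\sin\xi_i A_{2j}\bigr)\tau_j .
\]
Using $A_{12}=0$, the $j=1$ component is $\abs{\vec a_i}\cos\xi_i A_{11}$ and the $j=2$ component is $\abs{\vec a_i}\sin\xi_i A_{22}$. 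Consequently
\[
\abs{\nabla\nu\cdot\vec a_i}^2=\abs{\vec a_i}^2\Bigl(\cos^2\xi_i A_{11}^2+\sin^2\xi_i A_{22}^2+\sum_{j=3}^n(\cos\xi_i A_{1j}+\sin\xi_i A_{2j})^2\Bigr).
\]

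Summing over $i=1,2,3$, the first two terms give exactly $\msB_1A_{11}^2+\msB_2A_{22}^2$. For the mixed terms with $j\ge3$, I apply Cauchy--Schwarz in the form $(\cos\xi_i A_{1j}+\sin\xi_i A_{2j})^2\le A_{1j}^2+A_{2j}^2$. Since $\cos^2\xi_i+\sin^2\xi_i=1$, this yields
\[
\sum_i\abs{\vec a_i}^2(\cos\xi_i A_{1j}+\sin\xi_i A_{2j})^2\le\Bigl(\sum_i\abs{\vec a_i}^2\Bigr)(A_{1j}^2+A_{2j}^2)=(\msB_1+\msB_2)(A_{1j}^2+A_{2j}^2),
\]
which is the claimed inequality.

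Before this computation I would settle two routine points. First, the normalization: the paper writes $\abs{\nabla\nu_1}^2=\abs{\nabla\nu\cdot e_1}^2$ and so on, so the identification $\nabla\nu\cdot e=\nabla\langle\nu,e\rangle=A(e^\top,\cdot)$ (up to sign) lets me replace each vector by its tangential part, which lies in $\mathscr{P}$. Second, the degenerate case where $e_1^\top$ and $e_{n+1}^\top$ are parallel or vanish: then any $2$-plane containing them works, and such a plane still admits the diagonalizing basis, because $A|_{\mathscr{P}}$ is a symmetric form on a $2$-plane. I do not expect a genuine obstacle. The only place needing care is checking that $A_{12}=0$ is what kills the cross terms in the $\tau_1,\tau_2$ components; without it those components would contain $A_{12}$ contributions.
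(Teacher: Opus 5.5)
Your proposal is correct and follows the paper's proof: expand each $\nabla\nu\cdot\vec a_i$ in the basis $\{\tau_j\}$, use $A_{12}=0$ to isolate the $\tau_1$ and $\tau_2$ components as $\abs{\vec a_i}\cos\xi_i A_{11}$ and $\abs{\vec a_i}\sin\xi_i A_{22}$, then bound the $j\ge 3$ terms by Cauchy--Schwarz using $\cos^2\xi_i+\sin^2\xi_i=1$. Your remarks on replacing each vector by its tangential part and on the degenerate choice of $\mathscr{P}$ are sound, and the paper leaves them implicit.
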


\begin{proof}
By direct computation using the fact that $A_{12}=A_{21}=0$, we obtain
\begin{align*}
    \sum_{i=1}^{3}|\nabla \nu \cdot \vec{a}_i|^{2}={}& \sum_{i=1}^{3}|\vec{a}_i|^{2}\left|A_{11}\cos\xi_i \tau_1+A_{22}\sin \xi_i \tau_2+ \sum_{j=3}^{n}(A_{1j}\cos \xi_i+A_{2j}\sin\xi_i)\tau_j\right|^{2}\\
    ={}& \msB_1 A_{11}^{2} + \msB_2 A_{22}^{2} + \sum_{i=1}^{3}|\vec{a}_i|^{2}\sum_{j=3}^{n} \left( A_{1j}\cos \xi_i + A_{2j}\sin \xi_i \right)^{2}\\
    \le{}& \msB_1 A_{11}^{2} + \msB_2 A_{22}^{2} + (\msB_1+\msB_2) \sum_{j=3}^{n} (A_{1j}^{2}+A_{2j}^{2}).
\end{align*}
\end{proof}

\begin{lemma}\label{lem:quadratic_form}
    For $\tilde{s}$ defined by
    \[
        \tilde{s}=\frac{(n-1)(\msB_1+\msB_2)+\sqrt{(n-1)^{2}(\msB_1+\msB_2)^{2}-4n(n-2)\msB_1 \msB_2}}{2n},
    \]
    we have
    \[
        \msB_1A_{11}^{2}+\msB_2 A_{22}^{2} \le \tilde{s}\left( A_{11}^{2}+A_{22}^{2}+\frac{(A_{11}+A_{22})^{2}}{n-2} \right),
    \]
    where $\frac{(A_{11}+A_{22})^{2}}{n-2}$ is understood as $0$ when $n=2$.
\end{lemma}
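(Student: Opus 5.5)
This is a statement about a quadratic form in the two variables $(A_{11},A_{22})$, so the plan is to reduce it to an eigenvalue comparison. Write $x=A_{11}$, $y=A_{22}$, and set
\[
Q_1(x,y)=\msB_1x^2+\msB_2y^2,\qquad Q_2(x,y)=x^2+y^2+\frac{(x+y)^2}{n-2}.
\]
For $n\ge3$ the form $Q_2$ is positive definite. The best constant $c$ with $Q_1\le c\,Q_2$ is therefore the largest root $\lambda$ of
\[
\det(Q_1-\lambda Q_2)=0,
\]
and the claim is that this root equals $\tilde s$.

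Here $Q_1=\mathrm{diag}(\msB_1,\msB_2)$, and $Q_2$ has diagonal entries $\frac{n-1}{n-2}$ and off-diagonal entry $\frac{1}{n-2}$. The determinant condition reads
\[
\Bigl(\msB_1-\tfrac{n-1}{n-2}\lambda\Bigr)\Bigl(\msB_2-\tfrac{n-1}{n-2}\lambda\Bigr)-\tfrac{\lambda^2}{(n-2)^2}=0.
\]
Multiplying by $(n-2)^2$ and using $(n-1)^2-1=n(n-2)$ gives
\[
n(n-2)\lambda^2-(n-1)(n-2)(\msB_1+\msB_2)\lambda+(n-2)^2\msB_1\msB_2=0.
\]
Dividing by $(n-2)$ leaves
\[
n\lambda^2-(n-1)(\msB_1+\msB_2)\lambda+(n-2)\msB_1\msB_2=0.
\]
The larger root of this quadratic is exactly $\tilde s$. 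Its discriminant equals $(n-1)^2(\msB_1-\msB_2)^2+4\msB_1\msB_2$, which is nonnegative because $\msB_i\ge0$, so the square root is real.

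To make the argument self-contained without quoting generalized eigenvalue theory, I would instead observe that $\tilde sQ_2-Q_1$ is a symmetric $2\times2$ form. It is positive semidefinite as soon as its determinant is $\ge0$ and one diagonal entry is $\ge0$. The determinant is $(n-2)^{-1}$ times the quadratic above evaluated at $\lambda=\tilde s$, hence zero. For the diagonal entry, I need $\tilde s\,\frac{n-1}{n-2}\ge\msB_1$. This follows from $\tilde s\ge\frac{n-1}{2n}(\msB_1+\msB_2)+\frac{n-1}{2n}|\msB_1-\msB_2|\ge\frac{n-1}{n}\max(\msB_1,\msB_2)$, and $\frac{n-1}{n}\cdot\frac{n-1}{n-2}\ge1$. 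The first inequality comes from bounding the square root below by $(n-1)|\msB_1-\msB_2|$ using the discriminant identity.

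The case $n=2$ is handled separately, since there $Q_2=x^2+y^2$ and $\tilde s=\frac{(\msB_1+\msB_2)+|\msB_1-\msB_2|}{4}\cdot2=\max(\msB_1,\msB_2)$. The inequality is then trivial.

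The only step needing care is the algebraic simplification $(n-1)^2-1=n(n-2)$ together with the sign check on the diagonal entry.
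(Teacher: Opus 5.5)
Your argument for $n\ge3$ is correct and is essentially the paper's. The paper also shows that $\tilde s Q_2-Q_1$ is positive semidefinite, using that $\tilde s$ is a root of its determinant and that its trace is nonnegative, via $\frac{2(n-1)}{n-2}\tilde s\ge\frac{(n-1)^2}{n(n-2)}(\msB_1+\msB_2)$. One small correction to your phrasing: ``determinant $\ge0$ and one diagonal entry $\ge0$'' does not imply semidefiniteness when the determinant vanishes; $\mathrm{diag}(0,-1)$ is a counterexample. You need both diagonal entries, or the trace, to be nonnegative. Your bound $\tilde s\frac{n-1}{n-2}\ge\max(\msB_1,\msB_2)$ already gives both, so nothing breaks.

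The genuine gap is the case $n=2$. There the term $4n(n-2)\msB_1\msB_2$ vanishes, so the discriminant is $(\msB_1+\msB_2)^2$. This agrees with your own identity $(n-1)^2(\msB_1-\msB_2)^2+4\msB_1\msB_2$ at $n=2$. Hence $\tilde s=\frac{\msB_1+\msB_2}{2}$, the average, not $\max(\msB_1,\msB_2)$; you dropped the $4\msB_1\msB_2$ term. With the correct value, the inequality $\msB_1x^2+\msB_2y^2\le\tilde s(x^2+y^2)$ fails for general $(x,y)$ whenever $\msB_1\neq\msB_2$: take $y=0$ and $\msB_1>\msB_2$. So no argument that treats $(A_{11},A_{22})$ as free variables can work when $n=2$.

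The missing ingredient is minimality. For $n=2$ the vectors $\tau_1,\tau_2$ span all of $TM$, so $H=0$ gives $A_{11}+A_{22}=0$. Then $\msB_1A_{11}^2+\msB_2A_{22}^2=(\msB_1+\msB_2)A_{11}^2=\tilde s\,(A_{11}^2+A_{22}^2)$, which is in fact an equality. This is exactly how the paper handles $n=2$. It is also why reading $\frac{(A_{11}+A_{22})^2}{n-2}$ as $0$ is harmless there.
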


\begin{proof}
For $n=2$ we have $\tilde s=\frac{1}{2}(\msB_1+\msB_2)$.
By the minimality condition we have $A_{11}=-A_{22}$, it is then easy to check that the required inequality holds.

For $n\geq3$, the inequality is equivalent to showing that the matrix
\[
    \mathscr{M}
    \coloneqq\begin{pmatrix}
        \frac{n-1}{n-2}\tilde{s}-\msB_1 & \frac{1}{n-2}\tilde{s}\\
        \frac{1}{n-2}\tilde{s} & \frac{n-1}{n-2}\tilde{s} - \msB_2
    \end{pmatrix}
\]
is positive semi-definite.
For $\tilde{s}$ defined above, we have
\[
    {\rm tr}(\mathscr{M})=\frac{2(n-1)}{n-2}\tilde{s} - (\msB_1+\msB_2)\ge \frac{(n-1)^{2}}{n(n-2)}(\msB_1+\msB_2) - (\msB_1+\msB_2)\ge 0,
\]
and $\tilde{s}$ is a root of $\mathrm{det}(\mathscr{M})=0$.
Hence $\mathscr{M}$ is positive semi-definite, which completes the proof.
\end{proof}

\begin{lemma}\label{lem:B1B2_bounds}
    We have
    \[
        \msB_1 +\msB_2=1+(1-k \sin^{2}\theta)w,\quad \msB_1 \msB_2 \ge (1-k \sin^{2}\theta)w,
    \]
    where
    \[
        w=\frac{1-\nu_1^{2}-\nu_{n+1}^{2}}{g_{\theta,k}^{2}}.
    \]
\end{lemma}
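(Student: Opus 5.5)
The plan is to compute both quantities directly from the definitions of $\vec a_1,\vec a_2,\vec a_3$. The key observation is that all three vectors lie in $\mathscr P$, and in fact in the span of $e_1^\top$ and $e_{n+1}^\top$. Throughout, write $x=\nu_1$, $y=\nu_{n+1}$, $c=\cos\theta$ and $\alpha\coloneqq(1-k)x+kc$. Then
\[
\vec a_3=\frac{\alpha e_1^\top+y\,e_{n+1}^\top}{g_{\theta,k}},\qquad g_{\theta,k}^2=1-x^2-y^2+k(c-x)^2 .
\]

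\emph{The sum.} Since each $\vec a_i\in\mathscr P=\mathrm{span}\{\tau_1,\tau_2\}$, we have $\msB_1+\msB_2=\sum_i|\vec a_i|^2$. We will use the identities
\[
|e_1^\top|^2=1-x^2,\qquad |e_{n+1}^\top|^2=1-y^2,\qquad \langle e_1^\top,e_{n+1}^\top\rangle=-xy .
\]
They give
\[
|\vec a_1|^2=(1-k)(1-x^2),\qquad |\vec a_2|^2=1-y^2,
\]
\[
g_{\theta,k}^2|\vec a_3|^2=\alpha^2(1-x^2)+y^2(1-y^2)-2\alpha xy^2 .
\]
Adding these and expanding in $x,y,c$, the claim $\sum_i|\vec a_i|^2=1+(1-k\sin^2\theta)w$ is a polynomial identity after multiplying through by $g_{\theta,k}^2$. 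This step is routine algebra and will be checked directly.

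\emph{The product.} Here I would use the Cauchy--Binet (Lagrange) identity for the $2\times3$ matrix whose columns are the coordinates $(|\vec a_i|\cos\xi_i,\,|\vec a_i|\sin\xi_i)$:
\[
\msB_1\msB_2-\Bigl(\sum_i|\vec a_i|^2\cos\xi_i\sin\xi_i\Bigr)^2=\sum_{i<j}|\vec a_i\wedge\vec a_j|^2 .
\]
Hence $\msB_1\msB_2\ge\sum_{i<j}|\vec a_i\wedge\vec a_j|^2$. Each wedge is a multiple of $e_1^\top\wedge e_{n+1}^\top$:
\[
\vec a_1\wedge\vec a_2=\sqrt{1-k}\,e_1^\top\wedge e_{n+1}^\top,\quad \vec a_1\wedge\vec a_3=\frac{\sqrt{1-k}\,y}{g_{\theta,k}}\,e_1^\top\wedge e_{n+1}^\top,\quad \vec a_2\wedge\vec a_3=-\frac{\alpha}{g_{\theta,k}}\,e_1^\top\wedge e_{n+1}^\top .
\]
Moreover
\[
|e_1^\top\wedge e_{n+1}^\top|^2=(1-x^2)(1-y^2)-x^2y^2=1-x^2-y^2=g_{\theta,k}^2w .
\]
Therefore
\[
\sum_{i<j}|\vec a_i\wedge\vec a_j|^2=w\bigl[(1-k)g_{\theta,k}^2+(1-k)y^2+\alpha^2\bigr].
\]
It remains to show that the bracket equals $1-k\sin^2\theta$. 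Expanding, the $y^2$ terms cancel, and the $x^2$ and $xc$ terms cancel between $-k(1-k)x^2$, $k(1-k)(c-x)^2$ and $2k(1-k)xc$. What is left is
\[
(1-k)+k(1-k)c^2+k^2c^2=1-kc^2\cdot 0-k+kc^2=1-k\sin^2\theta .
\]
This gives the claimed lower bound on $\msB_1\msB_2$.

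\emph{Main obstacle.} The only delicate point is recognizing that the product bound should come from the Gram determinant/Cauchy--Binet structure, discarding the nonnegative squared cross term, rather than from a direct expansion of $\msB_1\msB_2$. Once that is in place, the rest is bookkeeping with the $2$-dimensional span of $e_1^\top$ and $e_{n+1}^\top$. The degenerate case $e_1^\top\parallel e_{n+1}^\top$ needs no separate treatment: then $w=0$ and both claims hold trivially or by the same identities.
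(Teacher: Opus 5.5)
Your proposal is correct and follows the paper's proof essentially step for step. The sum comes from the explicit lengths of $\vec a_1,\vec a_2,\vec a_3$. The product bound comes from $\msB_1\msB_2=\det\mathcal{Q}+d^2\geq\det\mathcal{Q}$, with $\det\mathcal{Q}=\sum_{i<j}\abs{\vec a_i\wedge\vec a_j}^2$ by Cauchy--Binet, reducing to the same bracket $(1-k)g_{\theta,k}^2+(1-k)\nu_{n+1}^2+((1-k)\nu_1+k\cos\theta)^2=1-k\sin^2\theta$. The algebra you defer for the sum does hold: the paper organizes it as $\abs{\vec a_3}^2=\nu_{n+1}^2-(1-k)(1-\nu_1^2)+(1-k\sin^2\theta)w$, after which the sum collapses immediately. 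Also, the stray middle expression $1-kc^2\cdot 0-k+kc^2$ in your last display should simply read $1-k+kc^2$.
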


\begin{proof}
We begin by noting that $e_i^\top=e_i-\nu_i\nu$, and hence
\eq{\label{eq:tangential-length-square}
\abs{e_i^\top}^2
=1-\nu_i^2,\quad
\langle e_1^\top,e_{n+1}^\top\rangle
=-\nu_1\nu_{n+1}.
}

We first compute $|\vec{a}_3|^2$ explicitly:
\begin{align*}
    |\vec{a}_3|^{2}={}& \frac{\Abs{(\nu_1+k(\cos \theta - \nu_1))e_1^\top+\nu_{n+1} e_{n+1}^\top}^{2}}{g_{\theta,k}^{2}}\\
    ={}& \frac{(\nu_1+k(\cos \theta -\nu_1))^{2}(1-\nu_1^{2})+\nu_{n+1}^{2}(1-\nu_{n+1}^{2})-2(\nu_1+k(\cos \theta -\nu_1))\nu_1\nu_{n+1}^{2}}{g_{\theta,k}^{2}}\\
    ={}& \frac{((1-k)\nu_1+k\cos \theta)^{2}(1-\nu_1^{2})+\nu_{n+1}^{2}(1-\nu_{n+1}^{2}-2\nu_1^{2}-2k(\cos \theta-\nu_1)\nu_1)}{g_{\theta,k}^{2}}\\
    ={}& \frac{\left[ (1-k)^{2}\nu_1^{2}+k^{2} \cos^{2}\theta +2k(1-k)\cos \theta \nu_1 \right](1-\nu_1^{2}) }{g_{\theta,k}^{2}}+\nu_{n+1}^{2}\\
    & - \frac{\nu_{n+1}^{2}\left(\nu_1^{2}+2k(\cos \theta -\nu_1)\nu_1+k(\cos \theta -\nu_1)^{2}\right)}{g_{\theta,k}^{2}}\\
    ={}& \nu_{n+1}^{2}+\frac{\left[ (1-k)^{2}\nu_1^{2}+k^{2}\cos^{2}\theta +2k(1-k)\cos \theta \nu_1 \right] (1-\nu_1^{2})}{g_{\theta,k}^{2}}\\
    & - \frac{\nu_{n+1}^{2}(1-k+k\cos^{2}\theta)-(1-k)\nu_{n+1}^{2}(1-\nu_1^{2})}{g_{\theta,k}^{2}}.
\end{align*}
Collecting the coefficients of $(1-\nu_1^2)$, we get
\begin{align*}
    & (1-k)^{2}\nu_1^{2}+k^{2}\cos^{2}\theta + 2k(1-k)\cos \theta \nu_1 + (1-k)\nu_{n+1}^{2}\\
    ={}& -(1-k)g_{\theta,k}^{2} + (1-k)+(1-k)k\cos^{2}\theta + k^{2} \cos^{2} \theta\\
    ={}& -(1-k)g_{\theta,k}^{2} + 1-k +k \cos^{2}\theta.
\end{align*}
Thus we find
\begin{align*}
    |\vec{a}_3|^{2}={}& \nu_{n+1}^{2} - (1-k)(1-\nu_1^{2})+\frac{(1-k+k\cos^{2}\theta)(1-\nu_1^{2}-\nu_{n+1}^{2})}{g_{\theta,k}^{2}}\\
    ={}& \nu_{n+1}^{2} - (1-k)(1-\nu_1^{2}) + (1-k \sin^{2}\theta) w.
\end{align*}

For $\vec{a}_1, \vec{a}_2$,
it is easy to see
\[
    |\vec{a}_1|^{2}= (1-k)(1-\nu_1^{2}),\quad |\vec{a}_2|^{2}=1-\nu_{n+1}^{2}.
\]
Summing over all three vectors, we obtain
\[
    \msB_1+\msB_2=\sum_{i=1}^{3}|\vec{a}_i|^{2}=1+(1-k \sin^{2}\theta)w.
\]

For the product, consider the symmetric endomorphism
$\mathcal Q\coloneqq\sum_{i=1}^3\vec a_i\otimes\vec a_i$,
where $(\vec a_i\otimes\vec a_i)(z)=\langle\vec a_i,z\rangle\vec a_i$.
With respect to the orthonormal basis $\{\tau_1,\tau_2\}$, its matrix is
\eq{
\mathcal Q=
\begin{pmatrix}
\msB_1&d\\
d&\msB_2
\end{pmatrix},
\qquad
d=\sum_{i=1}^3\abs{\vec a_i}^2\cos\xi_i\sin\xi_i.
}
Consequently,
$\msB_1\msB_2=\det\mathcal Q+d^2\geq\det\mathcal Q$.
Writing $\mcQ=\mcA\mcA^T$ where $\mcA=(\vec a_1,\vec a_2,\vec a_3)$, by the Cauchy--Binet formula
\eq{
\det\mathcal Q
={}&\sum_{i<j}\abs{\vec a_i\wedge\vec a_j}^2.
}
Recall that
$\abs{\vec a\wedge\vec b}^2
=\abs{\vec a}^2\abs{\vec b}^2-\langle\vec a,\vec b\rangle^2$, and hence
\eq{
\abs{e_1^\top\wedge e_{n+1}^\top}^2
=(1-\nu_1^2)(1-\nu_{n+1}^2)-\nu_1^2\nu_{n+1}^2
=1-\nu_1^2-\nu_{n+1}^2.
}
We compute the three terms separately:
\eq{
\abs{\vec a_1\wedge\vec a_2}^2
={}&(1-k)\abs{e_1^\top\wedge e_{n+1}^\top}^2
=(1-k)(1-\nu_1^2-\nu_{n+1}^2),\\
\abs{\vec a_1\wedge\vec a_3}^2
={}&\frac{\abs{\sqrt{1-k}e_1^\top
\wedge\nu_{n+1}e_{n+1}^\top}^2}{g_{\theta,k}^2}
=\frac{(1-k)\nu_{n+1}^2
(1-\nu_1^2-\nu_{n+1}^2)}{g_{\theta,k}^2},\\
\abs{\vec a_2\wedge\vec a_3}^2
={}&\frac{\abs{e_{n+1}^\top\wedge
(\nu_1+k(\cos\theta-\nu_1))e_1^\top}^2}{g_{\theta,k}^2}\\
={}&\frac{(\nu_1+k(\cos\theta-\nu_1))^2
(1-\nu_1^2-\nu_{n+1}^2)}{g_{\theta,k}^2}.
}
Summing these identities gives
\eq{
\det\mathcal Q
={}&\left((1-k)g_{\theta,k}^2+(1-k)\nu_{n+1}^2
+(\nu_1+k(\cos\theta-\nu_1))^2\right)w\\
={}&\left(1-k-(1-k)\nu_1^2
+(1-k)k(\cos\theta-\nu_1)^2+\nu_1^2
+2k\nu_1(\cos\theta-\nu_1)
+k^2(\cos\theta-\nu_1)^2\right)w\\
={}&(1-k+k\cos^2\theta)w
=(1-k\sin^2\theta)w.
}
The required lower bound for $\msB_1\msB_2$ follows.
\end{proof}
\begin{proof}
[Proof of Proposition \ref{prop:boundS}]
We define the function
\eq{
    f(t)=\frac{(n-1)(1+t)+\sqrt{(n-1)^{2}(1+t)^{2}-4n(n-2)t}}{2n},\quad\forall t\in[0,\infty).
}
A direct computation shows when $n\geq2$, $f'(t)>0$ for all $t\in \mathbb{R}_+$, i.e., $f(t)$ is an increasing function in $t$.
Note that $w\in [0,1]$, so by Lemma \ref{lem:quadratic_form} and Lemma \ref{lem:B1B2_bounds}, we have
\eq{\label{ineq:tildes-s}
    \tilde{s}\leq f\left((1-k\sin^2\theta)w\right)
    \le f(1-k \sin^{2}\theta)=s(k,n,\theta).
}
By Lemma \ref{lem:gradient-nu-estimates}, Lemma \ref{lem:quadratic_form}, and \eqref{ineq:tildes-s} we deduce
\eq{
    \sum_{i=1}^{3}|\nabla \nu \cdot \vec{a}_i|^{2}\le{}& s(k,n,\theta)\left( A_{11}^{2}+A_{22}^{2}+\frac{(A_{11}+A_{22})^{2}}{n-2} \right)+ (\msB_1+\msB_2) \sum_{j=3}^{n}(A_{1j}^{2}+A_{2j}^{2})\\
    \le{}& s(k,n,\theta)\left( A_{11}^{2}+A_{22}^{2}+ \frac{(\sum_{j=3}^{n}A_{jj})^{2}}{n-2} \right)+ 2s(k,n,\theta) \sum_{j=3}^{n}(A_{1j}^{2}+A_{2j}^{2})\\
    \le{}& s(k,n,\theta)\left( \sum_{i=1}^{n}A_{ii}^{2} + \sum_{j=3}^{n}(A_{1j}^{2}+A_{2j}^{2}+A_{j1}^{2}+A_{j2}^{2}) \right)\\
    \le{}& s(k,n,\theta)|A|^{2}.
}
Here we have also used the Cauchy-Schwarz inequality, the minimality condition $\sum_{i=1}^{n}A_{ii}=0$, and the fact that
\[
2s(k,n,\theta)
\ge \frac{(n-1)(2-k\sin^2\theta)+\sqrt{4(1-k\sin^2\theta)+k^2\sin^4\theta}}{n}=2-k\sin^2\theta
\ge \msB_1+\msB_2.
\]
\end{proof}
To prove Theorem \ref{thm:differential-capillary-Schoen}, it remains to
bound $s(k,n,\theta)$ from above, which determines the range of $k$.

\begin{lemma}\label{lem:chooseK}
    For $\theta\in (0,\pi)$, and for $0<k\leq1$ when $n=2$; $0<k\le \frac{1}{n-2}$ when $n\ge 3$, we have
    \[
        1+k\cos^{2}\theta - k |\cos \theta |-s(k,n,\theta) >0.
    \]
\end{lemma}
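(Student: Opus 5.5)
We prove the inequality by squaring once and factoring the difference; the main work is organizing the algebra so that it factors.

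\emph{Reduction.} Set $x\coloneqq\abs{\cos\theta}\in[0,1)$ (recall $\theta\in(0,\pi)$) and $a\coloneqq k\sin^2\theta=k(1-x^2)$. Multiplying the claimed inequality by $2n$ and inserting the formula for $s(k,n,\theta)$ from Proposition~\ref{prop:boundS}, it is equivalent to
\[
L\coloneqq 2n\bigl(1+kx^2-kx\bigr)-(n-1)(2-a)\;>\;R\coloneqq\sqrt{4(1-a)+(n-1)^2a^2}.
\]
Using $(n-1)k+(n+1)kx^2=(n-1)a+2nkx^2$, we rewrite
\[
L=2+(n-1)a-b,\qquad b\coloneqq 2nkx(1-x)\ge 0.
\]
As a quadratic in $x$, $L=2+(n-1)k+(n+1)kx^2-2nkx$ is minimized at $x=\tfrac{n}{n+1}$, where its value is $2-\tfrac{k}{n+1}>0$ since $k\le1$. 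So $L>0$, and it suffices to prove $L^2-R^2>0$.

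\emph{Expanding and factoring.} Expanding gives
\[
L^2-R^2=4na-2b\bigl(2+(n-1)a\bigr)+b^2 .
\]
We split this into two pieces and substitute $a=k(1-x)(1+x)$ and $b=2nkx(1-x)$:
\[
4na-4b=4nk(1-x)^2,
\]
\[
b^2-2(n-1)ab=b\bigl(b-2(n-1)a\bigr)=4nk^2x(1-x)^2\bigl(x-(n-1)\bigr).
\]
Adding the two pieces,
\[
L^2-R^2=4nk(1-x)^2\Bigl(1+kx^2-k(n-1)x\Bigr).
\]
Since $x<1$ and $k>0$, the prefactor $4nk(1-x)^2$ is strictly positive. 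It remains to show that $1-kx(n-1-x)>0$ for $x\in[0,1)$.

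\emph{Positivity of the last factor.}
\begin{itemize}
\item For $n\ge3$: the function $x\mapsto x(n-1-x)$ is increasing on $[0,1]$ and equals $n-2$ at $x=1$. Hence $x(n-1-x)<n-2$ on $[0,1)$, and the factor exceeds $1-k(n-2)\ge0$ because $k\le\frac1{n-2}$.
\item For $n=2$: the factor is $1-kx(1-x)\ge1-\tfrac k4>0$.
\end{itemize}
This gives $L^2>R^2$, hence $L>R$, which is the statement.

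The point to watch is to keep $1-x$ as an explicit factor when substituting for $a$ and $b$, so that the squared difference visibly factors. The hypothesis $k\le\frac1{n-2}$ enters only in the final sign check, where the boundary case $x\to1$ is excluded by $\theta\in(0,\pi)$.
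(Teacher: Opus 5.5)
Your proof is correct. Its first step, multiplying by $2n$ to reach $L>R$, is the same reduction the paper makes, since $L=(2-k\sin^2\theta)+nk(1-\abs{\cos\theta})^2$.

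The two arguments diverge after that. The paper never computes $L^2-R^2$ exactly. It first weakens $L$ using $(1-\abs{\cos\theta})^2>\frac{\sin^4\theta}{2(2-\sin^2\theta)}$, so that everything depends only on $\sin^2\theta$. It then squares and finishes with two crude bounds, $\frac{2-k\sin^2\theta}{2-\sin^2\theta}\ge 1$ and $nk+k^2\ge (n-1)^2k^2$; the second is where $k\le\frac{1}{n-2}$ enters.

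Your exact factorization $L^2-R^2=4nk(1-x)^2\bigl(1-kx(n-1-x)\bigr)$ is more transparent and tells you more. It isolates the single factor that the hypothesis on $k$ controls. It also shows that hypothesis is sharp: for $n\ge 3$ and $k>\frac{1}{n-2}$, the last factor tends to $1-k(n-2)<0$ as $\abs{\cos\theta}\to 1$. Since $L\to 2>0$ there, the inequality fails for $\theta$ near $0$ or $\pi$. You also check $L>0$ explicitly before squaring, which the paper leaves implicit.

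The paper's version is shorter to write, but its lossy intermediate steps hide how much room is left at each stage.
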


\begin{proof}
By direct computation, it is equivalent to showing
\[
    (2-k\sin^{2}\theta)+nk(1-|\cos \theta|)^{2} > \sqrt{4(1-k\sin^{2}\theta)+(n-1)^{2}k^{2}\sin^{4}\theta}.
\]
Note that
\[
    (1-|\cos \theta|)^{2}>\frac{\sin^4\theta}{2(2-\sin^{2}\theta)}.
\]
So it is sufficient to show
\[
    \left( 2-k\sin^{2}\theta+\frac{nk\sin^4\theta}{2(2-\sin^{2}\theta)} \right)^{2}\ge 4(1-k\sin^{2}\theta)+(n-1)^{2}k^{2}\sin^{4}\theta.
\]
After simplification, this is equivalent to
\[
    \frac{nk\sin^4\theta(2-k\sin^{2}\theta)}{2-\sin^{2}\theta}+k^{2}\sin^{4}\theta + \frac{n^{2}k^{2}\sin^8\theta}{4(2-\sin^{2}\theta)^{2}}\ge (n-1)^{2}k^{2}\sin^{4}\theta.
\]
Note that the range of $k$ yields $nk+k^{2}\ge (n-1)^{2}k^{2}$.
The above inequality then holds, since
\[
    \frac{nk(2-k\sin^{2}\theta)}{2-\sin^{2}\theta}+k^{2}\ge nk+k^{2}\ge (n-1)^{2}k^{2}.
\]
This completes the proof.
\end{proof}

With these ingredients, we can now prove the theorem.

\begin{proof}[Proof of Theorem \ref{thm:differential-capillary-Schoen}]
Put $k_n=1$ if $n=2$, $=\frac1{n-2}$ if $n\geq3$.
By \eqref{eq:lemDeltaG} and Proposition
\ref{prop:boundS}, we find
\eq{
g_\theta(\Delta+\abs{A}^2)g_\theta
\geq\left(1+k_n\cos^2\theta-k_n\cos\theta\,\nu_1-s(k_n,n,\theta)\right)\abs{A}^2
\geq\delta_{n,\theta}\abs{A}^2\text{ on }\{g_\theta>0\},
}
while by Lemma \ref{lem:chooseK},
\eq{
\delta_{n,\theta}
=1+k_n\cos^2\theta-k_n\abs{\cos\theta}-s(k_n,n,\theta)
>0
}
This completes the proof.
\end{proof}

\section{Capillary first variation formula and its consequences}\label{sec:capillaryFirstVariation}

Let $n\geq2$, for any $\overline V\in\mathscr{V}(\theta,\Lambda)$, in view of Definition \ref{defn:varifold-class} (iii), we can always assume WLOG that $(V,W)$ is a $\mbF_\theta$-stationary pair in $B_2(0)$ for $\theta\in[\frac{\pi}2,\pi)$.
By \cite[Lemma 38.4]{Simon83},
\eq{\label{eq:Euc-Riem-capillary-1st-variation}
&{\int{\rm div}_P(\psi)\rd V(X,P)-\cos\theta\int{\rm div}_{\p\mbH^{n+1}}(\psi)\rd\norm{W}(X)}=0
}
for any $\psi\in C^1(\mbR^{n+1};\mbR^{n+1})$ tangential to $\p\mbH^{n+1}$, compactly supported in $B_2(0)$.
Hence the rectifiable $n$-varifold $\overline{V}=V-\cos\theta W$ is stationary with free boundary in $B_2(0)$, in the sense of Definition \ref{Defn:stationary-free-bdry-vfld}.

\subsection{Monotonicity formula}

We record here Kagaya-Tonegawa's monotonicity formula \cite{KT17}.
For any $X=(x_1,x_2,\cdots,x_{n+1})\in\mbH^{n+1}$, we denote by $\tilde X=(-x_1,x_2,\cdots,x_{n+1})$ the reflection point across $\p\mbH^{n+1}$.
It follows immediately for any $Z\in\mbR^{n+1}$,
\eq{\label{eq:relation-reflection-half-space}
\abs{X-\tilde Z}
=\abs{\tilde X-Z}.
}
\begin{lemma}\label{Lem:Monotonicity-formula-doubling}
Let $n\geq2,\theta\in[\frac{\pi}{2},\pi)$.
Let $(V,W)$ be a $\mbF_\theta$-stationary pair in $B_2(0)$, in the sense of Definition \ref{defn:stationary-pair}.
Then for every $0<\sigma<\rho<\frac{1}{2}$ and any $X_0\in B_1(0)$,
\eq{
\frac{1}{\sigma^n}\left(\norm{\overline{V}}(B_\sigma(X_0))+\norm{\overline{V}}(B_\sigma(\tilde X_0))\right)
=\frac{1}{\rho^n}\left(\norm{\overline{V}}(B_\rho(X_0))+\norm{\overline{V}}(B_\rho(\tilde X_0))\right)\\
-\left(\int_{B_\rho(X_0)\setminus{B_\sigma(X_0)}}\frac{\abs{(Y-X_0)^\perp}^2}{\abs{Y-X_0}^{n+2}}\rd\norm{\overline{V}}(Y)+\int_{B_\rho(\tilde X_0)\setminus{B_\sigma(\tilde X_0)}}\frac{\abs{(Y-\tilde X_0)^\perp}^2}{\abs{Y-\tilde X_0}^{n+2}}\rd\norm{\overline{V}}(Y)\right).
}

\end{lemma}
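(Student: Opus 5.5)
The plan is the classical reflection argument: derive the monotonicity identity by testing the free boundary stationarity of $\overline V=V-\cos\theta W$ (noted after \eqref{eq:Euc-Riem-capillary-1st-variation}) with a radial field symmetrized under reflection across $\p\mbH^{n+1}$. The sign of the weight matters here. For $\theta\in[\frac\pi2,\pi)$ we have $-\cos\theta\geq0$, so $\norm{\overline V}=\norm V+\abs{\cos\theta}\norm W$ is a genuine nonnegative Radon measure. The pair is $\mbF_\theta$-stationary, which means $\overline V$ is stationary with free boundary, i.e.\ $\de\overline V(\psi)=0$ for all $\psi$ tangential to $\p\mbH^{n+1}$ and compactly supported in $B_2(0)$.

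\textbf{Step 1 (test field).} Fix a cutoff $\phi\in C^1_c([0,1))$ that is nonincreasing and equal to $1$ near $0$. For $r\in(0,\frac12)$ and $X_0\in B_1(0)$, set
\[
\psi(Y)=\phi\Big(\tfrac{\abs{Y-X_0}}{r}\Big)(Y-X_0)+\phi\Big(\tfrac{\abs{Y-\tilde X_0}}{r}\Big)(Y-\tilde X_0).
\]
On $\p\mbH^{n+1}$ we have $\tilde Y=Y$, so \eqref{eq:relation-reflection-half-space} gives $\abs{Y-X_0}=\abs{Y-\tilde X_0}$. The $e_1$-components of $Y-X_0$ and $Y-\tilde X_0$ are $-x_{0,1}$ and $+x_{0,1}$, so they cancel, and $\psi$ is tangential to $\p\mbH^{n+1}$. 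Both balls $B_r(X_0)$ and $B_r(\tilde X_0)$ lie in $B_2(0)$ for $\abs{X_0}<1$ and $r<\frac12$, so $\psi$ is admissible. Hence $\de\overline V(\psi)=0$.

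\textbf{Step 2 (standard computation).} For each of the two summands, use
\[
{\rm div}_P\big(\phi(\abs{Y-Z}/r)(Y-Z)\big)=n\phi+\frac{\abs{Y-Z}}{r}\phi'\Big(1-\frac{\abs{(Y-Z)^\perp}^2}{\abs{Y-Z}^2}\Big),
\]
where $\perp$ denotes the projection onto $P^\perp$. Here $P$ is the approximate tangent plane of $\overline V$. On the $W$ part this plane is $\p\mbH^{n+1}$ and $\perp$ is the $e_1$ direction, which is consistent with the convention in the statement.

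Set $I(r)=\int\phi(\abs{Y-X_0}/r)\,\rd\norm{\overline V}+\int\phi(\abs{Y-\tilde X_0}/r)\,\rd\norm{\overline V}$, and let $J(r)$ be the analogous sum with $\phi$ replaced by $\phi(\cdot)\abs{(Y-Z)^\perp}^2/\abs{Y-Z}^2$. Exactly as in \cite[§17]{Simon83}, the stationarity identity becomes
\[
nI-rI'=-rJ',
\]
that is,
\[
\frac{\rd}{\rd r}\big(r^{-n}I(r)\big)=r^{-n}J'(r).
\]

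\textbf{Step 3 (integration and limit).} Integrate this from $\sigma$ to $\rho$ and let $\phi$ increase to $\chi_{[0,1)}$. Monotone convergence turns the left side into the difference of the normalized masses of $B_\sigma$ and $B_\rho$ around $X_0$ and $\tilde X_0$. The right side becomes the two integrals of $\abs{(Y-Z)^\perp}^2\abs{Y-Z}^{-n-2}$ over the annuli, as in \cite[17.3--17.4]{Simon83}. This yields the stated identity.

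There is no real obstacle here. The only points needing care are the tangentiality check in Step 1 and the requirement $\theta\geq\frac\pi2$, which ensures that $\norm{\overline V}$ is nonnegative so the limit in Step 3 is justified.
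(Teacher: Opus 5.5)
Your proposal is correct and follows the paper's proof essentially verbatim. It uses the same symmetrized test field $\phi(\abs{Y-X_0}/r)(Y-X_0)+\phi(\abs{Y-\tilde X_0}/r)(Y-\tilde X_0)$, shown to be tangential via the reflection identity, derives the same identity $\frac{\rd}{\rd r}\left(r^{-n}I(r)\right)=r^{-n}J'(r)$, and passes to the same limit $\phi\to\chi_{(-\infty,1)}$ after integrating from $\sigma$ to $\rho$.
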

\begin{proof}
Let $\phi$ be any smooth non-increasing function which $\equiv1$ on $(-\infty,\frac{1}{2}]$ and $\equiv0$ on $[1,\infty)$.
For any fixed $X_0\in B_1(0)$,
define the vector fields $\psi_{X_0}(Y)$ and $\psi(Y)$ as
\eq{
\psi_{X_0}(Y)
=\phi\left(\frac{\abs{Y-X_0}}r\right)\left(Y-X_0\right),\quad
\psi(Y)
=\psi_{X_0}(Y)+\psi_{\tilde X_0}(Y).
}
By \eqref{eq:relation-reflection-half-space} one verifies that $\psi$ is tangential to $\p\mbH^{n+1}$.

Testing the first variation formula \eqref{eq:Euc-Riem-capillary-1st-variation} with $\psi$, we get
\eq{
&\int n\phi\left(\frac{\abs{Y-X_0}}r\right)+\phi'\left(\frac{\abs{Y-X_0}}r\right)\frac{\abs{Y-X_0}}r\left(1-\frac{\abs{(Y-X_0)^\perp}^2}{\abs{Y-X_0}^2}\right)\rd\norm{\overline{V}}(Y)\\
+&\int n\phi\left(\frac{\abs{Y-\tilde X_0}}r\right)+\phi'\left(\frac{\abs{Y-\tilde X_0}}r\right)\frac{\abs{Y-\tilde X_0}}r\left(1-\frac{\abs{(Y-\tilde X_0)^\perp}^2}{\abs{Y-\tilde X_0}^2}\right)\rd\norm{\overline{V}}(Y)
=0.
}
Put
\eq{
I(r)
\coloneqq&\int\phi\left(\frac{\abs{Y-X_0}}r\right)\rd\norm{\overline{V}}(Y)+\int\phi\left(\frac{\abs{Y-\tilde X_0}}r\right)\rd\norm{\overline{V}}(Y),\\
J(r)
\coloneqq&\int\phi\left(\frac{\abs{Y-X_0}}r\right)\frac{\abs{(Y-X_0)^\perp}^2}{\abs{Y-X_0}^2}\rd\norm{\overline{V}}(Y)+\int\phi\left(\frac{\abs{Y-\tilde X_0}}r\right)\frac{\abs{(Y-\tilde X_0)^\perp}^2}{\abs{Y-\tilde X_0}^2}\rd\norm{\overline{V}}(Y).
}
We deduce the differential equality
\eq{
\frac{\rd}{\rd r}\left(r^{-n}I(r)\right)
= r^{-n}J'(r).
}
Integrating from $0<\sigma<\rho<\frac{1}{2}$ and taking $\phi\ra\chi_{(-\infty,1)}$ gives the required formula.
\end{proof}

It is then easy to conclude the following.

\begin{corollary}\label{Cor:density-upper-semi-continuity}
Under the assumptions of Lemma \ref{Lem:Monotonicity-formula-doubling},
\begin{enumerate}
    \item the tilde-density, defined as
\eq{
\tilde\Theta^n(\norm{\overline{V}},X)\coloneqq\lim_{\rho\ra0}\frac{\norm{\overline{V}}\left(B_\rho(X)\right)+\norm{\overline{V}}\left(B_\rho(\tilde X)\right)}{\om_n\rho^n},\quad\forall X\in\mbR^{n+1},
}
exists for every $X_0\in B_1(0)$.
Moreover, the function $X\mapsto\tilde\Theta^n(\norm{\overline{V}},X)$ is upper semi-continuous on $B_1(0)$.
    \item ${\rm VarTan}(\overline{V},X)\neq\emptyset$ for every
    $X\in\spt\norm{\overline V}\cap B_1(0)$.
    Every $\mfC\in{\rm VarTan}(\overline{V},X)$ is an $n$-rectifiable cone
    stationary with free boundary in $\mbH^{n+1}$.
\end{enumerate}

\end{corollary}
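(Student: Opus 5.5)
Both parts follow from the monotonicity identity of Lemma \ref{Lem:Monotonicity-formula-doubling}, in the standard way used for interior stationary varifolds (cf.\ \cite[\S17, \S42]{Simon83}), once we keep track of the reflected ball. For (1), fix $X_0\in B_1(0)$ and set
\[
F(\rho)\coloneqq\rho^{-n}\bigl(\norm{\overline V}(B_\rho(X_0))+\norm{\overline V}(B_\rho(\tilde X_0))\bigr).
\]
The lemma says that $F(\rho)-F(\sigma)$ equals a nonnegative integral, so $F$ is nondecreasing on $(0,\tfrac12)$ and bounded by $F(\tfrac12)\le 2^{n+1}\Lambda$. Here $\norm{\overline V}=\norm V-\cos\theta\norm W$ is a positive measure, since $\theta\ge\tfrac\pi2$. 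Hence the limit $\tilde\Theta^n(\norm{\overline V},X_0)=\lim_{\rho\to0}F(\rho)/\om_n$ exists.

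For upper semicontinuity, take $X_j\to X$ in $B_1(0)$; then $\tilde X_j\to\tilde X$ as well. For fixed $\rho$ and any $\ep>0$, eventually $B_\rho(X_j)\subset B_{\rho+\ep}(X)$ and $B_\rho(\tilde X_j)\subset B_{\rho+\ep}(\tilde X)$. Monotonicity then gives
\[
\om_n\tilde\Theta(X_j)\le\rho^{-n}\bigl(\norm{\overline V}(B_{\rho+\ep}(X))+\norm{\overline V}(B_{\rho+\ep}(\tilde X))\bigr).
\]
Taking $\limsup_j$, then $\ep\to0$, then $\rho\to0$ yields $\limsup_j\tilde\Theta(X_j)\le\tilde\Theta(X)$.

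For (2), the density bound from (1) gives $\Theta^{\ast n}(\norm{\overline V},X)<\infty$, so ${\rm VarTan}(\overline V,X)$ is nonempty, as recalled in the preliminaries. We split by where $X$ lies.
\begin{enumerate}[label=(\alph*)]
    \item If $X\in\mbH^{n+1}$, small balls around $X$ avoid $\p\mbH^{n+1}$, so $\overline V$ is stationary there. Every tangent is then a stationary cone in $\mbR^{n+1}$ by the classical argument, and it trivially satisfies the free boundary condition.
    \item If $X\in\p\mbH^{n+1}$, then $\tilde X=X$ and $F(\rho)=2\rho^{-n}\norm{\overline V}(B_\rho(X))$. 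Blow-ups $\eta_{X,\lambda\#}\overline V$ remain stationary with free boundary in $\overline{\mbH^{n+1}}$, because the dilation preserves $\p\mbH^{n+1}$ and tangential vector fields, and their masses are locally uniformly bounded. Any subsequential limit $\mfC$ is therefore stationary with free boundary: test fields pass to the limit, since $\delta$ is continuous under varifold convergence for fixed $C^1_c$ fields. Rectifiability of $\mfC$ follows from Allard's compactness theorem, using the locally bounded first variation of Proposition \ref{Prop:bdd-1st-variation-free-bdry-vfld}; its scaling estimate $\sigma(B_{r/2})\le Cr^{-1}\norm{\cdot}(B_r)$ keeps the first variations bounded uniformly under dilation, together with a positive lower density bound, which follows from (1).
\end{enumerate}

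The conic property comes from the monotonicity identity applied at the boundary point. Since $\tilde X=X$, the two integrals in the lemma coincide and give, for each $0<\sigma<\rho$,
\[
\int_{B_{\rho\lambda}(X)\setminus B_{\sigma\lambda}(X)}\frac{\abs{(Y-X)^\perp}^2}{\abs{Y-X}^{n+2}}\,\rd\norm{\overline V}(Y)\longrightarrow0\quad\text{as }\lambda\to0,
\]
because $F(\rho\lambda)-F(\sigma\lambda)\to0$ as both terms converge to the density. Passing to the limit gives $Y^\perp=0$ for $\mfC$-a.e.\ $(Y,P)$, and the standard argument \cite[Theorem 19.3]{Simon83} then shows that $\mfC$ is a cone. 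That argument uses only the free boundary first variation tested with radial fields $\phi(\abs Y/r)Y$, which are tangential to $\p\mbH^{n+1}$ at a boundary point.

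The main obstacle is rectifiability of the limit. I would handle it with Allard's rectifiability theorem for varifolds with locally bounded first variation, applied after reflection across $\p\mbH^{n+1}$, or directly via the positive density bound and Proposition \ref{Prop:bdd-1st-variation-free-bdry-vfld}.
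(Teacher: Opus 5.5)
Your proof is correct and is the standard argument the paper has in mind. The paper gives no proof beyond saying the corollary is easy to conclude from Lemma \ref{Lem:Monotonicity-formula-doubling}; your steps (monotonicity of the doubled ratio, upper semicontinuity, free-boundary blow-ups with uniform first-variation bounds, Allard compactness, and the cone property from the vanishing of $Y^\perp$ at boundary centres) are exactly how that gap is filled. The only slip is in the justification of rectifiability: the $\norm{\overline V}$-a.e.\ positive lower density bound needed in Allard's compactness theorem comes from the integrality of $V$ and $W$, giving density at least $\min\{1,\abs{\cos\theta}\}$ a.e., or at least $1$ when $\theta=\frac{\pi}{2}$. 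It does not come from part (1); monotonicity and upper semicontinuity only upgrade that a.e.\ bound to every point of the support.
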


\begin{lemma}\label{Lem:Ahlfors-regularity}
Let $n\geq2, \theta\in(0,\pi)$.
Let $\overline{V}\in \mathscr{V}(\theta,\Lambda)$, and
denote by $M,V,W$ the corresponding hypersurface and varifolds as in Definition \ref{defn:varifold-class}.
Then there exists a positive constant $C=C(n,\theta,\Lambda)$, with the following property (Ahlfors-regularity):
for any $\sigma\in(0,\frac{1}{2})$ and $X\in\overline{M}%
\cap B_1(0)$,
\eq{
C^{-1}
\leq\frac{\mcH^n(\overline{M}%
\cap B_\sigma(X))}{\sigma^n}
\leq C.
}
\end{lemma}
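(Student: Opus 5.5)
We split the argument into the upper bound and the lower bound, reducing to the case $\theta\in[\frac{\pi}{2},\pi)$ via $\mathscr{V}(\theta,\Lambda)=\mathscr{V}(\pi-\theta,\Lambda)$, so that $(V,W)$ is $\mbF_\theta$-stationary and $\overline{V}=V-\cos\theta W$ is stationary with free boundary with nonnegative weight $\norm{\overline V}=\norm{V}+\abs{\cos\theta}\norm{W}\geq\norm{V}$.

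\textbf{Upper bound.} For $X\in\overline M\cap B_1(0)$ and $\sigma<\frac12$ we use $\mcH^n(\overline M\cap B_\sigma(X))\leq\norm{\overline V}(B_\sigma(X))$. The boundary $\p M$ contributes no $\mcH^n$-measure, so the closure changes nothing. We then apply the monotonicity formula of Lemma \ref{Lem:Monotonicity-formula-doubling} with $\rho=\frac12$, discarding the nonnegative excess terms:
\[
\sigma^{-n}\norm{\overline V}(B_\sigma(X))\leq 2^n\bigl(\norm{\overline V}(B_{1/2}(X))+\norm{\overline V}(B_{1/2}(\tilde X))\bigr)\leq 2^{n+1}\Lambda.
\]
If $\tilde X\notin B_2$ the second term simply vanishes, and since $X\in B_1$, both balls lie in $B_2$ anyway when $x_1\ge0$ small.

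\textbf{Lower bound.} Here we must prevent $\norm{\overline V}$ from being carried mostly by $W$. We use Corollary \ref{Cor:density-upper-semi-continuity} together with monotonicity, which gives
\[
\sigma^{-n}\bigl(\norm{\overline V}(B_\sigma(X))+\norm{\overline V}(B_\sigma(\tilde X))\bigr)\geq\om_n\tilde\Theta^n(\norm{\overline V},X).
\]
For $X\in M$ an interior point, or a $\theta$-regular point, the tilde-density is at least $1$; upper semicontinuity of the density then extends this to all of $\overline M$. We still need a lower bound for $\mcH^n(M\cap B_\sigma(X))$ itself rather than for $\norm{\overline V}$. For this we would use the Michael--Simon inequality, Lemma \ref{Lem:M-S-ineq}, in the standard way. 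Let $m(r)=\mcH^n(M\cap B_r(X))$ and test with $\varphi=(r-\abs{Y-X})^+$. This gives
\[
\Bigl(\int_0^r m(s)\,\rd s\Bigr)\leq C\,\cdots,
\]
which yields $m(r)^{\frac{n-1}{n}}\leq C m'(r)$ for a.e.\ $r$. Integrating produces $m(r)\geq c\,r^n$ as long as $m(r)>0$ for every $r>0$, which holds since $X\in\overline M$. The boundary term has already been absorbed inside Lemma \ref{Lem:M-S-ineq} through \eqref{eq-divf-capillary}, so no $W$ term appears. Lemma \ref{Lem:M-S-ineq} needs supports inside $\mbC^\theta_2$; this is satisfied for $X\in B_1$ and $\sigma<\frac12$ after recentering, since the inequality is translation invariant along $\p\mbH^{n+1}$ and for interior points it is the classical one.

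The main obstacle is justifying the ODE step for Lipschitz cutoffs on $M$ near ${\rm Sing}_\theta V$. This is already covered by the approximation in Lemma \ref{Lem:M-S-ineq}, which uses $\mcH^{n-2}({\rm Sing}_\theta V)=0$. We also need the co-area bound $\int_M\abs{\na\abs{Y-X}}\chi_{B_r}\leq m'(r)$ in the distributional sense, where $m$ is monotone. Thus the lower bound constant depends only on $n,\theta$, and the upper bound constant on $n,\Lambda$.
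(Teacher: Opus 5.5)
Your proposal is correct and follows the paper's proof. The upper bound comes from the doubled monotonicity formula with $\theta\in[\frac{\pi}{2},\pi)$, so that $\norm{\overline V}\geq\norm{V}=\mcH^n\llcorner\overline M$. The lower bound comes from Lemma \ref{Lem:M-S-ineq} fed into the ODE $m^{\frac{n-1}{n}}\leq Cm'$, integrated from $0$. Two cosmetic points: the tilde-density detour is not needed, and to obtain the ODE you should test with the cutoff equal to $1$ on $B_r(X)$ and decaying linearly to $0$ on $B_{r+h}(X)$, then let $h\to0$, rather than with $(r-\abs{Y-X})^+$.
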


\begin{proof}
As discussed in the beginning of this section, we assume WLOG that $(V,W)$ is $\mbF_\theta$-stationary for $\theta\in[\frac\pi2,\pi)$.

We first show the upper bound.
By Lemma \ref{Lem:Monotonicity-formula-doubling}, for any $0<\sigma<\rho<\frac{1}{2}$,
\eq{\label{ineq:Monotonicity-tilde-area}
&\frac{\norm{V}\left(B_\sigma(X)\right)-\cos\theta\norm{W}\left(B_\sigma(X)\right)+\norm{V}\left(B_\sigma(\tilde X)\right)-\cos\theta\norm{W}\left(B_\sigma(\tilde X)\right)}{\sigma^n}\\
\leq&\frac{\norm{V}\left(B_\rho(X)\right)-\cos\theta\norm{W}\left(B_\rho(X)\right)+\norm{V}\left(B_\rho(\tilde X)\right)-\cos\theta\norm{W}\left(B_\rho(\tilde X)\right)}{\rho^n}.
}
Note that $\norm{V}=\mcH^n\llcorner\overline{M}$, and that
$\norm{V}(B_\sigma(\tilde X))\leq\norm{V}(B_\sigma(X))$ because $\overline{M}$ is supported in $\overline{\mbH^{n+1}}$.
The desired upper bound then follows immediately, thanks to $\theta\in[\frac{\pi}{2},\pi)$.

Then we derive the lower bound.
For any $X\in\overline{M}%
\cap B_1(0)$ and any $\sigma\in(0,\frac{1}{2})$, we put
\eq{
Q(X,\sigma)
\coloneqq\mcH^n(\overline{M}%
\cap B_\sigma(X)).
}
For simplicity we omit the argument $X$ and write $Q(\sigma)$.
Using the Michael-Simon-type inequality (Lemma \ref{Lem:M-S-ineq}), we can argue as in \cite[Prop 4.7]{JZ24} to show the differential inequality
\eq{
Q(r)^\frac{n-1}{n}
\leq C(n,\theta)Q'(r),\quad\forall r\in(0,\sigma),
}
which gives
\eq{
C(n,\theta)
\leq\frac{Q'(r)}{Q(r)^{1-\frac{1}{n}}}.
}
Since $Q(0)=0$,
integrating the above differential inequality over $(0,\sigma)$, we obtain
\eq{
Q(\sigma)
\geq C(n,\theta)\sigma^n.
}
This gives the required lower bound.

\end{proof}

\begin{lemma}\label{Lem:Ahlfors-regularity-capillary-vfld}

Let $n\geq2, \theta\in[\frac{\pi}{2},\pi)$, $\Lambda\in[1,\infty)$.
Let $M$ be a capillary minimal hypersurface in the sense of Definition \ref{defn:stable-capillary-minimal-hypersurface}, put $V\coloneqq\abs{M}$.
Assume $\mcH^{n-2}\left({\rm Sing}_\theta V\right)=0$, and there exists $W$ such that $(V,W)$ is $\mbF_\theta$-stationary in $\mbC^\theta_2$ with $\left(\norm{V}-\cos\theta\norm{W}\right)(\mbC^\theta_2)\leq\Lambda$.
Then there exists a positive constant $C=C(n,\theta,\Lambda)$, with the following property (Ahlfors-regularity):
for any $\sigma\in(0,\frac{1}{2})$ and $X\in\spt\norm{\overline{V}}\cap B_1(0)$,
\eq{
C^{-1}
\leq\frac{\norm{\overline{V}}(B_\sigma(X))}{\sigma^n}
\leq C.
}
\end{lemma}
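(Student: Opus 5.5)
We will deduce this from Lemma \ref{Lem:Ahlfors-regularity} and the monotonicity formula (Lemma \ref{Lem:Monotonicity-formula-doubling}). Throughout, $\theta\in[\frac\pi2,\pi)$, so $-\cos\theta\geq0$. Hence $\norm{\overline V}=\norm V+\abs{\cos\theta}\norm W$ is a positive measure that dominates $\norm V=\mcH^n\llcorner\overline M$.

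\emph{Upper bound.} We apply the monotonicity identity at $X$ with $\rho\uparrow\frac12$. Since $\norm{\overline V}(B_\sigma(X))$ is bounded by the left-hand side, we get
\[
\norm{\overline V}(B_\sigma(X))\le 2^{n+1}\norm{\overline V}(B_2(0))\,\sigma^n\le 2^{n+1}\Lambda\,\sigma^n .
\]
Here the reflected ball $B_\rho(\tilde X)$ is also contained in $B_2(0)$, because $\abs{\tilde X}=\abs X<1$.

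\emph{Lower bound.} We split into two cases according to where $X\in\spt\norm{\overline V}\cap B_1(0)$ lies.

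If $X\in\overline M$, then $\norm{\overline V}(B_\sigma(X))\ge\mcH^n(\overline M\cap B_\sigma(X))\ge C^{-1}\sigma^n$ directly by Lemma \ref{Lem:Ahlfors-regularity}.

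Otherwise $X\in\spt\norm W\setminus\overline M\subset\p\mbH^{n+1}$. Set $d\coloneqq\dist(X,\overline M)>0$, which may be very large compared with $\sigma$. Inside $B_d(X)$ the pair reduces to $W$ alone, and stationarity for tangential variations means that $W$ is a stationary integral varifold in the $n$-plane $\p\mbH^{n+1}$. By the constancy theorem, $W\llcorner B_d(X)$ is an integer constant multiple $m\geq1$ of $\mcH^n\llcorner\p\mbH^{n+1}$. Consequently $\norm{\overline V}(B_r(X))\ge\abs{\cos\theta}\om_n r^n$ for all $r\le d$.

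For $r\in(d,\sigma)$ we pick $Y\in\overline M$ with $\abs{X-Y}=d$. Then $B_{r}(X)\supset B_{r-d}(Y)$, and Lemma \ref{Lem:Ahlfors-regularity} gives mass at least $C^{-1}(r-d)^n$. Combining the two regimes, for $\sigma\le 2d$ we use the first estimate at radius $\min(\sigma,d)\ge\sigma/2$, and for $\sigma>2d$ we use the second with $r-d\ge\sigma/2$.

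One caveat: $Y$ may lie slightly outside $B_1(0)$, where Lemma \ref{Lem:Ahlfors-regularity} was stated. We handle this by shrinking radii, or by noting that its proof works on any ball well inside $B_2(0)$.

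The only delicate point is the constancy step. It needs the fact that $\norm V$ vanishes near $X$ and that $W$ is integral. It also needs $\cos\theta\neq0$; if $\cos\theta=0$, then $\norm{\overline V}=\norm V$ and this case does not arise. Together these give a constant $C=C(n,\theta,\Lambda)$.
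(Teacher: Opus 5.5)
Your proof is correct, and your upper bound is the paper's, but your lower bound takes a different route.

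\textbf{The paper's route.} The paper argues through monotonicity alone. Because $\norm{\overline V}(B_\sigma(\tilde X))\le\norm{\overline V}(B_\sigma(X))$, Lemma~\ref{Lem:Monotonicity-formula-doubling} gives $2\norm{\overline V}(B_\sigma(X))\ge\om_n\tilde\Theta^n(\norm{\overline V},X)\,\sigma^n$ at every scale. A pointwise density lower bound on the regular part of $M$ (of full $\mcH^n$-measure, hence dense) is then carried to all of $\overline M$ by the upper semicontinuity in Corollary~\ref{Cor:density-upper-semi-continuity}.

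\textbf{Your route.} On $\overline M$ you use the isoperimetric bound of Lemma~\ref{Lem:Ahlfors-regularity} (Michael--Simon plus the ODE for $\mcH^n(\overline M\cap B_r)$). Away from $\overline M$ you use the constancy theorem on $\p\mbH^{n+1}$, spliced with a nearest-point argument.

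\textbf{What each buys.} The monotonicity route is more economical: it needs no Sobolev inequality and no split into scales. Your wetted-region case also follows from it in one line. Constancy gives $\tilde\Theta^n(\norm{\overline V},X)=2m\abs{\cos\theta}$ at $X\in\spt\norm W\setminus\overline M$, so the nearest point $Y$ is not needed. Your route, however, explicitly handles the points of $\spt\norm W\setminus\overline M$. The paper's one-line proof leaves these implicit: it cites only $\tilde\Theta^n\geq1-\cos\theta$ on $M$. As stated, that bound holds at points of $\p M$; interior points have density $1$, which still suffices. Your route also shows why the constant must degenerate like $\abs{\cos\theta}$ as $\theta\to\frac{\pi}{2}$.

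\textbf{Two bookkeeping points.} First, Lemma~\ref{Lem:Ahlfors-regularity} and Lemma~\ref{Lem:M-S-ineq} are stated for $\overline V\in\mathscr V(\theta,\Lambda)$, which includes stability, and stability is not assumed here. You should note that their lower-bound arguments use only minimality, the capillary boundary condition, $\mcH^{n-2}({\rm Sing}_\theta V)=0$ and the mass bound. Second, apply constancy in $B_{\min\{d,1/2\}}(X)$, so that you stay in the region where stationarity holds.
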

\begin{proof}
The upper bound follows by the same argument as in the proof of Lemma \ref{Lem:Ahlfors-regularity}.
The lower bound follows from Lemma \ref{Lem:Monotonicity-formula-doubling} and Corollary \ref{Cor:density-upper-semi-continuity}(1), together with the fact that $\tilde\Theta^n(\norm{\overline{V}},X)\geq1-\cos\theta$ for $X\in M%
$ (recalling Definition \ref{Defn:theta-regular-points-manifold} \ref{it:defThetaBoundary} \ref{it:defNonemptyIntersection}, and the assumption $\mcH^{n-2}\left({\rm Sing}_\theta V\right)=0$.
\end{proof}

\subsection{Tilt-excess controls}

\begin{proposition}\label{Prop:tilt-excess-control-x_1}

Let $n\geq2, \theta\in[\frac{\pi}{2},\pi)$, $\Lambda\in[1,\infty)$.
Let $M$ be a capillary minimal hypersurface in the sense of Definition \ref{defn:stable-capillary-minimal-hypersurface}, put $V\coloneqq\abs{M}$.
Assume $\mcH^{n-2}\left({\rm Sing}_\theta V\right)=0$, there exists $W$ such that $(V,W)$ is $\mbF_\theta$-stationary in $\mbC^\theta_2$ with $\left(\norm{V}-\cos\theta\norm{W}\right)(\mbC^\theta_2)\leq\Lambda$, and for some $\ep\in[0,1)$,
\eq{\label{condi:Lem-tilt-height-excess-small-height-weakly-stable}
2^{-1}\dist_\mcH\left(\overline{M}%
\cap\left(\mbR\times B^{n}_2(0)\right),\{0\}\times B^{n}_2(0)\right)
\leq\ep.
}
Then
\eq{
\int_{M%
\cap B_1(0)}\left(1-\langle\nu,e_1\rangle^2\right)\rd\mcH^n
\leq 8\epsilon^2 \mathcal{H}^n\left(M\cap B_2(0)\right).
}
\end{proposition}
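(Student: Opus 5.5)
The plan is a Caccioppoli-type (tilt controlled by height) argument, using the coordinate function $x_1$ restricted to $M$. Since $\nu_1=\langle\nu,e_1\rangle$ and $e_1^\top=e_1-\nu_1\nu$, we have
\[
\abs{\na x_1}^2=\abs{e_1^\top}^2=1-\nu_1^2 .
\]
So the left-hand side of the claimed inequality is the Dirichlet energy of $x_1$ on $M\cap B_1(0)$. The hypothesis \eqref{condi:Lem-tilt-height-excess-small-height-weakly-stable} says that $0\le x_1\le 2\ep$ on $\overline{M}\cap(\mbR\times B^n_2(0))$. The boundary of $M$ lies in $\p\mbH^{n+1}=\{x_1=0\}$.

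The key identity comes from minimality. Since $H=0$ we have ${\rm div}_M e_1^\top=-H\nu_1=0$, hence for a cutoff $\varphi$
\[
{\rm div}_M\big(x_1\varphi^2e_1^\top\big)=\varphi^2\abs{\na x_1}^2+2x_1\varphi\langle\na\varphi,\na x_1\rangle .
\]
We integrate over $M$ with a Lipschitz cutoff $\varphi$ satisfying $\varphi\equiv1$ on $B_1(0)$, $\spt\varphi\subset B_2(0)$ and $\abs{D\varphi}\le1$. The boundary flux $\int_{\p M}x_1\varphi^2\langle e_1,\eta\rangle$ vanishes because $x_1=0$ on $\p M$. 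This is the point where the capillary condition plays no role, so no angle-dependent term appears. Therefore
\[
\int_M\varphi^2\abs{\na x_1}^2=-2\int_M x_1\varphi\langle\na\varphi,\na x_1\rangle .
\]

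Absorbing by Cauchy--Schwarz (or Young) gives
\[
\int_M\varphi^2\abs{\na x_1}^2\le 4\int_M x_1^2\abs{\na\varphi}^2 .
\]
Then $x_1\le2\ep$ and $\abs{\na\varphi}\le\chi_{B_2}$ yield a bound of the form $C\ep^2\mcH^n(M\cap B_2(0))$. To reach the stated constant $8$, I would tune this step: for instance apply Young's inequality with a weight, or use the identity with $x_1^2$ in place of $x_1$, namely $\tfrac12\,{\rm div}_M\big(\varphi^2\na x_1^2\big)$. Since only smallness in $\ep^2$ matters downstream, the precise constant is not the essential issue.

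The main obstacle is justifying the divergence theorem in the presence of ${\rm Sing}_\theta V$. Here $M$ is only a $C^2$ hypersurface away from a closed set with $\mcH^{n-2}({\rm Sing}_\theta V)=0$, and at $\theta$-regular boundary points several sheets may touch. I would handle this as in the proof of Lemma~\ref{Lem:M-S-ineq}. Multiply $\varphi$ by cutoffs $\zeta_j$ that vanish near ${\rm Sing}_\theta V$, with $\zeta_j\to1$ pointwise and $\int_M\abs{\na\zeta_j}\to0$. Such cutoffs exist because $\mcH^{n-2}({\rm Sing}_\theta V)=0$ and $M$ has the Ahlfors upper bound of Lemma~\ref{Lem:Ahlfors-regularity} (standard covering as in \cite{SS81,wickramasekera2008regularity2density}). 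The extra error term is bounded by $2\ep\,\sup\abs{\na x_1}\int\abs{\na\zeta_j}\le 2\ep\int\abs{\na\zeta_j}\to0$. On the regular part, including the $\theta$-regular boundary sheets, the computation is classical, and letting $j\to\infty$ completes the argument.
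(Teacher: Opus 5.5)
Your argument is essentially the paper's: the same test field $x_1\varphi^2e_1$ (its tangential part is your $x_1\varphi^2e_1^\top$), the vanishing wall contribution, and Cauchy--Schwarz absorption. The only difference is that the paper reads the identity directly off the $\mbF_\theta$-stationarity of $(V,W)$: the field vanishes on $\p\mbH^{n+1}$, so it is admissible and the $W$-term drops. That makes your capacity cutoff around ${\rm Sing}_\theta V$ unnecessary, though your route is also valid.

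For the constant, use that $\na\varphi=0$ on $B_1(0)$. Applying $2ab\leq a^2+b^2$ only where $\na\varphi\neq0$, and cancelling that part of the left side, gives $\int_{M\cap B_1(0)}\abs{\na x_1}^2\leq\int_M x_1^2\abs{\na\varphi}^2\leq 4\ep^2\,\mathrm{Lip}(\varphi)^2\,\mcH^n(M\cap B_2(0))$. This is at most $8\ep^2\mcH^n(M\cap B_2(0))$ once $\mathrm{Lip}(\varphi)\leq\sqrt2$. Your $x_1^2$ variant would not help here, since $\na x_1=e_1^\top$ makes it literally the same identity.
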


\begin{proof}
Let $\zeta\in C^1_c(B_{2}(0))$ be a cut-off function which is identically $1$ in $B_1(0)$ with $\abs{D\zeta}\leq2$.
Testing the first variation formula \eqref{eq:Euc-Riem-capillary-1st-variation} with $\psi(X)=x_1\zeta^2(X)e_1$ (which vanishes on $\p\mbH^{n+1}$ and hence admissible), we get
\eq{
\int\left(1-\langle\nu,e_1\rangle^2\right)\zeta^2\rd\norm{V}
\leq \int2\abs{x_1}\abs{\zeta}\abs{\langle\na\zeta,e_1\rangle}\rd\norm{V},
}
by Cauchy-Schwarz inequality we find
\eq{\label{ineq:height-tilt-Cauchy-Schwarz}
2\abs{x_1}\abs{\zeta}\abs{\langle\na\zeta,e_1\rangle}
=2\abs{x_1}\abs{\zeta}\abs{\langle\na\zeta,e^\top_1\rangle}
\leq 2\abs{x_1}^2\abs{\na\zeta}^2+\frac{1}{2}\left(1-\langle\nu,e_1\rangle^2\right)\zeta^2.
}
Using the H\"older inequality in conjunction with Lemma \ref{Lem:Ahlfors-regularity}, taking also into account that $V$ is integral, we obtain the required estimate.

\end{proof}

\section{Sheeting theorems}
\label{sec:SheetingTheorem}

\begin{lemma}\label{Lem:normalized-Sobolev}
Let $n\geq2$, $\theta\in(0,\pi)$, and $\Lambda\in[1,\infty)$.
Let $\overline V\in\mathscr V(\theta,\Lambda)$, and denote by $M,V,W$ the
corresponding objects in Definition \ref{defn:varifold-class}.
There is a constant $C=C(n,\theta)$ such that every Lipschitz function $f$
compactly supported in $\mbC_2^\theta$ satisfies the following inequalities.
If $n\geq3$, then
\eq{\label{ineq:normalized-Sobolev}
\left(\int_M\abs{f}^{\frac{2n}{n-2}}\rd\mcH^n\right)^{\frac{n-2}{n}}
\leq C\int_M\abs{\na f}^2\rd\mcH^n.
}
If $n=2$, then
\eq{\label{ineq:normalized-Sobolev-n=2}
\left(\int_M\abs{f}^4\rd\mcH^2\right)^{\frac12}
\leq C\left(\int_M f^2\rd\mcH^2\right)^{\frac12}
\left(\int_M\abs{\na f}^2\rd\mcH^2\right)^{\frac12}.
}
\end{lemma}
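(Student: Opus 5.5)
The plan is to derive both inequalities from the $L^1$ Michael--Simon-type inequality of Lemma \ref{Lem:M-S-ineq}, applied to powers of $\abs f$. Since $\mathscr V(\theta,\Lambda)=\mathscr V(\pi-\theta,\Lambda)$, I may assume $\theta\in[\frac\pi2,\pi)$ as in Section \ref{sec:capillaryFirstVariation}. That lemma holds for non-negative Lipschitz functions compactly supported in $\mbC^\theta_2$, and the constant depends only on $n,\theta$. Replacing $f$ by $\abs f$ (which is still Lipschitz with $\abs{\na\abs f}=\abs{\na f}$ a.e.), I reduce to the case $f\geq0$.

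For $n\geq3$, I would set $\gamma=\frac{2(n-1)}{n-2}$ and apply \eqref{ineq-Michael-Simon} to $\varphi=f^\gamma$. This function is Lipschitz with compact support, since $\gamma>1$ and $f$ is bounded. Note that $\gamma\frac{n}{n-1}=\frac{2n}{n-2}$ and $2(\gamma-1)=\frac{2n}{n-2}$. Then
\[
\Bigl(\int_M f^{\frac{2n}{n-2}}\Bigr)^{\frac{n-1}{n}}\leq C\gamma\int_M f^{\gamma-1}\abs{\na f}\leq C\gamma\Bigl(\int_M f^{\frac{2n}{n-2}}\Bigr)^{\frac12}\Bigl(\int_M\abs{\na f}^2\Bigr)^{\frac12},
\]
by Cauchy--Schwarz. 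Dividing by $(\int f^{2n/(n-2)})^{1/2}$, which is finite and which I may assume nonzero, leaves the exponent $\frac{n-1}{n}-\frac12=\frac{n-2}{2n}$. Squaring then gives \eqref{ineq:normalized-Sobolev}.

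For $n=2$, I would apply \eqref{ineq-Michael-Simon} to $\varphi=f^2$, whose $L^{n/(n-1)}=L^2$ norm is $(\int f^4)^{1/2}$. This gives
\[
\Bigl(\int_M f^4\Bigr)^{\frac12}\leq 2C\int_M f\abs{\na f}\leq 2C\Bigl(\int_M f^2\Bigr)^{\frac12}\Bigl(\int_M\abs{\na f}^2\Bigr)^{\frac12},
\]
which is exactly \eqref{ineq:normalized-Sobolev-n=2}.

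The main point to check is that $f^\gamma$ and $f^2$ are admissible in Lemma \ref{Lem:M-S-ineq}, i.e.\ that they are non-negative, Lipschitz, and compactly supported in $\mbC^\theta_2$. The support and non-negativity are clear. The only delicate part, the presence of $\theta$-singularities and of boundary points, has already been absorbed into the approximation argument in the proof of Lemma \ref{Lem:M-S-ineq}, which uses $\mcH^{n-2}({\rm Sing}_\theta V)=0$. So no new capacity argument is needed here.
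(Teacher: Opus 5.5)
Your proposal is correct and matches the paper's proof. After reducing to $\theta\in[\frac\pi2,\pi)$, the paper likewise applies Lemma \ref{Lem:M-S-ineq} to $\abs{f}^{2(n-1)/(n-2)}$ with H\"older (your Cauchy--Schwarz step) for $n\geq3$, and to $f^2$ with Cauchy--Schwarz for $n=2$, relying on the approximation across $\operatorname{Sing}_\theta V$ already built into that lemma; your exponent bookkeeping $\gamma\frac{n}{n-1}=2(\gamma-1)=\frac{2n}{n-2}$ is exactly what makes the argument close.
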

\begin{proof}
As discussed in the beginning of Section \ref{sec:capillaryFirstVariation}, we assume WLOG that $(V,W)$ is $\mbF_\theta$-stationary for $\theta\in[\frac\pi2,\pi)$.

For $n\geq3$, apply Lemma \ref{Lem:M-S-ineq} to
$\abs{f}^{2(n-1)/(n-2)}$ and use H\"older's inequality.
For $n=2$, apply the same lemma to $f^2$ and then use Cauchy--Schwarz:
\[
\left(\int_M\abs{f}^4\rd\mcH^2\right)^{\frac12}
\leq C\int_M\abs{f}\abs{\na f}\rd\mcH^2
\leq C\left(\int_M f^2\rd\mcH^2\right)^{\frac12}
\left(\int_M\abs{\na f}^2\rd\mcH^2\right)^{\frac12}.
\]
The approximation across $\operatorname{Sing}_\theta V$ is already included in Lemma \ref{Lem:M-S-ineq}.
\end{proof}

We next combine Theorem \ref{thm:differential-capillary-Schoen} with the stability inequality to prove a Caccioppoli inequality in the spirit of \cite{Bellettini25}. For the ansatz $h_\theta$ defined as in \eqref{defn:h_theta}, we note that the idea of using $h_\theta$ in the capillary stability inequality can be at least dated back to \cite{Hong2021capillary}.
\begin{lemma}
The differential equations \eqref{eq:h_theta} hold.
\end{lemma}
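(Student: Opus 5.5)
The plan is to prove both identities in \eqref{eq:h_theta} directly from the Jacobi-field identities \eqref{eq:lemLaplacian} and \eqref{eq:lemBoundaryDeri}, applied with the constant vector $w=e_1$, together with the capillary boundary relations \eqref{eq:nu_and_eta}. Since $h_\theta=1-\cos\theta\,\nu_1$ is affine in $\nu_1$, no gradient terms will appear, and both identities should reduce to one-line computations.

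\emph{Interior equation.} Since $h_\theta$ is affine in $\nu_1=\langle\nu,e_1\rangle$, \eqref{eq:lemLaplacian} gives
\[
\Delta h_\theta=-\cos\theta\,\Delta\nu_1=\cos\theta\,\abs{A}^2\nu_1 .
\]
Adding $\abs{A}^2h_\theta=\abs{A}^2-\cos\theta\,\nu_1\abs{A}^2$, the $\nu_1$ terms cancel, leaving $(\Delta+\abs{A}^2)h_\theta=\abs{A}^2$ on $M$.

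\emph{Boundary condition.} By \eqref{eq:lemBoundaryDeri} with $w=e_1$, and using $\langle\eta,e_1\rangle=-\sin\theta$ from \eqref{eq:nu_and_eta},
\[
\partial_\eta h_\theta=-\cos\theta\,\partial_\eta\nu_1=\cos\theta\,A(\eta,\eta)\langle\eta,e_1\rangle=-\sin\theta\cos\theta\,A(\eta,\eta).
\]
On $\partial M$ the capillary condition gives $\nu_1=\cos\theta$, so $h_\theta=1-\cos^2\theta=\sin^2\theta$. Then
\[
\cot\theta\,A(\eta,\eta)\,h_\theta=\sin\theta\cos\theta\,A(\eta,\eta),
\]
which exactly cancels $\partial_\eta h_\theta$. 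This yields the boundary identity in \eqref{eq:h_theta}.

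No step is a genuine obstacle; the argument is routine. The only point needing care is the sign convention: $\eta$ is the outer co-normal and $\nu$ is oriented so that $\langle\nu,e_1\rangle=\cos\theta$. This is what makes $\langle\eta,e_1\rangle=-\sin\theta$ in \eqref{eq:nu_and_eta}, and it must be used consistently with \eqref{eq:lemBoundaryDeri}.
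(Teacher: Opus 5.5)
Your proof is correct and takes the same route as the paper: apply \eqref{eq:lemLaplacian} and \eqref{eq:lemBoundaryDeri} with $w=e_1$, then use $\langle\eta,e_1\rangle=-\sin\theta$ and $\nu_1=\cos\theta$ on $\partial M$ from \eqref{eq:nu_and_eta}. The paper states this in one line, and your computation supplies the details, including the correct sign bookkeeping.
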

\begin{proof}
The equations follow directly from \eqref{eq:lemLaplacian}, \eqref{eq:lemBoundaryDeri}, and the capillary boundary condition \eqref{eq:nu_and_eta}.
\end{proof}

\begin{lemma}\label{lem:capillary-truncated-caccioppoli}
Under the assumptions of Lemma \ref{Lem:normalized-Sobolev}. There are constants $L_0=L_0(n,\theta)>0$ and $C=C(n,\theta)$ such that, whenever
$0<\ell<L\leq L_0$ and $\varphi\in{\rm Lip}_c(\mbC_2^\theta)$, one has
\eq{\label{eq:two-level-caccioppoli}
\int_M\abs{\na(\varphi (g_\theta-L h_\theta)^+)}^2\rd\mcH^n
\leq\frac{C}{1-\ell/L}
\int_M(g_\theta-\ell h_\theta)^{+^2}\abs{\na\varphi}^2\rd\mcH^n.
}
\end{lemma}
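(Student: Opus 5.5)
The strategy follows Bellettini's Caccioppoli argument \cite[Lemma 4.1]{Bellettini25}, with the stability inequality \eqref{ineq:SS81-(1.17)} as the starting point. The two boundary relations \eqref{eq:g_theta-bdry-condition} and \eqref{eq:h_theta} are used to cancel the $\cot\theta\int_{\p M}A(\eta,\eta)\varphi^2$ term.

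Write $u_L\coloneqq(g_\theta-Lh_\theta)^+$ and test \eqref{ineq:SS81-(1.17)} with $\varphi u_L$. Away from ${\rm Sing}_\theta V$ this is legitimate because $u_L$ is Lipschitz. The extension across ${\rm Sing}_\theta V$ uses the cut-off argument of Lemma \ref{Lem:M-S-ineq}, since $\mcH^{n-2}({\rm Sing}_\theta V)=0$. One must also check that $\{g_\theta>Lh_\theta\}$ avoids $\{g_\theta=0\}$, which holds because $h_\theta\geq 1-\abs{\cos\theta}>0$.

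Expanding $\abs{\na(\varphi u_L)}^2$ and integrating the cross term $\int\varphi^2 u_L\Delta u_L$ by parts on $\{u_L>0\}$ gives
\[
\int_M\abs{\na(\varphi u_L)}^2=\int_M u_L^2\abs{\na\varphi}^2-\int_M\varphi^2u_L\Delta u_L+\int_{\p M}\varphi^2u_L\p_\eta u_L.
\]
By \eqref{eq:g_theta-bdry-condition} and \eqref{eq:h_theta}, on $\p M\cap\{u_L>0\}$ we have $\p_\eta u_L=-\cot\theta A(\eta,\eta)u_L$. Hence the boundary integral is exactly $-\cot\theta\int_{\p M}A(\eta,\eta)\varphi^2u_L^2$, which cancels the boundary term in stability. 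This linearity of the boundary condition in $g_\theta$ and $h_\theta$ is the reason for introducing $h_\theta$.

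Stability then yields
\[
\int_M\varphi^2u_L(\Delta+\abs{A}^2)u_L\le\int_M u_L^2\abs{\na\varphi}^2.
\]
On $\{u_L>0\}$, Theorem \ref{thm:differential-capillary-Schoen} and \eqref{eq:h_theta} give
\[
u_L(\Delta+\abs{A}^2)u_L=u_L\Big((\Delta+\abs{A}^2)g_\theta-L\abs{A}^2\Big)\ge u_L\abs{A}^2\Big(\frac{\delta_{n,\theta}}{g_\theta}-L\Big).
\]
Since $g_\theta\le C(n,\theta)$, choosing $L_0$ small makes the right side at least $c\,u_L\abs{A}^2/g_\theta\ge0$. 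This yields
\[
\int\varphi^2\frac{u_L}{g_\theta}\abs{A}^2\lesssim\int u_L^2\abs{\na\varphi}^2.
\]

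The gradient term needs more than this, since we need $\int\abs{\na(\varphi u_L)}^2$ itself and the lower bound $\delta\abs{A}^2$ alone does not control $\abs{\na u_L}^2$. I would use the identity $\int\abs{\na(\varphi u)}^2=\int u^2\abs{\na\varphi}^2-\int\varphi^2u\Delta u$ (plus the boundary term) combined with $-u\Delta u\le\abs{A}^2u^2$ from the Schoen inequality:
\[
\int\abs{\na(\varphi u_L)}^2\le\int u_L^2\abs{\na\varphi}^2+\int\varphi^2\abs{A}^2u_L^2-\text{(same boundary term)}.
\]
The task then reduces to controlling $\int\varphi^2\abs{A}^2u_L^2$. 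For this I would run the stability argument above with the lower level $\ell$ in place of $L$. It gives
\[
\int\varphi^2\abs{A}^2u_\ell\Big(\frac{\delta}{g_\theta}-\ell\Big)\le\int u_\ell^2\abs{\na\varphi}^2.
\]
On $\{u_L>0\}$ we have $u_\ell\ge g_\theta-\ell h_\theta\ge(1-\ell/L)g_\theta$, because $Lh_\theta<g_\theta$ there. Together with $u_L\le u_\ell$ and $u_L\le g_\theta$, this gives $u_L^2\le g_\theta u_\ell\le\frac{C}{1-\ell/L}\cdot\frac{u_\ell\, g_\theta}{\ldots}$. More precisely,
\[
\abs{A}^2u_L^2\le\abs{A}^2 g_\theta u_\ell\le C\,\abs{A}^2\frac{u_\ell}{g_\theta}\cdot\frac{g_\theta^2}{1}.
\]
Bounding $g_\theta^2\le C$ gives $\int\varphi^2\abs{A}^2u_L^2\le C\int u_\ell^2\abs{\na\varphi}^2$.

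The main obstacle is obtaining the factor $(1-\ell/L)^{-1}$ with the correct placement, and ensuring the boundary term still cancels when mixing levels. The cancellation holds because the same Robin-type condition applies to $u_\ell$ and $u_L$ separately. If the crude bound above loses the factor, I would instead test stability with $\varphi\sqrt{u_Lu_\ell}$ or with $\varphi u_L$ against the Schoen inequality for $u_\ell$, following Bellettini's two-level trick. That is where $1-\ell/L$ arises from the lower bound $u_\ell\ge(1-\ell/L)g_\theta$ on $\{u_L>0\}$.
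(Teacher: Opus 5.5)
Your first step is sound and matches the paper. Testing stability with $\varphi u$, the identical Robin conditions for $g_\theta$ and $h_\theta$ cancel the boundary terms exactly. The Schoen inequality with $L_0\le\delta_{n,\theta}/(2\sup g_\theta)$ then gives, for $u_\ell=(g_\theta-\ell h_\theta)^+$,
\[
\frac{\delta_{n,\theta}}{2}\int_M\varphi^2\frac{u_\ell}{g_\theta}\abs{A}^2\leq\int_M u_\ell^2\abs{\nabla\varphi}^2 .
\]
The gap is in how you then bound $\int_M\abs{\nabla(\varphi u_L)}^2$. You use the integration-by-parts identity on its own, without the stability inequality. The boundary integral $\int_{\partial M}\varphi^2u_L\partial_\eta u_L=-\cot\theta\int_{\partial M}A(\eta,\eta)\varphi^2u_L^2$ therefore has nothing left to cancel against. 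It has no sign, since $A(\eta,\eta)$ is arbitrary, and after writing ``$-$(same boundary term)'' you silently drop it. Nothing in your argument controls it; bounding it through the trace identity \eqref{eq-divf-capillary} would need control of $\nabla\abs{A}$. What you actually bound is $\int_M\abs{\nabla(\varphi u_L)}^2+\cot\theta\int_{\partial M}A(\eta,\eta)\varphi^2u_L^2$. That is not the Dirichlet energy the lemma asserts, and not what the Sobolev step of the De Giorgi iteration needs. Your estimate $\int\varphi^2\abs{A}^2u_L^2\leq C\int u_\ell^2\abs{\nabla\varphi}^2$ is correct, but it does not repair this.

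The missing idea is a pointwise gradient bound, and it refutes your remark that $\delta_{n,\theta}\abs{A}^2$ cannot control $\abs{\nabla u_L}^2$. Apply Cauchy--Schwarz to $g_{\theta,k}\nabla g_{\theta,k}=\sum_{i=2}^n\nu_i\nabla\nu_i+k(\nu_1-\cos\theta)\nabla\nu_1$ with $k\leq1$; this gives $\abs{\nabla g_\theta}\leq\abs{A}$. Also $\abs{\nabla h_\theta}\leq\abs{\nabla\nu_1}\leq\abs{A}$, so $\abs{\nabla u_L}\leq(1+L)\abs{A}$ on $\{u_L>0\}$. Hence
\[
\int_M\abs{\nabla(\varphi u_L)}^2\leq2\int_M u_L^2\abs{\nabla\varphi}^2+2(1+L)^2\int_{\{u_L>0\}}\varphi^2\abs{A}^2,
\]
with no boundary term at all. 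For the last integral, on $\{u_L>0\}$ you have $u_\ell/g_\theta=1-\ell h_\theta/g_\theta\geq1-\ell/L$, which you already noticed. Combined with the level-$\ell$ estimate above, this bounds it by $\frac{C}{1-\ell/L}\int u_\ell^2\abs{\nabla\varphi}^2$. For the first term use $u_L\leq u_\ell$. This is exactly where the factor $(1-\ell/L)^{-1}$ comes from, and it is the paper's argument. Your fallback with $\varphi\sqrt{u_Lu_\ell}$ is not needed.
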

\begin{proof}
By Theorem \ref{thm:differential-capillary-Schoen},
\[
g_\theta(\Delta+\abs{A}^2)g_\theta
\geq\delta_{n,\theta}\abs{A}^2
\quad\text{on }\{g_\theta>0\},
\]
and by \eqref{eq:lemBoundaryG},
\eq{\label{eq:g_theta-bdry}
\partial_\eta g_\theta
+\cot\theta\,A(\eta,\eta)g_\theta=0
\quad\text{on }\partial M\cap\{g_\theta>0\},
}
In view of \eqref{eq:h_theta}, we see that $g_\theta$ and $h_\theta$ satisfy the same boundary condition.

To proceed, put $k_n=1$ if $n=2$, $=\frac1{n-2}$ if $n\geq3$. Set $G_{n,\theta}\coloneqq\max_{\xi\in\mathbb S^n}\sqrt{1-\xi_1^2-\xi_{n+1}^2+k_n(\cos\theta-\xi_1)^2}$ so that $g_\theta\leq G_{n,\theta}$ on $M$, and choose $L_0=\min\left\{1,\frac{\delta_{n,\theta}}{2G_{n,\theta}}\right\}$.
On $\{g_\theta-\ell h_\theta>0\}$, the function $g_\theta$ is bounded away from zero and by \eqref{eq:h_theta},
\eq{\label{eq:g_theta-l-h_theta}
(\Delta+\abs{A}^2)(g_\theta-\ell h_\theta)
=(\Delta+\abs{A}^2)g_\theta-\ell\abs{A}^2.
}
Using $(g_\theta-\ell h_\theta)^+\varphi$ in the stability inequality \eqref{ineq:SS81-(1.17)-smooth-case} then integrating by parts,
and using \eqref{eq:g_theta-bdry}, \eqref{eq:h_theta}, we obtain 
\eq{\label{eq:truncation-before-lower-bound}
\int_{\{g_\theta-\ell h_\theta>0\}}(g_\theta-\ell h_\theta)^+
(\Delta+\abs{A}^2)(g_\theta-\ell h_\theta)
\varphi^2\rd\mcH^n
\leq\int_M(g_\theta-\ell h_\theta)^{+^2}\abs{\na\varphi}^2\rd\mcH^n.
}
By \eqref{eq:g_theta-l-h_theta}, Theorem \ref{thm:differential-capillary-Schoen}, and the choice of $L_0$, we have
\eq{
(g_\theta-\ell h_\theta)^+(\Delta+\abs{A}^2)(g_\theta-\ell h_\theta)
\geq\frac{(g_\theta-\ell h_\theta)^+}{g_\theta}
\left(\delta_{n,\theta}-\ell g_\theta\right)\abs{A}^2
\geq\frac{\delta_{n,\theta}}2
\frac{(g_\theta-\ell h_\theta)^+}{g_\theta}\abs{A}^2.
}
Consequently,
\eq{\label{eq:weighted-A-caccioppoli}
\frac{\delta_{n,\theta}}2
\int_{\{g_\theta-\ell h_\theta>0\}}\frac{(g_\theta-\ell h_\theta)^+}{g_\theta}\abs{A}^2\varphi^2\rd\mcH^n
\leq\int_M(g_\theta-\ell h_\theta)^{+^2}\abs{\na\varphi}^2\rd\mcH^n.
}
Finally, on $\{g_\theta-Lh_\theta>0\}$ we have
\eq{
\frac{(g_\theta-\ell h_\theta)^+}{g_\theta}
=1-\frac{\ell h_\theta}{g_\theta}
\geq1-\frac\ell L,
}
while using the gradient estimate of $g_\theta$ (see \eqref{ineq:nabla-g_theta-estimate}), and also $\abs{\na h_\theta}\leq\abs{\na\nu_1}\leq\abs{A}$, we have
\eq{\label{ineq:nabla-g_theta-Lh_theta}
\abs{\na (g_\theta-L h_\theta)^+}\leq(1+L)\abs{A}.
}
Since $(g_\theta-L h_\theta)\leq (g_\theta-\ell h_\theta)$, \eqref{eq:weighted-A-caccioppoli} can be further written as
\eq{
\frac{1}{(1+L)^2}\int_{\{g_\theta-Lh_\theta>0\}}\abs{\na (g_\theta-L h_\theta)^+}^2\varphi^2\rd\mcH^n
\leq&\int_{\{g_\theta-Lh_\theta>0\}}\abs{A}^2\varphi^2\rd\mcH^n\\
\leq&\frac{C(n,\theta)}{1-\ell/L}
\int_M(g_\theta-\ell h_\theta)^{+^2}\abs{\na\varphi}^2\rd\mcH^n.
}
Expanding $\na(\varphi(g_\theta-L h_\theta)^+)$ and using again $(g_\theta-L h_\theta)\leq (g_\theta-\ell h_\theta)$, yields \eqref{eq:two-level-caccioppoli}.

\end{proof}

\begin{lemma}\label{lem:capillary-De-Giorgi}
Under the assumptions of Lemma \ref{Lem:normalized-Sobolev}, there are constants $\varepsilon=\varepsilon(n,\theta)>0$ and $C=C(n,\theta)>0$ with the following property.
If $0<R\leq\frac{1}{2(1+\abs{\cot\theta})}$ and
\eq{
\mathcal E_R
=R^{-n}\int_{M\cap\mbC_R^\theta}g_\theta^2\rd\mcH^n
\leq\varepsilon,
}
then
\eq{\label{eq:De-Giorgi-pointwise-tilt}
\sup_{M\cap\mbC_{R/2}^\theta}g_\theta
\leq C\mathcal E_R^{\frac12}.
}
\end{lemma}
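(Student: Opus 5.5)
We run a De Giorgi iteration on the truncations $(g_\theta-\ell h_\theta)^+$, using the two-level Caccioppoli inequality (Lemma \ref{lem:capillary-truncated-caccioppoli}) together with the Sobolev inequality of Lemma \ref{Lem:normalized-Sobolev}. Since $h_\theta=1-\cos\theta\,\nu_1\in[1-\abs{\cos\theta},1+\abs{\cos\theta}]$ is bounded above and below by positive constants depending on $\theta$, the inequality $g_\theta\leq \ell h_\theta$ is equivalent, up to constants, to $g_\theta\lesssim\ell$. It therefore suffices to find a level $\ell_\infty\leq C\mathcal E_R^{1/2}$ with $(g_\theta-\ell_\infty h_\theta)^+\equiv0$ on $M\cap\mbC^\theta_{R/2}$. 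By scaling, which preserves $g_\theta$, $h_\theta$, the stability inequality and the capillary condition, we may normalize $R$; the constraint on $R$ only guarantees that all cylinders used lie in $\mbC^\theta_2$, where both lemmas apply.

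Set up the iteration with radii $r_j=\frac R2+\frac R{2^{j+1}}$, levels $\ell_j=K(1-2^{-j-1})$ with $K\leq L_0$ to be chosen, and energies
\[
a_j\coloneqq R^{-n}\int_{M\cap\mbC^\theta_{r_j}}(g_\theta-\ell_jh_\theta)^{+2}\rd\mcH^n.
\]
Take cutoffs $\varphi_j$ that equal $1$ on $\mbC^\theta_{r_{j+1}}$, are supported in $\mbC^\theta_{r_j}$, and satisfy $\abs{\na\varphi_j}\lesssim 2^j/R$. The cylinder has height proportional to its radius, so such cutoffs exist. Apply Lemma \ref{lem:capillary-truncated-caccioppoli} with $\ell=\ell_j$ and $L=\ell_{j+1}$. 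Then $1-\ell/L\gtrsim 2^{-j}$, and we get
\[
\int\abs{\na(\varphi_j(g_\theta-\ell_{j+1}h_\theta)^+)}^2\lesssim 4^j\cdot 2^j R^{-2}\int_{\mbC^\theta_{r_j}}(g_\theta-\ell_jh_\theta)^{+2}.
\]
The boundary terms have already been absorbed in that lemma, so cutoffs that do not vanish on $\p\mbH^{n+1}$ are allowed. This is the reason for working with $\mbC^\theta$ rather than interior balls.

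Next combine Sobolev and H\"older. For $n\geq3$, Lemma \ref{Lem:normalized-Sobolev} followed by H\"older on the set $\{g_\theta>\ell_{j+1}h_\theta\}\cap\mbC^\theta_{r_j}$ gives
\[
\int (\varphi_j(g_\theta-\ell_{j+1}h_\theta)^+)^2\leq \Big(\int\abs{\na(\cdot)}^2\Big)\,\abs{\{g_\theta>\ell_{j+1}h_\theta\}\cap\mbC^\theta_{r_j}}^{2/n}.
\]
We bound the measure by Chebyshev:
\[
\abs{\{g_\theta-\ell_jh_\theta>(\ell_{j+1}-\ell_j)h_\theta\}\cap\mbC^\theta_{r_j}}\lesssim 4^jK^{-2}R^na_j,
\]
using that $h_\theta$ is bounded below. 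This yields the recursion $a_{j+1}\leq C\,b^j K^{-4/n}a_j^{1+2/n}$ with $b=b(n)$. For $n=2$ we do the analogous computation with the interpolation inequality \eqref{ineq:normalized-Sobolev-n=2}; it produces a superlinear recursion of the same kind, for instance with exponent $1+\frac12$. The standard fast-geometric-convergence lemma then gives $a_j\to0$ as soon as $a_0\leq c K^{2}$ for a small $c$ depending on $n,\theta$. Since $a_0\leq\mathcal E_R$, we choose $K=C'\mathcal E_R^{1/2}$. This choice is admissible, that is $K\leq L_0$, provided $\varepsilon$ is small. Then $(g_\theta-Kh_\theta)^+=0$ a.e. on $M\cap\mbC^\theta_{R/2}$, and continuity of $g_\theta$ gives \eqref{eq:De-Giorgi-pointwise-tilt} with $C\approx C'(1+\abs{\cos\theta})$.

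The main obstacle is justifying the use of Lemma \ref{lem:capillary-truncated-caccioppoli} and of the Sobolev inequality across ${\rm Sing}_\theta V$. Both are stated for Lipschitz test functions, and the Caccioppoli proof integrates by parts. I would first check that the truncated functions $\varphi(g_\theta-\ell h_\theta)^+$ are Lipschitz on the regular part, since $\abs{\na g_\theta}\leq\abs{A}$. Then I would cut off near the singular set using the capacity argument behind Lemma \ref{Lem:M-S-ineq}, which is available because $\mcH^{n-2}({\rm Sing}_\theta V)=0$. This step needs $\abs{A}\in L^2_{loc}$, which comes from stability. A secondary care point is that the constants in the recursion must depend only on $n,\theta$; this holds because $\Lambda$ never enters the Sobolev or Caccioppoli constants.
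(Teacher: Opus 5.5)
Your proposal is correct and is essentially the paper's argument. It is a De Giorgi iteration on $(g_\theta-\ell h_\theta)^+$ with the same radii $r_j$ and levels $\ell_j$, driven by Lemma \ref{lem:capillary-truncated-caccioppoli} and Lemma \ref{Lem:normalized-Sobolev}, with top level $C\mathcal E_R^{1/2}\leq L_0$. The only difference is bookkeeping: you close the recursion with H\"older plus a Chebyshev bound on $\{(g_\theta-\ell_{j+1}h_\theta)^+>0\}$, giving the one-step recursion $a_{j+1}\leq Cb^jK^{-4/n}a_j^{1+2/n}$. The paper instead uses the pointwise lower bound $(g_\theta-\ell_jh_\theta)^+\geq(1-\abs{\cos\theta})L2^{-j-2}$ on that set, applies Sobolev to $\zeta_j(g_\theta-\ell_jh_\theta)^+$ and Caccioppoli between $\ell_{j-1}$ and $\ell_j$, and iterates the two-step recursion $y_{j+1}\leq C^{j+1}y_{j-1}^{\kappa_n}$ along even indices.
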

\begin{proof}
We follow the iteration of Bellettini \cite[(17)--(23)]{Bellettini25},
using Lemma \ref{lem:capillary-truncated-caccioppoli} in place of the interior
Caccioppoli inequality.
Put for simplicity $M_r=M\cap\mbC_r^\theta$, $q_{\ell}=(g_\theta-\ell h_\theta)^+$.
Fix $0<L\leq L_0$, and define
\eq{
r_j=R\left(\frac12+2^{-j-1}\right),
\quad
\ell_j=L\left(1-2^{-j-1}\right),
\quad
Y_j=\int_{M_{r_j}}q_{\ell_j}^2\rd\mcH^n.
}
On $M_{r_{j+1}}\cap\{q_{\ell_{j+1}}>0\}$, we have
\eq{
q_{\ell_j}
=g_\theta-\ell_jh_\theta
\geq(\ell_{j+1}-\ell_j)h_\theta
\geq(1-\abs{\cos\theta})L2^{-j-2}.
}
Set
\eq{
\kappa_n=
\begin{cases}
2,&n=2,\\
\dfrac{n}{n-2},&n\geq3.
\end{cases}
}
Consider a cutoff
function $\zeta_j$ supported in $\mbC_{r_j}^\theta$, equal to one on
$\mbC_{r_{j+1}}^\theta$, and satisfying
$\abs{\na\zeta_j}\leq C(n,\theta)R^{-1}2^j$.
Hence
\begin{align*}
Y_{j+1}\le{}&
\int_{M_{r_{j+1}}\cap\{q_{\ell_{j+1}}>0\}}q_{\ell_{j+1}}^2
\frac{q_{\ell_j}^{2\kappa_n-2}}
{(1-\abs{\cos\theta})^{2\kappa_n-2}
L^{2\kappa_n-2}2^{-(2\kappa_n-2)(j+2)}}\rd\mcH^n\\
\le{}&C^{j+1}L^{-2(\kappa_n-1)}
\int_{M_{r_j}\cap\{q_{\ell_j}>0\}}
(\zeta_jq_{\ell_j})^{2\kappa_n}\rd\mcH^n.
\end{align*}
Here and below, $C=C(n,\theta)>1$ may increase from line to line.
If $n\geq3$, inequality \eqref{ineq:normalized-Sobolev} bounds the last
integral by a constant times
\eq{
\left(\int_{M_{r_j}}
\abs{\nabla(\zeta_jq_{\ell_j})}^2\rd\mcH^n\right)^{\kappa_n}.
}
If $n=2$, inequality \eqref{ineq:normalized-Sobolev-n=2} instead bounds it
by a constant times
\eq{
\underbrace{\left(\int_{M_{r_j}}(\zeta_jq_{\ell_j})^2\rd\mcH^2\right)}_{\leq Y_{j-1}}
\left(\int_{M_{r_j}}
\abs{\nabla(\zeta_jq_{\ell_j})}^2\rd\mcH^2\right).
}
Hence in both cases, we deduce the following estimate by Lemma \ref{lem:capillary-truncated-caccioppoli}:
\eq{
Y_{j+1}
\leq C^{j+1}L^{-2(\kappa_n-1)}
R^{-n(\kappa_n-1)}Y_{j-1}^{\kappa_n}.
}
This is equivalent to the following iteration
\eq{\label{eq:De-Giorgi-preliminary-iteration}
y_{j+1}\leq C^{j+1}y_{j-1}^{\kappa_n},
\qquad j\geq1\quad \text{where }y_j:=\frac{Y_j}{R^nL^2}
}
Iterating along the even subsequence gives, for $l\geq1$,
\eq{
y_{2l}
\leq\left(C^{\sum_{m=1}^l \frac{2m}{\kappa_n^{m}}}y_0\right)^{\kappa_n^l}
\leq\left(C^{\frac{2\kappa_n}{(\kappa_n-1)^2}}y_0\right)^{\kappa_n^l}
}
where we have used the fact that $\sum_{m=1}^l \frac{m}{\kappa_n^m}\le \sum_{m=1}^{+\infty} \frac{m}{\kappa_n^m}=\frac{\kappa_n}{(\kappa_n-1)^2}$.
Thus $y_{2l}\to0$ provided $y_0$ is sufficiently small depending only on
$n$ and $\theta$.
By choosing $L=C(n,\theta)\mathcal E_R^{1/2}$ and decreasing $\varepsilon$ if necessary, we can ensure that $y_0$ is sufficiently small and hence $y_{2l}\to0$, which gives the desired estimate \eqref{eq:De-Giorgi-pointwise-tilt}.

\end{proof}

\begin{proof}[Proof of Theorem \ref{thm:sheetingHalf}]
Applying Lemma \ref{lem:capillary-De-Giorgi} with $R=\sigma$.
After decreasing $\ep_0$, we obtain
\eq{\label{eq:sheeting-uniform-tilt}
\sup_{M\cap\mbC_{\sigma/2}^\theta}g_\theta
\leq C(n,\theta)E_\sigma^{\frac12}
\leq\delta_0(n,\theta).
}

We note that the definition of $g_\theta$ and the fact that $\abs\nu=1$ imply (cf. \eqref{ineq:normal-g-estimate-upward}, \eqref{ineq:nu-n+1-lower-bound}, \eqref{ineq:normal-g-estimate-downward}, \eqref{ineq:nu_n+1+sin-theta})
\eq{
\min\left\{\abs{\nu-\nu_\theta},\abs{\nu-\nu_{-\theta}}\right\}
\leq C(n,\theta)g_\theta,
\quad
\abs{\nu_{n+1}}\geq\frac12\sin\theta.
}

Thus the vertical projection $\pi:\overline{\mbH^{n+1}}\longrightarrow\overline{\mbH^n}$, $(x,x_{n+1})\mapsto x$, is a local diffeomorphism on the immersed hypersurface $\iota:M\to\overline{\mbH^{n+1}}$, including at its boundary $\p M$.
The sign of $\nu_{n+1}$ is constant on each connected component and separates the positive phase ($\nu_{n+1}\geq\frac12\sin\theta$) and negative phase ($\nu_{n+1}\leq-\frac12\sin\theta$).

The gradient estimate in Lemma \ref{Lem:Du+cot-e_1<=g_theta} and
\eqref{eq:sheeting-uniform-tilt} show that every local graph in the positive or negative phase satisfies respectively that
\eq{\label{eq:sheeting-gradient-from-tilt}
\abs{Du+\cot\theta\,e_1}
\leq C(n,\theta)g_\theta,
\quad\text{ or }\quad
\abs{Du-\cot\theta\,e_1}
\leq C(n,\theta)g_\theta.
}
Following the properness and continuation argument in \cite[Proof of Theorem 5]{Bellettini25}, also using $\mcH^{n-2}(\operatorname{Sing}_\theta V)=0$, we can thus deduce the graph decomposition
\eq{
\overline M\cap\mbC_{\sigma/8}^\theta
=\mbC_{\sigma/8}^\theta\cap
\left(\left(\bigcup_{j\in Q^+}\operatorname{graph}(u_j^+)\right)
\cup\left(\bigcup_{j\in Q^-}\operatorname{graph}(u_j^-)\right)\right),
}
where $u_j^\pm$ are smooth functions solving the minimal surface equation and satisfying the capillary boundary condition, such that $u^\pm_j\leq u^\pm_{j+1}$ for $j=1,2,\ldots,q^\pm-1$.

It remains to prove the estimates \eqref{esti:C^2-u^pm}.
Set \footnote{$w_j^\pm$ is the so-called {\em slanted graph function}, recently used by De Masi–Edelen–Gasparetto–Li \cite{DEGL25} in their viscosity approach to the Allard-type boundary regularity.}
\eq{
w_j^\pm=u_j^\pm\pm\cot\theta\,x_1,
\quad
x_*=\left(\frac{\sigma}{16},0,\ldots,0\right),
\quad
a_j^\pm=w_j^\pm(x_*).
}
By \eqref{eq:sheeting-uniform-tilt} and \eqref{eq:sheeting-gradient-from-tilt},
\eq{\label{ineq:w_j-gradient-estimate}
\sup_{B_{\sigma/4}^{n+}(0)}\abs{Dw_j^\pm}
\leq C(n,\theta)E_\sigma^{\frac12}.
}
Integrating this estimate along line segments in the half-ball yields
\eq{
\sigma^{-1}\sup_{B_{\sigma/8}^{n+}(0)}\abs{w_j^\pm-a_j^\pm}
\leq C(n,\theta)E_\sigma^{\frac12}.
}

Let $F(p)=p/\sqrt{1+\abs{p}^2}$ for $p\in\mbR^n$.
Since $u_j^\pm$ solve the minimal surface equation with the capillary boundary condition, the functions $w_j^\pm$ in turn solve
\eq{\label{eq:w_j-MSE}
\operatorname{div}F(\mp\cot\theta e_1+Dw_j^\pm)=0
\quad\text{in }B_{\sigma/4}^{n+}(0),
}
with the constant oblique boundary condition
\eq{
F_1(\mp\cot\theta e_1+Dw_j^\pm)
=F_1(\mp\cot\theta e_1)
\quad\text{on }B_{\sigma/4}^{n}(0)\cap\{x_1=0\}.
}
The linearized boundary operator is uniformly oblique because
\eq{
\frac{\partial F_1}{\partial p_1}(\mp\cot\theta e_1)
=\sin^3\theta>0.
}
If $E_\sigma=0$, then $Dw_j^\pm=0$ and the estimate \eqref{esti:C^2-u^pm} follows.
Otherwise, we consider the rescaled function $v(y)=\frac{w_j^\pm(\sigma y)-a_j^\pm}{\sigma E_\sigma^{1/2}}$, which satisfies $Dv(y)=\frac{Dw_j^\pm(\sigma y)}{E^{1/2}_\sigma}$, so \eqref{eq:w_j-MSE} becomes
\eq{
{\rm div}_yF(\mp\cot\theta e_1+E_\sigma^{1/2}Dv(y))=0
\quad\text{in }B_{1/4}^{n+}(0),
}
and can be further rewritten as (put $\varepsilon=E_\sigma^{1/2}$ for simplicity)
\eq{
{\rm div}_y\left(\mcL_\varepsilon(Dv)\right)=0
\quad\text{in }B_{1/4}^{n+}(0),
}
where $\mcL_\varepsilon: p\mapsto
\frac{F(\mp\cot\theta e_1+\varepsilon p)-F(\mp\cot\theta e_1)}{\varepsilon}$ is the normalized interior operator.
In fact, by Taylor expansion
\eq{
F(\mp\cot\theta e_1+\varepsilon p)
=F(\mp\cot\theta e_1)+\varepsilon DF(\mp\cot\theta e_1)p+O(\varepsilon^2\abs{p}^2),
}
and hence $\mcL_\varepsilon(p)=DF(\mp\cot\theta e_1)p+O(\varepsilon\abs{p}^2)$, which approximates the linearized minimal surface equation at the slope $\mp\cot\theta e_1$ when $\varepsilon\searrow0$.
In particular, after decreasing $\varepsilon_0$ if necessary, depending only on $n,\theta$, one has:
\eq{\label{ineq:mcL_varepsilon-uniform-elliptic}
\lambda_{min}\abs{\xi}^2
\leq\left<D\mcL_\varepsilon(Dv)\xi,\xi\right>
\leq\lambda_{max}\abs{\xi}^2,\quad\forall\xi\in\mbR^n,
}
where $0<\lambda_{min}\leq\lambda_{max}$ are constants depending only on $\theta$.
Here we have used the fact that $D\mcL_\varepsilon(Dv)=DF(\mp\cot\theta e_1+\varepsilon Dv)$, the standard estimates
\eq{
\frac{\abs{\xi}^2}{(1+\abs{p}^2)^{3/2}}
\leq\left<DF(p)\xi,\xi\right>
\leq\frac{\abs{\xi}^2}{\sqrt{1+\abs{p}^2}},
}
as well as $\abs{Dv}\leq C(n,\theta)$, thanks to \eqref{ineq:w_j-gradient-estimate}.
This shows that the normalized interior operator $\mcL_\varepsilon$ is uniformly elliptic.
As a by-product of $\abs{Dv}\leq C(n,\theta)$, we find $\norm{v}_{C^0(B^{n+}_{1/4})}\leq C(n,\theta)$.

Similarly, for the normalized boundary operator $\mcB_\varepsilon:p\mapsto\frac{F_1(\mp\cot\theta e_1+\varepsilon p)-F_1(\mp\cot\theta e_1)}{\varepsilon}$, after decreasing $\ep_0$, one has
\eq{
\frac{\p\mcB_\varepsilon}{\p p_1}(Dv)
=\frac{\partial F_1}{\partial p_1}(\mp\cot\theta e_1+Dw_j^\pm)
\geq\frac{1}{2}\sin^3\theta>0.
}
Thus the rescaled $v$ satisfies a uniformly elliptic PDE, a uniformly oblique boundary condition, and also the uniform bounds $\norm{v}_{C^0}\leq C(n,\theta), \abs{Dv}\leq C(n,\theta)$.
Using the estimates in \cite{LT86},
together with the standard interior estimates (cf. \cite{SS81}), we deduce after rescaling back $v$ that
\eq{
\sigma\sup_{B_{\sigma/8}^{n+}(0)}\abs{D^2u_j^\pm}
\leq C(n,\theta)E_\sigma^{\frac12}.
}
The proof is thus completed.
\end{proof}

Then we consider the case when the capillary hypersurface is close to the supporting hyperplane $\p\mbH^{n+1}$.

\begin{lemma}\label{lem:wall-De-Giorgi}
Under the assumptions of Lemma \ref{Lem:normalized-Sobolev}, set the classical tilt function $\mathbf g=\sqrt{1-\nu_1^2}$.
There are positive constants
$\varepsilon'=\varepsilon'(n,\theta)$ and
$C=C(n,\theta)>0$ with the following property.
If $0<R\leq1$ and
\[
\mathbf E_R
=R^{-n}\int_{M\cap((-R,R)\times B_R^n(0))}\mathbf g^2\rd\mcH^n
\leq\varepsilon',
\]
then
\eq{\label{eq:wall-De-Giorgi}
\sup_{M\cap((-R/2,R/2)\times B_{R/2}^n(0))}\mathbf g
\leq C\mathbf E_R^{\frac12}.
}
In particular, if the right-hand side is smaller than $\frac12\sin\theta$, then
\eq{
\partial M\cap((-R/2,R/2)\times B_{R/2}^n(0))
=\emptyset.
}
\end{lemma}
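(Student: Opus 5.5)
The idea is to rerun the argument of Lemma \ref{lem:capillary-De-Giorgi} with the classical tilt $\mathbf g=\sqrt{1-\nu_1^2}$ in place of $g_\theta$. We then read off the emptiness of $\p M$ directly from the capillary boundary condition.

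\textbf{Step 1: Schoen inequality for $\mathbf g$.} This is the classical Schoen inequality in the direction $e_1$, proved as in \cite[(2.7)]{SS81}. By \eqref{eq:lemLaplacian} with $w=e_1$ we get
\[
\mathbf g(\Delta+\abs{A}^2)\mathbf g=\abs{A}^2-\abs{\na\nu_1}^2-\frac{\nu_1^2\abs{\na\nu_1}^2}{\mathbf g^2}.
\]
The Simons-type estimate $\abs{\na\nu\cdot e_1^\top}^2\leq(1-\tfrac1n)\abs{e_1^\top}^2\abs{A}^2$ (the case $k=0$, $\vec a_3$ absent, of the computation in Proposition \ref{prop:boundS}) then yields
\[
\mathbf g(\Delta+\abs{A}^2)\mathbf g\geq\tfrac1n\abs{A}^2\quad\text{on }\{\mathbf g>0\}.
\]

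\textbf{Step 2: boundary terms and Caccioppoli.} Here lies the main obstacle. Along $\p M$ the capillary condition \eqref{eq:nu_and_eta} forces $\nu_1=\cos\theta$, so $\mathbf g=\sin\theta$ there, and the term $\cot\theta\int_{\p M}A(\eta,\eta)\varphi^2$ in \eqref{ineq:SS81-(1.17)} must be handled. I would use the truncation $(\mathbf g-\ell)^+$ with levels $\ell\leq L\leq L_0<\frac12\sin\theta$, rather than $(\mathbf g-\ell h)^+$.

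If $\p M$ met the support of the test function, the truncation would not vanish on $\p M$. To avoid this, I first argue that the smallness of $\mathbf E_R$ already keeps $\p M$ away from the smaller cylinder. Near a boundary point, Lemma \ref{Lem:Ahlfors-regularity} together with smoothness gives a lower bound on $\int\mathbf g^2$ over a ball. Concretely, by \eqref{eq-divf-capillary} and the co-area/Michael–Simon argument of Lemma \ref{Lem:M-S-ineq}, the boundary measure satisfies $\sin\theta\,\mcH^{n-1}(\p M\cap\cdot)\lesssim\int\abs{\na\varphi}$. Also $\mathbf g\geq\frac12\sin\theta$ on a collar near $\p M$, whose size is controlled via the gradient bound $\abs{\na\mathbf g}\leq\abs{A}$ and the Caccioppoli estimate.

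If that route proves too delicate, the fallback is to multiply the test function by a cutoff in the distance to $\p M$. The resulting error is absorbed using $\mathcal H^{n-2}(\mathrm{Sing}_\theta V)=0$ and the fact that on $\p M$ the function $(\mathbf g-\ell)^+$ is bounded below by $\frac12\sin\theta$. A positive measure of $\p M$ then costs a definite amount of $\int\mathbf g^2$, which contradicts $\mathbf E_R\leq\varepsilon'$ after a covering argument.

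With the boundary term removed, testing stability with $(\mathbf g-\ell)^+\varphi$ and integrating by parts gives the analogue of \eqref{eq:two-level-caccioppoli}:
\[
\int\abs{\na(\varphi(\mathbf g-L)^+)}^2\leq\frac{C}{1-\ell/L}\int(\mathbf g-\ell)^{+2}\abs{\na\varphi}^2,
\]
using $\frac{(\mathbf g-\ell)^+}{\mathbf g}\geq1-\ell/L$ on $\{\mathbf g>L\}$.

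\textbf{Step 3: iteration and conclusion.} The De Giorgi scheme of Lemma \ref{lem:capillary-De-Giorgi} runs verbatim on the cylinders $(-r_j,r_j)\times B^n_{r_j}$, with $q_\ell=(\mathbf g-\ell)^+$ and the Sobolev inequality of Lemma \ref{Lem:normalized-Sobolev}. Choosing $L=C\mathbf E_R^{1/2}$ gives \eqref{eq:wall-De-Giorgi}.

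Finally, if $C\mathbf E_R^{1/2}<\frac12\sin\theta$, then any $X\in\p M$ in the half-cylinder would have $\mathbf g(X)=\sqrt{1-\cos^2\theta}=\sin\theta$ by \eqref{eq:nu_and_eta}. This contradicts the supremum bound, so $\p M$ does not meet the half-cylinder.
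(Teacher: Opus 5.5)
Your Steps 1 and 3 and the final deduction about $\partial M$ agree with the paper. Step 2, however, has a genuine gap.

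With constant levels, $q=(\mathbf g-\ell)^+$ does not satisfy the Robin condition that makes the boundary terms cancel. On $\partial M$ you have $\mathbf g=\sin\theta$ and $\partial_\eta\mathbf g=-\cos\theta\,A(\eta,\eta)$. So testing stability with $q\varphi$ and integrating by parts leaves the residual term $-\ell\cot\theta\int_{\partial M}A(\eta,\eta)\,q\,\varphi^2$ on the right-hand side. This term has no sign and no a priori bound.

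Your remedy, first showing that small $\mathbf E_R$ keeps $\partial M$ out of the smaller cylinder, is circular. That is the conclusion of the lemma. Moreover, the width of the ``collar'' on which $\mathbf g\ge\frac12\sin\theta$ is controlled only through $|\nabla\mathbf g|\le|A|$, that is, through a pointwise curvature bound or the very Caccioppoli inequality you are trying to prove. The trace identity \eqref{eq-divf-capillary} does not help here: it gives $\sin\theta\int_{\partial M}\varphi\le\int_M|\nabla\varphi|\,\mathbf g$, so small tilt forces small boundary measure, not empty boundary.

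The fallback also fails. $q\varphi$ has trace about $\sin\theta-\ell>0$ on the codimension-one set $\partial M$, so cutting off across an $\epsilon$-collar costs roughly $\epsilon^{-1}(\sin\theta-\ell)^2\,\mathcal H^{n-1}(\partial M\cap\operatorname{spt}\varphi)$, which blows up. The hypothesis $\mathcal H^{n-2}(\mathrm{Sing}_\theta V)=0$ only lets you excise codimension-two sets.

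The missing idea is to replace the constant levels by $\ell h_\theta$, with $h_\theta=1-\cos\theta\,\nu_1$. By \eqref{eq:h_theta}:
\begin{itemize}
\item $(\Delta+|A|^2)h_\theta=|A|^2$, exactly as for a constant, so the interior estimate is unchanged for $\ell\le(2n)^{-1}$;
\item $\partial_\eta h_\theta+\cot\theta\,A(\eta,\eta)h_\theta=0$, which is the same Robin condition that $\mathbf g$ satisfies.
\end{itemize}
Hence $q=(\mathbf g-\ell h_\theta)^+$ satisfies $\partial_\eta q=-\cot\theta\,A(\eta,\eta)\,q$ on $\partial M\cap\{q>0\}$. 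The boundary term from integrating by parts then cancels the stability boundary term exactly. The Caccioppoli inequality follows for $0<\ell<L\le(2n)^{-1}$ with no information about where $\partial M$ lies.

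Since $h_\theta\ge 1-|\cos\theta|>0$, the De Giorgi lower bound $q_{\ell_j}\ge(1-|\cos\theta|)(\ell_{j+1}-\ell_j)$ on $\{q_{\ell_{j+1}}>0\}$ still holds. The iteration of Lemma \ref{lem:capillary-De-Giorgi} then runs unchanged. The emptiness of $\partial M$ is read off afterwards, exactly as in your last paragraph.
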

\begin{proof}
By \cite[(2.7)]{SS81} (see also \cite[(15)]{Bellettini25}), on
$\{\mathbf g>0\}$ we have
\eq{\label{eq:wall-pointwise-Jacobi}
\mathbf g(\Delta+\abs{A}^2)\mathbf g
=\abs{A}^2-\frac{\abs{\na\mathbf g}^2}{1-\mathbf g^2}
\geq\frac1n\abs{A}^2.
}
By \eqref{eq:nu_and_eta} and \eqref{eq:lemBoundaryDeri}, along $\p M$ one has $\mathbf g=\sin\theta$ and
\eq{\label{eq:mfg-bdry}
\partial_\eta\mathbf g=-\cos\theta\,A(\eta,\eta),
\qquad
\partial_\eta\mathbf g
+\cot\theta\,A(\eta,\eta)\mathbf g=0.
}
For the ansatz $h_\theta$ we have \eqref{eq:h_theta}.

For $0<\ell<L\leq(2n)^{-1}$, we use $\bigl(\mathbf g-\ell h_\theta\bigr)^+$ in place of $(g_\theta-\ell h_\theta)^+$ in the proof of Lemma \ref{lem:capillary-truncated-caccioppoli}.
Using \eqref{eq:wall-pointwise-Jacobi}, \eqref{eq:mfg-bdry}, and \eqref{eq:h_theta}, we can repeat the same computations as in Lemma \ref{lem:capillary-truncated-caccioppoli} and deduce
\eq{
\int_M\left|\na\left(\varphi(\mathbf g-Lh_\theta)^+\right)\right|^2\rd\mcH^n
\leq\frac{C(n,\theta)}{1-\ell/L}
\int_M\bigl((\mathbf g-\ell h_\theta)^+\bigr)^2
\abs{\na\varphi}^2\rd\mcH^n.
}
The same iteration argument in the proof Lemma \ref{lem:capillary-De-Giorgi} then yields \eqref{eq:wall-De-Giorgi}.

Finally, since every point of $\partial M$ satisfies $\mathbf g=\sin\theta$, we see that \eqref{eq:wall-De-Giorgi} excludes capillary boundary points from the smaller cylinder, provided that $\varepsilon'=\varepsilon'(n,\theta)$ is sufficiently small. This completes the proof.
\end{proof}

\begin{theorem}[Sheeting theorem: second version]\label{thm:sheeting-2nd}
Let $n\geq2, \theta\in(0,\pi)$, $\Lambda\in[1,\infty)$.
Let $ \overline{V}\in \mathscr{V}(\theta,\Lambda) $.
Denote by $M,V,W$ the corresponding hypersurface and varifolds as in Definition \ref{defn:varifold-class}.

There exists a positive constant $\ep'_0\in(0,1)$, depending only on $n,\theta,\Lambda$, with the following property:
	if for some $\ep\in[0,\ep_0')$,
\eq{\label{condi:hausdoff-distance-x_1=0}
\sup\left\{x_1:(x_1,x')\in\overline M\cap
\left(\mbR\times B_2^n(0)\right)\right\}\leq\ep,
}
then
\eq{
	\overline{M}\cap\left(\mbR\times B^n_{\frac{1}{8}}(0)\right)
        =\bigcup_{j\in Q} {\mathrm{graph}(u_j)},
}
where $u_j:B_{\frac{1}{8}}^{n}(0)\rightarrow \mathbb{R}$,
$j\in Q\coloneqq\left\{1,\cdots,q\right\}$, are smooth functions, with
$q\geq0$, whose
graphs
\[
\left\{\left(u_j(x),x\right):x\in B^n_{\frac{1}{8}}(0)\right\}
\]
are minimal and without boundary in $\mbR\times B^n_{\frac{1}{8}}(0)$.
Moreover, $u_j\leq u_{j+1}$ for $j=1,2,\cdots ,q-1$, and
    \eq{
    (\frac{1}{4})^{-1}\sup_{B^{n}_{\frac{1}{8}}(0)}\abs{u_j}
    +\sup_{B^{n}_{\frac{1}{8}}(0)}\abs{Du_j}
    +\frac{1}{4}\sup_{B^{n}_{\frac{1}{8}}(0)}\abs{D^2u_j}
    \leq C\ep^\frac{1}{2},
    }
where $C=C(n,\theta,\Lambda)\in(0,\infty)$.
\end{theorem}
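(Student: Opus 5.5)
We plan to deduce the second sheeting theorem from the wall De Giorgi estimate (Lemma \ref{lem:wall-De-Giorgi}) combined with the interior sheeting theorem of Schoen--Simon/Bellettini type. The first step is to turn the height bound \eqref{condi:hausdoff-distance-x_1=0} into smallness of the classical tilt excess $\mathbf E_R$ for $\mathbf g=\sqrt{1-\nu_1^2}$. For this we apply Proposition \ref{Prop:tilt-excess-control-x_1}. Its hypothesis is a Hausdorff-distance bound from $\{0\}\times B^n_2(0)$; because $\overline M\subset\overline{\mbH^{n+1}}$, the bound \eqref{condi:hausdoff-distance-x_1=0} gives $0\le x_1\le\ep$ on $\overline M\cap(\mbR\times B^n_2(0))$, which is the one-sided version the proof actually uses. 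Indeed, testing with $\psi=x_1\zeta^2e_1$ only needs $\abs{x_1}\le\ep$ on the support. Together with the area bound $\mcH^n(M\cap B_2(0))\le\Lambda$ (from $\theta\ge\pi/2$ after the usual WLOG reduction, or Lemma \ref{Lem:Ahlfors-regularity}), this gives $\int_{M\cap B_1(0)}\mathbf g^2\le 8\Lambda\ep^2$. Taking $R=\frac12$, the cylinder $(-R,R)\times B^n_R(0)$ sits inside $B_1(0)$, so $\mathbf E_{1/2}\le C(n)\Lambda\ep^2$. This is below $\varepsilon'(n,\theta)$ once $\ep_0'$ is small.

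Lemma \ref{lem:wall-De-Giorgi} then yields $\sup\mathbf g\le C\ep$ on $M\cap((-\frac14,\frac14)\times B^n_{1/4}(0))$. Choosing $\ep_0'$ smaller still makes this less than $\frac12\sin\theta$, so $\p M$ does not meet the region. Since $\overline M$ lies in $\{0\le x_1\le\ep\}$, the whole of $\overline M\cap(\mbR\times B^n_{1/4}(0))$ lies in that cylinder. So there $M$ is a stable minimal hypersurface without capillary boundary, satisfying $\abs{\nu_1}\ge\sqrt{1-C^2\ep^2}$.

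It remains to rule out singular points and build the graphs. At this stage Definition \ref{Defn:theta-regular-points-manifold} shows that every $\theta$-singular point in the region is an interior singular point of an embedded stable minimal hypersurface. The two-sided bound $\mathbf g\le C\ep$ holds on the regular part, and we have $\mcH^{n-2}(\mathrm{Sing}_\theta V)=0$. Hence we can run the properness and continuation argument of \cite[Proof of Theorem 5]{Bellettini25}, just as in the proof of Theorem \ref{thm:sheetingHalf}. Orthogonal projection onto $\{x_1=0\}$ is a local diffeomorphism on the regular part. Each component lifts to a graph $x_1=u_j(x')$ over $B^n_{1/8}(0)$, and the small codimension of the singular set lets the graphs extend across it, because a uniformly Lipschitz minimal graph has removable singular sets of $\mcH^{n-2}$-measure zero. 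Embeddedness gives the ordering $u_j\le u_{j+1}$, and the number $q$ is bounded by $\Lambda$.

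For the estimates, $0\le u_j\le\ep$ bounds the $C^0$ norm. We get $\abs{Du_j}\le C\mathbf g/\sqrt{1-\mathbf g^2}\le C\ep$, and interior Schauder estimates for the minimal surface equation with small gradient give $\abs{D^2u_j}\le C\ep$. All of these are bounded by $C\ep^{1/2}$, since $\ep<1$. The main obstacle is the continuation step, specifically the claim that the sheets are single-valued graphs over the full ball and do not branch or terminate at singular points. This rests on Bellettini's argument together with the codimension-two smallness of the singular set, and we would check that it carries over verbatim once the boundary is excluded.
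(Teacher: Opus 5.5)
Your proposal is correct and follows the paper's proof. The height bound gives small tilt excess for $\mathbf g=\sqrt{1-\nu_1^2}$ via the test field $x_1\zeta^2e_1$. Lemma \ref{lem:wall-De-Giorgi} then keeps $\partial M$ out of the cylinder, and what remains is an interior stable minimal hypersurface with $\mcH^{n-2}$-null singular set. The only difference is in the last step: the paper simply invokes Bellettini's interior sheeting theorem \cite[Theorem 7]{Bellettini25} after rotation, which supplies the continuation and removability step you flagged as the main obstacle, whereas you rerun it directly from the pointwise bound $\mathbf g\le C\ep$ plus Schauder estimates (which even gives the stronger bound $C\ep$).
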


\begin{proof}
Set $\mathbf g=\sqrt{1-\nu_1^2}$.
As discussed in the beginning of Section \ref{sec:capillaryFirstVariation}, we assume WLOG that $(V,W)$ is $\mbF_\theta$-stationary for $\theta\in[\frac\pi2,\pi)$.
Repeating the proof of Proposition \ref{Prop:tilt-excess-control-x_1} with a cutoff adapted to the cylinder $(-1,1)\times B_1^n(0)$, and
then using Lemma \ref{Lem:Ahlfors-regularity}, gives
\eq{
\int_{M\cap((-1,1)\times B_1^n(0))}\mathbf g^2\rd\mcH^n
\leq C(n,\theta,\Lambda)\ep^2.
}
After decreasing $\ep'_0=\varepsilon'(n,\theta,\Lambda)$, we can apply Lemma \ref{lem:wall-De-Giorgi} with $R=1$, and obtain
\eq{
\sup_{M\cap((-1/2,1/2)\times B_{1/2}^n(0))}\mathbf g
<\frac12\sin\theta,
}
so that
\eq{
\partial M\cap((-1/2,1/2)\times B_{1/2}^n(0))
=\emptyset.
}
Namely, the hypersurface is a smooth stable minimal embedding with empty boundary in the cylinder.
After rotation, the interior sheeting theorem \cite[Theorem 7]{Bellettini25} applies at scale $R=1/4$, and yields the assertions.
\end{proof}

\section{Compactness and regularity}
\label{sec:mainCompactness}

\subsection{Notations}

Our goal is to prove the regularity of varifolds in $\overline{\mathscr{V}}(\theta,\Lambda)$ and show that they are indeed induced by stable capillary minimal hypersurfaces.
It is convenient to work with the following notations.

\begin{definition}[classical cones and $\theta$-classical cones]
\normalfont
For $\beta \in [0,\pi]$, define the half-hyperplane

\eq{
H_\beta\coloneqq\left\{ (r \sin \beta,x_2,\ldots,x_{n}, -r\cos\beta) \in \overline{\mbH^{n+1}}
: r\ge 0\right\}.
}

We define the class of {\em classical cones} by
\eq{\label{defn:classical-cones}
\mathscr{C}
\coloneqq\left\{
q_0 |H_0| + \sum_{i=1}^m p_i |H_{\theta_i}| + q_\pi |H_\pi| :
\begin{aligned}
&m \ge 1,\quad p_i \in \mathbb Z_{>0},\quad
q_0,q_\pi \in \mathbb{R}_{\geq0}, \\
&\theta_i\in(0,\pi),\quad \theta_i\neq\theta_j\text{ for }i\neq j
\end{aligned}
\right\}.
}
Of particular interest is its subclass, called \emph{$\theta$-classical cones}, defined as
\eq{\label{defn:theta-cones}
\mathscr C_\theta \coloneqq
\left\{
\begin{aligned}
&q_0 |H_0| + p_1 |H_{\pi-\theta}| + p_2 |H_\theta| + q_\pi |H_\pi| : \\
&p_1,p_2 \in \mathbb Z_{\ge 0},\quad p_1+p_2>0,\quad
q_0,q_\pi \in \mathbb{R}_{\geq0}
\end{aligned}
\right\}.
}
\end{definition}

For a $\mathbf C \in \mathscr C$ expressed as
\eq{
\mathbf C = q_0 |H_0| + \sum_{i=1}^m p_i |H_{\theta_i}| + q_\pi |H_\pi|,
}
we define
\eq{
\underline{\theta}_1 & \coloneqq
\min\left\{|\theta_i-\theta_j|:1\leq i<j\leq m\right\}, \\[0.3em]
\underline{\theta}_2 & \coloneqq \min \left\{
|\theta_i - \theta|,\ |\theta_i - (\pi-\theta)| :
\theta_i \neq \theta,\pi-\theta,\ 1 \le i \le m
\right\}, \\[0.3em]
\underline{\theta}_3 & \coloneqq
\min\left\{\theta_i,\pi-\theta_i:1\leq i\leq m\right\}, \\[0.3em]
\underline{\theta} & \coloneqq \min\left\{ \underline{\theta}_1,\underline{\theta}_2,\underline{\theta}_3 \right\}.
}
Here and below, the minimum of an empty set in the definitions of
$\underline\theta_1$ and $\underline\theta_2$ is understood to be $+\infty$.

For $1 \le i \le m$, we define the neighborhoods of $H_{\theta_i}$ by
\eq{
\mathcal{N}_i \coloneqq \bigcup_{|\beta-\theta_i|<\underline{\theta}/3} H_\beta,
}
and
\eq{
\mathcal{N}_0 \coloneqq \bigcup_{\beta<\underline{\theta}/3} H_\beta,
\qquad
\mathcal{N}_\pi \coloneqq \bigcup_{\pi-\beta<\underline{\theta}/3} H_\beta.
}
Note that the definitions of $\underline{\theta}$ and $\mathcal{N}_i$ depend on the choice of $\mathbf{C}$, but this dependence is clear from the context.

For $\tau>0$, we put
\eq{
S_\tau \coloneqq \left\{ (x_1,\ldots,x_{n+1}) : x_1^2 + x_{n+1}^2 < \tau^2 , x_2^{2}+\cdots +x_n^{2}\le 1\right\}.
}
For $y \in \mathbb R^{n-1}$, we define
\eq{
P_y \coloneqq \left\{ (x_1,\ldots,x_{n+1}) \in \overline{\mbH^{n+1}} :
(x_2,\ldots,x_n)=y \right\}.%
}

\subsection{Minimum distance theorem}

\begin{theorem}\label{thm:coneMinDistGraph}
Let $n\geq2, \theta\in(0,\pi), \Lambda\in[1,\infty)$.
Let $\mathbf C \in \mathscr C$.
\begin{enumerate}[label=\textup{(\Roman*)}]
    \item \label{it:thm:miniDist}
If $\mathbf C\in\mathscr C\setminus\mathscr C_\theta$, then there exists
$\varepsilon=\varepsilon(\Lambda,\theta,n,\mathbf C)>0$ such that every
$\overline V\in\mathscr V(\theta,\Lambda)$ satisfies
\eq{
{\rm dist}_{\mcH}\!\left(
\mathrm{spt}\,\|\overline V\|\cap B_2,\;
\mathrm{spt}\,\|\mathbf C\|\cap B_2
\right) \geq \varepsilon .
}
    \item \label{it:thm:graph}If $\mathbf C = q_0 |H_0| + p_1 |H_{\pi-\theta}| + p_2 |H_\theta| + q_\pi |H_\pi|\in \mathscr C_\theta$, then for any $\tau>0$ there exists
$\varepsilon=\varepsilon(\Lambda,\theta,n,\mathbf C,\tau)>0$ such that 
for any $\overline{V}\in \mathscr{V}(\theta,\Lambda)$ with
\eq{
{\rm dist}_{\mcH}\!\left(
\mathrm{spt}\,\|\overline V\|\cap B_2,\;
\mathrm{spt}\,\|\mathbf C\|\cap B_2
\right) < \varepsilon ,
}
the following conclusions hold:
\begin{enumerate}[label=\textup{(\alph*)}]
\item For $i=1,2$, the set $\left(M\cap S_1\cap\mathcal{N}_i\right)\setminus S_\tau$ consists of finitely many
connected graphical components.
After indexing them by their normal, we denote them by
$M_{i,1},\ldots,M_{i,N_i}$, $i=1,2$, where they are graphed over domains in $H_{\pi-\theta}$ for $i=1$ and in $H_\theta$ for $i=2$ (when $\theta=\frac\pi2$, $H_{\pi-\theta}$ and $H_\theta$ coincide, and the index $i$ records only the two different orientations).
\item For $i=0,\pi$, $\left(M\cap S_{1} \cap\mathcal{N}_i\right)\setminus S_\tau = \emptyset$.
\item Let $\nu_{i,j}$ denote the unit normal of $M_{i,j}$.
Then
\eq{
|\nu_{1,j}-\nu_{-\theta}|<\tfrac{\underline{\theta}}{4},
\qquad
|\nu_{2,j}-\nu_\theta|<\tfrac{\underline{\theta}}{4},
}
\item Writing $u_{i,j}$ for the graphing function of $M_{i,j}$, then
\eq{\label{ineq:u_i,j-C^2-estimate}
\|u_{i,j}\|_{C^2(\mathrm{dom}(u_{i,j}))}
\le
C\,
{\rm dist}_{\mcH}\!\left(
\mathrm{spt}\,\|\overline V\|\cap B_2,\;
\mathrm{spt}\,\|\mathbf C\|\cap B_2
\right).
}
\end{enumerate}
\end{enumerate}
\end{theorem}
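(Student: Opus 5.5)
Both parts rest on the same local mechanism. Away from the spine $\{x_1=x_{n+1}=0\}$, Hausdorff closeness of $\spt\norm{\overline V}$ to $\spt\norm{\mathbf C}$ turns into small tilt-excess via the first variation. The sheeting theorems (Theorem \ref{thm:sheetingHalf} at boundary points and Bellettini's interior sheeting theorem at interior points), together with Theorem \ref{thm:sheeting-2nd} near the wall, then give graphical decompositions with $C^2$ bounds. As in Section \ref{sec:capillaryFirstVariation}, we assume WLOG $\theta\in[\frac\pi2,\pi)$ and use the monotonicity formula and Ahlfors regularity (Lemmas \ref{Lem:Ahlfors-regularity}, \ref{Lem:Ahlfors-regularity-capillary-vfld}).

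\emph{Part (II).} Fix $\tau>0$ and cover the compact set $\bigl(\overline{S_1}\cap\spt\norm{\mathbf C}\bigr)\setminus S_\tau$ by finitely many balls of radius comparable to $\tau\underline\theta$. Each ball is centered on one of the half-planes and stays away from the spine and from the other half-planes.

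1. \emph{Interior balls on $H_{\theta_i}$, away from $\p\mbH^{n+1}$.} Hausdorff closeness to a single hyperplane, combined with the integral-by-parts argument of Proposition \ref{Prop:tilt-excess-control-x_1} (taking $x$ to be the distance to that plane), gives small classical tilt-excess. Bellettini's sheeting theorem then gives graphs with $C^2$ norm $\le C\ep^{1/2}$. Bootstrapping with standard Schauder/Harnack estimates for the difference from the plane upgrades this to $C\ep$, which is estimate \eqref{ineq:u_i,j-C^2-estimate}.

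2. \emph{Boundary balls on $H_\theta$ or $H_{\pi-\theta}$.} We need smallness of $E_\sigma$ in order to apply Theorem \ref{thm:sheetingHalf}. Near such a point the surface lies in a thin slab around the capillary plane. Testing the first variation with $\psi=\zeta^2\,\mathrm{dist}(\cdot,P_{\pm\theta})\,\nu_{\pm\theta}$ is not admissible, since this field is not tangential. Instead we use the slanted height $x_{n+1}\pm\cot\theta\,x_1$ as a test function on $M$ directly. Integrating by parts with \eqref{eq-divf-capillary}, the boundary term is controlled by the capillary condition, and we obtain $\int g_\theta^2\lesssim\ep^2$. Connected components of the graph then carry the normals in (c).

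3. \emph{Balls on $H_0,H_\pi$ (statement (b)).} Apply Theorem \ref{thm:sheeting-2nd}: if $M$ were present there, it would consist of boundaryless graphs over $\p\mbH^{n+1}$ inside $\{x_1<\ep\}$. Such graphs are minimal hypersurfaces entering $\mbH^{n+1}$ without touching the wall, which contradicts the maximum principle against the planes $\{x_1=t\}$. The sheets would reach the boundary of the cylinder at height $\le\ep$, and a connected stable minimal graph with $|u|\le C\ep$ must lie in $\{x_1=\text{const}>0\}$. That is impossible once we continue it toward $S_\tau$, where the closeness assumption forces it to approach $H_{\theta_i}$ planes with a different normal. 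I expect this step to need care.

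\emph{Part (I).} Argue by contradiction. Take $\overline V_k\in\mathscr V(\theta,\Lambda)$ with Hausdorff distance $\to0$ to $\mathbf C\in\mathscr C\setminus\mathscr C_\theta$. Then some $H_{\theta_i}$ with $\theta_i\notin\{\theta,\pi-\theta\}$ is approximated. Step 1 gives interior sheets near $H_{\theta_i}$ with normals close to $\nu$ of $H_{\theta_i}$. Continue these sheets toward the wall using Step 2's argument localized near $H_{\theta_i}\cap\p\mbH^{n+1}$ but away from other half-planes. Actually the planes all share the spine, so instead we use a monotonicity/first-variation argument. The varifold limit of $\overline V_k$ is stationary with free boundary and supported on $\spt\norm{\mathbf C}$, hence is a classical cone $\mathbf C'$ with the same support. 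Its free-boundary stationarity forces a balance condition along the spine: $\sum p_i\sin\theta_i\,(\text{horizontal})+\dots$. The integrality and phase structure then force $\theta_i\in\{\theta,\pi-\theta\}$.

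To get this, we pass to the limit in Theorem \ref{thm:sheetingHalf}-type graphs near the spine at points of $\p M_k$. Every boundary component of $M_k$ meets the wall at angle $\theta$, and the $\tilde\Theta$ lower bound ensures that each sheet of $H_{\theta_i}$ must end on the wall through capillary boundary points. At points where $g_\theta$ is small, sheeting forces nearby sheets to have normals $\nu_{\pm\theta}$, contradicting $\theta_i\neq\theta,\pi-\theta$. The main obstacle is exactly this spine region, where no single sheeting theorem applies. My first attempt would be the blow-up/stationarity computation of the cone's first variation on the spine, which yields the force balance $\sum p_i\nu_{\theta_i}\cdot e_{n+1}=0$, combined with the fact that the $x_1$ component is absorbed only by $W$ with coefficient $\cos\theta$.
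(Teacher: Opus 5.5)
\textbf{There is a genuine gap, and it sits where you suspected: in the tube $S_\tau$ around the spine.}

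Every $H_{\theta_i}$ with $\theta_i\in(0,\pi)$ meets $\p\mbH^{n+1}$ only along the spine. So outside $S_\tau$, the sheets near $H_\theta$ and $H_{\pi-\theta}$ are interior sheets, and your Step~2 has no boundary balls to treat. Consequently (I), (II)(b) and (II)(c) are all statements about how the sheets found outside $S_\tau$ are connected, to each other or to $\p M$, through $S_\tau$. There $M$ is not known to be close to a single capillary half-plane, so neither Theorem \ref{thm:sheetingHalf} nor any other sheeting theorem can be invoked.

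Local arguments outside $S_\tau$ cannot settle these statements. Interior sheeting near $H_\theta$ only gives $\nu\approx\pm\nu_\theta$. The sign is the entire content of (c), so ``connected components then carry the normals in (c)'' is unsupported. In Step~3, a small minimal graph over the wall contradicts nothing. It need not be a piece of $\{x_1=\mathrm{const}\}$, and the maximum principle gives nothing in a cylinder whose lateral boundary the graph may reach.

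\textbf{Your substitute for (I), a force balance for the limit cone, cannot work.} For fields tangential to $\p\mbH^{n+1}$, the only constraint on $q_0|H_0|+\sum_ip_i|H_{\theta_i}|+q_\pi|H_\pi|$ is $q_0-q_\pi+\sum_ip_i\cos\theta_i=0$. The real wall densities can always absorb this. For example, if $0<\abs{\cos\theta}<\frac12$, then $|H_\beta|+2\abs{\cos\theta}\,|H_\pi|$ with $\cos\beta=2\abs{\cos\theta}$ is stationary with free boundary and has integral $V$- and $W$-parts, yet $\beta\notin\{\theta,\pi-\theta\}$. The first variation of $V_k$ alone does not help either: its boundary term involves $-\sin\theta e_1+\cos\theta\bar\nu$, and since nothing controls $\bar\nu$, the limit yields only an inequality.

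\textbf{What the paper does instead.} Stability has to enter quantitatively inside $S_\tau$, via a slicing argument.
\begin{enumerate}
\item By Sard, slice $M_k\cap S_\tau$ by the $2$-planes $P_y$. For a.e.\ $y$ this gives finitely many curves, which cannot cross in the interior. Each curve ends on an outside sheet or on the wall, where $\nu_1=\cos\theta$.
\item Each of the following curves must rotate $\nu$ by a definite angle $c(\theta,\mathbf C)>0$:
  \begin{itemize}
  \item a curve joining a sheet over $H_{\theta_i}$, with $\theta_i\notin\{\theta,\pi-\theta\}$, to the wall;
  \item a curve joining sheets over different half-planes, or returning to sheets over the same half-plane;
  \item a curve joining a wrongly oriented sheet to the wall.
  \end{itemize}
  An $H_0$--$H_\pi$ connection traps the remaining sheets, which reduces to the previous cases.
\item Hence $c\le\int_{M_k\cap P_y\cap S_\tau}\abs{A}$ for a set of $y$ of positive measure.
\item On the other hand, the coarea formula, Cauchy--Schwarz, the $L^2$ bound on $\abs{A}$ furnished by stability, and $\mcH^n(M_k\cap S_\tau)\le C\tau$ from Ahlfors regularity give $\int_{M_k\cap S_\tau}\abs{A}\le C\sqrt\tau$. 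This is a contradiction once $\tau$ is small.
\end{enumerate}
This single mechanism proves (I), (II)(b) and (II)(c). Your Step~1 is fine for the region outside $S_\tau$ and for (d); since the excess is $\lesssim\epsilon^2$, the sheeting bound already gives $\lesssim\epsilon$ without any bootstrap.
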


{
\begin{proof}
In either case, we may assume that there exists a sequence of varifolds $\left\{\overline{V}_k\in\mathscr{V}(\theta,\Lambda)\right\}_{k\in\mbN}$ converging to $\overline{V}$, such that
\eq{\label{assump:dist-to-cone-converges-to-0}
{\rm dist}_{\mcH}\!\left(
\mathrm{spt}\,\|\overline V_k\|\cap B_2,\;
\mathrm{spt}\,\|\mathbf C\|\cap B_2
\right) \to 0 .
}

We shall derive a contradiction in case \ref{it:thm:miniDist}, and establish the conclusions in case \ref{it:thm:graph} for sufficiently large $k$.

Fix $\tau>0$. By \eqref{assump:dist-to-cone-converges-to-0}, we can use the interior tilt and sheeting theorems \cite[Theorems 4 and 7]{Bellettini25}, also Theorem \ref{thm:sheeting-2nd}, to conclude that,
after passing to a subsequence, there exist nonnegative integers
$N_0,N_\pi$, and positive integers $N_i$, $1\leq i\leq m$, such that for all sufficiently large $k$ the following hold:

\begin{enumerate}
\item
For each $i=0,1,\ldots,m,\pi$, the set
$\left(M_k \cap S_{1} \cap\mathcal{N}_i\right) \setminus S_\tau$
consists of exactly $N_i$ connected components, each of which can be written as the graph of a function
over a domain in $H_{\theta_i}$.
We denote these components by
$M_{k,i,1},\ldots,M_{k,i,N_i}$,
with corresponding graph functions
$u_{k,i,1},\ldots,u_{k,i,N_i}$.

\item
For every $i,j$, the $C^2$-norm of $u_{k,i,j}$ converges to $0$, as $k\to\infty$.
In fact, an estimate of the form \eqref{ineq:u_i,j-C^2-estimate} holds.

\item
Let $\nu_{k,i,j}$ denote the unit normal of $M_{k,i,j}$.
Then $\nu_{k,i,j}$ converges uniformly to either $\nu_{\theta_i}$ or $-\nu_{\theta_i}$.
In particular, after possibly passing to a further subsequence, we may assume that for each $i,j$,
\eq{\label{ineq:nu_k,i,j}
|\nu_{k,i,j}-\nu_{\theta_i}|<\tfrac{\underline{\theta}}{4}
\quad\text{or}\quad
|\nu_{k,i,j}+\nu_{\theta_i}|<\tfrac{\underline{\theta}}{4}.
}
\end{enumerate}

\medskip
By Sard's theorem, for almost every $y\in B_1^{n-1}\subset\mathbb R^{n-1}$,
the hypersurface $M_k\cap S_\tau$ intersects $P_y$ transversely.
Consequently,
\eq{
M_k\cap P_y\cap S_\tau=\bigcup_{\gamma\in\Gamma}\gamma,
}
where $\Gamma$ is a finite collection of smooth, properly immersed curves in $P_y\cap S_\tau$ satisfying:

\begin{enumerate}
\item
Each $\gamma\in\Gamma$ is either a closed smooth embedded Jordan curve, or a smooth curve with two endpoints lying in
$(\partial S_\tau\cap P_y)\cup (S_\tau\cap P_y\cap\{x_1=0\})$.

\item
If $\gamma$ has endpoints, then it may have self-intersections only at endpoints lying in
$S_\tau\cap P_y\cap\{x_1=0\}$.

\item
Each endpoint of $\gamma$ lies either on $M_{k,i,j}\cap P_y$ for some $i,j$,
in which case $\gamma$ and $M_{k,i,j}\cap P_y$ together form a smooth curve near the endpoint,
or on $S_\tau\cap P_y\cap\{x_1=0\}$, where $\gamma$ meets the boundary with nonzero contact angle.

\item
If $\gamma_1,\gamma_2\in\Gamma$ are distinct, then $\gamma_1\cap\gamma_2\neq\emptyset$
can occur only at endpoints lying in $S_\tau\cap P_y\cap\{x_1=0\}$.
\label{pf:item:curve-intersection}
\end{enumerate}

We decompose $\Gamma=\Gamma_1\cup\Gamma_2\cup\Gamma_3$, where:
\begin{enumerate}
\item
$\Gamma_1$ consists of curves whose two endpoints lie on some $M_{k,i,j}$;
\item
$\Gamma_2$ consists of curves with exactly one endpoint on some $M_{k,i,j}$;
\item
$\Gamma_3$ consists of curves with no endpoints, or whose endpoints lie entirely on $\{x_1=0\}$.
\end{enumerate}

\medskip
We consider first $\mathbf C\in \mathscr{C}\backslash \mathscr{C}_\theta$, and
observe:

\medskip
\noindent{\em Claim 1.
For almost every $y\in B_1^{n-1}$,
there exists $\gamma\in\Gamma_1\cup\Gamma_2$ such that the angle between the unit normal vector field
$\nu$ at the two endpoints of $\gamma$ is bounded below by $\underline{\theta}/2$.}

\medskip
\noindent
{\bf Case 1: $\Gamma_1\neq\emptyset$.}
Choose $\gamma\in\Gamma_1$ with endpoints
$X_1\in M_{k,i_1,j_1}$ and $X_2\in M_{k,i_2,j_2}$. We consider three subcases. See Figure \ref{fig:curves} for an illustration.
\begin{figure}[ht]
    \centering
	\begingroup
	\def\svgwidth{1\columnwidth}
	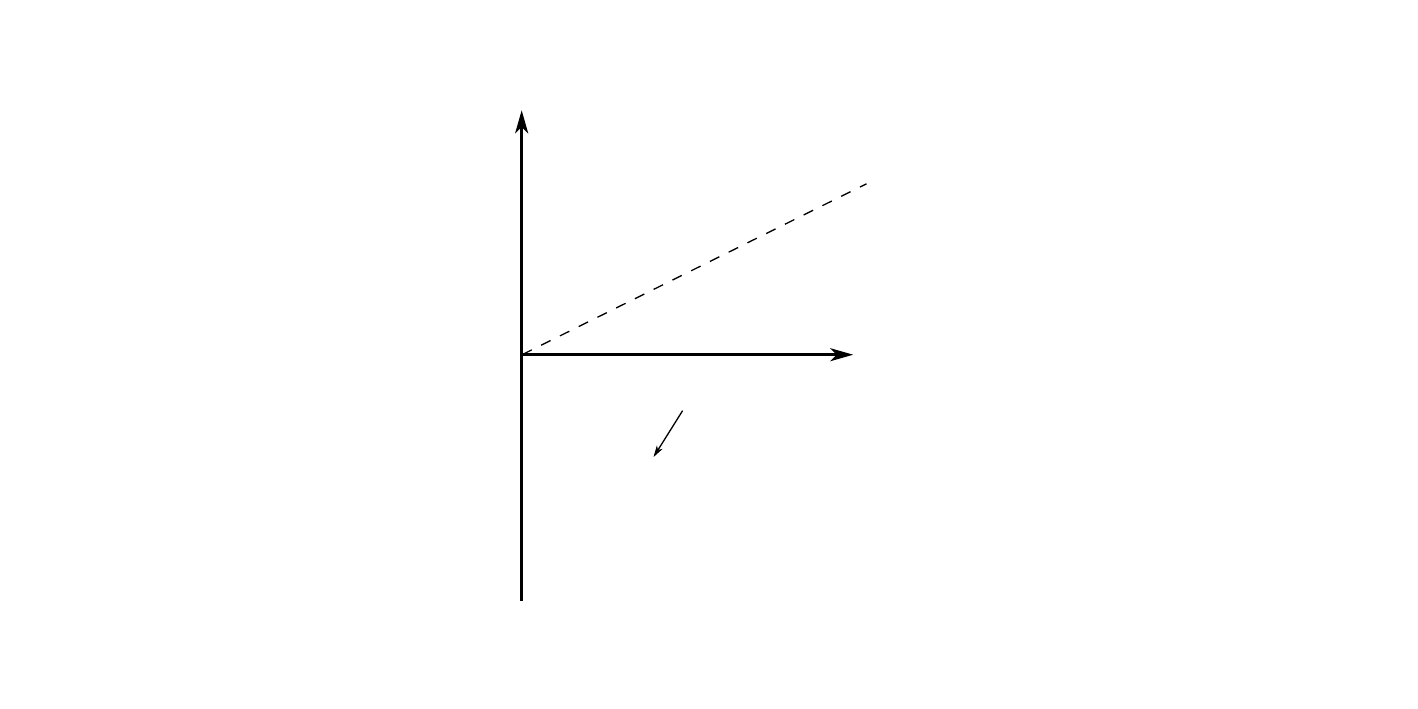
	\endgroup

    \caption{The case when $\gamma\in \Gamma_1$}
    \label{fig:curves}
\end{figure}

\smallskip
\emph{Case 1.1: $i_1\neq i_2$ and at least one of $i_1,i_2$ is not $0$ or $\pi$.}
By construction and the definition of $\underline{\theta}$,
the angle between $\nu_{k,i_1,j_1}(X_1)$ and $\nu_{k,i_2,j_2}(X_2)$ is bounded below by
\eq{
\min\bigl\{|\theta_{i_1}-\theta_{i_2}|,\;\pi-|\theta_{i_1}-\theta_{i_2}|\bigr\}
-\tfrac{\underline{\theta}}{2}
\;\ge\; \tfrac{\underline{\theta}}{2}.
}
Here we adopt the convention $\theta_0=0$ and $\theta_\pi=\pi$.

\smallskip
\emph{Case 1.2: $i_1=i_2$.}
Either the angle between the normals at $X_1$ and $X_2$ is at least $\underline{\theta}/2$,
or it is less than $\underline{\theta}/2$.
The latter case cannot occur:
since $P_y$ intersects $M_k$ transversely,
the projection of the unit normal $\nu_k$ onto $P_y$ defines a nonvanishing normal vector field along $\gamma$,
whose total change along $\gamma$ must be at least $\pi-\underline{\theta}/2$.

\smallskip
\emph{Case 1.3: $i_1=0$ and $i_2=\pi$.}
By the intersection property \eqref{pf:item:curve-intersection},
there exists a curve $\gamma'\in\Gamma_1$ whose endpoints connect nontrivially to some
$M_{k,i,j}$ with $i\notin\{0,\pi\}$.
This reduces to \emph{Case~1.1} or \emph{Case~1.2}.

\medskip
\noindent
{\bf Case 2: $\Gamma_1=\emptyset$, and $\Gamma_2\neq\emptyset$.}

Since $\mathbf C\in \mathscr{C}\backslash \mathscr{C}_\theta$, there exists some $\theta_i\notin\{\theta,\pi-\theta\}$.
Choose $\gamma\in\Gamma_2$ with one endpoint $ X_1 $ on $M_{k,i,j}$.
At the other endpoint $X_2$, $M_k$ meets $\{x_1=0\}$ with contact angle $\theta$.
By the definition of $\underline{\theta}$, the angle between the normals at the two endpoints is bounded below by $\underline{\theta}/2$.
See Figure \ref{fig:curveCase2} for an illustration.

\medskip

\noindent{\bf Case 3: $\Gamma_1=\Gamma_2=\emptyset$.}

By construction, $N_i>0$ for each $i\in\{1,\cdots,m\}$.
Hence, this case reduces to {\em Case 1.3}, which yields a contradiction.

\begin{figure}[ht]
    \centering
    \begin{minipage}[b]{0.45\textwidth}
        \centering
	\begingroup
	\def\svgwidth{0.74\columnwidth}
	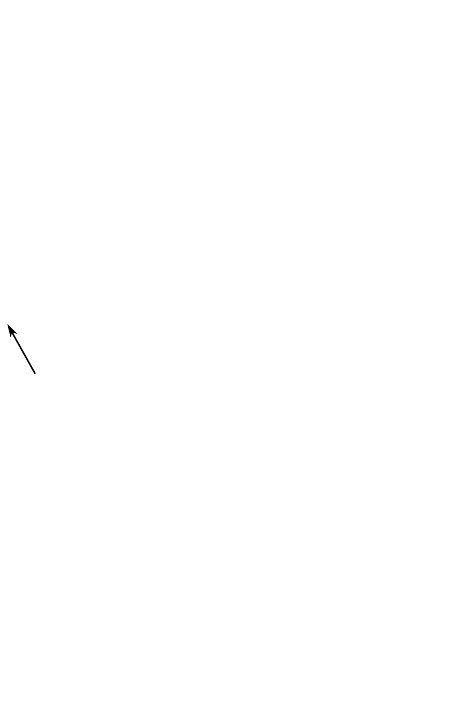
	\endgroup

        \caption{The case when $\gamma\in \Gamma_2$ and $\mathbf{C}$ is not in $\mathscr{C}_\theta$}
        \label{fig:curveCase2}
    \end{minipage}
    \hfill
    \begin{minipage}[b]{0.45\textwidth}
        \centering
	\begingroup
	\def\svgwidth{0.74\columnwidth}
	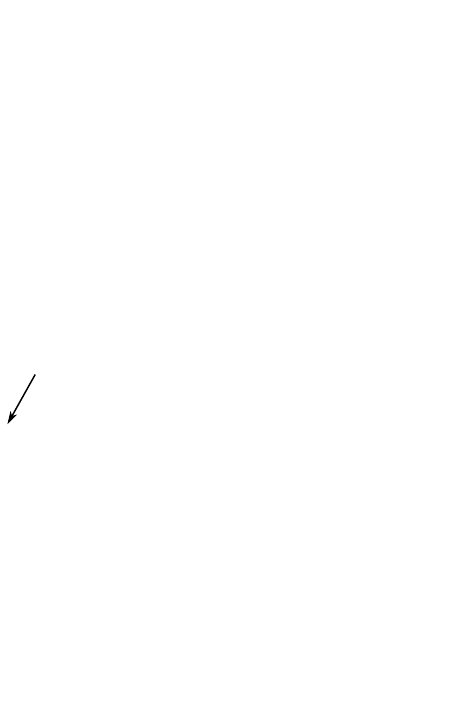
	\endgroup

        \caption{The case when the normal vector is far from $\nu_\theta$}
        \label{fig:curveWrongNormal}
    \end{minipage}
\end{figure}

\medskip

{\em Claim 1} is thus proved, consequently,
\eq{
\frac{\underline{\theta}}{2}
\le
\int_{M_k\cap P_y\cap S_\tau} |A|\,\rd\mcH^1 .
}
Integrating over $y\in B^{n-1}_1$ and applying the coarea formula, we obtain
\eq{\label{ineq:curvature-concentration}
\frac{\underline{\theta}}{2}
\le
C(n)\int_{M_k\cap S_\tau} |A|\,\rd\mcH^n
\le
C(n)\,
\mathcal H^n(M_k\cap S_\tau)^{1/2}
\left(
\int_{M_k\cap B_{3/2}} |A|^2\,\rd\mcH^n
\right)^{1/2}
\le
C(n,\theta,\Lambda)\sqrt{\tau},
}
which is a contradiction for $\tau$ sufficiently small.
This completes the proof of (I).

\medskip

Now we prove (II).
Assume that $\mathbf C\in \mathscr{C}_\theta$ is expressed as
\eq{
\mathbf C
=
q_0|H_0|+q_\pi|H_\pi|
+p_1|H_{\pi-\theta}|+p_2|H_\theta|,\quad q_0,q_\pi\geq0,\ p_1+p_2>0,
}
{and we adopt the convention $\pi-\theta\eqqcolon\theta_1$, $\theta\eqqcolon\theta_2$, to be consistent with \eqref{defn:classical-cones}}.
We first observe that for all sufficiently large $k$,
\eq{
M_k\cap S_1\cap\mathcal{N}_0\setminus S_\tau=\emptyset,
\qquad
M_k\cap S_1\cap\mathcal{N}_\pi\setminus S_\tau=\emptyset .
}
Indeed, if this were false, then by repeating the above argument used to prove {\em Claim 1}, we would again arrive at a contradiction.

It is thus left to show (II)(c).
Here we only show that on each $M_{k,2,j}$ the unit normal vector satisfies (recalling \eqref{ineq:nu_k,i,j})
\eq{
|\nu_{k,2,j}-\nu_\theta|<\frac{\underline{\theta}}{4}.
}
An entirely analogous argument then applies to $M_{k,1,j}$.

Suppose, to the contrary, that there exists some $M_{k,2,j}$ such that
\eq{
|\nu_{k,2,j}+\nu_\theta|\le \frac{\underline{\theta}}{4}.
}

{\em Claim 2. There exists a set of $y\in B_1^{n-1}$ with $\mathcal H^{n-1}$-measure at least $\frac{1}{2}\omega_{n-1}$ such that for each such $y$, one can find a curve $\gamma\in \Gamma_2$ with one endpoint lying on $M_{k,2,j}\cap P_y$.}

Indeed, if this were false, then for a subset of $y\in B_1^{n-1}$ of measure at least $\frac{1}{2}\omega_{n-1}$, every curve $\gamma\in \Gamma$ with an endpoint on $M_{k,2,j}\cap P_y$ must belong to $\Gamma_1$, with the other endpoint lying on some $M_{k,i,j'}$.
By the arguments in \emph{Case~1.1} and \emph{Case~1.2} above, the angle between the unit normal vector field at the two endpoints of such a curve is bounded from below by $\underline{\theta}/2$ for almost every such $y$.
Applying the same co-area estimate as \eqref{ineq:curvature-concentration} then yields a contradiction.
This proves the claim.

Therefore, for a subset of $y\in B_1^{n-1}$ of measure at least $\frac{1}{2}\omega_{n-1}$, there exists $\gamma\in \Gamma_2$ with one endpoint on $M_{k,2,j}\cap P_y$.
At the other endpoint, $M$ meets $\{x_1=0\}$ with contact angle $\theta$.
It follows that the total change of the unit normal vector field $\nu_k$ is at least $\underline{\theta}/2$; see Figure~\ref{fig:curveWrongNormal} for an illustration.
As before, this leads to a contradiction.
The proof is thus completed.
\end{proof}
}

\begin{corollary}\label{Cor:tilt-excess-control-qualitative}
	Let $n\geq2, \theta\in(0,\pi), \Lambda\in[1,\infty)$.%
	Let $\mathbf{C}\in \mathscr{C}_\theta$.
	For any $\delta>0$, there exists $\varepsilon=\varepsilon(n,\theta,\Lambda,\mathbf{C},\delta)>0$ such that for any
	$\overline{V}\in\mathscr{V}(\theta,\Lambda)$,
	if%
	\eq{
		{\rm dist}_{\mcH}\!\left(
		\mathrm{spt}\,\|\overline V\|\cap B_{2}(0),\;
		\mathrm{spt}\,\|\mathbf C\|\cap B_{2}(0)
		\right) <2\varepsilon,
	}
	then
	\eq{
		\int_{M\cap B_1(0)} g_\theta^{2}\, \rd\mcH^n < \delta.
	}
	
\end{corollary}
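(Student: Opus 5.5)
The plan is a compactness and contradiction argument built on Theorem \ref{thm:coneMinDistGraph}\ref{it:thm:graph}, splitting $M\cap B_1(0)$ into a good region where the normal is close to $\nu_{\pm\theta}$ and a thin tubular region $S_\tau$ around the spine of small area. Fix $\delta>0$. On the good region $g_\theta$ is pointwise small; on $S_\tau$ we use $g_\theta\le G_{n,\theta}$ together with the upper area bound.

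\textbf{Step 1 (area near the spine).} By the Ahlfors upper bound of Lemma \ref{Lem:Ahlfors-regularity}, applied on a cover of $S_\tau\cap B_1(0)$ by roughly $\tau^{-(n-1)}$ balls of radius $\tau$ centered at points of $\overline M$, we get
\[
\mcH^n(M\cap S_\tau\cap B_1(0))\le C(n,\theta,\Lambda)\,\tau .
\]
Since $g_\theta\le G_{n,\theta}$ (the constant from the proof of Lemma \ref{lem:capillary-truncated-caccioppoli}), the contribution of $S_\tau$ to $\int_{M\cap B_1}g_\theta^2$ is at most $C\tau$. We choose $\tau$ so that this is less than $\delta/2$.

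\textbf{Step 2 (away from the spine).} With this $\tau$ fixed, apply Theorem \ref{thm:coneMinDistGraph}\ref{it:thm:graph} with $\varepsilon$ small. By conclusion (b), $M\cap S_1\setminus S_\tau$ contains no sheets near $H_0$ or $H_\pi$. By (a), (c), (d), it consists of graphs $M_{i,j}$ over $H_{\pi-\theta}$ or $H_\theta$. Their normals satisfy $|\nu_{1,j}-\nu_{-\theta}|$ and $|\nu_{2,j}-\nu_\theta|$ bounded by a multiple of the $C^2$ norm of $u_{i,j}$, which is at most $C\varepsilon$ by \eqref{ineq:u_i,j-C^2-estimate}. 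Since $g_\theta$ is Lipschitz in $\nu$ and vanishes exactly at $\nu_{\pm\theta}$, we get $g_\theta\le C\varepsilon$ there. Hence
\[
\int_{M\cap B_1\setminus S_\tau}g_\theta^2\le C\varepsilon^2\Lambda<\delta/2
\]
for $\varepsilon$ small.

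There is a wrinkle: $B_1(0)$ is not contained in $S_1$, because $S_1$ only controls $|x'|\le1$ and $x_1^2+x_{n+1}^2<1$. The point is not serious, since $B_1(0)\subset\{x_1^2+x_{n+1}^2<1,\ |x'|<1\}$, so indeed $B_1\subset S_1$. Still, we should confirm that the theorem's graphical description covers $M\cap(S_1\setminus S_\tau)$ entirely, i.e.\ that the union of the neighborhoods $\mathcal N_i$ and $\mathcal N_0$, $\mathcal N_\pi$ exhausts it. This follows because for $\varepsilon<\tau\sin(\underline\theta/3)$, the Hausdorff-distance hypothesis forces every point of $M$ outside $S_\tau$ to lie within the angular neighborhoods.

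\textbf{Main obstacle.} The delicate point is making Step 2 legitimate: Theorem \ref{thm:coneMinDistGraph} gives a $C^2$ bound on graph functions over domains in $H_\theta$, and that bound must translate into normals close to the correct orientation $\nu_\theta$, not $-\nu_\theta$. This is exactly conclusion (c), upgraded quantitatively by (d). If only the qualitative bound $\underline\theta/4$ from (c) were available, I would instead run the contradiction argument: take a sequence with Hausdorff distance tending to $0$ and use the smooth graphical convergence on compact subsets of $S_1\setminus S_\tau$ to conclude $g_\theta\to0$ uniformly there. Combining Steps 1 and 2 gives $\int_{M\cap B_1}g_\theta^2<\delta$.
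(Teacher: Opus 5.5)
Your proposal is correct and follows essentially the same route as the paper. Both arguments remove the thin region $S_\tau$ using the Ahlfors upper bound of Lemma~\ref{Lem:Ahlfors-regularity} together with a uniform bound on $g_\theta$. On $(M\cap B_1(0))\setminus S_\tau$ they then combine Theorem~\ref{thm:coneMinDistGraph}\ref{it:thm:graph}, with (c) fixing the orientation, and the pointwise bound $g_\theta\le\min\{|\nu-\nu_\theta|,|\nu-\nu_{-\theta}|\}$. That bound is your Lipschitz observation, which the paper records as Lemma~\ref{Lem:g_theta-controlled-by-model-normals}.

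Your extra check that $M\cap S_1\setminus S_\tau$ lies in the angular neighborhoods is a detail the paper leaves implicit. Given the hypothesis that the Hausdorff distance is $<2\varepsilon$, the threshold there should read $2\varepsilon<\tau\sin(\underline\theta/3)$. Your opening remark that $B_1(0)\not\subset S_1$ is false, as you yourself then observe.
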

\begin{proof}
	First, by virtue of Lemma \ref{Lem:Ahlfors-regularity}, we choose $\tau\in(0,1)$ sufficiently small, depending only on $n,\theta,\Lambda,\de$, such that $\mathcal{H}^n(\spt\norm{V}\cap S_\tau)<\frac{\delta}{8}$ for any $V$ associated to $\overline{V}\in\mathscr{V}(\theta,\Lambda)$.
	Then, by
    Theorem \ref{thm:coneMinDistGraph} \ref{it:thm:graph} and the inclusion $B_1(0)\subset S_1$,
    we can choose $\varepsilon>0$ sufficiently small, depending only on the stated quantities, such that
	\eq{
		\int_{\left(M\cap B_1(0)\right)\setminus S_\tau} g_\theta^{2}\, \rd\mcH^n
        < \frac{\delta}{2},%
	}
	where we have used the fact that (see Lemma \ref{Lem:g_theta-controlled-by-model-normals})
	\eq{
		g_\theta^{2}(X)\leq \min\{ |\nu(X)-\nu_\theta|^{2}, |\nu(X)-\nu_{-\theta}|^{2} \}.
	}
\end{proof}

\begin{theorem}[Sheeting theorem, qualitative version]\label{Thm:sheeting-epsilon-delta}
Let $n\geq2, \theta\in[\frac{\pi}{2},\pi)$, $\Lambda\in[1,\infty)$.%
Let $\mfC\in\mathscr{C}_\theta$,
let $\ep_0=\ep_0(n,\theta,\Lambda)$ be the constant from Theorem \ref{thm:sheetingHalf}.
For any $\de\in(0,\ep_0)$,
there exists a positive constant $\ep\in(0,1)$ depending only on $n,\theta,\Lambda,\mfC,\de$, with the following property:
For any $\overline{V}\in\mathscr{V}(\theta,\Lambda)$, with
\eq{
\dist_\mcH\left(\spt\norm{\overline{V}}\cap B_{2}(0),\spt\norm{\mfC}\cap B_{2}(0)\right)<\ep,
}
we have
\eq{
	\overline{M}\cap B_\frac{1}{2}(0)
        =\left(\left(\bigcup_{j\in Q^+}{\mathrm{graph}(u^+_j)}\right)\cup\left(\bigcup_{j\in Q^-}{\mathrm{graph}(u^-_j)}\right)\right)\cap B_\frac{1}{2}(0),
}
	where
    $u^\pm_j:\mathrm{dom}(u^\pm_j)\rightarrow \mathbb{R}$, $j\in Q^\pm\coloneqq\left\{1,\cdots,q^\pm\right\}$ are smooth functions whose graphs 
    $$\left\{\left(x,u_j^\pm(x)\right): x\in \mathrm{dom}(u^\pm_j)\right\},$$ 
    oriented by the unit normal pointing upwards for $u^+_j$ and downwards for $u^-_j$, are minimal and satisfy the capillary boundary condition. If $q^\pm>1$ then $u^\pm_j\leq u^\pm_{j+1}$ for $j=1,2,\cdots ,q^\pm -1$. In particular, for any $j\in Q^\pm$,
    \eq{
    \sup_{\mathrm{dom}(u^\pm_j)}\abs{u_j^\pm\pm\cot\theta x_1}+\sup_{\mathrm{dom}(u^\pm_j)}\abs{Du^\pm_j\pm\cot\theta e_1}+\sup_{\mathrm{dom}(u^\pm_j)}\abs{D^2u^\pm_j}
    \leq C\de^\frac{1}{2},%
    }
where $C=C(n,\theta,\Lambda)\in(0,\infty)$.

\end{theorem}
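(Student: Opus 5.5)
The plan is to combine the qualitative tilt-excess control of Corollary \ref{Cor:tilt-excess-control-qualitative} with the quantitative sheeting Theorem \ref{thm:sheetingHalf}. We apply the latter at a fixed small scale centred at many boundary and interior points of $B_{1/2}(0)$, and then glue the resulting local graphs.

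\emph{Step 1: choosing scales.} Fix $\sigma_0\coloneqq\frac{1}{2(1+\abs{\cot\theta})}$. Given $\de\in(0,\ep_0)$, set $\de'\coloneqq\de\,\sigma_0^n/C_0$, where $C_0$ is a covering constant fixed later. Corollary \ref{Cor:tilt-excess-control-qualitative}, applied with $\de'$ and with balls recentred at points $X\in\overline{M}\cap B_{1/2}(0)$, then gives $\ep$ such that
\[
\int_{M\cap B_1(0)}g_\theta^2\,\rd\mcH^n<\de'.
\]
Consequently, for every $X\in B_{1/2}(0)$ with $\mbC^\theta_{\sigma_0}(X)\subset B_1(0)$, one has $E_{\sigma_0}(X)<\de\le\ep_0$. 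If the cylinders do not fit inside $B_1$, shrink $\sigma_0$ by a fixed factor depending on $\theta$; this changes only constants.

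\emph{Step 2: local sheeting.} For points $X$ with $x_1$ comparable to $\sigma_0$ or smaller, translate horizontally so that $X$ projects to $\{x_1=0\}$; the translation keeps $\p\mbH^{n+1}$ invariant. Then Theorem \ref{thm:sheetingHalf} applies and gives graphs $u^\pm_j$ over $B^{n+}_{\sigma_0/4}$ with
\[
\abs{Du^\pm_j\pm\cot\theta e_1}+\sigma_0\abs{D^2u^\pm_j}\le C\de^{1/2}.
\]
Interior points far from the wall are handled directly. There, $g_\theta$ small together with $\abs{\nu_{n+1}}\ge\frac12\sin\theta$ makes $M$ locally graphical over $\{x_{n+1}=0\}$ by the interior sheeting theorem \cite[Theorem 7]{Bellettini25}, which needs tilt excess relative to the planes $P_{\pm\theta}$. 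That excess is bounded by $C g_\theta^2$ via Lemma \ref{Lem:g_theta-controlled-by-model-normals}. For the $C^0$ bound on $u^\pm_j\pm\cot\theta x_1$, use the Hausdorff closeness to $\spt\norm{\mfC}$: each sheet lies within $\ep$ of $H_\theta$ or $H_{\pi-\theta}$. Both of these pass through the origin, so the constants $a_j^\pm$ are bounded by $C\ep\le C\de^{1/2}$ after shrinking $\ep$.

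\emph{Step 3: gluing.} This is the main obstacle. We must show that the local graph decompositions, obtained over overlapping half-balls covering the projection of $B_{1/2}(0)$, patch into finitely many global sheets $u^\pm_j$ with a consistent ordering. I would follow the continuation argument of \cite[Proof of Theorem 5]{Bellettini25}, as already used in the proof of Theorem \ref{thm:sheetingHalf}. The vertical projection restricted to each phase $\{\pm\nu_{n+1}\ge\frac12\sin\theta\}$ is a local diffeomorphism, including at $\p M$, by the uniform smallness of $g_\theta$. It is also proper over the relevant domain, since $\overline M\cap B_{1/2}$ is contained in an $\ep$-neighbourhood of the two half-planes. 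Hence the projection is a covering map over the connected domain $\mathrm{dom}(u^\pm_j)$, which yields single-valued sheets.

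\emph{Step 4: embeddedness and ordering.} The hypothesis $\mcH^{n-2}({\rm Sing}_\theta V)=0$ ensures that the sheets are embedded away from the boundary and that touching can occur only in the $\theta$-regular fashion of Definition \ref{Defn:theta-regular-points-manifold}. This gives the ordering $u^\pm_j\le u^\pm_{j+1}$. Finally, the local $C^2$ estimates combine into the stated global estimate with $C=C(n,\theta,\Lambda)$.
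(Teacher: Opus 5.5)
Your proposal follows the same route as the paper, which simply invokes Corollary \ref{Cor:tilt-excess-control-qualitative} to make $\int_{M\cap B_1(0)}g_\theta^2\,\rd\mcH^n$ small and then concludes by Theorem \ref{thm:sheetingHalf} plus a covering argument; your rescaling $\de'=\de\sigma_0^n/C_0$ (so that the scale-$\sigma_0$ excess really is below $\ep_0$) and your use of Hausdorff closeness to bound the constants $a_j^\pm$ supply details the paper leaves implicit. One correction: the pointwise bound of the tilt relative to $P_{\pm\theta}$ by $Cg_\theta^2$ does not come from Lemma \ref{Lem:g_theta-controlled-by-model-normals}, which gives the reverse inequality, but from \eqref{ineq:normal-g-estimate-upward}--\eqref{ineq:normal-g-estimate-downward} in Lemma \ref{Lem:Du+cot-e_1<=g_theta}, and that bound holds only phase by phase once $g_\theta$ is pointwise small, so for interior points you should first get $\sup g_\theta\le C\de^{1/2}$ (the De Giorgi iteration of Lemma \ref{lem:capillary-De-Giorgi} works in any cylinder inside $\mbC^\theta_2$) before separating the two phases.
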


{
\begin{proof}
Let $\ep_1=\ep_1(n,\theta,\Lambda,\mfC,\de)$ be the constant from Corollary \ref{Cor:tilt-excess-control-qualitative},
then set $\ep=\min\{\ep_0,\ep_1\}$.
Applying Corollary \ref{Cor:tilt-excess-control-qualitative}, we deduce
\eq{
\int_{M\cap B_1(0)}g_\theta^2\rd\mcH^n
<\de.
}
Using Theorem \ref{thm:sheetingHalf} and a covering argument, we conclude the proof.
\end{proof}
}

\subsection{Boundary regularity}

\begin{definition}[weak $\theta$-regular points]\label{defn:theta-regular-point-capillary-vfld}
\normalfont
Let $\overline V\in \overline{\mathscr{V}}(\theta,\Lambda)$.
A point $X\in \mathrm{spt}\,\|\overline V\|\cap B_1(0)$ is called a {\em weak $\theta$-regular point}, denoted as $X\in \mathrm{reg}_\theta\,\overline V$,
if there exists $\rho>0$ such that one of the following holds:
\begin{enumerate}[label=\textup{(\roman*)}]
    \item
    $\mathrm{spt}\,\|\overline V\|\cap B_\rho(X)$ is an orientable, embedded $C^2$-minimal hypersurface without boundary.
    \item
    $X\in\partial \mathbb{H}^{n+1}\cap B_1(0)$, and
    \eq{
        \left(\mathrm{spt}\,\|\overline V\|\cap B_\rho(X)\right)\backslash \partial \mathbb{H}^{n+1}
        =
        \bigcup_{j=1}^N \Sigma_j \cap B_\rho^+(X),
    }
    where each $\Sigma_j$ is an embedded 
    $C^2$-hypersurface such that following conditions hold:
    \begin{enumerate}[label=\textup{(\alph*)}]
        \item $\partial \Sigma_j \cap B_\rho(X)\subset \partial \mathbb{H}^{n+1}$ for each $j$;
        \item there exists a unit normal vector field $\nu_j$ of $\Sigma_j$ such that $\langle \nu_j, e_1 \rangle=\cos \theta$ on $\p\S_j$; 
        \item the interiors of $\Sigma_i$ and $\Sigma_j$ are disjoint in $B_\rho(X)$ for $i\neq j$;
        \item any intersection between distinct components may occur only along $\partial \mathbb{H}^{n+1}$;
        \item%
        if $\Sigma_i\cap\Sigma_j\neq\emptyset$, then their boundaries are
        \begin{enumerate}
            \item [(e1)] either identical, and with the same induced unit normal in $\p\mbH^{n+1}$, which implies that $\Sigma_i$ and $\Sigma_j$ are identical;
            \item [(e2)] or mutually tangent within $\partial\mathbb{H}^{n+1}$, with opposite induced unit normals in $\partial \mathbb{H}^{n+1}$.
        \end{enumerate}
    \end{enumerate}
\end{enumerate}
We denote the {\em weak $\theta$-singular set} by $\mathrm{sing}_\theta\,\overline V := \mathrm{spt}\,\|\overline V\| \backslash \mathrm{reg}_\theta\,\overline V$.
\end{definition}
In particular, 0 is a weak $\theta$-regular point for $\mathbf{C}\in \mathscr{C}_\theta$, {but not a $\theta$-regular point in the sense of Definition \ref{Defn:theta-regular-points-manifold} unless $q_0=q_\pi=0$.}

\begin{theorem}\label{thm:singular-dimension-estimate-capillary-vflds}
Let $n\geq2, \theta\in[\frac{\pi}{2},\pi)$, $\Lambda\in[1,\infty)$.
For any $\overline{V}\in \overline{\mathscr{V}}(\theta,\Lambda)$,
we have $\mathcal{H}^{n-n_\theta+\delta}(\mathrm{sing}_\theta\,\overline{V}\cap B_2(0))=0$ for all $\delta\in (0,1)$, where $n_\theta$ is defined in Theorem \ref{thm:mainCompact}.
    \label{thm:regularityLimit}
\end{theorem}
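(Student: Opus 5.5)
The plan is a Federer-type dimension reduction combined with the qualitative sheeting theorem (Theorem \ref{Thm:sheeting-epsilon-delta}) and the cone classification (Theorem \ref{thm:stable-capillary-cone-isolated-singularity}).

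\emph{Step 1: compactness of the class under blow-up.} Let $\overline V\in\overline{\mathscr V}(\theta,\Lambda)$ be the limit of $\overline V_k\in\mathscr V(\theta,\Lambda)$. By Lemma \ref{Lem:Monotonicity-formula-doubling} and Corollary \ref{Cor:density-upper-semi-continuity}, tangent cones exist at every point. Since $\mathscr V(\theta,\Lambda)$ is invariant under translations along $\p\mbH^{n+1}$ and dilations (with $\Lambda$ enlarged by the monotonicity bound), a diagonal argument shows that every tangent cone $\mathbf C$ of $\overline V$ at a boundary point lies in $\overline{\mathscr V}(\theta,\Lambda')$. The same holds for iterated tangent cones at points away from the origin. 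Interior points are handled by the classical Schoen--Simon/Bellettini theory and give codimension $7$; since $n_\theta\le 7$, this is no worse than the boundary bound.

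\emph{Step 2: $\varepsilon$-regularity.} I would show that if some tangent cone at $X\in\p\mbH^{n+1}$ belongs to $\mathscr C_\theta$, then $X\in\mathrm{reg}_\theta\overline V$. By varifold convergence and the Ahlfors lower bound (Lemma \ref{Lem:Ahlfors-regularity-capillary-vfld}), at a small scale the supports of the approximating $\overline V_k$ are Hausdorff-close to $\spt\norm{\mathbf C}$. Theorem \ref{Thm:sheeting-epsilon-delta} then gives uniform $C^2$ graph decompositions, with capillary boundary, over the two half-planes. These pass to the limit by Arzel\`a--Ascoli and Schauder estimates for the oblique problem. The resulting sheets satisfy conditions (a)--(e) of Definition \ref{defn:theta-regular-point-capillary-vfld}, the ordering $u_j\le u_{j+1}$ being preserved. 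The $H_0,H_\pi$ parts live on the wall and are invisible in the definition, which only concerns $\spt\setminus\p\mbH^{n+1}$. Conversely, Theorem \ref{thm:coneMinDistGraph}(I) rules out cones in $\mathscr C\setminus\mathscr C_\theta$ as limits. Hence any tangent cone that is a union of half-planes through a common $(n-1)$-dimensional spine in $\p\mbH^{n+1}$ is in $\mathscr C_\theta$, and its vertex is weakly regular.

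\emph{Step 3: dimension reduction.} I would apply Almgren--Federer stratification via the upper semicontinuity of $\tilde\Theta$. If $\mathcal H^{n-n_\theta+\delta}(\mathrm{sing}_\theta\overline V)>0$, one obtains a boundary point of positive upper density and a tangent cone $\mathbf C$ whose singular set also has positive $\mathcal H^{n-n_\theta+\delta}$-measure. Iterating and splitting off translation-invariant directions, with weak regular points staying regular by Step 2 and upper semicontinuity, yields a cone $\mathbf C_0\times\mbR^{n-d}$. Here $\mathbf C_0$ is a $d$-dimensional capillary cone in $\overline{\mbH^{d+1}}$, with $d\ge n_\theta$ if $\mathrm{sing}$ were too large, and $\mathrm{sing}_\theta\mathbf C_0=\{0\}$. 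By Step 2 applied away from $0$, $\mathbf C_0$ is a stable capillary minimal cone with an isolated singularity. Stability is inherited from the smooth convergence on compact subsets away from $0$. One also needs to pass multiplicities through the sheets: each sheet is separately stable, which handles the multiple sheets arising from the $u_j$.

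\emph{Step 4: conclusion from the classification.} The smallest dimension in which a nonflat stable cone could exist is $d=n_\theta$: $d=5$ is excluded outside the angle window, $d=6$ inside the middle window, and $7$ otherwise. Theorem \ref{thm:stable-capillary-cone-isolated-singularity} says such a cone for $d<n_\theta$ is flat, hence in $\mathscr C_\theta$, contradicting the singularity. Interior cones are covered by Simons' classification. For $n=2$, when $d=2$ appears, one needs 2D capillary cones with an isolated singularity to be unions of half-planes; stationarity alone gives this. The main obstacle is Step 3: showing that limits of the boundary pieces genuinely remain stable capillary hypersurfaces. The difficulty is that sheets with equal boundary can touch along the wall (e2), and stability must survive there. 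I would first try to obtain stability by passing the stability inequality through the graphical convergence on compact sets away from the isolated singularity, sheet by sheet, using the $C^2$ estimates of Theorem \ref{Thm:sheeting-epsilon-delta}.
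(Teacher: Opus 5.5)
Your outline is the paper's own argument: iterated tangent cones with Federer's reduction, the sheeting theorems used as $\varepsilon$-regularity (the paper's property (d)), Theorem \ref{thm:coneMinDistGraph} to force one-dimensional terminal cones into $\mathscr C_\theta$, and Theorem \ref{thm:stable-capillary-cone-isolated-singularity} for $2\le d<n_\theta$. The step you flag as the main obstacle is also handled as you propose: stability of the limit cone follows from smooth sheet-by-sheet convergence away from the singular set.

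There is, however, a case your Step 2 does not cover: tangent cones that are a multiple of the wall $\partial\mathbb H^{n+1}$. Such a cone is neither in $\mathscr C_\theta$ nor in $\mathscr C\setminus\mathscr C_\theta$, since $\mathscr C$ requires at least one half-plane $H_{\theta_i}$ with $\theta_i\in(0,\pi)$. So neither Theorem \ref{Thm:sheeting-epsilon-delta} nor Theorem \ref{thm:coneMinDistGraph} says anything about it. Your sentence ``any tangent cone that is a union of half-planes through a common spine is in $\mathscr C_\theta$'' is therefore false for $q_0|H_0|+q_\pi|H_\pi|$. The case arises in two places:
\begin{itemize}
\item as the terminal cone when the one-dimensional cone $\mathbf C'$ lies in the wall;
\item more importantly, in the upper semicontinuity of the singular set that Federer's argument requires. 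At interior points of a wall portion $q_0|H_0|$ of a tangent cone, you must know that the approximating varifolds are weakly regular nearby, i.e.\ that sheets of $M_k$ accumulating onto the wall are graphs over it and carry no capillary boundary points.
\end{itemize}
That a point is weakly regular ``by definition'' when the support is exactly the wall does not give this. The missing ingredient is Theorem \ref{thm:sheeting-2nd} (via the classical tilt $\mathbf g=\sqrt{1-\nu_1^2}$ and Lemma \ref{lem:wall-De-Giorgi}), which the paper invokes precisely here. With it added to Step 2, your argument closes.

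There are two smaller points.
\begin{itemize}
\item In Step 3 the inequality is backwards. If the singular set were too large, the reduction produces a spine of dimension $\ge n-n_\theta+1$. Hence you get a cone $\mathbf C_0$ with isolated singularity of dimension $d\le n_\theta-1$, which Step 4 then rules out. Your Step 4 uses the correct direction.
\item Your separate treatment of $d=2$ is a genuine improvement on the paper's write-up. The paper applies Theorem \ref{thm:stable-capillary-cone-isolated-singularity} for $2\le\dim\mathbf C'$, although that theorem is stated only for $n\ge3$. After showing the link consists of half great circles, you still need the half-planes to share their boundary line; otherwise they meet transversally along an interior ray, which would be singular. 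Then $\mathbf C_0\times\mathbb R^{n-2}\in\mathscr C$ and Theorem \ref{thm:coneMinDistGraph} applies.
\end{itemize}
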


\begin{proof}
Note that for ${\rm reg}_\theta{\overline{V}}\mid_{\mbH^{n+1}}$ (i.e., Definition \ref{defn:theta-regular-point-capillary-vfld} (i)) agrees with the notion of classical regular points, therefore by Schoen-Simon's (interior) regularity theorem \cite[Theorem 3]{SS81} we have $\mcH^{n-7+\de}\left({\rm sing}_\theta\overline{V}\cap\mbH^{n+1}\right)=0$.
So, it remains to consider the singularities on $\p\mbH^{n+1}$.
Put \eq{
\mathfrak{S}\coloneqq\mathrm{sing}_\theta\overline{V}\cap B_{1}(0)\cap \partial \mathbb{H}^{n+1}.
}

\noindent{\bf Claim.} $\mathcal{H}^{n-n_\theta+\delta}(\mathfrak{S})=0$ for all $\delta\in(0,1)$, and when $n=n_\theta$, $\mathfrak{S}$ is a discrete set.

By Lemma \ref{Lem:Ahlfors-regularity-capillary-vfld}, Proposition
\ref{Prop:bdd-1st-variation-free-bdry-vfld}, and Lemma
\ref{Lem:Monotonicity-formula-doubling}, the set
${\rm VarTan}(\overline V,X_0)$ is nonempty for every $X_0\in\mathfrak S$.
Every $\mfC\in{\rm VarTan}(\overline V,X_0)$ is a cone.
To prove the theorem, we analyze such tangent cones. In particular, we have the following.
    \begin{lemma}\label{lem:tangen-cone-splitting-capillary-vflds}
        \label{lem:coneDimReduction}
        For any $\mathbf{C} \in \mathrm{VarTan}(\overline{V},X_0)$ with $X_0 \in\mathfrak{S}$, we can write $\mathbf{C}=\mathbf{C}'\times \mathbb{R}^{n-p}$ for some $p\ge n_\theta$.
    \end{lemma}
    \begin{proof}
        [Proof of Lemma \ref{lem:coneDimReduction}]
        For any cone $\mathbf{C}$, we write $\mathcal{S}(\mathbf{C})$ (the {\em spine} of $\mathbf{C}$) to be the linear subspace containing all $X\in \partial \mathbb{H}^{n+1}$ such that $\mathbf{C}$ is invariant under the translation along the line spanned by $X$.
        For any $X_0 \in\mathfrak{S}$, we introduce the notion of {\em iterated tangents} of $\overline{V}$ at $X_0$ as follows.
        We say a collection of cones $\left\{ \mathbf{C}_1,\mathbf{C}_2,\cdots ,\mathbf{C}_N \right\}$ is iterated tangents of $\overline{V}$ at $X_0$ if $\mathbf{C}_1$ is a tangent cone of $\overline{V}$ at $X_0$, and $\mathbf{C}_{j+1}$ is a tangent cone of $\mathbf{C}_j$ at $X_j \in \mathrm{sing}_\theta\mfC_j\backslash \mathcal{S}(\mathbf{C}_j)$ for $1\le j\le N-1$.
        
        Note that each tangent cone $\mathbf{C}_j$ is stationary with free boundary in $\mbH^{n+1}$.
        Moreover, we can make sure the iterated tangents satisfy the following properties:
        \begin{enumerate}[(a)]
            \item $\mathrm{sing}_\theta\mfC_j\neq \emptyset$.
            \item $\mathrm{dim}(\mathcal{S}(\mathbf{C}_{j+1}))>\mathrm{dim}(\mathcal{S}(\mathbf{C}_{j}))$ for each $j=1,2,\cdots ,N-1$.
            \item $\mathbf{C}_N=\mathbf{C}'\times \mathbb{R}^{\mathrm{dim}(\mathcal{S}(\mathbf{C}_N))}$ where $\mathrm{sing}\,\mathbf{C}'=\{ 0 \}$.
            \item For each $1\le j\le N$, we can find a sequence of points $\left\{ Y_k \right\}$ with $Y_k\rightarrow X_0$, a sequence of positive real numbers $\left\{ r_k \right\}$ with $r_k\rightarrow 0^+$ as $k\rightarrow \infty$, and a sequence $\left\{\overline{V_k}\in\mathscr{V}(\theta,\Lambda)\right\}_{k\in\mbN}$ such that $(\bseta_{Y_k,r_k})_\#\overline{V_k}$ converges, in the sense of varifolds, to $\mfC_j$.
            This also implies that $\spt\norm{(\bseta_{Y_k,r_k})_\#\overline{V_k}}$ converges in the sense of Hausdorff distance to $\spt\norm{\mfC_j}$, thanks to Lemma \ref{Lem:Ahlfors-regularity-capillary-vfld}.
            Moreover, we note that up to a different $\Lambda'$, depending only on $n,\theta,\Lambda$, we have $(\bseta_{Y_k,r_k})_\#\overline{V_k}\in\mathscr{V}(\theta,\Lambda')$ for each $k\in\mbN$.
            At a weak $\theta$-regular point of $\mathbf C_j$, smooth convergence follows
            from Theorem \ref{Thm:sheeting-epsilon-delta} in the capillary case,
            from Theorem \ref{thm:sheeting-2nd} when the tangent plane is $\p\mbH^{n+1}$, and from the interior sheeting theorem
            \cite[Theorem 7]{Bellettini25} at an interior point.
        \end{enumerate}
        In particular, (b) implies $N$ is a finite number, and (d) implies that the weak $\theta$-regular part of $\mathbf{C}_j$ is stable.

        Now, let us determine the dimension of $\mathbf{C}'$.
        Note that the dimension of $\mathbf{C}_N$ is at least one.

        If the dimension of $\mathbf{C}'$ is one, then either
        $\operatorname{spt}\|\mathbf C_N\|\subset\p\mbH^{n+1}$ or
        $\mathbf C_N\in\mathscr C$. In the first case, stationarity implies
        that $\mathbf C_N$ is a multiple of $\p\mbH^{n+1}$. In the second
        case, Theorem \ref{thm:coneMinDistGraph} and (d) imply that
        $\mathbf C_N\in\mathscr C_\theta$. Thus in either case,
        ${\rm sing}_\theta\mathbf C_N=\emptyset$, contradicting (a).

        If $2 \leq \dim(\mathbf{C}') < n_\theta$, {in view of (d) we know that the stability inequality \eqref{ineq:SS81-(1.17)-smooth-case} holds on ${\rm reg}_\theta\mfC'$.
        Hence} by the classification of stable capillary cone (Theorem \ref{thm:stable-capillary-cone-isolated-singularity}), we will have $\mathrm{sing}_\theta\mfC'=\emptyset$, which again contradicts (a).

        Therefore, we know $\mathbf{C}'$ has dimension at least $n_\theta$.
        Hence, by (b), we know $\dim(\mathcal{S}(\mathbf{C}))\le n-n_\theta$ for any $\mathbf{C}\in \mathrm{VarTan}(\overline{V},X_0)$, and the lemma follows.
    \end{proof}
    
    For $n\ge n_\theta$, using Lemma \ref{lem:coneDimReduction} we can apply Federer's dimension reducing principle (cf., \cite[Appendix A]{Simon83}) to get $\dim_{\mathcal{H}} \left(\mathrm{sing}_\theta\overline{V}\cap B_1(0)\right)\le n-n_\theta$, and when $n<n_\theta$, we can directly apply Lemma \ref{lem:coneDimReduction} to get $\mathrm{sing}_\theta\overline{V}\cap B_1(0)=\emptyset$.
    
    At last, we need to show when $n=n_\theta$, $\mathrm{sing}_\theta\overline{V}\cap B_1(0)$ is discrete. We argue by contradiction, and assume that there exists a sequence of points $\left\{ X_i \right\}_{i\in\mbN}\subset \mathrm{sing}_\theta\overline{V}\cap B_1(0)$, such that $X_i\rightarrow X_0$ for some $X_0 \in \mathrm{sing}_\theta\overline{V}\cap B_1(0)$.
    Up to a subsequence, we can assume $(\bseta_{X_0,\rho_i})_{\#}\overline{V}$ converges to some $\mathbf{C} \in \mathrm{VarTan}(\overline{V},X_0)$, where $\rho_i=|X_i-X_0|$, and assume $Y=\lim_{i\rightarrow \infty} \frac{X_i-X_0}{\rho_i}\neq 0$.
    Since $n=n_\theta$, Lemma \ref{lem:coneDimReduction} shows that $Y$ is a
    weakly $\theta$-regular point of $\mathbf C$.
    The sheeting Theorems (\ref{Thm:sheeting-epsilon-delta}, Theorem
    \ref{thm:sheeting-2nd}, and
    \cite[Theorem 7]{Bellettini25}) then shows that
    $(\bseta_{X_0,\rho_i})_\#\overline V$ is weakly $\theta$-regular near $Y$
    for all sufficiently large $i$.
    This contradicts the fact that
    $(X_i-X_0)/\rho_i\to Y$ and each $X_i$ is weakly $\theta$-singular.
    The {\bf Claim} is thus proved.
    
    Finally, note that $(\bm\eta_{X,\frac{2-|X|}{2}})_\#\overline{V}\in\overline{\mathscr{V}}(\theta,\left( \frac{2}{2-|X|} \right)^n\Lambda)$ for any $X\in B_{2}(0)$, hence the proof of the theorem follows by applying the {\bf Claim} to the above push-forward varifold (up to a different $\tilde\Lambda\coloneqq\left( \frac{2}{2-|X|} \right)^n\Lambda$).
\end{proof}
\begin{theorem}[$\theta$-regularity and compactness]\label{thm:bdry-regularity}
    Let $n\geq2, \theta\in[\frac{\pi}{2},\pi)$, $\Lambda\in[1,\infty)$.
    Suppose $\overline{V}_i \in \mathscr{V}(\theta,\Lambda)$, $i\in\mbN$, and let $M_i, V_i,W_i$ be the corresponding hypersurfaces and varifolds as in Definition \ref{defn:varifold-class}.
    Then, after passing to a subsequence, there exists a stable capillary minimal hypersurface $M$, a varifold $V$ induced by $M$, and a varifold $W$ such that $(V,W)$ is $\mbF_\theta$-stationary in $B_2(0)\cap\overline{\mbH^{n+1}}$ with $\left(\norm{V}-\cos\theta\norm{W}\right)(B_2(0))\leq\Lambda$, and that $V_i,W_i$ converge in the sense of varifolds to $V,W$ respectively.
    Moreover, ${\rm Sing}_\theta V\cap B_2(0)=\emptyset$ if $n< n_\theta$, ${\rm Sing}_\theta V\cap B_2(0)$ is discrete if $n=n_\theta$, and $\mcH^{n-n_\theta+\de}\left({\rm Sing}_\theta V\cap B_2(0)\right)=0$ for any $\de>0$ if $n>n_\theta$, and $M_i$ converges to $M$ smoothly away from ${\rm Sing}_\theta V$.
\end{theorem}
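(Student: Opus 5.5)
We first extract limits by compactness and then read off the structure of the limit from the preceding regularity results. By the mass bound $(\norm{V_i}-\cos\theta\norm{W_i})(B_2(0))\le\Lambda$ (recall $\theta\in[\frac\pi2,\pi)$) and Allard's compactness for integral varifolds with locally bounded first variation, we pass to a subsequence with $V_i\to V$, $W_i\to W$, and $\overline V_i\to\overline V=V-\cos\theta W\in\overline{\mathscr V}(\theta,\Lambda)$. Proposition \ref{Prop:bdd-1st-variation-free-bdry-vfld} supplies the uniform first variation control for the free boundary varifolds $\overline V_i$. The interior part of $V_i$ has first variation controlled away from $\p\mbH^{n+1}$, and $W_i$ is supported in a hyperplane, so both pieces converge to integral varifolds. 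Stationarity of $(V,W)$ for $\mbF_\theta$ passes to the limit, because the first-variation identity \eqref{eq:Euc-Riem-capillary-1st-variation} is linear and continuous under varifold convergence. Lower semicontinuity of mass gives the energy bound.

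Next we identify the regular part. By Theorem \ref{thm:singular-dimension-estimate-capillary-vflds}, $\mathrm{sing}_\theta\overline V$ has the stated size. At each point of $\mathrm{reg}_\theta\overline V$, the varifold distance of $\overline V_i$ to a model (a plane, a cone in $\mathscr C_\theta$, or $\p\mbH^{n+1}$) becomes small at some small scale. Ahlfors regularity (Lemma \ref{Lem:Ahlfors-regularity-capillary-vfld}) upgrades this to Hausdorff closeness of supports. Then Theorem \ref{Thm:sheeting-epsilon-delta}, Theorem \ref{thm:sheeting-2nd}, or \cite[Theorem 7]{Bellettini25} applies: near the point, $M_i$ is a union of ordered graphs with uniform $C^2$ bounds, and by elliptic/oblique boundary regularity \cite{LT86}, uniform $C^{k}$ bounds. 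Arzel\`a--Ascoli gives smooth subsequential convergence of the sheets. The limit sheets are minimal, satisfy Young's law, and are stable, since the stability inequality \eqref{ineq:SS81-(1.17)} passes to smooth limits. Ordered graphs converging to the same graph merge into a sheet with integer multiplicity. Define $M$ as the union of the limit sheets, with multiplicity recorded in $V$. Smooth convergence of $M_i$ to $M$ away from $\mathrm{sing}_\theta\overline V$ follows from a diagonal argument over a countable cover.

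The main obstacle is reconciling the weak $\theta$-regular set of $\overline V$ with the $\theta$-regular set of $V$ in the sense of Definition \ref{Defn:theta-regular-points-manifold}. Near a cone in $\mathscr C_\theta$ with $q_0>0$ or $q_\pi>0$, the part of $\overline V$ lying on $\p\mbH^{n+1}$ comes from $W$, not from $V$. I would show that, in the sheeting picture, $\spt\norm V$ near such a point is exactly the union of capillary half-sheets. Theorem \ref{thm:coneMinDistGraph}(II)(b) and Theorem \ref{thm:sheeting-2nd} exclude interior sheets of $M_i$ collapsing onto the wall except as genuine wall-parallel minimal graphs, which stay in $\mbH^{n+1}$ or vanish. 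Two limit half-sheets may touch along $\p\mbH^{n+1}$. If so, they do so tangentially with opposite induced normals, as in (e2), or coincide, as in (e1); this follows from the ordering $u_j^\pm\le u^\pm_{j+1}$ and from the fact that the slanted functions $w^\pm_j$ have the same boundary derivative. Hence $\mathrm{Sing}_\theta V\subset\mathrm{sing}_\theta\overline V$.

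It remains to check that $V$ is the multiplicity-one varifold of an immersed $M$. Coinciding limit sheets give a multiply covered immersion. So I would regard $M$ as the immersed hypersurface parametrizing sheets with multiplicity; this is consistent with the theorem, which allows $M$ properly immersed. The size estimates for $\mathrm{Sing}_\theta V$, including discreteness when $n=n_\theta$, then follow from Theorem \ref{thm:singular-dimension-estimate-capillary-vflds}. The singular set being of vanishing $\mcH^{n-2}$ measure also justifies the stability inequality across it, via the standard cutoff argument of \cite{SS81}.
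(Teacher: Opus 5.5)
Your overall route is the paper's. You extract limits, apply the sheeting theorems (Theorem~\ref{Thm:sheeting-epsilon-delta}, Theorem~\ref{thm:sheeting-2nd}, \cite[Theorem 7]{Bellettini25}) at points of ${\rm reg}_\theta\overline V$ to get ${\rm reg}_\theta\overline V\subset{\rm Reg}_\theta V$ together with smooth convergence, and then read off the size of ${\rm Sing}_\theta V\subset{\rm sing}_\theta\overline V$ from Theorem~\ref{thm:singular-dimension-estimate-capillary-vflds}. The gap is in the first step: the reason you give for integrality of the limits $V$ and $W$ does not work.

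Allard's integral compactness needs locally uniform bounds on $\norm{\delta V_i}$ and $\norm{\delta W_i}$ in all of $B_2(0)$, including near the wall.
\begin{itemize}
\item Minimality only gives $\delta V_i=0$ in the open half-space. That says nothing about mass of $V_i$ concentrating on $\p\mbH^{n+1}$.
\item Being supported in a hyperplane gives no integrality for $W$. The tangent plane of $W_i$ is a.e.\ $\p\mbH^{n+1}$, so $\norm{\delta W_i}$ is the total variation of its multiplicity function. For example, $\{0,1\}$-valued checkerboard multiplicities of mesh $1/i$ converge weakly to $\frac12\mcH^n\llcorner\p\mbH^{n+1}$, which is not integral.
\item Proposition~\ref{Prop:bdd-1st-variation-free-bdry-vfld} controls only the combination $\overline V_i=V_i-\cos\theta\,W_i$, not the two pieces separately.
\end{itemize}
The missing ingredient is the trace identity \eqref{eq-divf-capillary}, which is exactly how the paper argues.
\begin{enumerate}
\item Testing \eqref{eq-divf-capillary} with a cutoff $\phi_K$ gives $\sin\theta\,\mcH^{n-1}(\p M_i\cap K)\le C(K)\Lambda$.
\item Since $\delta V_i(\psi)=\int_{\p M_i}\langle\psi,\eta\rangle$ (using $\mcH^{n-2}({\rm Sing}_\theta V_i)=0$), this gives a uniform local bound on $\norm{\delta V_i}$ up to the wall.
\item For $\theta>\frac\pi2$, the identity $\abs{\cos\theta}\,\delta W_i=\delta\overline V_i-\delta V_i$ and Proposition~\ref{Prop:bdd-1st-variation-free-bdry-vfld} then bound $\norm{\delta W_i}$ uniformly.
\item Allard's compactness now yields integral limits $V$ and $W$. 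This is genuinely needed, since $\mbF_\theta$-stationarity of a pair is defined for integral varifolds.
\end{enumerate}

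A smaller point concerns your (e1)/(e2) discussion. Ordering plus a common oblique boundary condition does not by itself force two touching same-phase sheets to coincide. You need the strong maximum principle and Hopf lemma applied to their difference. For opposite-phase sheets, you need the fact that they cannot cross, which is inherited from $\mcH^{n-2}({\rm Sing}_\theta V_i)=0$ for the approximants.
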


\begin{proof}[Proof of Theorem \ref{thm:bdry-regularity}]
    For each compact subset $K\subset B_2(0)$, we consider cut-off function $\phi_K=1$ on $K$, $=0$ outside $B_2(0)$, with $\abs{D\phi_K}\leq C(K)$ for some positive constant depending on $K$.
    Testing the trace estimate \eqref{eq-divf-capillary} with $\varphi$ therein chosen as $\phi_K$ and $M$ chosen as $M_i$, we see that $\{V_i\}_{i\in\mbN}$, and consequently $\{W_i\}_{i\in\mbN}$ (by Proposition \ref{Prop:bdd-1st-variation-free-bdry-vfld}), have (uniform) locally bounded first variation in $B_2(0)$.
    Applying Allard's integral compactness, we deduce that $V_i$ and $W_i$ subsequentially converge to integral $n$-varifolds $V$ and $W$ in $B_2(0)$, respectively.
    Now, we can define $\overline{V}=V-\cos\theta W$, and we have $\overline{V}_i \rightarrow \overline{V}$.
    Though not needed for this proof, we note that a stronger form of convergence can in fact be established, namely convergence as curvature varifolds with capillary boundary (cf. \cite{WZ25}).
    
    We make the following claim.

    \noindent\textbf{Claim.} ${\rm reg}_\theta \overline{V}\cap B_2(0) \subset {\rm Reg}_\theta V\cap B_2(0)$. 

    Let $X_0\in{\rm reg}_\theta \overline{V}\cap B_2(0)$.
    If the tangent support at $X_0$ is an interior hyperplane, Bellettini's
    interior sheeting theorem \cite[Theorem 6]{Bellettini25} gives
    smooth convergence near $X_0$.
    If it is the supporting wall, the same conclusion follows from Theorem
    \ref{thm:sheeting-2nd}.
    It remains to consider case \textup{(ii)} of Definition
    \ref{defn:theta-regular-point-capillary-vfld}.
    Choose $\rho>0$ so that
    $\overline V\cap B_\rho(X_0)\setminus\partial\mathbb H^{n+1}$ is the stated
    union of embedded capillary minimal hypersurfaces.
    The tangent cone $\boldsymbol C$ at $X_0$ belongs to $\mathscr C_\theta$.
    By Theorem \ref{Thm:sheeting-epsilon-delta}, we can choose
    $\sigma\in(0,\rho)$ such that, for all sufficiently large $i$,
    $B_\sigma^+(X_0)\cap M_i$ can be written as
    \[
        B_\sigma^+(X_0)\cap M_i = \bigcup_{j=1}^{Q_i} \Sigma_{i,j},
    \]
    where $\{\Sigma_{i,j}\}_{j=1}^{Q_i}$ satisfies properties
    \ref{it:reg:onboundary}--\ref{it:defNonemptyIntersection} in Definition
    \ref{Defn:theta-regular-points-manifold}, and
    $B_\sigma^+(X_0)\cap M_i$ converges smoothly to
    $B_\sigma^+(X_0)\cap M$.
    In particular, $B_\sigma^+(X_0)\cap M$ can have the same decomposition as listed in Definition \ref{Defn:theta-regular-points-manifold} (ii), which implies $X_0\in{\rm Reg}_\theta V$.
    Hence, the claim is proved.
    Now, we directly have ${\rm Sing}_\theta V\cap B_2(0)\subset {\rm sing}_\theta \overline{V}\cap B_2(0)$, and the regularity of $V$ follows from Theorem \ref{thm:singular-dimension-estimate-capillary-vflds}.

At last, the smooth convergence is the consequence of the sheeting theorems (Theorem \ref{Thm:sheeting-epsilon-delta}, Theorem
    \ref{thm:sheeting-2nd}, and
    \cite[Theorem 7]{Bellettini25}).
\end{proof}

\section{Bernstein theorem}
\label{sec:BernsteinTheorem}

\begin{theorem}\label{thm:Bernstein-capillary}
    Given $\theta\in(0,\pi)$, let $n_\theta$ be the integer defined in Theorem \ref{thm:mainCompact}.
    Then, for any $2\le n<n_\theta$, if $M$ is a complete, connected, stable capillary minimal hypersurface embedded in $\mbH^{n+1}$ with Euclidean area growth, then $M$ must be flat.
\end{theorem}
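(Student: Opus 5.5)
We would prove Theorem \ref{thm:Bernstein-capillary} by a blow-down argument combined with the regularity and compactness theory of Theorem \ref{thm:bdry-regularity}. By the symmetry $\mathscr V(\theta,\Lambda)=\mathscr V(\pi-\theta,\Lambda)$ we may assume $\theta\in[\frac\pi2,\pi)$. Since $M$ is embedded, complete and stable, the first step is to produce a wall varifold $W$. We take $W$ to be the multiplicity-one varifold of the region of $\p\mbH^{n+1}$ enclosed by $\p M$, i.e.\ the wetted region $\p^*E\cap\p\mbH^{n+1}$ when $M=\p^*E\cap\mbH^{n+1}$ bounds. If $M$ does not bound such a region globally, we take $W$ as a limit of such local choices. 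Young's law then makes $(V,W)$ $\mbF_\theta$-stationary. The wall area inside $B_r$ is controlled by $Cr^n$, via the divergence identity \eqref{eq-divf-capillary} or simply by $\mcH^n(\p\mbH^{n+1}\cap B_r)$. Hence for every $r>0$ the rescalings $\overline V_r\coloneqq(\bseta_{0,r})_\#(V+\abs{\cos\theta}W)$ lie in $\mathscr V(\theta,\Lambda)$ with $\Lambda$ independent of $r$, because of the Euclidean area growth. Here $M$ is smooth, so ${\rm Sing}_\theta V=\emptyset$.

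The second step is the blow-down. Let $r_i\to\infty$. By Theorem \ref{thm:bdry-regularity}, after passing to a subsequence, $\overline V_{r_i}$ converges to a limit $(V_\infty,W_\infty)$ whose $V_\infty$ is a stable capillary minimal hypersurface. Since $n<n_\theta$, we have ${\rm Sing}_\theta V_\infty=\emptyset$, and the convergence is smooth. The monotonicity formula (Lemma \ref{Lem:Monotonicity-formula-doubling}) together with the bound on $r^{-n}\norm{\overline V}(B_r)$ shows that the reflected density ratio at $0$ is monotone and bounded, so it has a limit as $r\to\infty$. The limit $\overline V_\infty$ is therefore a cone, by the equality case of the monotonicity identity. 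A cone that is $\theta$-regular everywhere, including at the vertex, must be a $\theta$-classical configuration. It is therefore a union of half-planes $H_\theta,H_{\pi-\theta}$, planes, and multiples of $\p\mbH^{n+1}$. Theorem \ref{thm:coneMinDistGraph} rules out any other classical cone.

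The third step is to pass flatness back to $M$. By Theorem \ref{Thm:sheeting-epsilon-delta}, applied at scale $r_i$ on $B_{1/2}$, the rescalings decompose into capillary graphs $u_j^\pm$ with $\abs{D^2u_j^\pm}\le C\de^{1/2}$, or into interior graphs via Theorem \ref{thm:sheeting-2nd} and \cite{Bellettini25}. Scaling back gives $\sup_{M\cap B_{r_i/2}}\abs{A}\le C\de^{1/2}r_i^{-1}$. Letting $r_i\to\infty$ along any sequence would give $A\equiv0$, but the quantitative estimate needs care: we use the curvature-estimate formulation, arguing by contradiction. If $\abs{A}(x)\neq0$ at some point, rescale so that $\abs{A}$ is normalized, and let the radius tend to infinity. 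The normalized sequence lies in $\mathscr V(\theta,\Lambda)$, so Theorem \ref{thm:bdry-regularity} gives smooth convergence to a limit. The previous argument then makes the limit flat, contradicting the normalization. This yields $\sup_{B_{r/2}}\abs{A}\le C/r$, which is scale invariant; rescaling the estimate from $B_r$ to a fixed point and sending $r\to\infty$ gives $A\equiv0$. Connectedness then shows that $M$ is a single half-plane $H_\theta$ or $H_{\pi-\theta}$, or a plane.

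The main obstacle we expect is the first step, namely constructing $W$ with the uniform energy bound and $\mbF_\theta$-stationarity for a general embedded $M$ that may not globally bound a region. For this we would use that $\p M$ is a properly embedded $(n-1)$-manifold in $\p\mbH^{n+1}$, hence separating, and choose the side consistent with the orientation $\langle\nu,e_1\rangle=\cos\theta$. The second delicate point is justifying the contradiction-compactness step for the curvature estimate near the wall. There the smooth convergence at $\theta$-regular boundary points is exactly what Theorem \ref{thm:bdry-regularity} provides.
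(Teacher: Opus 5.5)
Your proposal is correct and follows essentially the paper's proof: reduce to $\theta\in[\frac\pi2,\pi)$, take as $W$ the wetted region given by the (relative Jordan--Brouwer) separation of $\overline{\mbH^{n+1}}$ by $M$, use the area growth to place all blow-downs in $\mathscr V(\theta,\Lambda)$, obtain from Theorem \ref{thm:bdry-regularity} a $\theta$-regular limit cone in $\mathscr C_\theta$, and apply Theorem \ref{Thm:sheeting-epsilon-delta} near the vertex together with the scaling $\abs{A_{M_j}}(x/r_j)=r_j\abs{A_M}(x)$. The contradiction/curvature-estimate detour in your third step, and the fallback ``limit of local choices'' for $W$, are unnecessary: your bound $\sup_{M\cap B_{r_i/2}}\abs{A}\le C\de^{1/2}r_i^{-1}$ along a single subsequence already gives $A(x)=0$ for each fixed $x$, which is exactly how the paper concludes.
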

\begin{proof}
Let $M\subset \overline{\mbH^{n+1}}$ be as in the statement, and denote $V\coloneqq |M|$.
Reversing the unit normal if necessary and relabeling the resulting angle as
$\theta$, we may assume that $\theta\in[\frac{\pi}{2},\pi)$.
This leaves $M$, its stability, and the value of $n_\theta$ unchanged.
By the relative Jordan--Brouwer separation theorem, $M$ separates
$\overline{\mbH^{n+1}}$ into two connected components. Choose one of
them, denoted by $E_1$, and set
$\Omega_1\coloneqq E_1\cap\partial\mbH^{n+1}$, with the choice of $E_1$
made so that $(V,|\Omega_1|)$ is $\mbF_\theta$-stationary in the sense of
Definition~\ref{defn:stationary-pair}.

By Euclidean area growth, there exists $\Lambda\ge 1$ such that
\eq{
\mcH^n(M\cap B_R)-\cos\theta\,\mcH^n(\Omega_1\cap B_R)\le \Lambda R^n,\quad\forall R>0.
}
Fix any sequence $r_j\to\infty$ as $j\ra\infty$, and define the blow-down sequence $V_j\coloneqq (\bseta_{0,r_j})_\#V$, $W_j\coloneqq (\bseta_{0,r_j})_\#|\Omega_1|$.
Since we have the Euclidean area growth condition, we can apply Theorem~\ref{thm:bdry-regularity} to find varifolds $V_\infty,W_\infty$ such that, after passing to a subsequence, $V_j \to V_\infty$, $W_j\to W_\infty$ as varifolds on $B_2\cap\overline{\mbH^{n+1}}$.
By construction, $V_\infty$ is conical and ${\rm Sing}_\theta V_\infty=\emptyset$.
In particular, the classification of stable capillary cones (see Theorem~\ref{thm:stable-capillary-cone-isolated-singularity}) shows that the associated cone $\mathbf C_\infty\coloneqq V_\infty-\cos\theta\,W_\infty$ belongs to $\mathscr C_\theta$, possibly with multiplicities.
Applying the qualitative sheeting theorem (Theorem~\ref{Thm:sheeting-epsilon-delta}) around $\mathbf C_\infty$, we deduce, after passing to a further subsequence, that the rescaled hypersurfaces $M_j\coloneqq r_j^{-1}M$ converge smoothly, possibly with multiplicity, to the half-hyperplanes in $\spt\|V_\infty\|$ on compact subsets. In particular, for any fixed $x\in M$,
\eq{
\abs{A_{M_j}}\!\left(\frac{x}{r_j}\right)\longrightarrow0\text{ as }j\ra\infty.
}
Using the scaling law of the second fundamental form,
\eq{
\abs{A_{M_j}}\!\left(\frac{x}{r_j}\right)=r_j\,\abs{A_M}(x),
}
we therefore deduce $A_M(x)=0$. By the arbitrariness of $x\in M$, we conclude $M$ is flat.
\end{proof}
This implies the following curvature estimates on Riemannian manifolds.

\begin{corollary}
Given $\theta\in(0,\pi)$, let $n_\theta$ be the integer defined in Theorem \ref{thm:mainCompact}, and let $2\le n<n_\theta$.
Let $(N^{n+1},g)$ be an open, $(n+1)$-dimensional Riemannian manifold with boundary $\p N$,
let $U\subset N$ be an open subset with compact closure, and denote by $\p_{rel}U=\overline{\p U\cap N}$ the relative boundary of $U$ in $N$. 
Let $M$ be a compact, orientable stable capillary minimal hypersurface embedded in $(N^{n+1},g)$ (namely, $\p M\subset\p N$ and $M$ meets $\p N$ along $\p M$ with constant contact angle $\theta$).
If $M\subset U$ with ${\rm dist}_N(M,\p_{rel}U)>0$, and has the area bound $\mcH^n_g(M)\leq\Lambda$ for some $\Lambda>0$, then there exists a constant $C>0$, depending only on $n,(N^{n+1},g),U,\Lambda,\theta$, such that
\eq{
\abs{A}^2(x)
\leq\frac{C}{{\rm dist}^2_N(x,\p_{rel}U)},\quad\forall x\in M.
}
\end{corollary}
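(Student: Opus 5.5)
We argue by contradiction and blow up, in the standard way curvature estimates are derived from a Bernstein theorem. Suppose the estimate fails. Then there are $(N,g)$, $U$, $\Lambda$, $\theta$ fixed and a sequence $M_k$ of stable capillary minimal hypersurfaces as in the statement, together with points $x_k\in M_k$, such that
\[
\abs{A_{M_k}}^2(x_k)\,{\rm dist}^2_N(x_k,\p_{rel}U)\to\infty .
\]
We apply the usual point-picking argument to the function
\[
F_k(x)=\abs{A_{M_k}}(x)\,{\rm dist}_N(x,\p_{rel}U),
\]
which vanishes on $\p_{rel}U$ and is continuous on the compact $M_k$. This produces points $y_k$ and radii $r_k\to 0$, with
\[
\lambda_k\coloneqq\abs{A_{M_k}}(y_k)\to\infty,\qquad \lambda_k r_k\to\infty,
\]
such that $\abs{A_{M_k}}\le 2\lambda_k$ on the intrinsic ball of radius $r_k$ about $y_k$. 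Because $\overline U$ is compact, we may assume $y_k\to y_\infty\in\overline U$.

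We then rescale by $\lambda_k$ about $y_k$, working in normal coordinates. Near interior points these are Riemannian normal coordinates. If $y_\infty\in\p N$, we use Fermi-type coordinates in which $\p N$ becomes the flat wall. We distinguish two cases according to whether $\lambda_k\,{\rm dist}_g(y_k,\p N)$ stays bounded or tends to infinity. The rescaled metrics $\lambda_k^2 g$ converge smoothly to the Euclidean metric. The ambient domain converges to $\mbR^{n+1}$ in the second case, and to a half-space in the first, where after translation we normalize it to $\mbH^{n+1}$. The rescaled surfaces $\tilde M_k$ satisfy $\abs{A}\le 2$ on balls of radius $\lambda_k r_k\to\infty$ and $\abs{A}(0)=1$. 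They are minimal up to errors that vanish as $k\to\infty$, and they meet the wall with angle $\theta$ up to metric errors.

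The next step is to pass to a limit. The local area bound comes from the monotonicity formula (Lemma \ref{Lem:Monotonicity-formula-doubling}). In a Riemannian manifold it holds up to an $e^{Cr}$ factor, and it is combined with the global bound $\mcH^n_g(M_k)\le\Lambda$. We also need the boundary term $\mcH^n(\Omega)$ for a wall varifold. For this we either use the trace identity \eqref{eq-divf-capillary}, in its Riemannian version with error terms, to bound the boundary portion, or we choose $\Omega$ inside the compact $\overline U\cap\p N$. The outcome is a scale-invariant bound on $\tilde\Lambda r^{-n}$ times the area in balls of radius $r$. With $\abs{A}\le 2$ fixed, the standard graphical estimates, including boundary Schauder estimates for the oblique capillary condition as in the proof of Theorem \ref{thm:sheetingHalf}, give smooth subsequential convergence to a complete limit $M_\infty$ with $\abs{A}(0)=1$. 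The limit $M_\infty$ lies in $\mbR^{n+1}$ or in $\mbH^{n+1}$. It is minimal, carries the capillary condition in the half-space case, and has Euclidean area growth. It is stable because the stability inequality passes to the limit, since the ambient curvature terms scale away. It is two-sided, since the $M_k$ are orientable. Embeddedness also passes to the limit, at worst up to touching sheets. Touching is excluded by the maximum principle in the interior, and by the boundary condition, which gives the (e2) configuration, on the wall.

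Finally we invoke a Bernstein theorem to reach a contradiction. In the interior case, stable minimal hypersurfaces in $\mbR^{n+1}$ with Euclidean area growth are flat for $n\le 6$, by Schoen--Simon \cite{SS81}, and this range contains $n<n_\theta\le7$. In the half-space case, Theorem \ref{thm:Bernstein-capillary} shows $M_\infty$ is flat. In both cases this contradicts $\abs{A_{M_\infty}}(0)=1$.

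The main obstacle is making the limit rigorous when $y_\infty\in\p N$. There we must keep a uniform area bound including the wall term, so that the limit has Euclidean growth. We must also ensure the limit is genuinely an embedded capillary hypersurface and not merely a varifold with extra wall mass. The $\abs{A}\le2$ bound reduces the second issue to graphical convergence. For the first, we would rely on the trace identity to control the boundary area, together with an almost-monotonicity formula in the rescaled metric.
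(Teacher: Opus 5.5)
Your proposal is correct and follows the same route as the paper, which only sketches it as a modification of Guang--Li--Zhou: argue by contradiction, do a point-picking blow-up, and obtain a limit that is either a stable minimal hypersurface in $\mathbb{R}^{n+1}$ or a stable capillary minimal hypersurface in $\mathbb{H}^{n+1}$ with Euclidean area growth, contradicting Schoen--Simon or Theorem~\ref{thm:Bernstein-capillary}. Two minor remarks: the wall term in the area bound needs no trace identity, since it is the area of a subset of $\partial N$ (a hyperplane in the limit); and (e2)-type touching of boundary sheets is not actually excluded in the limit, but this is harmless, because the blow-down in Theorem~\ref{thm:Bernstein-capillary} only uses membership in the almost-embedded class $\mathscr{V}(\theta,\Lambda)$ at every scale.
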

\begin{proof}[Sketch of proof]
The proof follows by a straightforward modification of that of \cite[Theorem 1]{GLZ20}.
More precisely, one can argue by contradiction that the curvature estimates fail, then apply a blow-up argument to obtain a non-flat, complete, orientable, stable capillary minimal hypersurface in $\mbH^{n+1}$ (or stable minimal hypersurface without boundary in $\mbR^{n+1}$), satisfying the Euclidean area growth condition. Therefore contradicts either to the classical Bernstein theorem for stable minimal hypersurfaces without boundary, or to Theorem \ref{thm:Bernstein-capillary}.  
\end{proof}

\section{Stable minimal capillary cones}
\label{sec:stableCapillaryCone}

In this section we prove the classification result for stable minimal capillary cones with an isolated singularity (Theorem \ref{thm:stable-capillary-cone-isolated-singularity}).
Note that reversing the unit normal of $M$ replaces $\theta$ by $\pi-\theta$ and $A$ by $-A$, while leaving the stability inequality unchanged. It therefore suffices to assume $\theta\in(0,\frac{\pi}{2}]$ throughout the section.

We begin with the following Simons-type inequality for minimal cones with capillary boundary, (cf. \cite[Equation~(4.4)]{HLW2024deltaStable}):
\eq{\label{ineq:Simons-type}
\abs{A}\De\abs{A}+\abs{A}^4
\geq(s-1)\abs{\na\abs{A}}^2+(3-s)\frac{\abs{A}^2}{r^2},\quad s\leq1+\frac{2}{n-1}.
}
We henceforth fix $s=1+\frac{2}{n-1}$.

For $\alpha\in(0,1]$, multiplying \eqref{ineq:Simons-type} by $\varphi^2\abs{A}^{2\alpha-2}$, integrating over $M$, and integrating by parts, we obtain
\eq{\label{ineq:simonsIntegral}
&\int_{\p M}\varphi^2\abs{A}^{2\alpha-1}\frac{\p\abs{A}}{\p\eta}
+\int_M\abs{A}^{2\alpha+2}\varphi^2
-\left(2\alpha-1+\frac{2}{n-1}\right)\int_M\abs{A}^{2\alpha-2}\abs{\na\abs{A}}^2\varphi^2\\
&-2\int_M\varphi\abs{A}^{2\alpha-1}\langle\na\varphi,\na\abs{A}\rangle
\geq\frac{2(n-2)}{n-1}\int_M\frac{\abs{A}^{2\alpha}}{r^2}\varphi^2.
}

\begin{remark}
\normalfont
The terms involving $\abs{A}^{2\alpha-2}$ may be singular near the zero set of $\abs{A}$. This can be handled by the standard regularization: one replaces $\abs{A}^{2\alpha-2}$ by $(\abs{A}^2+\varepsilon)^{\alpha-1}$ and inserts $(\abs{A}^2+\varepsilon)^{\alpha/2}\varphi$ into the stability inequality, then passes to the limit $\varepsilon\to0$, see \cite[Section~4]{HLW2024deltaStable} for details.
\end{remark}

Multiplying the stability inequality \eqref{ineq:SS81-(1.17)} (with test function $\abs{A}^\alpha\varphi$) by $(q+1)$ for $q>0$, and adding to \eqref{ineq:simonsIntegral}, we obtain
\eq{\label{ineq:combined}
&\int_M(1+q)\abs{A}^{2\alpha}\abs{\na\varphi}^2
-\left(2\alpha-1+\frac{2}{n-1}-\alpha^2(q+1)\right)\int_M\abs{A}^{2\alpha-2}\abs{\na\abs{A}}^2\varphi^2\\
&+2(\alpha(q+1)-1)\int_M\varphi\abs{A}^{2\alpha-1}\left<\na\varphi,\na\abs{A}\right>
-q\int_M\abs{A}^{2\alpha+2}\varphi^2\\
&+\int_{\p M}\varphi^2\abs{A}^{2\alpha-1}\frac{\p\abs{A}}{\p\eta}+(1+q)\cot\theta\int_{\p M}\abs{A}^{2\alpha}A(\eta,\eta)\varphi^2\\
\geq{}&\frac{2(n-2)}{n-1}\int_M\frac{\abs{A}^{2\alpha}}{r^2}\varphi^2.
}

We now analyze the boundary contributions in \eqref{ineq:combined}.
Recall the boundary formula \cite[Lemma C.2]{LZZ24}:
\eq{\label{eq:boundary-formula}
\abs{A}\frac{\p\abs{A}}{\p\eta}
=\cot\theta\left(\sum_{i=1}^n\lambda_i^3-2\abs{A}^2A(\eta,\eta)\right),
}
where $\eta$ is a principal direction of $\p M$, and $\lambda_i$ are the principal curvatures of $M$.

We denote by $\lambda_1=A(\eta,\eta)$ the principal curvature in the $\eta$-direction, and $\lambda_2,\cdots,\lambda_{n}$ the remaining principal curvatures.
Suppose there exists $p_n\geq0$ such that
\eq{\label{ineq:estLam}
\Abs{\lambda_2^3+\cdots+\lambda_n^3+(1-q)(\lambda_2^2+\cdots+\lambda_n^2)(\lambda_2+\cdots+\lambda_n)-q(\lambda_2+\cdots+\lambda_n)^3}\\
\leq p_n\left(\lambda_2^2+\cdots+\lambda_n^2+(\lambda_2+\cdots+\lambda_n)^2\right)^{3/2}.
}
Using \eqref{eq:boundary-formula}, one computes
\eq{
&\abs{A}^{2\alpha-1}\p_\eta\abs{A}+(1+q)\cot\theta\abs{A}^{2\alpha}A(\eta,\eta)\\
=&\cot\theta\left(\abs{A}^{2\alpha-2}\left(\sum_{i=1}^n\lambda_i^3-2\abs{A}^2A(\eta,\eta)\right)+(1+q)\abs{A}^{2\alpha}A(\eta,\eta)\right)\\
=&\cot\theta\left(\left(\abs{A}^{2\alpha-2}\sum^n_{i=1}\lambda^3_i\right)-(1-q)\abs{A}^{2\alpha}A(\eta,\eta)\right)\\
=&\cot\theta\abs{A}^{2\alpha-2}\left(-(1-q)\lambda_1(\lambda_1^2+\cdots+\lambda_n^2)+\sum^n_{i=1}\lambda_i^3\right),
}
where $\lambda_1=-(\lambda_2+\cdots+\lambda_n)$ thanks to the minimality.
Note also
\eq{
(1-q)(\lambda_2+\cdots+\lambda_n)\lambda_1^2+\lambda_1^3
=q\lambda_1^3
=-q(\lambda_2+\cdots+\lambda_n)^3.
}
Thus we can write
\eq{
&\abs{A}^{2\alpha-1}\p_\eta\abs{A}+(1+q)\cot\theta\abs{A}^{2\alpha}A(\eta,\eta)\\
=&\cot\theta\abs{A}^{2\alpha-2}\left((1-q)(\lambda_2+\cdots+\lambda_n)(\lambda_1^2+\cdots+\lambda_n^2)+\sum^n_{i=1}\lambda_i^3\right)\\
=&\cot\theta\abs{A}^{2\alpha-2}\left((1-q)(\lambda_2+\cdots+\lambda_n)(\lambda_2^2+\cdots+\lambda_n^2)+\sum^n_{i=2}\lambda_i^3-q(\lambda_2+\cdots+\lambda_n)^3\right).
}
Combining this with
\eqref{ineq:estLam}, and using $\theta\in(0,\frac{\pi}{2}]$, we can estimate the boundary contribution in \eqref{ineq:combined} as
\eq{
\int_{\p M}\varphi^2\abs{A}^{2\alpha-1}\frac{\p\abs{A}}{\p\eta}+(1+q)\cot\theta\int_{\p M}\abs{A}^{2\alpha}A(\eta,\eta)\varphi^2
\leq p_n\cot\theta\int_{\p M}\abs{A}^{2\alpha+1}\varphi^2.
}
To control this boundary term, we apply \eqref{eq-divf-capillary},
together with the Cauchy-Schwarz inequality, to obtain, for any $\delta\in(0,1)$:
\eq{
p_n\cot\theta\int_{\p M}\abs{A}^{2\alpha+1}\varphi^2
\leq\,&\frac{p_n\cos\theta}{\sin^2\theta}\int_M\left[(2\alpha+1)\abs{A}^{2\alpha}\abs{\na\abs{A}}\varphi^2+2\abs{A}^{2\alpha+1}\varphi\abs{\na\varphi}\right]\\
\leq\,&\left(2\alpha-1+\frac{2}{n-1}-\alpha^2(q+1)\right)\int_M\abs{A}^{2\alpha-2}\abs{\na\abs{A}}^2\varphi^2\\
&+\frac{(2\alpha+1)^2p_n^2\cos^2\theta}{4\left(2\alpha-1+\frac{2}{n-1}-\alpha^2(q+1)\right)\sin^4\theta}\int_M\abs{A}^{2\alpha+2}\varphi^2\\
&+q\delta\int_M\abs{A}^{2\alpha+2}\varphi^2+\frac{p_n^2\cos^2\theta}{q\delta\sin^4\theta}\int_M\abs{A}^{2\alpha}\abs{\na\varphi}^2,
}
provided when $p_n>0$ that
\eq{\label{condi:p_n>0-Young-ineq}
2\alpha-1+\frac{2}{n-1}-\alpha^2(q+1)>0.
}
Substituting back into \eqref{ineq:combined} and absorbing the gradient terms, we arrive at
\eq{\label{ineq:finalInequalitywithTheta}
\int_M\left(1+q+\frac{p_n^2\cos^2\theta}{q\delta\sin^4\theta}\right)\abs{A}^{2\alpha}\abs{\na\varphi}^2
+2(\alpha(q+1)-1)\int_M\varphi\abs{A}^{2\alpha-1}\langle\na\varphi,\na\abs{A}\rangle\\
\geq\frac{2(n-2)}{n-1}\int_M\frac{\abs{A}^{2\alpha}}{r^2}\varphi^2
+\left(q-\frac{(2\alpha+1)^2p_n^2\cos^2\theta}{4\left(2\alpha-1+\frac{2}{n-1}-\alpha^2(q+1)\right)\sin^4\theta}-q\delta\right)\int_M\abs{A}^{2\alpha+2}\varphi^2.
}
In order for the last term on the right-hand side to be non-negative, it suffices to require
\eq{\label{ineq:rangeThetaAlpha}
\frac{(2\alpha+1)^2p_n^2}{4(1-\delta)q\left(2\alpha-1+\frac{2}{n-1}-\alpha^2(q+1)\right)}\cdot\frac{\cos^2\theta}{\sin^4\theta}
\leq1
}
whenever $p_n>0$.

Under condition \eqref{ineq:rangeThetaAlpha}, inequality \eqref{ineq:finalInequalitywithTheta} simplifies to
\eq{\label{ineq:finalSimplified}
\int_M\left(1+q+\frac{4(1-\delta)}{(2\alpha+1)^2\delta}\left(2\alpha-1+\frac{2}{n-1}-\alpha^2(q+1)\right)\right)\abs{A}^{2\alpha}\abs{\na\varphi}^2\\
+2(\alpha(q+1)-1)\int_M\varphi\abs{A}^{2\alpha-1}\langle\na\varphi,\na\abs{A}\rangle
\geq\frac{2(n-2)}{n-1}\int_M\frac{\abs{A}^{2\alpha}}{r^2}\varphi^2.
}

\subsection{Proof of Theorem \ref{thm:stable-capillary-cone-isolated-singularity}: case \texorpdfstring{$n=3$}{n=3}}

For $n=3$, note that at every $X\in\partial M\setminus\{0\}$, the radial direction $\partial_r$ belongs to $T_X\p M$, which is orthogonal to $\eta$, and satisfies $D_{\partial_r}\nu=0$. Hence $\partial_r$ is a principal
direction with zero principal curvature. After relabeling, we may assume $\lambda_3=0$. Therefore, back to \eqref{ineq:estLam} we can take $q=1, p_3=0$, and set $\alpha=1$.
Then \eqref{ineq:finalInequalitywithTheta} gives
\eq{
\int_M 2\abs{A}^2\abs{\na\varphi}^2+2\int_M\varphi\abs{A}\langle\na\varphi,\na\abs{A}\rangle
\geq\int_M\frac{\abs{A}^2}{r^2}\varphi^2.
}
We choose $\varphi=r^{1+\varepsilon}\max\{1,r\}^{-\frac{1}{2}-2\varepsilon}$, and note that $\langle\na\abs{A},\frac{\p}{\p r}\rangle=-\frac{\abs{A}}r$ for a homogeneous cone.
A direct computation then yields, for sufficiently small $\varepsilon>0$,
\begin{align*}
&\int_{ \{ r>1 \}} r^{-1-2\varepsilon}|A|^{2}+\int_{ \{ r< 1 \}} r^{2\varepsilon}|A|^{2}\\
\le{}& 
    \left[ 2\left( \frac{1}{2}-\varepsilon \right)^{2}-2\left( \frac{1}{2}-\varepsilon \right) \right] \int_{ \{ r>1 \}} r^{-1-2\varepsilon}|A|^{2}+\left[ 2(1+\varepsilon)^{2}-2(1+\varepsilon) \right] \int_{ \{ r< 1 \}} r^{2\varepsilon}|A|^{2}.
\end{align*}
For every sufficiently small $\varepsilon>0$, the coeffcients on the RHS are strictly less than $1$, thus we get $\abs{A}\equiv0$, and hence $M$ is a half-hyperplane.

\subsection{Proof of Theorem \ref{thm:stable-capillary-cone-isolated-singularity}: case \texorpdfstring{$n=4, \cos\theta>\frac{1}{3}$}{n=4}}

Let $\Sigma=M\cap\mbS^4$ denote the link of the cone $M$, and let $\Gamma=\p M\setminus\{0\}$. Assume that $\cos\theta>\frac13$.

Applying the stability inequality to test functions of the form $\varphi(r,\omega)=\zeta(r)\psi(\omega)$, where $\psi\in H_0^1(\Sigma)$, then using the sharp radial Hardy constant in
dimension four, we obtain
\eq{\label{eq:n=4-link-stability}
\int_\Sigma\left(\abs{\na^\Sigma\psi}^2
+(1-\abs{A_\Sigma}^2)\psi^2\right)\rd\mcH^3\geq0.
}
The height function $h(X)=\langle X,e_1\rangle$ on $\Sigma$ satisfies
\eq{
-\De_\Sigma h=3h,
\quad h>0\text{ in }{\rm int}(\Sigma),
\quad h=0\text{ on }\p\Sigma.
}
Namely, $h$ is a first Dirichlet eigenfunction, so that
\eq{\label{eq:n=4-link-Poincare}
3\int_\Sigma\psi^2\rd\mcH^3
\leq\int_\Sigma\abs{\na^\Sigma\psi}^2\rd\mcH^3
}
for every $\psi\in H_0^1(\Sigma)$.

On the other hand, note that the zero-homogeneous angle function $v=\langle\nu,e_1\rangle$ satisfies
\eq{\label{eq:angle-v=<nu,e_1>}
\De_\Sigma v+\abs{A_\Sigma}^2v=0\quad\text{in }\Sigma,
\quad v=\cos\theta\quad\text{on }\p\Sigma.
}
Set $w=(\cos\theta-v)_+\in H_0^1(\Sigma)$.
Testing \eqref{eq:angle-v=<nu,e_1>} with
$w$, and then using $w$ in \eqref{eq:n=4-link-stability}, gives
\eq{\label{eq:n=4-angle-estimates}
\int_\Sigma\abs{\na^\Sigma w}^2\rd\mcH^3
=\int_\Sigma\abs{A_\Sigma}^2(w^2-\cos\theta w)\rd\mcH^3,
\quad
\cos\theta\int_\Sigma\abs{A_\Sigma}^2w\rd\mcH^3
\leq\int_\Sigma w^2\rd\mcH^3.
}
Since $v\geq-1$, we have $w(w-\cos\theta)\leq w$ on $\{w>0\}$.
Combining with \eqref{eq:n=4-link-Poincare}, \eqref{eq:n=4-angle-estimates},
\eq{
3\int_\Sigma w^2\rd\mcH^3
\leq\int_\Sigma\abs{\na^\Sigma w}^2\rd\mcH^3
\leq\frac1{\cos\theta}\int_\Sigma w^2\rd\mcH^3.
}
Thus $w\equiv0$, and hence $v\geq \cos\theta$ on $\Sigma$.

Let $H_\Gamma$ be the mean curvature of $\Gamma\subset\p\mbH^5$ with
respect to $\bar\nu$ in \eqref{eq:nu_and_eta}. By the capillary boundary condition and minimality, we have
\eq{
\p_\eta v
=\sin\theta\,A(\eta,\eta),\quad
A(\eta,\eta)
=-\sin\theta\,H_\Gamma\quad\text{ along }\Gamma.
}
Since $v-\cos\theta$ is nonnegative and vanishes on the boundary, its outer
conormal derivative is nonpositive.  Therefore we conclude that
\eq{\label{eq:n=4-positive-boundary-mean-curvature}
\cos\theta>\frac13\quad\Longrightarrow\quad H_\Gamma\geq0.
}
By a recent result of 
Pacati-Tortone-Velichkov \cite[Proposition~2.4, Lemmas~4.1--4.3, and Section~4.3, Case~2]{PTV25}, a $4$-dimensional stable capillary minimal cone in $\mbR^5$ with an isolated singularity and nonnegative boundary mean curvature is flat \footnote{The results in \cite{PTV25} are stated for capillary minimizing cones, but we point out that these results apply to our case since their argument uses only the Simons, Codazzi, and capillary boundary identities, cone homogeneity, and also stability inequality.
}. Consequently, $A\equiv0$ whenever $\cos\theta>\frac13$.

\subsection{Proof of Theorem \ref{thm:stable-capillary-cone-isolated-singularity}: cases \texorpdfstring{$n=4,5,6$}{n=4,5,6}}

For $n=4,5,6$, we choose the test function $\varphi=r^\varepsilon\max\{1,r\}^{1+\alpha-n/2-2\varepsilon}$.
Substituting into \eqref{ineq:finalSimplified}, we have
\begin{align*}
    &\frac{2n-4}{n-1} \left( \int_{ \{ r>1 \}} |A|^{2\alpha}r^{2\alpha-n-2\varepsilon} + \int_{ \{ r< 1 \}} |A|^{2\alpha}r^{2\varepsilon-2} \right)\\
    \le{}&\left(1+q+\frac{4(1-\delta)}{(2\alpha+1)^2\delta} \left(2\alpha-1+\frac{2}{n-1}-\alpha^{2}(q+1)\right)\right)\\
    &\qquad\times\left[ \left( 1+\alpha-\frac{n}{2}-\varepsilon \right)^{2}\int_{ \{ r\ge 1 \}} |A|^{2\alpha}r^{2\alpha-n-2\varepsilon} + \varepsilon^{2}\int_{ \{ r< 1 \}} |A|^{2\alpha}r^{2\varepsilon-2} \right]\\
    &+ 2(\alpha(q+1)-1)\left[ -\left(1+\alpha-\frac{n}{2}-\varepsilon\right)\int_{ \{ r\ge 1 \}} |A|^{2\alpha}r^{2\alpha-n-2\varepsilon} - \varepsilon\int_{ \{ r< 1 \}} |A|^{2\alpha}r^{2\varepsilon-2} \right].
\end{align*}
To obtain $|A|\equiv 0$, we need the above inequality to hold for sufficiently small $\varepsilon>0$, which can be ensured if
\begin{align}
    &\left[1+q+\frac{4(1-\delta)}{(2\alpha+1)^2\delta} \left(2\alpha-1+\frac{2}{n-1}-\alpha^{2}(q+1)\right)\right]\left( 1+\alpha-\frac{n}{2} \right)^{2}\\
    &-2(\alpha(q+1)-1)\left(1+\alpha-\frac{n}{2}\right)< \frac{2n-4}{n-1}.
    \label{eq:constrain}
\end{align}

To determine a valid range of $\theta$, one seeks to maximize
\eq{\label{defn:M-functional}
\mathfrak{M}(n,\alpha,\delta,q,p_n(q))
\coloneqq\frac{4(1-\delta)q}{(2\alpha+1)^2p_n^2}\left(2\alpha-1+\frac{2}{n-1}-\alpha^2(q+1)\right)
}
over parameters $\alpha\in(0,1]$, $\delta\in(0,1)$, $q\in(0,1]$ subject to \eqref{eq:constrain} and $2\alpha-1+\frac{2}{n-1}-\alpha^2(q+1)>0$.
In view of\eqref{ineq:rangeThetaAlpha}, $\theta$ should satisfy
\eq{
\frac{\cos^2\theta}{\sin^4\theta}
\leq\mathfrak{M}(n,\alpha,\delta,q,p_n).
}

It is difficult to determine the exact maximum of $\mathfrak{M}(n,\alpha,\delta,q,p_n)$ analytically. Instead, we obtain an explicit lower bound by selecting suitable parameters $\alpha,\delta,q$.
We also need to determine the constant $p_n$ appearing in \eqref{ineq:estLam}.

As discussed above, $\partial_r$ is a principal
direction with zero principal curvature. After relabeling, we may assume $\lambda_n=0$.
Put $\tilde n\coloneqq n-2$ and $x_i\coloneqq\lambda_{i+1}$ for $i=1,\ldots,\tilde n$.
For $q>0$, we define the function
\eq{
f_{\tilde n,q}(x_1,\cdots,x_{\tilde n})
\coloneqq\frac{P_3+(1-q)P_1P_2-qP_1^3}{(P_2+P_1^2)^{3/2}},
}
where $P_k=x_1^k+\cdots+x_{\tilde n}^k$, for $(x_1,\cdots,x_{\tilde n})\in\mbR^{\tilde n}\setminus\{0\}$.
Thus $\tilde n$ should take values in $\{2,3,4\}$, which corresponds to the case $n=4,5,6$.
With this function, one can then choose $p_n=\sup\abs{f_{\tilde n,q}}$.

\begin{lemma}\label{lem:criticalPoints}
The critical points of $f_{\tilde n,q}$ take at most two distinct values among $x_1,\cdots,x_{\tilde n}$.
\end{lemma}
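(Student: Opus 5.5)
The plan is to treat this as a Lagrange multiplier computation. Since $f_{\tilde n,q}$ is homogeneous of degree $0$, its critical points on $\mbR^{\tilde n}\setminus\{0\}$ coincide with the critical points of the numerator
\[
N\coloneqq P_3+(1-q)P_1P_2-qP_1^3
\]
restricted to the level set $\{D\coloneqq P_2+P_1^2=1\}$. Differentiating in $x_i$ and using $\p_{x_i}P_k=kx_i^{k-1}$, we get
\[
\p_iN=3x_i^2+(1-q)\bigl(P_2+2P_1x_i\bigr)-3qP_1^2,\qquad \p_iD=2x_i+2P_1.
\]

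At a critical point there is a multiplier $\mu$ with $\p_iN=\mu\,\p_iD$ for every $i$. Writing this out gives
\[
3x_i^2+\bigl(2(1-q)P_1-2\mu\bigr)x_i+\bigl((1-q)P_2-3qP_1^2-2\mu P_1\bigr)=0,\qquad i=1,\dots,\tilde n.
\]
The coefficients here depend only on the symmetric quantities $P_1,P_2,\mu$, not on $i$. So every coordinate $x_i$ is a root of one and the same quadratic polynomial, whose leading coefficient is $3\neq0$. A nonzero quadratic has at most two roots, hence the $x_i$ take at most two distinct values.

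Two small points need care. First, the reduction to a constrained problem is legitimate because $D=P_2+P_1^2>0$ away from the origin, so $f$ is smooth there and $\na D\neq 0$ on the level set: $\na D=0$ would force $x_i=-P_1$ for all $i$, hence $P_1=-\tilde nP_1$, so $P_1=0$ and then $x=0$. Equivalently, without normalizing, criticality of $f=N/D^{3/2}$ reads $D\,\p_iN=\tfrac32 N\,\p_iD$, which again yields a single quadratic in $x_i$ with leading coefficient $3D\neq0$ and coefficients independent of $i$. Second, this works for every $q>0$, including $q=1$ where the $(1-q)$ terms drop out.

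There is essentially no real obstacle. The only step that requires attention is checking that the leading coefficient cannot vanish, and it is $3D>0$. This lemma will later let us reduce the computation of $p_n=\sup\abs{f_{\tilde n,q}}$ to two-value configurations $(a,\dots,a,b,\dots,b)$, which becomes a one-variable maximization for each split $\tilde n=k+(\tilde n-k)$.
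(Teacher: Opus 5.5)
Your argument is correct and is essentially the paper's: both are Lagrange multiplier computations in which the critical-point equations depend on $i$ only through a quadratic in $x_i$ with leading coefficient $3$ and symmetric lower-order coefficients. The paper normalizes $P_2=1$ and factors $\partial_i f-\partial_j f+2\lambda(x_i-x_j)$ as $(x_i-x_j)$ times an expression whose $(i,j)$-dependence is through $x_i+x_j$, which excludes three distinct values. You instead normalize $D=1$ and observe directly that all $x_i$ are roots of one quadratic; this is the same idea, stated slightly more cleanly and with the regularity of the constraint checked explicitly.
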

\begin{proof}
    Since $f_{\tilde n,q}$ is $0$-homogeneous,
    we can restrict to the unit sphere $P_2=1$.
    Then,
\eq{
f_{\tilde n,q}(x_1,\cdots,x_{\tilde n})
=\frac{P_3+(1-q)P_1-qP_1^3}{(1+P_1^2)^{3/2}}.
}
By the Lagrange multiplier condition, any critical point satisfies $\frac{\p f_{\tilde n,q}}{\p x_i}+2\lambda x_i=0$ for all $i$.
A direct computation gives
\eq{
\frac{\p f_{\tilde n,q}}{\p x_i}-\frac{\p f_{\tilde n,q}}{\p x_j}
=\frac{3(x_i^2-x_j^2)}{(1+P_1^2)^{3/2}}+(x_i-x_j)\cdot(\text{terms involving }P_1, P_2, P_3,\text{ independent of }i,j),
}
so that for any pair $i\neq j$:
\eq{
0
=&\frac{\p f_{\tilde n,q}}{\p x_i}-\frac{\p f_{\tilde n,q}}{\p x_j}+2\lambda(x_i-x_j)\\
=&(x_i-x_j)\left(\frac{3(x_i+x_j)}{(1+P_1^2)^{3/2}}+2\lambda+(\text{terms involving }P_1, P_2, P_3,\text{ independent of }i,j)\right).
}
If there were three distinct values among $x_1,\cdots,x_{\tilde n}$, say $x_1,x_2,x_3$, then we have
\eq{
    \frac{3(x_i+x_j)}{(1+P_1^2)^{\frac{3}{2}}}+2\lambda+(\text{terms involving }P_1, P_2, P_3,\text{ independent of }i,j)=0,
}
for any $i\neq j\in \{ 1,2,3 \}$. In particular, this implies
\[
    \frac{3x_1+3x_2}{(1+P_1^2)^{\frac{3}{2}}}=\frac{3x_1+3x_3}{(1+P_1^2)^{\frac{3}{2}}},
\]
which leads to $x_2=x_3$, contradicting the assumption.
\end{proof}

\begin{lemma}\label{lem:explicit-p_n}
With the choices $q_4=1$, $q_5=\frac{6}{11}$, $q_6=\frac{43}{391}$, the following bounds hold:
\eq{
\abs{f_{2,1}}\leq\frac{1}{\sqrt6},\quad
\abs{f_{3,\frac{6}{11}}}\leq\frac{65}{11\sqrt{66}},\quad
\abs{f_{4,\frac{43}{391}}}\leq\frac{25423}{782\sqrt{1173}}.
}
\end{lemma}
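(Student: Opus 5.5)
By Lemma \ref{lem:criticalPoints}, the bound reduces to a finite list of one-variable extremal problems. The function $f_{\tilde n,q}$ is $0$-homogeneous and odd: the numerator has degree three and the denominator is even. It is also continuous on the compact sphere $\{P_2=1\}\subset\mbR^{\tilde n}$, since there $P_2+P_1^2\geq1$. So $\sup\abs{f_{\tilde n,q}}=\max f_{\tilde n,q}$ is attained at a constrained critical point. By Lemma \ref{lem:criticalPoints}, such a point has at most two distinct coordinates. After permuting coordinates, which leaves $f$ unchanged, it therefore suffices to bound $f$ on the configurations
\[
x=(\underbrace{s,\ldots,s}_{a},\underbrace{t,\ldots,t}_{b}),\qquad a+b=\tilde n,\ 0\le a\le\tilde n .
\]

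The plan for each $\tilde n\in\{2,3,4\}$, with its prescribed $q$, is as follows. Substitute $P_1=as+bt$, $P_2=as^2+bt^2$ and $P_3=as^3+bt^3$. Use homogeneity to normalize $t=1$; the case $t=0$ is the case $a=\tilde n$, so the configuration with all coordinates equal is covered. Also use oddness, so that only the value of $f$ matters and not its sign. This yields finitely many rational functions
\[
g_{a}(s)=\frac{N_a(s)}{D_a(s)^{3/2}},\qquad N_a\ \text{a cubic},\quad D_a=as^2+b+(as+b)^2>0 .
\]
The claimed bound $\abs{g_a}\le c_{\tilde n}$ is equivalent to the polynomial inequality $c_{\tilde n}^2D_a(s)^3-N_a(s)^2\ge0$ for all $s\in\mbR$. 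This is a sextic, and it has to be checked for each $a$.

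As a sanity check, for $\tilde n=2$ and $q=1$ we have $P_3-P_1^3=-3x_1x_2(x_1+x_2)$. The point $x=(1,1)$ gives $f=-6/6^{3/2}=-1/\sqrt6$, so equality holds there. The remaining case $a=1$, $b=1$ is just the general two-variable problem. In that case we directly verify
\[
\tfrac16\bigl(x_1^2+x_2^2+(x_1+x_2)^2\bigr)^3\geq 9x_1^2x_2^2(x_1+x_2)^2 .
\]
This should follow from AM--GM applied to the three quantities $x_1^2$, $x_2^2$, $(x_1+x_2)^2$. For $\tilde n=3,4$ the extremal configuration is not the all-equal one: for $\tilde n=3$ that point only gives $7/(22\sqrt3)$. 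The constants $\frac{65}{11\sqrt{66}}$ and $\frac{25423}{782\sqrt{1173}}$ should therefore be attained at a genuinely two-valued point, presumably with a rational ratio $s$, which is why these particular $q$ were chosen.

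The main obstacle is this verification for $\tilde n=3,4$. I would first locate the maximizer $s_*$ by solving $g_a'(s)=0$. This amounts to $N_a'D_a-\tfrac32N_aD_a'=0$, a quartic that should have a rational root for the chosen $q$. At that root the sextic $c^2D_a^3-N_a^2$ has a double root, so I would factor out $(s-s_*)^2$ and show that the remaining quartic is positive, either by a sum-of-squares decomposition or by checking that its discriminant and leading coefficient give no real roots. For every other value of $a$, a cruder bound strictly below $c_{\tilde n}$ suffices. These are finite exact algebraic checks; I would carry them out, and double-check them symbolically, case by case.
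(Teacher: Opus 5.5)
Your reduction is the paper's: both arguments use Lemma \ref{lem:criticalPoints} to pass to configurations with at most two distinct coordinates. The endgames differ. The paper lists, for each split $a+b=\tilde n$, all critical points of the two-valued family (the roots of an explicit quadratic, plus the all-equal and one-zero configurations) and compares the finitely many values numerically. You instead certify $c^2D_a^3-N_a^2\ge0$ by splitting off a double root. If carried out, that is an exact proof, and your guesses are right. The extremal configurations are $(7,-2,-2)$ and $(60,-11,-11,-11)$, where $f$ equals $390/66^{3/2}$ and $305076/4692^{3/2}$, which are exactly the stated constants.

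Two details of how you locate $s_*$ need correcting. First, $N_a'D_a-\tfrac32N_aD_a'$ is only a cubic, not a quartic: the $s^4$ terms cancel, because $s=\infty$ is a spurious critical point forced by homogeneity. Second, this cubic always contains the factor $s-1$, since the all-equal point is critical by symmetry. The remaining quadratic has a perfect-square discriminant for the chosen $q$. For example, it is $14s^2+61s+42$, with roots $-\tfrac67,-\tfrac72$, for the family $(s,1,1)$, and $110s^2+721s+660$, with roots $-\tfrac{11}{10},-\tfrac{60}{11}$, for $(s,1,1,1)$.

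Two of your side claims are wrong as stated, although neither breaks the plan.

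First, AM--GM does not give the $\tilde n=2$ bound. Applied to $x_1^2,x_2^2,(x_1+x_2)^2$, it only yields $\tfrac16\bigl(x_1^2+x_2^2+(x_1+x_2)^2\bigr)^3\ge\tfrac92x_1^2x_2^2(x_1+x_2)^2$, i.e.\ $\abs{f_{2,1}}\le1/\sqrt3$. It can never be sharp here, because equality would force $x=0$. The sharp inequality needs the linear relation among the three quantities. With $x_3=-(x_1+x_2)$, the cubic with roots $x_1,x_2,x_3$ has nonnegative discriminant $-4e_2^3-27e_3^2\ge0$, and $p_2=-2e_2$; this is exactly $\tfrac16p_2^3\ge9e_3^2$. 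In your chart, $\tfrac16D^3-N^2=\tfrac13(s-1)^2(s+2)^2(2s+1)^2$.

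Second, the claim that for every other $a$ ``a cruder bound strictly below $c_{\tilde n}$ suffices'' fails for the complementary split. Rescaling $(s,\dots,s,1,\dots,1)$ by $1/s$ and permuting gives the $(\tilde n-a)$-family at $1/s$, with the same $\abs{f}$. So the maximum is attained in both families: at $s=-\tfrac72$ and $s=-\tfrac27$ for $\tilde n=3$, and at $s=-\tfrac{60}{11}$ and $s=-\tfrac{11}{60}$ for $\tilde n=4$. For $\tilde n=2$ the sextic even has three double roots. The fix is to treat only $1\le a\le\tilde n/2$, which covers everything: the all-equal point is $s=1$, and the complementary family arises via $s\mapsto1/s$ together with the limit $s\to\infty$. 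A crude bound is then needed only for the $(2,2)$ split of $\tilde n=4$, where the critical points are irrational but $\max\abs{f}\approx0.59<0.949$.
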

\begin{proof}
By Lemma~\ref{lem:criticalPoints}, it suffices to evaluate $f_{\tilde n,q}$ at points with at most two distinct values.
For integers $a,b\geq1$ with $a+b=\tilde n$, define
\eq{
f_{a,b,q}(x,y)
\coloneqq\frac{ax^3+by^3+(1-q)(ax+by)(ax^2+by^2)-q(ax+by)^3}{(ax^2+by^2+(ax+by)^2)^{3/2}}.
}
Computing the derivative of $f_{a,b,q}$ we find, its critical points (except for the case $x=y$) satisfy
\eq{
(2+q)a(a+1)x^2+(3(a+b+1)+2(2+q)ab)xy+(2+q)b(b+1)y^2=0.
}
Fixing $y=1$, then the roots of the above quadratic equation are $x_{a,b,q,1}\leq x_{a,b,q,2}$,
and can be computed explicitly.
Since $f_{\tilde n,q}$ is $0$-homogeneous, we thus have
\eq{
\max\abs{f_{\tilde n,q}}
=\max\left\{\abs{f_{a,b,q}(x_{a,b,q,1},1)},\abs{f_{a,b,q}(x_{a,b,q,2},1)},\abs{f_{a,b,q}(1,1)}, \abs{f_{a,b,q}(1,0)}: a,b\geq1,\,a+b=\tilde n\right\}.
}
A numerical evaluation for $(\tilde n,q)\in\{(2,1),(3,\tfrac6{11}),(4,\tfrac{43}{391})\}$ confirms that the maximum is achieved at $f_{\tilde n,q}(1,s_{\tilde n},\cdots,s_{\tilde n})$, with $s_2=-\tfrac12$, $s_3=-\tfrac27$, $s_4=-\tfrac{11}{60}$, yielding the stated bounds.
\end{proof}

We now specify the parameters for $n=4,5,6$.
Set
\begin{align*}
\alpha_4={}&\frac{14}{33}, \delta_4=\frac{1}{15}, q_4=1,p_4=\frac{1}{\sqrt{6}},\\
\alpha_5={}&\frac{7}{12}, \delta_5=\frac{4}{19}, q_5=\frac{6}{11},p_5=\frac{65}{11\sqrt{66}},\\
\alpha_6={}&\frac{6}{11}, \delta_6=\frac{16}{25}, q_6=\frac{43}{391},p_6=\frac{25423}{782 \sqrt{1173}}.
\end{align*}
One verifies directly that each triple $(\alpha_n,\delta_n,q_n)$ satisfies the constraints \eqref{condi:p_n>0-Young-ineq}, \eqref{eq:constrain}.
Substituting into \eqref{ineq:rangeThetaAlpha}, the condition on $\theta$ becomes:
\eq{
\frac{\cos^2\theta}{\sin^4\theta}
<\begin{cases}
\dfrac{18928}{18605}, & n=4,\\[6pt]
\dfrac{264924}{2713295}, & n=5,\\[6pt]
\dfrac{12002306544}{1858195670875}, & n=6.
\end{cases}
}
From the numerical solutions of these inequalities we deduce, the following ranges for $\theta$ will guarantee the above inequality holds:
\eq{
\theta\in
\begin{cases}
(51.654^\circ,128.346^\circ), & n=4,\\
(73.336^\circ,106.664^\circ), & n=5,\\
(85.420^\circ,94.580^\circ), & n=6.
\end{cases}
}
For $n=4$, the interval
$[\arccos(1/3),\frac{\pi}{2}]$ is contained in
$(51.654^\circ,128.346^\circ)$.  Hence this calculation covers
$\cos\theta\leq\frac13$, while the preceding subsection covers
$\cos\theta>\frac13$.  The orientation reduction proves the
four-dimensional statement for every $\theta\in(0,\pi)$.
This completes the proof of Theorem~\ref{thm:stable-capillary-cone-isolated-singularity}.

\begin{remark}
\normalfont
The Mathematica code for the numerical verifications in this section is available at \url{https://github.com/wgaom/stable-capillary-cone-verification}.
\end{remark}

\begin{proof}[Proof of Corollary~\ref{cor:minimizing-singular-set}]
As discussed in \cite{CEL25}, by the $\varepsilon$-regularity and compactness theorem due to De Philippis-Maggi \cite{DePM15}, the monotonicity formula of Kagaya-Tonegawa \cite{KT17}, and Federer's dimension reduction, it suffices to rule out non-flat minimizing capillary cones with an isolated singularity in dimensions at most four. Note that such cones are of course stable and hence flat by Theorem \ref{thm:stable-capillary-cone-isolated-singularity}, which completes the proof.
\end{proof}

\appendix

\section{Miscellaneous computations}
\begin{lemma}
Let $n\geq2, \theta\in(0,\pi)$, let $g_\theta$ be defined as in \eqref{defn:g_theta}, there holds
\eq{\label{ineq:nabla-g_theta-estimate}
\abs{\na g_\theta}
\leq\abs{A}.
}
\end{lemma}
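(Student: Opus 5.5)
We need to show $\abs{\na g_\theta}\leq\abs{A}$ on $\{g_\theta>0\}$. The plan is to start from the identity \eqref{eq:nabla-g-theta-k} obtained in the proof of Lemma \ref{lem:basic_formulas}, namely
\[
\abs{\na g_{\theta,k}}^2=\frac{\Abs{\na\nu\cdot\bigl(((1-k)\nu_1+k\cos\theta)e_1+\nu_{n+1}e_{n+1}\bigr)}^2}{g_{\theta,k}^2}=\abs{\na\nu\cdot\vec a_3}^2 .
\]
Since $\na\nu$ is the shape operator, viewed as a symmetric endomorphism of $TM$, we have $\abs{\na\nu\cdot\vec a_3}\leq\abs{A}_{op}\abs{\vec a_3}\leq\abs{A}\abs{\vec a_3}$. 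Here $\vec a_3$ is read as the tangential vector $\bigl(((1-k)\nu_1+k\cos\theta)e_1^\top+\nu_{n+1}e_{n+1}^\top\bigr)/g_{\theta,k}$, because $\na\nu$ only sees tangential components. The statement therefore reduces to the pointwise algebraic inequality $\abs{\vec a_3}^2\leq1$ for $k=k_n\in(0,1]$.

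For this, Lemma \ref{lem:B1B2_bounds} already gives
\[
\abs{\vec a_3}^2=\nu_{n+1}^2-(1-k)(1-\nu_1^2)+(1-k\sin^2\theta)\,w,\qquad w=\frac{1-\nu_1^2-\nu_{n+1}^2}{g_{\theta,k}^2}.
\]
Write $a=1-\nu_1^2-\nu_{n+1}^2\ge0$ and $b=k(\cos\theta-\nu_1)^2\ge0$, so that $g^2=a+b$ and $w=a/(a+b)\le1$. Then
\[
1-\abs{\vec a_3}^2=(1-\nu_{n+1}^2)+(1-k)(1-\nu_1^2)-(1-k\sin^2\theta)w .
\]
There are two cases. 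If $\nu_{n+1}^2\le k\sin^2\theta$, we use $w\le1$: the expression is at least $k\sin^2\theta-\nu_{n+1}^2+(1-k)(1-\nu_1^2)\ge0$. If instead $\nu_{n+1}^2> k\sin^2\theta$, we must use $w<1$ quantitatively. After multiplying by $a+b$, the inequality to verify is
\[
\bigl[(1-\nu_{n+1}^2)+(1-k)(1-\nu_1^2)\bigr](a+b)\ge(1-k\sin^2\theta)\,a .
\]

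The main obstacle is checking this last inequality in the second case. The route I would try first is to substitute $1-\nu_{n+1}^2=a+\nu_1^2$, which turns it into
\[
(a+\nu_1^2)(a+b)+(1-k)(1-\nu_1^2)(a+b)-(1-k)a-k\cos^2\theta\,a\ge0 .
\]
Grouping terms gives
\[
a^2+ab+\nu_1^2 b+(1-k)(1-\nu_1^2)b+a\bigl[k\nu_1^2-k\cos^2\theta\bigr].
\]
The only possibly negative term is $ak(\nu_1^2-\cos^2\theta)$, and it remains to dominate it by the others. One has $ab=ak(\cos\theta-\nu_1)^2$. Hence $ab+ak(\nu_1^2-\cos^2\theta)=2ak\nu_1(\nu_1-\cos\theta)$, which is nonnegative whenever $\nu_1(\nu_1-\cos\theta)\ge0$. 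In the remaining region, where $\nu_1$ lies between $0$ and $\cos\theta$, the bound comes from $a^2$ and $\nu_1^2b+(1-k)(1-\nu_1^2)b$, using $k\le1$ and AM–GM, with $a\le1-\nu_1^2$ to absorb $2ak\abs{\nu_1}\abs{\cos\theta-\nu_1}\le a^2+k^2\nu_1^2(\cos\theta-\nu_1)^2\le a^2+\nu_1^2 b$. With this, $\abs{\vec a_3}\le1$ holds in all cases, and therefore $\abs{\na g_\theta}\le\abs{A}$.
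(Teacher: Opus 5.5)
Your proof is correct, but it takes a different route from the paper's.

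The paper never passes through $\vec a_3$. It writes $g_{\theta,k}^2=\sum_{i=2}^n\nu_i^2+k(\nu_1-\cos\theta)^2$, so that for each unit $\tau\in TM$,
$g_{\theta,k}\,\tau g_{\theta,k}=\sum_{i=2}^n\nu_i\,\tau\nu_i+k(\nu_1-\cos\theta)\,\tau\nu_1$.
A single Cauchy--Schwarz together with $k\le1$ then gives $\abs{\tau g_{\theta,k}}^2\le\sum_{i=2}^n(\tau\nu_i)^2+k(\tau\nu_1)^2\le\abs{D_\tau\nu}^2$.

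You instead factor $\na g_{\theta,k}$ through the shape operator, reduce to the algebraic bound $\abs{\vec a_3}\le1$, and check it from the formula in Lemma~\ref{lem:B1B2_bounds}. Your algebra is right. After multiplying by $g_{\theta,k}^2$, the quantity equals
$\bigl(a+k\nu_1(\nu_1-\cos\theta)\bigr)^2+(k-k^2)\nu_1^2(\cos\theta-\nu_1)^2+(1-k)(1-\nu_1^2)b\ge0$.
So your first case is unnecessary: the second-case computation already works at every point where $g_{\theta,k}>0$.

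What each route buys: yours gives the marginally sharper $\abs{\na g_{\theta,k}}\le\abs{A}_{op}\abs{\vec a_3}$ and isolates the fact $\abs{\vec a_3}\le1$. However, it relies on the earlier, rather lengthy computation of $\abs{\vec a_3}^2$. The paper's argument is self-contained, and its Cauchy--Schwarz step also gives your key fact in one line. Set $v=((1-k)\nu_1+k\cos\theta)e_1+\nu_{n+1}e_{n+1}$. For any $X\perp\nu$, the relation $\sum_i\nu_iX_i=0$ gives $\langle X,v\rangle=-\bigl(\sum_{i=2}^n\nu_iX_i+k(\nu_1-\cos\theta)X_1\bigr)$, hence $\abs{\langle X,v\rangle}\le g_{\theta,k}\abs{X}$. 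Taking $X=v^\top$ yields $\abs{v^\top}\le g_{\theta,k}$, i.e.\ $\abs{\vec a_3}\le1$, with no case analysis.
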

\begin{proof}
Note that for any $\tau\in T_xM$, we have
\eq{
g_{\theta,k}\tau g_{\theta,k}
=&\sum_{i=2}^n\nu_i\tau\nu_i+k(\nu_1-\cos\theta)\tau\nu_1\\
=&\left<(\nu_2,\ldots,\nu_n,\sqrt{k}(\nu_1-\cos\theta)),(\tau\nu_2,\ldots,\tau\nu_n,\sqrt{k}\tau\nu_1)\right>.
}
By Cauchy-Schwarz,
\eq{
g^2_{\theta,k}\abs{\tau g_{\theta,k}}^2
\leq\underbrace{\left(\sum_{i=2}^n\nu_i^2+k(\nu_1-\cos\theta)^2\right)}_{=g^2_{\theta,k}}\left(\sum_{i=2}^n(\tau\nu_i)^2+k(\tau\nu_1)^2\right).
}
Thus whenever $k\in(0,1]$, we have
\eq{
\abs{\tau g_{\theta,k}}^2
\leq\sum_{i=2}^n(\tau\nu_i)^2+k(\tau\nu_1)^2
\leq\sum_{i=1}^n(\tau\nu_i)^2
\leq\abs{D_\tau\nu}^2,
}
which implies \eqref{ineq:nabla-g_theta-estimate}.

\end{proof}

{
\begin{lemma}\label{Lem:Du+cot-e_1<=g_theta}
Let $n\geq2, \theta\in(0,\pi)$, let $g_\theta$ be defined as in
\eqref{defn:g_theta}, and let $\nu_\theta, \nu_{-\theta}$ be defined as
in \eqref{defn:nu_theta}. Let $u$ be a function defined on
$\mbR^n_+=\{x_1>0\}$.

Suppose that $u$ is locally $C^2$ around a point $x_0\in\mbR^n_+$.
There exists a positive constant $C=C(n,\theta)$ with the following
property:
\begin{enumerate}
    \item If the graph of $u$ is oriented by the upwards pointing unit
    normal locally around $x_0$, and
    $g^2_\theta\leq\eta^2\leq\mfc_\theta$ at $(x_0,u(x_0))$, where
\eq{\label{defn:mfc}
    \mfc_\theta
    \coloneqq
    \begin{cases}
        \mfc_{\theta,1},\quad&\text{ when }n=2,\\
        \mfc_{\theta,\frac{1}{n-2}},\quad&\text{ when }n\geq3,
    \end{cases}
}
and
\eq{
    \mfc_{\theta,k}
    \coloneqq
    \min\left\{
        \frac{k\sin^4\theta}{64},
        \frac{k}{k+1+16\sin^{-2}\theta}
    \right\},
    \quad\forall k\in(0,1],
}
then
\eq{
    \abs{Du(x_0)+\cot\theta e_1}^2
    \leq C\abs{\nu_{\mid_{(x_0,u(x_0))}}-\nu_\theta}^2
    \leq C(g_\theta)^2_{\mid_{(x_0,u(x_0))}}
    \leq C\eta^2.
}
Moreover,
\eq{
    \langle\nu,\nu_\theta\rangle
    \geq\frac{1}{2}.
}

    \item If the graph of $u$ is oriented by the downwards pointing unit
    normal locally around $x_0$, and
    $g^2_\theta\leq\eta^2\leq\mfc_\theta$ at $(x_0,u(x_0))$, where
    $\mfc_\theta$ is given by \eqref{defn:mfc}, then
\eq{
    \abs{Du(x_0)-\cot\theta e_1}^2
    \leq C\abs{\nu_{\mid_{(x_0,u(x_0))}}-\nu_{-\theta}}^2
    \leq C(g_\theta)^2_{\mid_{(x_0,u(x_0))}}
    \leq C\eta^2.
}
Moreover,
\eq{
    \langle\nu,\nu_{-\theta}\rangle
    \geq\frac{1}{2}.
}
\end{enumerate}
In particular, in both cases we have as a by-product
\eq{\label{defn:mfC_theta}
    \abs{Du(x_0)}^2
    \leq\frac{4}{\sin^2\theta}-1
    \eqqcolon\mfC_\theta.
}
\end{lemma}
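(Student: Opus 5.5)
We will prove Lemma \ref{Lem:Du+cot-e_1<=g_theta} by an explicit computation in the graphical normal, treating case (1); case (2) follows by the reflection $x_{n+1}\mapsto -x_{n+1}$, which sends $u\mapsto -u$, the upward normal to the downward one, and $\nu_\theta$ to $\nu_{-\theta}$, while leaving $g_\theta$ unchanged since it depends only on $\nu_1$ and $\nu_{n+1}^2$.

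\emph{Step 1: the pointwise normal is close to $\nu_\theta$.} Write $\nu=(\nu_1,\nu',\nu_{n+1})$ with $\nu'=(\nu_2,\dots,\nu_n)$. From the definition,
\[
g_{\theta,k}^2=|\nu'|^2+k(\cos\theta-\nu_1)^2 .
\]
Hence $|\nu'|^2\le \eta^2$ and $k(\cos\theta-\nu_1)^2\le\eta^2$. Using $|\nu|=1$ we get
\[
\nu_{n+1}^2=1-\nu_1^2-|\nu'|^2 ,
\]
so $|\nu_{n+1}^2-\sin^2\theta|\le |\nu_1^2-\cos^2\theta|+\eta^2\le C k^{-1/2}\eta$. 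The smallness $\eta^2\le \mathfrak c_\theta\le k\sin^4\theta/64$ gives $|\nu_1-\cos\theta|\le \sin^2\theta/8$, and therefore $\nu_{n+1}^2\ge \sin^2\theta/2$, say. For the upward orientation $\nu_{n+1}>0$, so $\nu_{n+1}\ge \sin\theta/\sqrt2$.

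We then compare with $\sin\theta$:
\[
|\nu_{n+1}-\sin\theta|=\frac{|\nu_{n+1}^2-\sin^2\theta|}{\nu_{n+1}+\sin\theta}\le C(\,|\nu_1-\cos\theta|+|\nu'|^2).
\]
Combining the three bounds, $|\nu-\nu_\theta|^2\le C(\theta)k^{-1}g_\theta^2$. The second smallness condition $\eta^2\le k/(k+1+16\sin^{-2}\theta)$ is what lets us make the constants explicit and conclude $\langle \nu,\nu_\theta\rangle=1-\tfrac12|\nu-\nu_\theta|^2\ge \tfrac12$. I expect the main (if modest) obstacle to be this bookkeeping: tracking the constants carefully enough that the thresholds in $\mathfrak c_{\theta,k}$ actually suffice.

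\emph{Step 2: from the normal to the gradient.} For the upward normal,
\[
\nu=\frac{(-Du,1)}{\sqrt{1+|Du|^2}} ,
\]
so $Du=-\nu_{\mathbb R^n}/\nu_{n+1}$, where $\nu_{\mathbb R^n}=(\nu_1,\nu')$. At $\nu_\theta$ this gives $Du=-\cot\theta\, e_1$. The map $\nu\mapsto -\nu_{\mathbb R^n}/\nu_{n+1}$ is smooth, with Lipschitz constant $C(\theta)$ on the region $\{\nu_{n+1}\ge\sin\theta/\sqrt2\}$. Hence
\[
|Du+\cot\theta e_1|\le C|\nu-\nu_\theta|,
\]
which closes the chain of inequalities.

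\emph{Step 3: the by-product.} We have $|Du|^2=\nu_{n+1}^{-2}-1$. Using $\nu_{n+1}^2\ge \sin^2\theta/4$ (a weaker form of the bound in Step 1), this gives $|Du|^2\le 4\sin^{-2}\theta-1=\mathfrak C_\theta$.
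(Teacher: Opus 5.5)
Your proposal is correct and follows essentially the paper's route. It uses the same splitting of $\abs{\nu-\nu_\theta}^2$ into its $\nu_1$, $(\nu_2,\dots,\nu_n)$ and $\nu_{n+1}$ components, the same identity for $\nu_{n+1}^2-\sin^2\theta$ divided by $\nu_{n+1}+\sin\theta$, and the graphical relation $Du=-(\nu_1,\dots,\nu_n)/\nu_{n+1}$. The differences are cosmetic: you get case (2) from the reflection $x_{n+1}\mapsto-x_{n+1}$ instead of repeating the computation, and you use a Lipschitz bound on the convex set $\{\abs{x}\le 1,\ x_{n+1}\ge \sin\theta/\sqrt2\}$ instead of the paper's explicit formula for $\abs{Du+\cot\theta e_1}^2$.

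The constant bookkeeping you defer does close. The estimate $\abs{\nu_{n+1}-\sin\theta}\le 4k^{-1/2}\sin^{-1}\theta\,g_{\theta,k}$ yields $\abs{\nu-\nu_\theta}^2\le\bigl(k^{-1}+1+16k^{-1}\sin^{-2}\theta\bigr)g_{\theta,k}^2$. This is exactly the factor that the second threshold in $\mfc_{\theta,k}$ makes $\le 1$.
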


\begin{proof}
The following computations are carried out at $x_0$, or equivalently at
$(x_0,u(x_0))$. For simplicity, we omit the argument.

We prove the estimates for $g_{\theta,k}$ defined in
\eqref{defn:g_theta,k}. The assertion then follows by taking $k=1$ when
$n=2$, and $k=\frac{1}{n-2}$ when $n\geq3$. In particular,
$k\in(0,1]$.

{\em (1)}.
Write
\eq{
\nu
=(\nu_1,\cdots,\nu_{n+1})
=\left(
\frac{-u_1}{\sqrt{1+\abs{Du}^2}},
\cdots,
\frac{-u_n}{\sqrt{1+\abs{Du}^2}},
\frac{1}{\sqrt{1+\abs{Du}^2}}
\right).
}
Then
\eq{\label{eq:nu-nu_theta-length-square}
    \abs{\nu-\nu_\theta}^2
    =
    (\nu_1-\cos\theta)^2
    +\sum_{i=2}^n\nu_i^2
    +(\nu_{n+1}-\sin\theta)^2.
}

We first bound $\abs{\nu-\nu_\theta}^2$ in terms of
$g_{\theta,k}^2$. Since $g^2_{\theta,k}=\sum_{i=2}^n\nu_i^2+k(\nu_1-\cos\theta)^2$, we have
\eq{\label{ineq:appendix-estiamte-in-terms-of-g^2}
    \sum_{i=2}^n\nu_i^2
    \leq g^2_{\theta,k},
    \quad
    \abs{\nu_1-\cos\theta}^2
    \leq\frac{1}{k}g^2_{\theta,k}.
}

By direct computation,
\eq{
    \nu^2_{n+1}-\sin^2\theta
    =
    -(\nu_1-\cos\theta)(\nu_1+\cos\theta)
    -\sum_{i=2}^n\nu_i^2.
}
Hence, by \eqref{ineq:appendix-estiamte-in-terms-of-g^2},
\eq{
    \abs{\nu_{n+1}^2-\sin^2\theta}
    \leq
    \abs{\nu_1-\cos\theta}
    \left(\abs{\nu_1}+\abs{\cos\theta}\right)
    +\sum_{i=2}^n\nu_i^2
    \leq
    \frac{2}{\sqrt{k}}g_{\theta,k}
    +g_{\theta,k}^2.
}
By $g_{\theta,k}^2\leq\eta^2\leq\mfc_{\theta,k}\leq\frac{k\sin^4\theta}{64}\leq1$,
we have $g_{\theta,k}\leq1$. Since $k\leq1$, we therefore
obtain
\eq{
    \abs{\nu_{n+1}^2-\sin^2\theta}
    \leq\frac{4}{\sqrt{k}}g_{\theta,k}.
}
Since $\nu_{n+1}>0$ in this case, it follows that
\eq{\label{ineq:nu_n+1-sin-theta}
    \abs{\nu_{n+1}-\sin\theta}
    =
    \frac{\abs{\nu_{n+1}^2-\sin^2\theta}}
    {\nu_{n+1}+\sin\theta}
    \leq
    \frac{4}{\sqrt{k}\sin\theta}g_{\theta,k}.
}

Combining \eqref{ineq:appendix-estiamte-in-terms-of-g^2} and
\eqref{ineq:nu_n+1-sin-theta}, we obtain
\eq{\label{ineq:normal-g-estimate-upward}
    \abs{\nu-\nu_\theta}^2
    \leq
    \left(
        \frac{1}{k}
        +1
        +\frac{16}{k\sin^2\theta}
    \right)g^2_{\theta,k}.
}
Using $g_{\theta,k}^2\leq\eta^2\leq\mfc_{\theta,k}\leq\frac{k}{k+1+16\sin^{-2}\theta}$,
we thus deduce
\eq{
    \abs{\nu-\nu_\theta}^2
    \leq
    \left(
        \frac{1}{k}
        +1
        +\frac{16}{k\sin^2\theta}
    \right)
    \frac{k}{k+1+16\sin^{-2}\theta}
    =1.
}
Consequently,
\eq{
    2-2\langle\nu,\nu_\theta\rangle
    =
    \abs{\nu-\nu_\theta}^2
    \leq1,
}
and hence $\langle\nu,\nu_\theta\rangle\geq\frac{1}{2}$.

We next obtain a lower bound for $\nu_{n+1}$. Since $g_{\theta,k}^2\leq\eta^2\leq\mfc_{\theta,k}\leq\frac{k\sin^4\theta}{64}$,
inequality \eqref{ineq:nu_n+1-sin-theta} gives
\eq{
    \abs{\nu_{n+1}-\sin\theta}
    \leq
    \frac{4}{\sqrt{k}\sin\theta}
    \sqrt{\frac{k\sin^4\theta}{64}}
    =
    \frac{\sin\theta}{2}.
}
Therefore,
\eq{\label{ineq:nu-n+1-lower-bound}
    \nu_{n+1}
    \geq\frac{\sin\theta}{2}.
}
It follows that
\eq{
    \abs{Du}^2
    =
    \frac{1-\nu_{n+1}^2}{\nu_{n+1}^2}
    \leq
    \frac{4}{\sin^2\theta}-1.
}

It remains to bound
$\abs{Du+\cot\theta e_1}^2$ in terms of
$\abs{\nu-\nu_\theta}^2$. We compute
\eq{
    Du+\cot\theta e_1
    =
    \left(
        -\frac{\nu_1}{\nu_{n+1}}+\cot\theta,
        -\frac{\nu_2}{\nu_{n+1}},
        \cdots,
        -\frac{\nu_n}{\nu_{n+1}}
    \right),
}
and hence
\eq{\label{ineq:Lem-Du-cote_1-ineq2}
    \abs{Du+\cot\theta e_1}^2
    =
    \frac{1}{\nu_{n+1}^2}
    \left(
        \left(\nu_1-\cot\theta\nu_{n+1}\right)^2
        +\sum_{i=2}^n\nu_i^2
    \right).
}
Moreover,
\eq{\label{ineq:Lem-Du-cote_1-ineq3}
    \left(\nu_1-\cot\theta\nu_{n+1}\right)^2
    =
    \left(
        \nu_1-\cos\theta
        +\cos\theta\frac{\sin\theta-\nu_{n+1}}{\sin\theta}
    \right)^2
    \leq
    2(\nu_1-\cos\theta)^2
    +2\cot^2\theta(\sin\theta-\nu_{n+1})^2.
}
Also,
\eq{\label{ineq:Lem-Du-cote_1-ineq1}
    \max\left\{
        (\nu_1-\cos\theta)^2,
        \sum_{i=2}^n\nu_i^2,
        (\nu_{n+1}-\sin\theta)^2
    \right\}
    \leq
    \abs{\nu-\nu_\theta}^2.
}
Combining the above estimates with
\eqref{ineq:nu-n+1-lower-bound}, we arrive at
\eq{\label{ineq:Du+cot-e_1<=nu-nu_theta}
    \abs{Du+\cot\theta e_1}^2
    \leq
    \frac{4}{\sin^2\theta}
    \left(3+2\cot^2\theta\right)
    \abs{\nu-\nu_\theta}^2.
}
Together with \eqref{ineq:normal-g-estimate-upward}, this proves
assertion \textup{(1)}.

{\em (2)}.
In this case,
\[
\nu=(\nu_1,\cdots,\nu_{n+1})
=
\left(
\frac{u_1}{\sqrt{1+\abs{Du}^2}},
\cdots,
\frac{u_n}{\sqrt{1+\abs{Du}^2}},
\frac{-1}{\sqrt{1+\abs{Du}^2}}
\right).
\]
In particular, $\nu_{n+1}<0$, and
\eq{\label{eq:nu-nu_-theta-length-square}
    \abs{\nu-\nu_{-\theta}}^2
    =
    (\nu_1-\cos\theta)^2
    +\sum_{i=2}^n\nu_i^2
    +(\nu_{n+1}+\sin\theta)^2.
}
The estimates in
\eqref{ineq:appendix-estiamte-in-terms-of-g^2} still hold. As arguing above, we obtain
\eq{
    \abs{\nu_{n+1}+\sin\theta}
    =
    \frac{\abs{\nu_{n+1}^2-\sin^2\theta}}
    {\sin\theta-\nu_{n+1}}
    \leq
    \frac{4}{\sqrt{k}\sin\theta}g_{\theta,k}.
}
Consequently,
\eq{\label{ineq:normal-g-estimate-downward}
    \abs{\nu-\nu_{-\theta}}^2
    \leq
    \left(
        \frac{1}{k}
        +1
        +\frac{16}{k\sin^2\theta}
    \right)g^2_{\theta,k}.
}
The choice $\eta^2\leq\mfc_{\theta,k}\leq\frac{k}{k+1+16\sin^{-2}\theta}$ then gives $\abs{\nu-\nu_{-\theta}}^2\leq1$, and hence $\langle\nu,\nu_{-\theta}\rangle\geq\frac{1}{2}$.

Similarly, since $\eta^2\leq\mfc_{\theta,k}
\leq\frac{k\sin^4\theta}{64}$, we obtain
\eq{\label{ineq:nu_n+1+sin-theta}
    \abs{\nu_{n+1}+\sin\theta}
    \leq\frac{\sin\theta}{2}.
}
Since $\nu_{n+1}<0$, this implies $\abs{\nu_{n+1}}
    \geq\frac{\sin\theta}{2}$,
and therefore
\eq{
    \abs{Du}^2
    =
    \frac{1-\nu_{n+1}^2}{\nu_{n+1}^2}
    \leq
    \frac{4}{\sin^2\theta}-1.
}

Finally,
\eq{
    Du-\cot\theta e_1
    =
    \left(
        -\frac{\nu_1}{\nu_{n+1}}-\cot\theta,
        -\frac{\nu_2}{\nu_{n+1}},
        \cdots,
        -\frac{\nu_n}{\nu_{n+1}}
    \right),
}
and hence
\eq{
    \abs{Du-\cot\theta e_1}^2
    =
    \frac{1}{\nu_{n+1}^2}
    \left(
        \left(\nu_1+\cot\theta\nu_{n+1}\right)^2
        +\sum_{i=2}^n\nu_i^2
    \right).
}
Moreover,
\eq{
    \left(\nu_1+\cot\theta\nu_{n+1}\right)^2
    =
    \left(
        \nu_1-\cos\theta
        +\cos\theta\frac{\sin\theta+\nu_{n+1}}{\sin\theta}
    \right)^2
    \leq
    2(\nu_1-\cos\theta)^2
    +2\cot^2\theta(\sin\theta+\nu_{n+1})^2.
}
It follows that
\eq{
    \abs{Du-\cot\theta e_1}^2
    \leq
    \frac{4}{\sin^2\theta}
    \left(3+2\cot^2\theta\right)
    \abs{\nu-\nu_{-\theta}}^2.
}
Combining this with \eqref{ineq:normal-g-estimate-downward} proves
assertion \textup{(2)} and completes the proof.
\end{proof}
}

{
\begin{lemma}\label{Lem:g_theta-controlled-by-model-normals}
Let $n\geq2$ and $\theta\in(0,\pi)$, and let
$\nu_\theta,\nu_{-\theta}$ be defined as in
\eqref{defn:nu_theta}. For every unit vector
$\nu=(\nu_1,\ldots,\nu_{n+1})\in\mathbb S^n$ and every
$k\in(0,1]$, we have
\eq{
g_{\theta,k}^2
\leq
\min\left\{
    \abs{\nu-\nu_\theta}^2,
    \abs{\nu-\nu_{-\theta}}^2
\right\}.
}

\end{lemma}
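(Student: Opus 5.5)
Since $\abs{\nu_{\pm\theta}}=1$, the plan is to expand $\abs{\nu-\nu_{\pm\theta}}^2$ componentwise and compare with the definition \eqref{defn:g_theta,k}. By symmetry ($\nu_{-\theta}$ differs from $\nu_\theta$ only in the sign of the $e_{n+1}$-component, and $g_{\theta,k}$ depends on $\nu_{n+1}$ only through $\nu_{n+1}^2$) it suffices to prove $g_{\theta,k}^2\leq\abs{\nu-\nu_\theta}^2$; the other bound follows by replacing $\nu_{n+1}$ with $-\nu_{n+1}$.

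Using $\abs{\nu}=1$, rewrite
\[
g_{\theta,k}^2=\sum_{i=2}^n\nu_i^2+k(\cos\theta-\nu_1)^2,
\]
while, as in \eqref{eq:nu-nu_theta-length-square},
\[
\abs{\nu-\nu_\theta}^2=(\nu_1-\cos\theta)^2+\sum_{i=2}^n\nu_i^2+(\nu_{n+1}-\sin\theta)^2 .
\]
Subtracting gives
\[
\abs{\nu-\nu_\theta}^2-g_{\theta,k}^2=(1-k)(\nu_1-\cos\theta)^2+(\nu_{n+1}-\sin\theta)^2\geq0,
\]
because $k\in(0,1]$. That completes the argument; the identical computation with $\sin\theta$ replaced by $-\sin\theta$ handles $\nu_{-\theta}$, and taking the minimum yields the claim.

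There is no genuine obstacle. The only point requiring care is the first rewriting, which replaces $1-\nu_1^2-\nu_{n+1}^2$ by $\sum_{i=2}^n\nu_i^2$ using $\abs{\nu}=1$. This is exactly where the unit-length hypothesis on $\nu$ enters.
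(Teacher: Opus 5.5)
Your proof is correct and is essentially the paper's argument. The paper first bounds $k(\nu_1-\cos\theta)^2\leq(\nu_1-\cos\theta)^2$ and then drops $(\nu_{n+1}\mp\sin\theta)^2$; this is exactly your identity $\abs{\nu-\nu_{\pm\theta}}^2-g_{\theta,k}^2=(1-k)(\nu_1-\cos\theta)^2+(\nu_{n+1}\mp\sin\theta)^2\geq0$, carried out in two steps instead of one.
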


\begin{proof}
Since $k\in(0,1]$, we have
\begin{align}
g_{\theta,k}^2
&=
1-\nu_1^2-\nu_{n+1}^2
+k(\nu_1-\cos\theta)^2 \nonumber\\
&\leq
1-\nu_1^2-\nu_{n+1}^2
+(\nu_1-\cos\theta)^2 \nonumber\\
&=
\abs{\nu-\nu_\theta}^2
-(\nu_{n+1}-\sin\theta)^2 \nonumber\\
&\leq
\abs{\nu-\nu_\theta}^2.
\label{ineq:g_theta,k-4}
\end{align}
Similarly,
\begin{align}
g_{\theta,k}^2
&\leq
1-\nu_1^2-\nu_{n+1}^2
+(\nu_1-\cos\theta)^2 \nonumber\\
&=
\abs{\nu-\nu_{-\theta}}^2
-(\nu_{n+1}+\sin\theta)^2 \nonumber\\
&\leq
\abs{\nu-\nu_{-\theta}}^2.
\label{ineq:g_theta,k-3}
\end{align}
Combining \eqref{ineq:g_theta,k-4} and
\eqref{ineq:g_theta,k-3} proves the assertion.

\end{proof}
}

\bibliography{BibTemplate}%
\bibliographystyle{amsalpha}
\end{document}